%%%%%%%%%%%%%%%%%%%%%%%%%%%%%%%%%%%%%%%%%%%%%%%%%%%%%%%%%%%%%%%%%%%%%%%%%%%
%              Ageing in the	parabolic Anderson model                    %
%                         Submitted version                               %
                                	            %
%%%%%%%%%%%%%%%%%%%%%%%%%%%%%%%%%%%%%%%%%%%%%%%%%%%%%%%%%%%%%%%%%%%%%%%%%%%								

\documentclass[a4paper]{article}
%[reqno,11pt]

\usepackage{calc, amsmath, amsthm, a4, latexsym, amssymb, layout, subfigure, color, bm, xspace}
\usepackage[dvips]{graphicx}

% Layout

\setlength{\parindent}{0pt}
\setlength{\parskip}{1ex plus 0.5ex minus 0.2ex}
\addtolength{\textwidth}{2cm} 
\setlength{\oddsidemargin}{(\paperwidth - \textwidth)/2 - 72pt}

% Theorems & environments

\newtheorem{thm}{Theorem}[section]
\newtheorem{lemma}[thm]{Lemma}

\newtheorem{prop}[thm]{Proposition}

\theoremstyle{definition}

\newtheorem{rem}[thm]{Remark}

\newcommand{\be}[1]{\begin{equation}\label{#1}}
\newcommand{\ee}{\end{equation}}
\newcommand{\ba}{\begin{array}}
\newcommand{\ea}{\end{array}}
\newcommand{\bal}{\begin{aligned}}
\newcommand{\eal}{\end{aligned}}

% shortcuts
\newcommand{\ie}{i.e.\@\xspace}
\newcommand{\R}{\mathbb{R}}

\newcommand{\Z}{\mathbb{Z}}
\newcommand{\E}{\mathbb{E}}

\newcommand{\calC}{\mathcal{C}}

\newcommand{\calE}{\mathcal{E}}

\newcommand{\calM}{\mathcal{M}}

% general maths

\newcommand{\1}{1{\hskip -2.5 pt}\hbox{I}} %{{\bm 1}}
 % 15/03/07
 % 31/05/07 

% Greek letters
\newcommand{\la}{\lambda}

\newcommand{\eps}{\varepsilon}

\renewcommand{\phi}{\varphi}

% special letters

%\newcommand{\tla}{\tilde{\lambda}}

 %{{\mbox{\small bias}}}
\newcommand{\invis}[1]{}

% arrows
\newcommand{\ra}{\rightarrow}
\newcommand{\ua}{\uparrow} % 15/05/07
\newcommand{\da}{\downarrow}
\newcommand{\weakconv}{\Rightarrow}

% PAM ageing - Marcel

\newcommand{\diff}{\mathrm{d}}
\newcommand{\Prob}{\mathrm{Prob}}
\newcommand{\argmax}{\mathrm{argmax}}
\newcommand{\ox}{\overline{x}}
\newcommand{\oy}{\overline{y}} 
\newcommand{\tB}{\tilde{B}}
\newcommand{\restrH}{H^*}

\newcommand{\oz}{\overline{z}}

% Nadia's shortcuts

\renewcommand{\phi}{\varphi}

\renewcommand{\d}{\mathrm{d}}

% Peter's shortcuts

\newcommand{\heap}[2]{\genfrac{}{}{0pt}{}{#1}{#2}}
\newcommand{\ssup}[1] {{\scriptscriptstyle{({#1}})}}
\newcommand{\sfrac}[2]{\mbox{$\frac{#1}{#2}$}}

\begin{document}

\begin{center}
{\LARGE \bf Ageing in the parabolic Anderson model}\\
\vspace{0.7cm}
\textsc{Peter M\"orters\footnote{Department of Mathematical Sciences, University of Bath, Bath BA2 7AY, United Kingdom.}, 
Marcel Ortgiese\footnote{Institut f\"ur Mathematik, Technische Universit\"at Berlin, Stra\ss e des 17. Juni 136, 10623 Berlin, Germany.}} 
and \textsc{Nadia Sidorova\footnote{Department of Mathematics, 
University College London, Gower Street, London WC1E 6BT, United Kingdom.}}\\
%\vspace{0.1cm}
%{\small(Draft of \version)} 
%\vspace{0.5cm}\\
%\version
\end{center}

\vspace{0.3cm}

%\centerline{\LARGE \bf Ageing in the parabolic Anderson model}

%\vspace{0.5cm}

%\thispagestyle{empty}

\begin{quote}{\small {\bf Abstract: }
The parabolic Anderson model is the Cauchy problem for the heat equation with
a random potential. We consider this model in a setting which is continuous in time 
and discrete in space, and focus on time-constant, independent and identically distributed 
potentials with polynomial tails at infinity. We are concerned with the long-term temporal 
dynamics of this system. Our main result is that the periods, in which the profile of the 
solutions remains nearly constant, are increasing linearly over time, a phenomenon 
known as ageing. We describe this phenomenon in the weak sense, by looking at the asymptotic probability
of a change in a given time window, and in the strong sense, by identifying the almost sure upper 
envelope for the process of the time remaining until the next change of profile. We also prove functional 
scaling limit theorems for profile and growth rate of the~solution of the parabolic Anderson model.}
\end{quote}
\vspace{0.5cm}

{\footnotesize{\bf MSc classification (2000):} Primary 60K37 Secondary 82C44 \\[-5mm] 

{\bf Keywords:} Anderson Hamiltonian, parabolic problem, aging, 
out of equilibrium, random disorder, random medium, 
heavy tail, extreme value theory, polynomial tail, Pareto distribution,
point process, residual lifetime, scaling limit, functional limit theorem.}

% \tableofcontents

%%%%%%%%%%%%%%%%%%% Introduction %%%%%%%%%%%%%%%%%%%%%%%%%%%%%%%
\section{Introduction}

\subsection{Motivation and overview}

The long term dynamics of disordered complex systems out of equilibrium have been the subject 
of great interest in the past decade. A key paradigm in this research programme is the notion of 
ageing. %which can be characterized in terms of scaling properties of a suitable two-point function of the system. 
Roughly speaking, in an ageing system the probability that there is no 
essential change of the state between time $t$ and time $t+s(t)$ is of constant order for a 
period $s(t)$ which depends increasingly, and often linearly, on the time~$t$.
Hence, as time goes on, in an ageing system changes become less likely and the typical time scales
of the system are increasing. Therefore, as pointed out in~\cite{BF04}, ageing can be associated to 
the existence of infinitely  many time-scales that are inherently relevant to the system. In that respect, 
ageing systems are distinct from metastable systems, which are characterized by a finite number of well 
separated time-scales, corresponding to the lifetimes of different metastable states. % inspired by Bovier and Faggionato
\medskip

Ageing systems are typically rather difficult to analyse analytically. Most results to date
concern either the Langevin dynamics of relatively simple mean field spin glasses, 
see e.g.~\cite{BDG01}, or phenomenological models like the class of \emph{trap models}, see e.g.~\cite{B92, C06, GMW09}. 
The idea behind the latter is to represent a physical system as a particle moving in a random energy landscape with infinitely 
many valleys, or traps. Given the landscape, the particle moves according to a 
continuous time random walk remaining at each trap for an exponential time
with a rate proportional to its depth. While there is good experimental evidence for
the claim that trap models capture the dynamical behaviour of many more complex systems,
a rigorous mathematical derivation of this fact exists only in very few cases. 
% inspired by Bovier and Faggionato
\medskip
\pagebreak[3]

Two recent papers, Dembo and Deuschel~\cite{DD07} and Aurzada and Doering~\cite{AD09},
investigate weaker forms of ageing based on correlations. Both deal with a class of models which includes as a special case a parabolic Anderson
model with time-variable potential and show absence of correlation-based ageing in this case. While this approach
is probably the only way to deal rigorously with complicated models, it is not established that the effect
picked up by these studies is actually really due to the existence or absence of ageing in our sense, or whether
other moment effects are accountable. 
\medskip

In the present work we show that the parabolic Anderson model exhibits ageing behaviour, 
at least if the underlying random potential is sufficiently heavy-tailed. As a lattice
model with random disorder the parabolic Anderson model is a model of significant complexity, 
but its linearity and strong localization features make it considerably easier to study than, 
for example, the dynamics of most non-mean field spin glass models.%
\medskip

Our work has led to three main results. The first one, Theorem~\ref{ageing_for_u}, shows that the probability
that during the time window $[t,t+\theta t]$ the profiles of the solution of the parabolic Anderson problem 
remain within distance $\eps>0$ of each other converges to a constant~$I(\theta)$, which is strictly between 
zero and one. This shows that ageing holds on a linear time scale. Our second main result, Theorem~\ref{asymptotics_R_X}, 
is an almost sure ageing result. We define a function $R(t)$ which characterizes the waiting time starting 
from time~$t$ until the profile changes again. We determine the precise almost sure upper envelope of $R(t)$
in terms of an integral test. The third main result, Theorem~\ref{spatial_limit_u}, is a functional scaling limit theorem 
for the location of the peak, which determines the profile, and for the growth rate of the solution. We give the 
precise statements of the results in Section~\ref{se.results}, and in Section~\ref{se.guide} we provide a rough
guide to the proofs.
\medskip

\subsection{Statement of the main results}\label{se.results}

%\section{Overview}

%Ageing is a phenomenon that can be observed in many physical systems that are out of equilibrium. 
%Vaguely speaking, if starting at time $t$ we observe for how long the system stays in the same state, then if the answer
%depends on $t$, the system ages. 
%Mathematical treatments have focused mainly on the dynamical properties of spin glasses and related trap models, see~\cite{BBC08} for an overview. Also in~\cite{DD07}, the occurrence of ageing is investigated  for different models of interacting diffusion processes. In our project we show that ageing occurs in the parabolic Anderson model.  

The parabolic Anderson model is given by the heat equation on the lattice $\Z^d$ with a random potential, \ie we consider the solution 
$u \colon   ( 0,\infty) \times \Z^d \ra [0,\infty)$ of the Cauchy problem
\[ \begin{array}{llll} \displaystyle\frac{\partial}{\partial t} u(t,z) & = &\Delta u(t,z) + \xi(z) u(t,z) \, ,  
& \quad \mbox{ for } (t,z) \in (0,\infty) \times \Z^d \, , \\[3mm]
\displaystyle  \lim_{t\downarrow 0 } u(t,z) & = & \1_0(z) \, , &\quad \mbox{ for } z \in \Z^d \,. \end{array} \]
Here $\Delta$ is the discrete Laplacian
\[ \Delta f(x) = \sum_{\heap{y \in \Z^d}{y \sim x}} \big( f(y) - f(x) \big) \, , \vspace{-2mm}\]
and $y \sim x$ means that $y$ is a nearest-neighbour of site $x$. The potential $\xi = (\xi(z) \colon z\in \Z^d)$ 
is a collection of independent, identically distributed random variables, which we assume to be Pareto-distributed for some $\alpha > d$, \ie
\[ \Prob \{ \xi(z) \leq x \} = 1 - x^{- \alpha} \, , \quad \mbox{for } x \geq 1 \, . \] 
The condition $\alpha>d$ is necessary and sufficient for the Cauchy problem to have a unique, nonnegative solution, 
see~\cite{GM90}. We write
$$U(t)=\sum_{z\in\Z^d} u(t,z) \qquad \mbox{ for } t\ge 0,$$
for the \emph{total mass} of the solution (which is finite at all times) and
$$v(t,z)= \frac{u(t,z)}{U(t)} \qquad \mbox{ for } t\ge 0, z\in\Z^d,$$
for its~\emph{profile}. It is not hard to see that the total mass grows  
superexponentially in time. Our interest is therefore focused on the changes
in the profile of the solution. 
\bigskip

\subsubsection{Ageing: a weak limit theorem}

Our first ageing result is a weak limit result. We show that for an
observation window whose size is growing linearly in time, the probability 
of seeing no change during the window converges to a nontrivial value. The same
limit is obtained when only the states at the endpoints of the observation
window are considered.
\medskip
%\newpage

\begin{thm}\label{ageing_for_u} For any $\theta>0$ there exists $I(\theta)>0$ such that, for all sufficiently small $\eps >0$, 
$$\begin{aligned}
\lim_{t \ra \infty} \Prob & \Big\{ \sup_{z \in \R^d} \sup_{s \in [t, t+t\theta ]} \big| {v(t,z)} - {v(s,z)} \big| < \eps \Big\}\\
& = \lim_{t \ra \infty} \Prob \Big\{ \sup_{z \in \R^d}  \big| v(t,z) - v(t+t\theta,z) \big| < \eps \Big\} \\ 
& = I (\theta). \end{aligned}$$
\end{thm}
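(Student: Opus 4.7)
The plan is to exploit the strong localisation property of the parabolic Anderson model with Pareto potential: at large times, the profile $v(t,\cdot)$ concentrates on a single random site $Z_t$, the so-called intermittency peak. I would first establish (or import from an earlier section of the paper) the concentration statement
\[
\sup_{z \in \Z^d} \bigl| v(t,z) - \id_{\{z=Z_t\}} \bigr| \;\longrightarrow\; 0 \quad\text{in probability as } t\to\infty.
\]
From this one sees immediately that, for any sufficiently small $\eps > 0$, the event $\{\sup_z |v(t,z)-v(s,z)| < \eps\}$ agrees up to a set of vanishing probability with $\{Z_s = Z_t\}$, so both probabilities in the theorem reduce, modulo $o(1)$, to probabilities of events defined purely through the localiser process.

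Next I would show that the localiser cannot return to a previously visited site, so that $\{Z_t = Z_{t+\theta t}\}$ in fact forces $Z_s \equiv Z_t$ on the whole window $[t,t+\theta t]$. The maximiser $Z_s$ is characterised through the standard variational analysis as the argmax over $z\in\Z^d$ of a penalised potential of the form
\[
\Psi_s(z) \;=\; \xi(z) \;-\; \tfrac{|z|}{s}\log\tfrac{|z|}{es},
\]
or an analogous expression involving $\alpha$. Because the penalty on distant $z$ weakens monotonically in $s$, successive maximisers have strictly increasing values of both $|z|$ and $\xi(z)$; once a site $z$ has been overtaken by some $z'$ with $|z'|>|z|$, the gap $\Psi_s(z')-\Psi_s(z)$ is strictly increasing in $s$, ruling out any return. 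This equates the two probabilities in the theorem and reduces matters to computing $\Prob\{Z_t=Z_{t+\theta t}\}$.

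For the existence and positivity of $I(\theta)$, I would invoke a Poisson scaling limit. With appropriate scales $r(t)$, $a(t)$ determined by the Pareto index $\alpha$, the rescaled point process $\{(z/r(t),\xi(z)/a(t)):z\in\Z^d\}$ converges to a Poisson point process $\Pi$ on $\R^d\times(0,\infty)$ of explicit intensity. The penalised variational problem scales self-similarly in $t$, so that $Z_t$ and $Z_{t+\theta t}$ correspond in the limit to the maximisers over $\Pi$ of two deterministic functionals $\F_1$ and $\F_{1+\theta}$. A continuous-mapping argument for maximisers then gives
\[
\Prob\{Z_t=Z_{t+\theta t}\} \;\longrightarrow\; \Prob\bigl\{\argmax_{\Pi} \F_1 = \argmax_{\Pi} \F_{1+\theta}\bigr\} \;=:\; I(\theta).
\]
Positivity is clear, because with positive probability a single dominant point of $\Pi$ maximises both functionals; a matching argument shows $I(\theta)<1$.

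The hard part, in my view, is the non-return argument of the second step, uniformly over the window: one must rule out that some distant site with an anomalously high potential briefly overtakes $Z_t$ and is then overtaken again by $Z_t$. This requires a quantitative estimate on the monotone separation of $\Psi_s(z')-\Psi_s(z)$ combined with a tightness or truncation argument reducing the infinite collection of candidate sites to a large but finite one; the absolute continuity of the Pareto law is what eliminates degenerate ties. A secondary nuisance is upgrading the pointwise-in-$t$ concentration of $v(t,\cdot)$ to uniform concentration over the linearly growing window $[t,t+\theta t]$, which is needed to connect the $\sup_{s\in[t,t+\theta t]}$ probability to the reduced event on the localiser.
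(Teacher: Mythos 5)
Your proposal matches the paper's architecture: reduce to the localiser via one-point localisation, prove a no-return property so the endpoint event $\{Z_t=Z_{t+\theta t}\}$ pins the whole window, and compute the limit via a Poisson scaling limit of the rescaled potential/functional. Steps 1 and 3 are carried out in the paper essentially as you describe (with the technical caveat that one must pass between the argmax $X_t$ of $v(t,\cdot)$ and the maximiser $Z_t$ of the variational functional $\Phi_t$, which agree with high probability by [KLMS09]). The no-return argument is delivered by a clean monotonicity lemma: at a jump time $\tau$ the top two $\Phi$-values coincide, and $\Phi_s(x)-\Phi_s(y)=(\xi(x)-\xi(y))(1-\tau/s)$, which forces $\xi(Z_t)$ to be nondecreasing.

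The genuine gap is in what you dismiss as a "secondary nuisance": the uniform concentration of $v(s,\cdot)$ over the continuum $s\in[t,t+\theta t]$. Your proposed remedy — "a tightness or truncation argument reducing the infinite collection of candidate sites to a large but finite one" — does not address the real difficulty, which is temporal, not spatial: one-point localisation gives $v(t,X_t)\to 1$ \emph{in probability at a single time}, so there is no way to union-bound over the window. What the paper actually does requires two further ingredients. First, an almost-sure \emph{conditional} localisation ([KLMS09, Prop. 5.3]): a.s., for all large $s$, the profile mass off $Z_s^{\ssup 1}$ is $<\eps/2$ \emph{on the event that} the gap $\Phi_s(Z_s^{\ssup 1})-\Phi_s(Z_s^{\ssup 2})$ exceeds a threshold $\sim a_s\la_s$. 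Second — and this is the missing lemma — a separation-propagation statement (Lemma~\ref{separation_first_second}): if $Z_{t}^{\ssup 1}=Z_{t+\theta t}^{\ssup 1}$ and the gap exceeds the threshold at both endpoints, then it exceeds it at every $s$ in between. The proof exploits that on stretches where $Z^{\ssup 1}$ and $Z^{\ssup 2}$ are both fixed, the gap has the explicit form $A-B/s$, so the function $A-B/s-\tfrac12 a_s\la_s$ has eventually monotone derivative and hence can only dip below zero at an endpoint; jumps of $Z^{\ssup 2}$ inside the interval are handled via a spacing estimate from [KLMS09, Prop. 3.4]. Together with the fact that $\Prob\{\text{gap large at both endpoints}\}\to 1$ (since $(\Phi_t(Z_t^{\ssup 1})/a_t,\Phi_t(Z_t^{\ssup 2})/a_t)$ converges to a random variable with a density), these two ingredients produce the uniform localisation on the window and hence the equality of the two limits. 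Without the propagation lemma your reduction of the sup-over-window probability to $\{Z_s\equiv Z_t\}$ does not close, and I would flag this as the main conceptual hurdle rather than the no-return step.
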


\begin{rem}\ \\[-8mm]
\begin{itemize}
\item 
Note that we only have one ageing regime, which is contrast to the behaviour of the unsymmetric trap 
models described in~\cite{BC05}
\item
An integral representation of $I(\theta)$ will be given in Proposition~\ref{I}, which shows that
the limit is not derived from the generalized arcsine law as in the universal scheme for trap models 
described in~\cite{BC08}. In Proposition~\ref{asymptotics_I_large_c}, we show that there are
 positive constants $C_0, C_1$ such that
$$\lim_{\theta\downarrow 0} \theta^{-1} \, \big(1-I(\theta)\big) =C_0 
\qquad\mbox{ and } \qquad
\lim_{\theta\uparrow \infty} \theta^{d} \, I(\theta) =C_1 \, . $$ \\[-9mm]
\end{itemize}
\end{rem}

%Denote by $B(a,b)$ the Beta function with parameters $a,b$ and 
%by $\tilde{B}(x,a,b)$ as the normalized incomplete Beta function 
%\be{incomplete_beta_function}
%\tilde{B}(x,a,b) = \tfrac{1}{B(a,b)} \int_0^x v^{a-1} (1-v)^{b-1} \diff t \, .\ee
%Then
%$$I(\theta) = \tfrac{1}{B(\alpha - d+1,d)} \int_0^1  v^{\alpha - d} (1-v)^{d-1} \phi_\theta(v) \diff v \, ,$$
%where the weight $\phi_\theta(v)$ is defined by
%\be{weight} \tfrac1{\phi_\theta(v)} = 1- \tilde{B}(v,\alpha - d,d) + (1+\theta)^\alpha \, \big(\tfrac\theta{v}+1\big)^{d - \alpha} 
%\, \tilde{B} \big( \tfrac{v+\theta}{1+\theta}, \alpha - d, d\big) \, . \ee

\subsubsection{Ageing: an almost-sure limit theorem}

The crucial ingredient in our ageing result is the fact that in the case of Pareto distributed potentials
the profile of the solution of the parabolic Anderson problem can be essentially described by one parameter, 
the location of its \emph{peak}. This is due to the one-point localization theorem \cite[Theorem 1.2]{KLMS09}
which states that, for any $\Z^d$-valued process $(X_t \colon t\ge 0)$ 
with the property that $v(t,X_t)$ is the maximum value of the profile at time~$t$, we have 
\begin{equation}\label{one-point}
v(t,X_t) \ra 1 \mbox{ in probability. } 
\end{equation}
In other words, asymptotically the profile becomes completely localized in its peak.
Assume for definiteness that $t\mapsto X_t$ is right-continuous and define the 
\emph{residual lifetime} function by $R(t) = \sup\{ s \geq 0 \colon X_t = X_{t + s} \}$, 
for $t\ge 0$. Roughly speaking, $R(t)$ is the waiting time, at time~$t$, until the next change of peak, 
see the schematic picture in Figure~\ref{remaining_lifetime}.
We have shown in Theorem~\ref{ageing_for_u} that the law of $R(t)/t$ converges to the  law given by 
the distribution function~$1-I$. In the following theorem, we describe the smallest asymptotic upper 
envelope for the process~$(R(t) \colon t\ge 0)$.
\smallskip

\begin{thm}[Almost sure ageing]\label{asymptotics_R_X} 
For any nondecreasing function $h\colon(0,\infty)\to(0,\infty)$ we have, almost surely, 
$$\limsup_{t \ra\infty} \frac{R(t)}{t h(t)} = 
\left\{ \begin{array}{ll} 0 & \mbox{ if } \displaystyle \int_1^\infty \frac{\d t}{t h(t)^d} <\infty,\\[3mm]
\infty & \mbox{ if } \displaystyle \int_1^\infty \frac{\d t}{t h(t)^d} =\infty.\\
\end{array}\right.$$
\end{thm}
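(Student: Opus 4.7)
The proof is a Borel--Cantelli integral test, in the spirit of classical almost-sure upper envelope theorems. The necessary analytic input is a two-sided tail estimate of the form
$$c_1 \theta^{-d} \le \Prob\{R(t)>\theta t\} \le c_2\, \theta^{-d}$$
holding uniformly for $t$ and $\theta$ large, with positive constants $c_1, c_2$. The upper bound follows from Theorem~\ref{ageing_for_u} combined with the asymptotic $I(\theta)\sim C_1 \theta^{-d}$ of Proposition~\ref{asymptotics_I_large_c}; the matching lower bound should come from the same extremal point-process convergence that drives Theorem~\ref{ageing_for_u}. A further routine ingredient is that, for $h$ nondecreasing and any $q>1$, the integral $\int_1^\infty dt/(th(t)^d)$ and the series $\sum_n h(q^n)^{-d}$ have the same convergence behaviour.

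\textbf{Convergent case.} Suppose $\int_1^\infty dt/(th(t)^d) < \infty$; this forces $h(t)\ra\infty$. Fix $q>1$ and $\eps>0$, set $t_n=q^n$, and consider the events $A_n=\{R(t_n) > \eps\, t_n\, h(t_n)\}$. For $n$ large, $\eps\, h(t_n)$ is large, so the tail bound gives $\Prob(A_n) \le 2 c_2 (\eps\, h(t_n))^{-d}$, which is summable by the integral-to-sum comparison. The first Borel--Cantelli lemma yields $\limsup_n R(t_n)/(t_n\, h(t_n)) \le \eps$ almost surely; intersecting over a rational null sequence of $\eps$ gives $\limsup_n = 0$ almost surely. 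To extend to continuous $t$ we exploit that $t\mapsto t+R(t)$ is nondecreasing, since it equals the next jump time of $X$ at or after $t$. Hence for $t\in[t_n,t_{n+1}]$,
$$R(t) \le (t_{n+1}+R(t_{n+1})) - t_n = (t_{n+1}-t_n) + R(t_{n+1}),$$
and dividing by $t\, h(t)\ge t_n\, h(t_n)$ and using $h(t_n)\ra\infty$ gives $\sup_{t\in[t_n,t_{n+1}]} R(t)/(t\, h(t)) \ra 0$.

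\textbf{Divergent case.} Here we need the second Borel--Cantelli lemma applied to $A_n=\{R(t_n)> A\, t_n\, h(t_n)\}$, whose probabilities obey $\Prob(A_n)\ge \tfrac12 c_1 (A\,h(t_n))^{-d}$ and hence sum to infinity. The obstacle is independence: all $A_n$ are functionals of the common field $\xi$. The plan is to use the spatial structure of the model: by Theorem~\ref{spatial_limit_u} the peak $X_t$ lives on a polynomial spatial scale, while the one-point localization~\eqref{one-point} together with the standard PAM machinery should show that, up to events of negligible probability, $R(t)$ is determined by the potential in a ball around $X_t$ whose radius is of much smaller order than $|X_t|$. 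Passing to a sparse subsequence $(t_n)$ and truncating on the high-probability event that the corresponding peak-balls are pairwise disjoint, we replace $A_n$ by an event $\tilde A_n$ measurable with respect to $\xi$ restricted to one such window. Since the $\xi$-field is i.i.d., the $\tilde A_n$ are genuinely independent with $\sum\Prob(\tilde A_n)=\infty$, so Borel--Cantelli gives $\tilde A_n$ infinitely often; the discrepancy $\sum\Prob(A_n\,\triangle\,\tilde A_n)$ is summable by construction, so $A_n$ also occurs infinitely often. Letting $A\ua\infty$ along a rational sequence gives $\limsup_t R(t)/(th(t))=\infty$ almost surely.

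\textbf{Main obstacle.} The serious work lies in the decoupling step of the divergent half: quantifying, via the spatial scaling limit and the sharp localization of the PAM peak, how far $X$ can travel during the residual lifetime and how large a window the event $\{R(t_n)>A t_n h(t_n)\}$ genuinely depends on, so that a truly independent model for the $A_n$ can be substituted. All remaining ingredients---the uniform tail $\Prob\{R(t)>\theta t\}\asymp\theta^{-d}$, the integral-to-sum comparison, and the interpolation from discrete to continuous time---are routine once the localization geometry has been controlled.
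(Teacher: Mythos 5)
Your outline of the convergent half matches the paper closely: restrict to exponentially spaced times, invoke a tail bound of order $\theta^{-d}$, and use the first Borel--Cantelli lemma; the interpolation via monotonicity of $t\mapsto t+R(t)$ is also used in the paper, so that part is sound.

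Where the proposal genuinely departs from the paper — and where it has real gaps — is in the two ingredients you call ``routine.''

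First, the uniform two-sided tail $\Prob\{R(t)>\theta t\}\asymp\theta^{-d}$ for $\theta=\theta_t\to\infty$ does \emph{not} follow from Theorem~\ref{ageing_for_u} and Proposition~\ref{asymptotics_I_large_c}: those results give convergence of $\Prob\{Z_t=Z_{t+\theta t}\}$ to $I(\theta)$ only for fixed $\theta$, because the underlying point-process convergence holds on compact subsets of $\widehat H$. Letting $\theta$ grow with $t$ pushes the relevant region out to infinity, which is exactly why the paper proves a separate moderate-deviations result (Proposition~\ref{moderate_deviations}) from first principles, with careful estimates on growing boxes (Lemmas~\ref{exact_errors_for_Phi}--\ref{restricting_maximizer}). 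This is the analytic core of the proof and cannot be dismissed.

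Second, your decoupling strategy for the divergent case does not work as stated, because the event $\{R(t_n)>\theta t_n\}$ is fundamentally nonlocal in $\xi$. It requires that \emph{no other site in $\Z^d$} — including sites at distances of order $r_s$ for $s$ up to $t_n(1+\theta)$, which is much farther than $|X_{t_n}|$ — overtakes the current peak in the variational problem during that time window. So $R(t_n)$ is not determined by the potential in a ball around $X_{t_n}$ that is small compared to $|X_{t_n}|$, and the proposed replacement of $A_n$ by a window-measurable $\tilde A_n$ with summable discrepancy has no plausible mechanism. The paper avoids this by not aiming for genuine independence at all: it uses the Kochen--Stone lemma, which only needs a pairwise second-moment estimate $\Prob(E_n\cap E_m)\le(1+o(1))\Prob(E_n)\Prob(E_m)$, and proves this decay of correlations (Lemma~\ref{asymptotic_independence}) by an explicit computation in which the maximizers at the two time scales are confined to disjoint annuli of very different radii — spatial separation is exploited, but in a quantitative, integral-level way rather than by constructing independent surrogate events.

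Finally, the paper proves the result first for the residual lifetime $R^V$ of the variational maximizer $Z_t$ and then transfers it to $R$ for the profile maximizer $X_t$ via a nontrivial analysis of the exceptional set $\calE$ of transition times (Section~\ref{sect_asymptotics_sigma}, Lemmas~\ref{properties_jumps_of_Z}--\ref{length_intermediate_interval}). Your proposal does not distinguish the two processes, so even granting the tail estimate and the decoupling, a transfer argument is still missing.
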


\begin{figure}[htbp]
\centering 
\includegraphics[height=4cm]{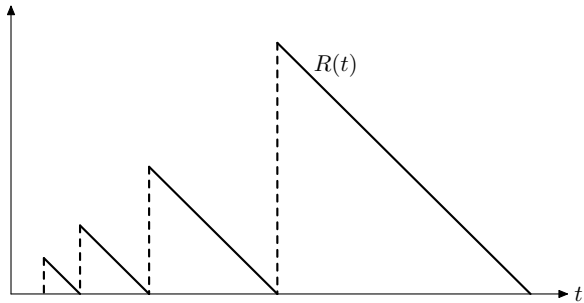} 
\caption{A schematic representation of the residual lifetime function $R$.}
\label{remaining_lifetime}
\end{figure}
	
%\begin{thm}\label{asymptotics_sigma} For all $n$ sufficiently large, for $\gamma, \gamma' \gg 1$, 
%\[ (\log \sigma_n)^{-\gamma} \leq \frac{R(\sigma_n)}{\sigma_n} \leq (\log \sigma_n)^{\gamma'} \,  . \]
%\end{thm}

\subsubsection{A functional scaling limit theorem}

To complete the discussion of the temporal behaviour of the solution it is natural to look for a 
functional limit theorem under suitable space-time scaling of the solution. From~\cite[Theorem 1.2]{HMS08} 
we know that there are heavy fluctuations even in the \emph{logarithm} of the total mass, as we have for $t\uparrow\infty$,
\begin{align}\label{mass} \frac{(\log t)^{\frac{d}{\alpha-d}}}{t^{\frac{\alpha}{\alpha-d}}} \log U(t) 
\Rightarrow Y \, , \end{align}
where $Y$ is a random variable of extremal Fr\'echet type with shape parameter~$\alpha-d$. We 
therefore focus on  the profile of the solution and extend it to $(0,\infty)\times \R^d$ by taking the integer parts of
the second coordinate, letting $v(t,x)=v(t, \lfloor x \rfloor)$. Taking nonnegative measurable functions on $\R^d$ as 
densities with respect to the Lebesgue measure, we can interpret $a^{d} v(t,ax)$ for any $a,t>0$ as an element of the 
space $\calM(\R^d)$ of probability measures on $\R^d$. % equipped with the weak~topology.
By $\delta(y)\in\calM(\R^d)$ we denote the Dirac point mass located in~$y\in\R^d$.
\medskip

%\newpage
\begin{prop}[Convergence of the scaled profile to a wandering point mass]\label{classical}
There exists a nondegenerate  stochastic process $(Y_t \colon t>0)$ such that,
as $T\uparrow\infty$, the following functional scaling limit holds,
\begin{equation}\label{scalim}
\Big(  \big(\tfrac{T}{\log T}\big)^{\frac{\alpha d}{\alpha-d}} \, v\big(tT, \big(\tfrac{T}{\log T}\big)^{\frac{\alpha}{\alpha-d}}x\big)  
\colon t>0 \Big)
\weakconv  \big(  \delta(Y_t)  \colon t  > 0 \big) \, ,
\end{equation}
in the sense of convergence of finite dimensional distributions on the space 
$\calM(\R^d)$ equipped with the weak topology.
\end{prop}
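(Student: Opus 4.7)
The plan is to reduce weak convergence of the rescaled profile measures to finite-dimensional convergence of the rescaled peak process $(X_{tT}/a_T \colon t > 0)$ with $a_T = (T/\log T)^{\alpha/(\alpha - d)}$, and then to obtain the latter from point process convergence of the potential. For the reduction, fix a bounded continuous test function $\phi \colon \R^d \to \R$ and write
$$\int_{\R^d} a_T^d \, v(tT, a_T x)\, \phi(x)\, dx = \sum_{z \in \Z^d} v(tT, z) \int_{[0,1)^d} \phi((z+u)/a_T)\, du.$$
Isolating the contribution of $z = X_{tT}$ and using~\eqref{one-point} together with continuity of $\phi$, the right hand side differs from $\phi(X_{tT}/a_T)$ by a term bounded in modulus by $2\|\phi\|_\infty(1-v(tT,X_{tT})) + o(1)$, which tends to zero in probability as soon as $X_{tT}/a_T$ is tight. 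Consequently $a_T^d\, v(tT, a_T \cdot) - \delta(X_{tT}/a_T) \weakconv 0$ in the weak topology on $\calM(\R^d)$, and it suffices to construct a nondegenerate limit $(Y_t)$ for the finite-dimensional distributions of $(X_{tT}/a_T)$.

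For this, introduce the rescaled potential point process on $\R^d \times (0, \infty)$,
$$\Pi_T = \sum_{z \in \Z^d \setminus \{0\}} \delta_{(z/a_T,\, \xi(z)/b_T)}, \qquad b_T = (T/\log T)^{d/(\alpha - d)},$$
whose normalisation is chosen so that $a_T^d\, \p\{\xi > b_T\} \to 1$. Classical extreme value theory yields $\Pi_T \weakconv \Pi$, a Poisson point process on $\R^d \times (0, \infty)$ with intensity $dx \otimes \alpha y^{-\alpha-1}\, dy$. The Feynman--Kac formula together with the moment and path estimates of~\cite{HMS08, KLMS09} identify $X_t$, with probability tending to one, as the maximiser over $\Z^d$ of a penalised functional $\Psi_t(z,\xi(z))$ essentially of the form $t\,\xi(z) - |z|\log(|z|/(et))$. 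After the change of variables $(x,s) = (z/a_T, \xi(z)/b_T)$ and subtraction of the $t$-dependent normalising constant, $\Psi_{tT}$ converges to a continuous functional $\psi_t(x,s)$ on $\R^d \times (0,\infty)$ whose decay in $|x|$ and in $1/s$ is fast enough to guarantee that the argmax of $\psi_t$ evaluated on the atoms of $\Pi$ is almost surely attained at a unique point, which we take to be $Y_t$.

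Continuity of the argmax map at $\Pi$-a.e.\ configuration, combined with joint point process convergence of $\Pi_T$ and of $\Psi_{t_1 T}, \ldots, \Psi_{t_k T}$ for any fixed times $t_1, \ldots, t_k > 0$, yields by the continuous mapping theorem
$$\bigl(X_{t_1 T}/a_T, \ldots, X_{t_k T}/a_T\bigr) \weakconv \bigl(Y_{t_1}, \ldots, Y_{t_k}\bigr),$$
and a fortiori the tightness hypothesis used in the reduction step. Nondegeneracy of $(Y_t)$ follows from the fact that $\Pi$ has full support and $\psi_t$ is non-constant, so the argmax has a diffuse distribution.

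The main obstacle is verifying the single-site approximation $X_{tT} \approx \argmax \Psi_{tT}$ and the associated truncation, uniformly over a fixed compact collection of times: one must cut off the contribution of atoms with very large $|x|$ or very small $s$, and control the residual Feynman--Kac sum around the candidate maximiser so that the continuous mapping theorem can be applied on a $\Pi$-generic configuration. These estimates are available for a single time slice from~\cite{HMS08, KLMS09}; the extension to finitely many times is conceptually routine but requires some uniform control of the localisation tails.
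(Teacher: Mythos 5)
Your reduction step is sound and essentially parallels the paper's own argument: where you pass from the rescaled profile measures to $\phi(X_{tT}/a_T)$ via one-point localization and a test function, the paper works with Laplace functionals and the event $A_\delta = \{v(tT,Z_{tT}) > 1-\delta\}$, but the two reductions are equivalent. Your change of variables $\psi_t(x,s) = s - q|x|/t$ with $q = d/(\alpha-d)$ also correctly identifies the limit penalised functional; after translating to the paper's $\Phi$-coordinates $y = s - q|x|$, maximising $\psi_t$ is the same as maximising $y + q(1-\tfrac1t)|x|$, so your $(Y_t)$ agrees with the paper's $(Y_t^{\ssup 1})$.

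The difference is in how the two proofs obtain the finite-dimensional convergence of the peak location. The paper has already proved this as Proposition~\ref{spatial_limit} (via Lemma~\ref{finite_dimensional_distributions} for the fdds and Lemma~\ref{Prob_T_tight} for tightness), so the proof of Proposition~\ref{classical} is a short corollary that cites $Z_{tT}/r_T \Rightarrow Y_t$ and sandwiches. Your proposal instead re-sketches the construction from the point process of potential values. This is a valid route, but the part you flag as the ``main obstacle'' --- the truncation of $\Pi_T$ to compact windows, the control of the Feynman--Kac residual, the uniqueness and continuity of the argmax map at $\Pi$-generic configurations, and the uniformity over $t_1,\dots,t_k$ --- is precisely the content of Lemma~\ref{finite_dimensional_distributions}, where it takes a careful multi-step argument (Steps 1--3, restricting to boxes $B_N$ and $B_K$, controlling errors via Lemma~\ref{Phi_t(1+c)_in_Phi_t}, and showing the mass outside large boxes is negligible via Lemma~\ref{restricting_maximizer}). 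Calling this ``conceptually routine'' understates it: the extension to several times requires verifying that the cones $\calC_{t_i}(x_i,y_i)$ jointly have finite intensity and that the error in replacing $\Phi_{t_iT}$ by its linearisation is simultaneously small, which does not follow directly from the single-time results of~\cite{HMS08,KLMS09}. A second, smaller point: you work with $X_t$ rather than the variational maximiser $Z_t$, which forces an extra identification step ($X_t = Z_t$ with high probability); the paper runs the entire point-process analysis on $Z_t$, which is cleaner because $Z_t$ is directly defined as an argmax of $\Phi_t$ and only links to $X_t$ via localization at the very end. So the proposal is correct in outline but leaves the genuinely hard part as an unproved sketch, whereas the paper modularises that work into a separate proposition and makes the present proof a few lines.
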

%\medskip
%
\begin{rem}
The process $(Y_t \colon t>0)$ will be described explicitly in and after Remark~\ref{comments_spatial_limit_u}\,(iii).
\end{rem}

In this formulation of a scaling limit theorem the mode of convergence is not
optimal. Also, under the given scaling, islands of diameter $o((\frac{t}{\log t})^{\frac{\alpha}{\alpha-d}})$
at time~$t$ would still be mapped onto single points, and hence the spatial scaling is not sensitive to the 
one-point localization described in the previous section. 
%\smallskip
%
We now state an optimal result in the form of a  functional scaling limit theorem
in the Skorokhod topology for the localization point itself. Additionally, we prove joint convergence of the localization
point together with the value of the potential there. This leads to a Markovian limit process
which is easier to describe, and from which the non-Markovian process  \mbox{$(Y_t \colon t>0)$} can be derived by projection.
This approach also yields an extension of~\eqref{mass} to a functional limit theorem. Here and in the following
we denote by $|x|$ the $\ell^1$-norm of $x\in\R^d$.%
\medskip

\pagebreak[3]

\begin{thm}[Functional scaling limit theorem]\label{spatial_limit_u}\ \\
There exists a time-inhomogeneous Markov process $((Y^{\ssup 1}_t,Y^{\ssup 2}_t) \, : \, t  > 0)$ on $\R^d \times \R$
such that,\\[-7mm]
\begin{itemize}
\item[(a)] as $T\to\infty$, we have 
\[\begin{aligned} \Big(\big(\big(\tfrac{\log T}{T}\big)^{\frac{\alpha}{\alpha-d}} X_{tT},	\big(\tfrac{\log T}{T}\big)^{\frac{d}{\alpha-d}} \xi(X_{tT})\big)\, : \, t > 0\Big) 
\weakconv  \Big(\big(Y^{\ssup 1}_t,Y^{\ssup 2}_t+\tfrac{d}{\alpha-d} |Y^{\ssup 1}_t|\big)\, : \, t  > 0 \Big) \, , \end{aligned}\]
in distribution on the space~$D(0,\infty)$ of c\`adl\`ag functions $f\colon (0,\infty)\to \R^d \times \R$ with respect to the 
Skorokhod topology  on compact subintervals;
\item[(b)]
as $T\to\infty$, we have 
\[\begin{aligned} \Big(	\big(\tfrac{\log T}{T}\big)^{\frac{d}{\alpha-d}}\tfrac{\log U(tT)}{tT} \, : \, t > 0\Big) 
\weakconv  \big( Y^{\ssup 2}_t+\tfrac{d}{\alpha-d}\big(1-\tfrac{1}{t}\big)|Y^{\ssup 1}_t|\, : \, t  > 0 \big) \, , \end{aligned}\]
in distribution on the space~$C(0,\infty)$ of continuous functions $f\colon (0,\infty)\to \R$ with respect to the uniform topology on compact subintervals.
\end{itemize}
\end{thm}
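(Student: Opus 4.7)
The approach uses the variational representation of $\log u(t,z)$ for a heavy-tailed potential, combined with the Poisson limit of the rescaled field. The proofs underlying Theorem~\ref{ageing_for_u} and Proposition~\ref{classical} (see also~\cite{HMS08}) establish that, with probability tending to one,
\[
X_s=\argmax_{z\in\Z^d}\Phi_s(z),\qquad \frac{\log U(s)}{s}=\Phi_s(X_s)\bigl(1+o(1)\bigr),
\]
where $\Phi_s(z):=\xi(z)-\tfrac{|z|}{s}\log\tfrac{|z|}{eds}$, and the effective argmax is restricted to a ball of radius $o(a_s)$, with $a_T:=(T/\log T)^{\alpha/(\alpha-d)}$ and $b_T:=(T/\log T)^{d/(\alpha-d)}$. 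Using the identity $a_T/T=b_T/\log T$, the substitution $z=a_T y$, $\xi(z)=b_T r$, $s=tT$ gives
\[
\Phi_{tT}(a_T y)=b_T\Bigl(r-\tfrac{d}{(\alpha-d)t}|y|\Bigr)\bigl(1+o(1)\bigr),
\]
uniformly for $(y,r,t)$ in compact subsets of $(\R^d\setminus\{0\})\times(0,\infty)^2$. So, after rescaling, the entire problem reduces to tracking the argmax of $\Psi_t(y,r):=r-\tfrac{d}{(\alpha-d)t}|y|$ over the rescaled point process as $t$ varies.

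Extreme value theory for i.i.d.\ Pareto random variables gives $\Pi_T:=\sum_{z\in\Z^d}\delta(z/a_T,\xi(z)/b_T)\weakconv\Pi$, where $\Pi$ is a Poisson point process on $\R^d\times(0,\infty)$ with intensity $\d y\otimes \alpha r^{-\alpha-1}\d r$. Let $(Y^{\ssup 1}_t,\widetilde Y^{\ssup 2}_t)$ be the almost surely unique maximizer of $\Psi_t$ over~$\Pi$; existence and uniqueness at each fixed~$t$ follow from the fact that for any~$M$ the intensity of $\{\Psi_t>M\}$ is $\int_{\R^d}(M+\tfrac{d|y|}{(\alpha-d)t})^{-\alpha}\d y<\infty$ because $\alpha>d$. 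Set $Y^{\ssup 2}_t:=\widetilde Y^{\ssup 2}_t-\tfrac{d}{\alpha-d}|Y^{\ssup 1}_t|$. A continuous-mapping argument applied to $\Pi_T\weakconv\Pi$, combined with the uniform expansion of $\Phi$ and a compact-window tightness estimate, gives the joint finite-dimensional convergence
\[
\bigl(X_{tT}/a_T,\,\xi(X_{tT})/b_T\bigr)\weakconv\bigl(Y^{\ssup 1}_t,\,Y^{\ssup 2}_t+\tfrac{d}{\alpha-d}|Y^{\ssup 1}_t|\bigr)
\]
at any finite collection of times~$t$, since $\Pi$ is a common limit. The time-inhomogeneous Markov property of $(Y^{\ssup 1}_t,Y^{\ssup 2}_t)$ is read off from the Poisson structure: the next change after time~$t$ is the smallest $t'>t$ at which a $\Pi$-point $(y^\ast,r^\ast)$ with $r^\ast>\widetilde Y^{\ssup 2}_t$ crosses $\Psi_{t'}(Y^{\ssup 1}_t,\widetilde Y^{\ssup 2}_t)$, and the $\Pi$-points in the half-plane $\{r>\widetilde Y^{\ssup 2}_t\}$ are independent of the history by the restriction property.

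To upgrade to Skorokhod convergence on $[t_0,t_1]\subset(0,\infty)$, the crucial observation is that $t\mapsto(Y^{\ssup 1}_t,\widetilde Y^{\ssup 2}_t)$ is piecewise constant with a.s.\ only finitely many jumps on~$[t_0,t_1]$, and the analogous finiteness in the prelimit (tightness of the number of distinct peaks visited in a linear-size time window) was already obtained in the proof of Theorem~\ref{ageing_for_u}. Combining finite-dimensional convergence with convergence of the jump times to a.s.\ distinct limit points yields Skorokhod convergence, proving~(a). For~(b), substituting into $\tfrac{\log U(s)}{s}=\Phi_s(X_s)(1+o(1))$ together with the uniform expansion gives
\[
\bigl(\tfrac{\log T}{T}\bigr)^{d/(\alpha-d)}\tfrac{\log U(tT)}{tT}=b_T^{-1}\Phi_{tT}(X_{tT})+o(1)\weakconv \widetilde Y^{\ssup 2}_t-\tfrac{d}{(\alpha-d)t}|Y^{\ssup 1}_t|=Y^{\ssup 2}_t+\tfrac{d}{\alpha-d}\bigl(1-\tfrac{1}{t}\bigr)|Y^{\ssup 1}_t|,
\]
by direct simplification. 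The right-hand side is continuous in~$t$: at each jump of $(Y^{\ssup 1},\widetilde Y^{\ssup 2})$ the old and new maximizers by definition share the same value of $\Psi_t$, and the prelimit is continuous because $U(\cdot)$ is. Continuity of the limit, combined with the Skorokhod convergence obtained in~(a), then yields the uniform convergence on compact subintervals required for~(b).

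I expect the main technical obstacle to be the Skorokhod tightness in~(a), specifically ruling out prelimit jump times that collapse to a single limit point. One needs a separation estimate in the spirit of the one underlying Theorem~\ref{asymptotics_R_X}, showing that two near-optimal candidate sites cannot exchange roles on a sub-linear time scale. A secondary difficulty is ensuring uniformity of the expansion of $\Phi_{tT}$ on a compact window in $(y,r,t)$ that has to be allowed to grow slightly with~$T$, to capture every candidate for the argmax; this requires refined control of the location and height of the large order statistics of~$\xi$ within a ball of radius~$a_T$.
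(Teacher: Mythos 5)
Your plan follows the same broad architecture as the paper's proof: rescale, use the Poisson point process limit of $\{(z/r_T,\xi(z)/a_T)\}$, describe the limit process as the time-parametrized argmax of a cone functional $\Psi_t$ over the limit process, and identify (b) as the tip of the cone after substituting into the variational formula for $\log U$. The change of variables $r=y+q|x|$ connects your formulation (intensity $\d y\otimes\alpha r^{-\alpha-1}\d r$ on $\R^d\times(0,\infty)$) to the paper's cone picture on $H^0$, and your definition $Y^{\ssup2}_t=\widetilde Y^{\ssup2}_t-q|Y^{\ssup1}_t|$ reproduces the paper's process. So the strategic idea is right. However, there are two genuine gaps.

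First, the transfer from the variational maximizer to $X_t$ is substantially more delicate than ``with probability tending to one, $X_s=\argmax\Phi_s$.'' That is a fixed-time statement; for a functional limit theorem on $D([a,b])$ you need the identification to hold \emph{simultaneously} over a time window $[aT,bT]$ up to time changes that vanish uniformly. The paper works first entirely at the level of $Z_t=\argmax\Phi_t$ (Proposition~\ref{spatial_limit}: finite-dimensional convergence plus a tightness criterion), and only afterwards proves Lemma~\ref{superappro}, which constructs explicit time changes $\la_T$ pairing up the jump times of $Z$ and $X$ and shows that the difference process tends to zero uniformly. This step is not a corollary of one-point localization; it relies on the exceptional-set machinery $\calE(\beta)$ from Section~\ref{sect_asymptotics_sigma}, in particular on Lemmas~\ref{properties_jumps_of_Z}, \ref{properties_jumps_of_X} and \ref{length_intermediate_interval}, which control how far apart the $X$- and $Z$-jump times can drift and bound the length of each transition window by a negative power of $\log t$. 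Your last paragraph gestures at ``a separation estimate in the spirit of Theorem~\ref{asymptotics_R_X},'' which is exactly the right ingredient, but you neither state nor use it; as written, the claim that the argmax identification survives the passage to path-space convergence is unsupported, and this is where the real work is.

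Second, your proposal contains a logical short-circuit: you cite ``the proofs underlying \dots\ Proposition~\ref{classical}'' as establishing the variational representation, but in the paper Proposition~\ref{classical} is a \emph{consequence} of Theorem~\ref{spatial_limit_u}, not a tool available for its proof. What you actually need from earlier work is the point-process convergence $\Pi_T\weakconv\Pi$ and the almost-sure approximation $\frac{1}{t}\log U(t)\sim\max_z\Phi_t(z)$ from~\cite{KLMS09}, both of which are fine to invoke — but the convergence has to be made uniform over $t\in[a,b]$ and over slowly growing boxes in $(z/r_T,\Phi_T(z)/a_T)$-space, which is the content of Lemma~\ref{exact_errors_for_Phi} and the $B_N/\widehat B_N$ truncation scheme. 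Your sketch also leaves the limit process under-specified at its (countably many) jump times, where two points of $\Pi$ attain the maximum of $\Psi_t$ simultaneously; a well-defined c\`adl\`ag version requires a tie-breaking rule (the paper picks the point of larger $\ell^1$-norm), and the Markov property and time-evolution description depend on making this choice explicitly.

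In short, the skeleton of your argument is the one the paper uses, but the two hard parts — the uniform prelimit $Z$-to-$X$ transfer via exceptional transition times, and the quantitative control permitting passage from compact boxes to the full point process — are precisely the parts your plan leaves open.
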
 
\smallskip

\begin{rem}\label{comments_spatial_limit_u}\ \\[-7mm]
\begin{itemize}
\item[(i)] Projecting the process onto the first component at time $t=1$ we recover the result of
\cite[Theorem~1.3]{KLMS09}. This result shows in particular  that the peak~$X_t$ of the profile escapes with superlinear speed.
%\item[(ii)] The scaling limit in \eqref{scalim} is an immediate consequence of Theorem~\ref{spatial_limit_u}\,(a). 
\item[(ii)] From the proof of this result it is easy to see that the convergence in both parts of
Theorem~\ref{spatial_limit_u} also holds simultaneously on the space of c\`adl\`ag functions $f\colon (0,\infty)\to \R^d \times \R \times \R$ 
with respect to the Skorokhod topology on compact subintervals.
\item[(iii)] The process $(Y_t \colon t>0)$ in Proposition~\ref{classical} is
is equal to the projected process $(Y_t^{\ssup 1} \colon t>0)$. 
\end{itemize}
\end{rem}

\begin{figure}[htbp]\label{fig_definition_of_Y}
\centering 
\subfigure[$t<1$.]{ 	
		\includegraphics[width=6cm]{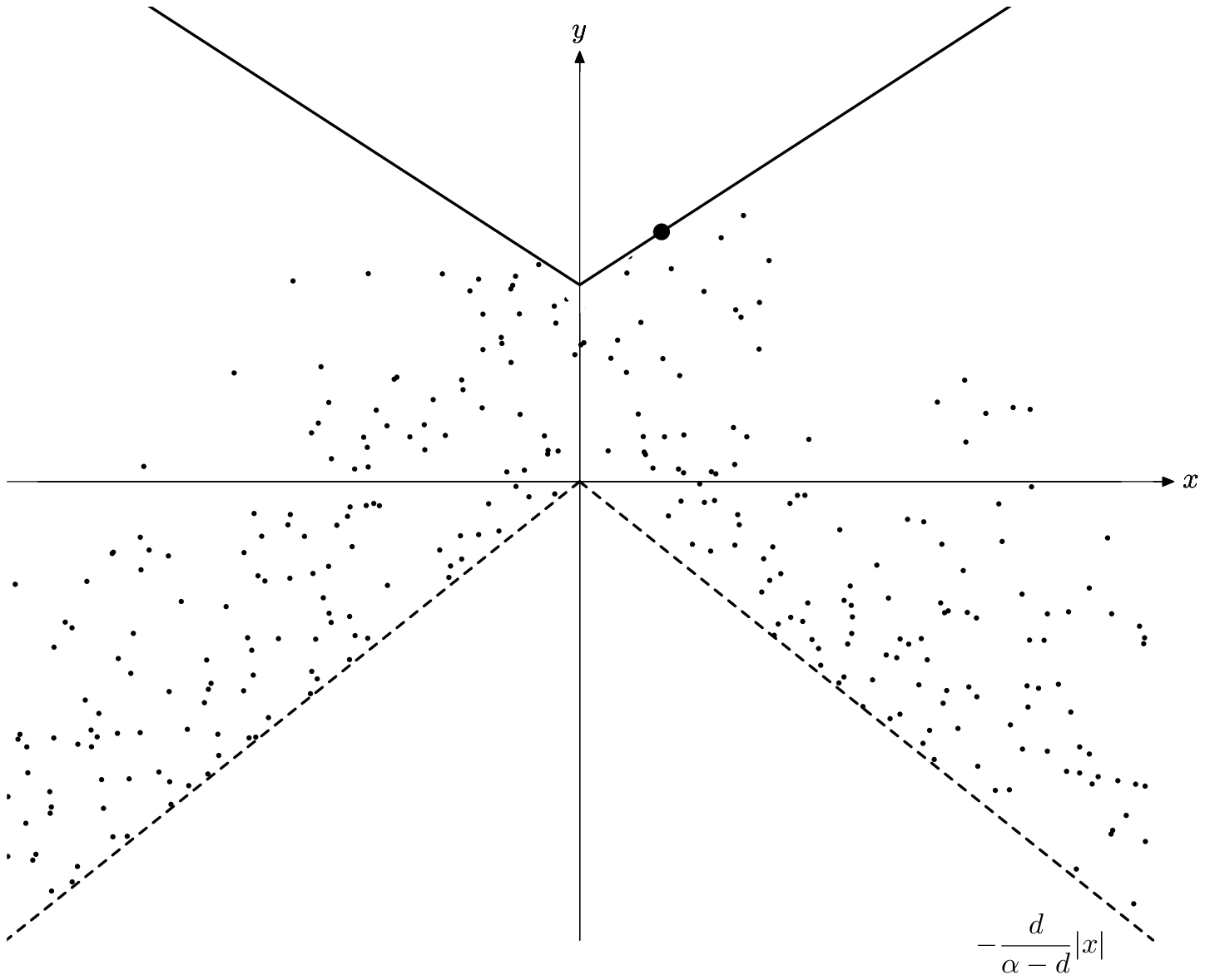}
		\label{fig_definition_of_Y_small_t}  
		}
\subfigure[$t>1$.]{ 	
		\includegraphics[width=6cm]{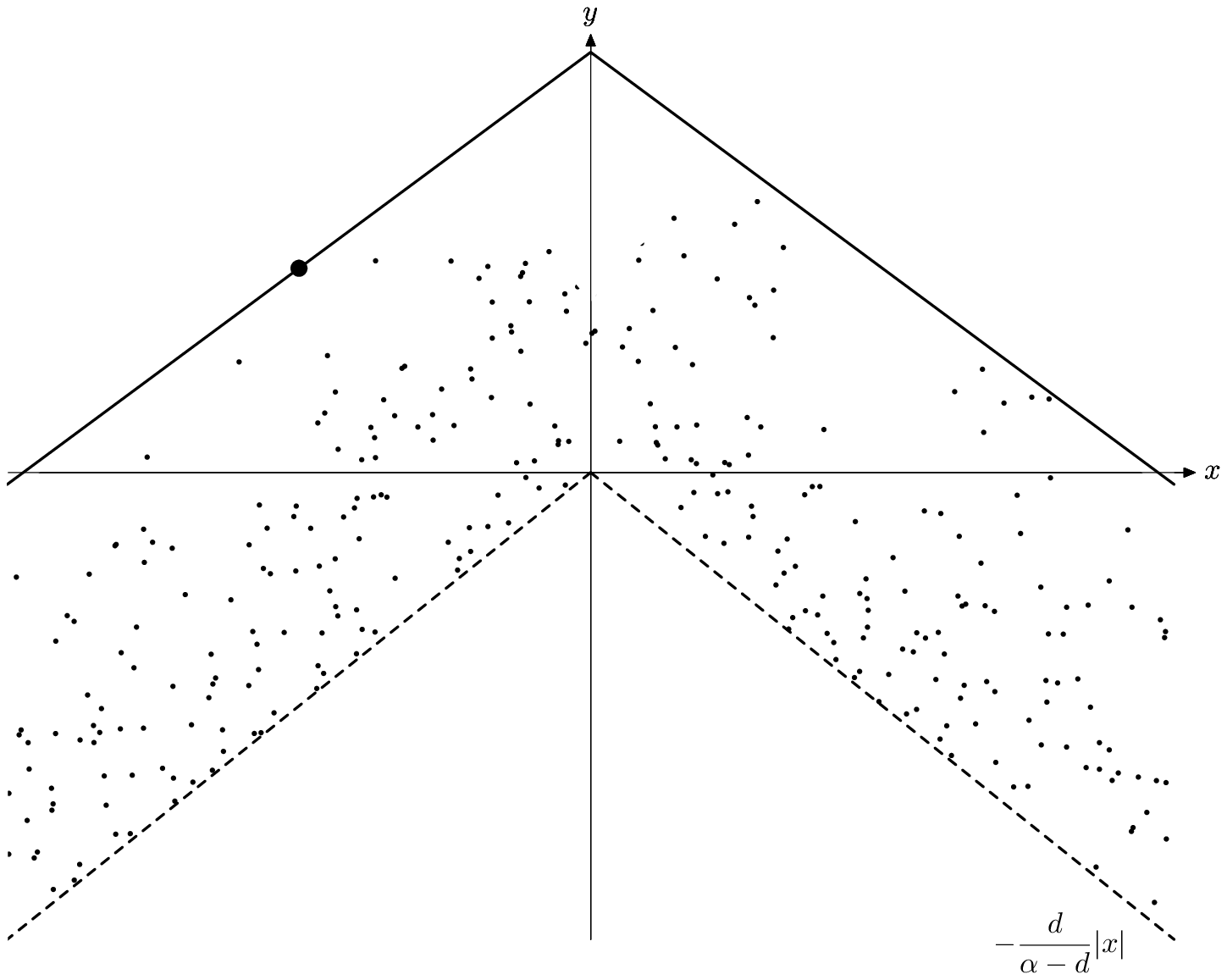}
		\label{fig_definition_of_Y_large_t} 
		}
%\subfigure[$V$ uniformly distributed on \mbox{$[0,1]$}.]{ 	
%		\includegraphics[height=6cm, width=6cm]{spectrum_three.eps}
%		\label{f_V_uniform} }
%\subfigure[$V$ standard normally distributed.]{ 	
%		\includegraphics[height=6cm, width=6cm]{spectrum_four.eps}
%		\label{f_V_normal} }
\caption[The definition of the process $Y_t$ in terms of the point process.]{The definition of the process $(Y_t^{\ssup 1},Y_t^{\ssup 2})$ in terms of the point process $\Pi$. Note that $t$ parametrizes the opening angle of the cone, see (a) for $t<1$ and (b) for $t > 1$.}
\end{figure}

In order to describe the limit process we need to introduce some notation. Denote by $\Pi$ a Poisson point process on 
$H^0 = \{ (x,y)\in\R^d\times \R \colon y > -\frac{d}{\alpha-d}|x| \}$  with intensity measure 
\begin{align*}
\nu(\d x\,\d y)=\d x\otimes \frac{\alpha\d y}{(y+\frac{d}{\alpha-d}|x|)^{\alpha+1}}.
\end{align*}
%where as in the remainder of the paper $|\cdot|$ denotes the $\ell^1$ norm. 
Given the point process, we can define an $\R^d$-valued process $Y^{\ssup 1}_t$ and an $\R$-valued process $Y^{\ssup 2}_t$ 
in the following way. Fix $t>0$ and define the open cone with tip $(0,z)$
$$\calC_t(z) = \big\{ (x,y) \in \R^d \times \R \, : \, y + \tfrac{d}{\alpha-d}(1-\tfrac{1}{t}) |x| > z \big\} \, ,$$
and let 
$$\calC_t = \mathrm{cl}\bigg( \bigcup_{\heap{z>0}{\Pi(\calC_t(z)) = 0}} \calC_t(z) \bigg) \, .$$
Informally, $\calC_t$ is the closure of the first cone $\calC_t(z)$ that `touches' the point process as we decrease~$z$ from infinity. Since $\calC_t\cap \Pi$ contains at most two points, we can define $(Y_t^{\ssup 1}, Y_t^{\ssup 2})$ as the point in this intersection whose projection on the first component has the largest $\ell^1$-norm, see Figures~\ref{fig_definition_of_Y_small_t} and~\ref{fig_definition_of_Y_large_t} for  illustration. The resulting process $((Y_t^{\ssup 1}, Y_t^{\ssup 2}) \colon t > 0)$ is an element of $D(0,\infty)$.% 
\smallskip
% the space of c\`adl\`ag functions on $(0,\infty)$ taking values in $\R^{d}\times \R$. 
% We will equip this space with the Skorokhod topology on compact subintervals of~$(0,\infty)$.

%\newpage
The derived processes in Theorem~\ref{spatial_limit_u} can be described as follows: % (see also Figure~\ref{limit_process}):
\begin{itemize}
\item $((Y^{\ssup 2}_t+\frac{d}{\alpha-d} |Y^{\ssup 1}_t|\big)\colon t  > 0)$ corresponds to the \emph{vertical distance} 
of the point $(Y^{\ssup 1}_t, Y^{\ssup 2}_t)$ to the boundary of the domain given by the curve~$y = -\frac{d}{\alpha-d}|x|$;
\item $((Y_t^{\ssup 2}+(1-\tfrac{1}{t})|Y_t^{\ssup 1}|) \colon t>0)$ corresponds to the \emph{$y$-coordinate of the tip} of the cone~$\calC_t$. 
\end{itemize}
%\smallskip

%\begin{figure}[htbp]
%\centering 
%\subfigure[The time evolution for $t>1$.]{ 	
%		\includegraphics[width=6.5cm]{time_evolution_of_Y.eps} 
%		\label{fig_time_evolution}  }
%\subfigure[The trace of the limiting process.]{ 	
%		\includegraphics[width=6.5cm]{trace_of_limit.eps} 
%		\label{fig_trace}  }
%\caption[The time evolution of $Y_t$.]{(a) The time evolution for $t > 1$ of the jump process $Y_t = (Y^{\ssup 1}_t,Y^{\ssup 2}_t)$ (large dots), %defined in terms of the Poisson point process (small dots). (b) The trace of the limiting process $(Y^{\ssup 1}_t,Y^{\ssup 2}_t + %\frac{d}{\alpha-d}(1-\frac{1}{t})|Y_t^{\ssup 1}|)$. The second component corresponds to the $y$-coordinate of the tip of the defining cone.
%}\label{limit_process}
%\end{figure}
% Maybe replace with two pictures showing the derived processes more clearly.

\begin{rem}\emph{Time evolution of the process.}\\ %[-6mm]
% \begin{itemize} \item[(i)] 
$(Y^{\ssup 1}_1,Y^{\ssup 2}_1)$ is the `highest' point of the Poisson point process $\Pi$.
%\item[(ii)] 
Given~$(Y^{\ssup 1}_t,Y^{\ssup 2}_t)$ and $s\ge t$ we consider the surface given by all $(x,y) \in \R^{d}\times\R$ such that 
$$y = Y_t^{\ssup 2} - \tfrac{d}{\alpha-d}\big(1-\tfrac{1}{s}\big)(|x| - |Y_t^{\ssup 1}|) \, .$$
For $s=t$ there are no points of $\Pi$ above this surface, while $(Y^{\ssup 1}_{t},Y^{\ssup 2}_{t})$ (and possibly
one further point) is lying on it.  We now increase the parameter $s$ until the surface hits a further point of $\Pi$. At this 
time~$s>t$ the process jumps to this new point~$(Y^{\ssup 1}_{s},Y^{\ssup 2}_{s})$.
Geometrically, increasing~$s$ means opening the cone further, while keeping the point~$(Y^{\ssup 1}_t,Y^{\ssup 2}_t)$ on the boundary
and moving the tip upwards on the $y$-axis. % see Figure~\ref{fig_time_evolution}. % maybe replace figure
%\item[(iii)]
Similarly, given the point~$(Y^{\ssup 1}_t,Y^{\ssup 2}_t)$ one can go backwards in time 
by decreasing~$s$, or equivalently closing the cone and moving the tip downwards on the $y$-axis.
%, see also Figure~\ref{fig_time_evolution}.
The general independence properties of Poisson processes ensure that this procedure yields a process
$((Y^{\ssup 1}_t,Y^{\ssup 2}_t) \colon t>0)$ which is Markovian in both the forward and backward direction.
%\item[(iv)] 
The process $(Y^{\ssup 2}_t+\tfrac{d}{\alpha-d}(1-\frac1t)|Y^{\ssup 1}_t| \colon t >0 )$ is continuous, which can be seen 
directly from its interpretation\vspace{-1mm} as the $y$-coordinate of the tip of the cone. % see Figure~\ref{fig_trace}.
An animation of the process $((Y^{\ssup 1}_t,Y^{\ssup 2}_t) \colon t>0)$ can be found on the second author's homepage at 
{\texttt http://people.bath.ac.uk/maspm/animation\_ ageing.pdf}.
%Marcel might want to create a separate movie for his/our homepages.
%\end{itemize}
\end{rem}

\subsection{Strategy of the proofs and overview}\label{se.guide}

Let us first collect some of the key ingredients common to the proofs of our three main results.
It is shown in~\cite{KLMS09} that, almost surely, for all large~$t$ the total mass $U(t)$ can be 
approximated by a variational problem. More precisely,
\begin{equation}\label{key}
\frac{1}{t} \log U(t) \sim \max_{z \in \Z^d} \Phi_t(z) \, , 
\end{equation}
where, for any $t\ge 0$, the functional $\Phi_t$ is defined as
$$\Phi_t(z) =  \xi(z) - \frac{|z|}{t} \log \xi(z) + \frac{\eta(z)}{t},$$ 
for $z\in\Z^d$ with $t\xi(z)\geq |z|$, and $\Phi_t(z)=0$ for other values of~$z$. Here 
$\eta(z)$ is the logarithm of the number of paths of 
length $|z|$ leading from $0$ to $z$.

%\cite{KLMS09} continue their analysis by showing that the peak $X_t$ of the profile agrees for most 
%times~$t$ with the maximizer $Z_t$ of the functional~$\Phi_t$. 
Furthermore,~\cite{KLMS09} show that the peaks $X_t$ agree for most times~$t$ with 
the maximizer $Z_t$ of the functional~$\Phi_t$.
This maximizer is uniquely defined, 
if we impose the condition that $t \mapsto Z_t$ is right-continuous.
Defining the two scaling functions
$$r_t = \big(\sfrac{t}{\log t}\big)^{\frac{\alpha}{\alpha-d}} \quad \mbox{and} \quad 
a_t = \big(\tfrac{t}{\log t}\big)^{\frac{d}{\alpha-d}},$$
it is shown in~\cite{KLMS09}, refining the argument of~\cite{HMS08}, that, 
as $t \ra \infty$, the point process
\begin{equation}\label{pitdef}
\Pi_t= \sum_{\heap{z \in \Z^d}{t \xi(z) \geq |z|}} \delta_{(\frac{z}{r_t}, \frac{\Phi_t(z)}{a_t})}
\end{equation}
converges (in a suitable sense) to the Poisson point process~$\Pi$ on~$H_0$ defined above.
\pagebreak[3]

Section~\ref{se:weak_ageing} is devoted to the proof of the `annealed' ageing result, Theorem~\ref{ageing_for_u}.
We show in Section~\ref{ageing_for_solution}, see Lemma~\ref{weak_ageing}, that
$$\begin{aligned}
\lim_{t \ra \infty} \Prob & \Big\{ \sup_{z \in \R^d} \sup_{s \in [t, t+t\theta ]} 
\big| {v(t,z)} - {v(s,z)} \big| < \eps \Big\}\\
& = \lim_{t \ra \infty} \Prob\big\{ Z_t=Z_{t+t\theta} \big\}.
\end{aligned}$$
Therefore we begin this proof, in Section~\ref{ageing_for_variational_maximizer}, by discussing 
the limit on the right hand side. To this end we approximate the probability %on the right hand side 
in terms of the point process~$\Pi_t$. We are able to write
\begin{equation}\label{appO}
\frac{\Phi_{t+\theta t}(z)}{a_t}= \frac{\Phi_{t}(z)}{a_t} + \frac{\theta}{1+\theta}\,\frac{d}{\alpha-d}\, \frac{|z|}{r_t} + 
{\rm error},
\end{equation}
where the error can be suitably controlled, see Lemma~\ref{Phi_t(1+c)_in_Phi_t}. Hence (in symbolic notation)
$$\begin{aligned}
\Prob& \big\{ Z_t=Z_{t+t\theta} \big\}\\
& \approx \iint \Prob\Big\{ \Pi_t(\d x \,  \d y)>0, \Pi_t\{ (\bar{x},\bar{y}) \colon  \bar{y} > y\}=0,\\ 
& \qquad \qquad\qquad\qquad  \Pi_t\{ (\bar{x},\bar{y}) \colon |\bar{x}| > |x| \mbox{ and }
\bar{y} > y - \tfrac{d}{\alpha - d}\tfrac{\theta}{1+\theta} (|\bar{x}| - |x|) \}=0 \Big\},
\end{aligned}$$
where the first line of conditions on the right means that~$x$ is a maximizer of $\Phi_t$
with maximum~$y$, and the second line means that $x$ is also a maximizer of~$\Phi_{t+\theta t}$.
As $t\uparrow\infty$ the point process~$\Pi_t$ is replaced by~$\Pi$ and we can evaluate the probability.
\medskip

Section~\ref{se:almost_sure_ageing} is devoted to the `quenched' ageing result, Theorem~\ref{asymptotics_R_X}.
This proof is technically more involved, because we cannot exploit the point process approach and have to do
significant parts of the argument from first principles. We now have to consider events 
$$\Prob\Big\{ \frac{R(t)}{t} \ge \theta_t\Big\} \approx \Prob\big\{ Z_t=Z_{t+t\theta_t}\big\},$$
for $\theta_t\uparrow\infty$. We have to significantly refine the argument above and 
replace the convergence of $\Prob\{Z_t=Z_{t+t\theta}\}$ by a moderate deviation statement, see
Section~\ref{sect_moderate_deviations}. Indeed, 
for $\theta_t\uparrow \infty$ not too fast we show that
$$\Prob\big\{ Z_t=Z_{t+t\theta_t} \big\} %\sim I(\theta_t) 
\sim C\,\theta_t^{-d},$$
for a suitable constant~$C>0$, see Proposition~\ref{moderate_deviations}. Then, if $\varphi(t) = t h(t)$, this allows us to show 
in Sections~\ref{asymptotics_tau} and~\ref{sect_asymptotics_sigma} that,
for any $\eps>0$, the series $\sum_n \Prob\{ R(e^n) \ge \eps \varphi(e^n)\}$ converges if $\sum_n h(e^n)^{-d}$ converges, which
is essentially equivalent to $\int h(t)^{-d} \d t/t<\infty$. By Borel-Cantelli we get %, %for any such function~$h$, 
that\vspace{-2mm}
$$\limsup_{n\to\infty} \frac{R(e^n)}{\varphi(e^n)}=0,$$
which implies the upper bound in Theorem~\ref{asymptotics_R_X}, and the lower bound follows similarly 
using a slightly more delicate second moment estimate, see Lemma~\ref{asymptotic_independence}.
\medskip

The proofs of the scaling limit theorems, Proposition~\ref{classical} and Theorem~\ref{spatial_limit_u} 
are given in Section~\ref{sect_spatial_limit_theorem}. By~\eqref{appO} we can describe~$Z_{tT}$
approximately as the maximizer of 
$$\frac{\Phi_{T}(z)}{a_T}+ \frac{d}{\alpha-d}\, \Big(1-\frac1t\Big) \,\frac{|z|}{r_T}.$$
Instead of attacking the proof of Theorem~\ref{spatial_limit_u} directly, we first show
in Sections~\ref{sect_finite_diml} and~\ref{sect_tightness} a limit theorem for 
\be{real_process} \Big( \big( \tfrac{Z_{tT}}{r_T} , \tfrac{\Phi_{tT}(Z_{tT})}{a_T} \big) \, : \, t > 0 \Big) \, , \ee
see Proposition~\ref{spatial_limit}. Informally, we obtain
$$\begin{aligned}
P & \big\{ \tfrac{Z_{tT}}{r_T} \in A, \, \tfrac{\Phi_{tT}(Z_{tT})}{a_T} \in B \big\}\\
& \approx \iint\limits_{\substack{{x\in A}, \\ {y + q(1-\frac{1}{t})|x| \in B}}}  \Prob\Big\{ \Pi_T(\d x \,  \d y)>0,\,
%\\ & \qquad\qquad \qquad\qquad\qquad\qquad 
 \Pi_T\big\{ (\bar{x},\bar{y}) \colon 
\bar{y} - y  > \tfrac{d}{\alpha - d}\, \big( 1-\tfrac1t\big) \,(|{x}| - |\bar{x}|) \big\}=0 \Big\},
\end{aligned}\pagebreak[3]$$
where the first line of conditions on the right means that there is a site $z\in\Z^d$ such that
$x=z/r_T\in A$ and $y=\Phi_T(z)/a_T \in B - q(1-\frac{1}{t})|x|$ , and the second line means that~$\Phi_{tT}(z)$ is not 
surpassed by~$\Phi_{tT}(\bar{z})$ for any other site~$\bar{z}\in\Z^d$ with $\bar{x}=\bar{z}/r_T$.
We can then use the convergence of~$\Pi_T$ to~$\Pi$ inside the formula to 
give a limit theorem for the one-dimensional distributions of~(\ref{real_process}). A minor
strengthening of this argument given in Section~\ref{sect_finite_diml} shows convergence of the finite dimensional
distributions, see Lemma~\ref{finite_dimensional_distributions}. 
In Section~\ref{sect_tightness} we check a tightness criterion in Skorokhod space, see Lemma~\ref{Prob_T_tight}, 
and thus complete the proof of the convergence
\[ \Big( \big( \tfrac{Z_{tT}}{r_T} , \tfrac{\Phi_{tT}(Z_{tT})}{a_T} \big) \, : \, t > 0 \Big) \weakconv 
\Big( \big(Y_t^{\ssup 1}, Y_t^{\ssup 2} + \tfrac{d}{\alpha-d}(1-\tfrac{1}{t}) |Y_t^{\ssup 1}| \big) \, : \, t > 0 \Big)\, . \]
Based on this result we complete the proof of the scaling limit results in Section~\ref{transfer_spatial_limit}.
Theorem~\ref{spatial_limit_u}\,(b) follows using~\eqref{key} and projecting on the second component.
Observe that the convergence in~(b) automatically holds in the uniform sense, as all involved processes are continuous.
We note further that % for $z\in\Z^d$ such that $z/r_T$ is bounded away from zero and infinity,
$$\frac{\xi(z)}{a_T} = \frac{\Phi_T(z)}{a_T} + \frac{d}{\alpha-d} \, \frac{|z|}{r_T} + {\rm error},$$
see Lemma~\ref{superappro}.
This allows us to deduce Theorem~\ref{spatial_limit_u}\,(a), and Proposition~\ref{classical} is an easy consequence 
of this. %Theorem~\ref{spatial_limit_u}\,(a).
%approximate the events of interest 
%associated with the random variables
%$$\Big(\frac{Z_{tT}}{r_T},\frac{\Phi_{tT}(Z_{tT})}{a_T}\Big),$$
%with events involving only the point process~$\Pi_T$. 
%Informally, we obtain
%$$\begin{aligned}
%P & \big\{ \tfrac{Z_{tT}}{r_T} \in A, \, \tfrac{\xi(Z_{tT})}{a_T} \in B \big\}\\
%& \approx \iint_{{x\in A}, {y+\frac{d}{\alpha-d}|x| \in B}} P\Big\{ \Pi_T(\d x \, , \d y)>0,\\ 
%& \qquad\qquad \qquad\qquad\qquad\qquad  \Pi_T\big\{ (\bar{x},\bar{y}) \colon 
%\bar{y} - y  > \tfrac{d}{\alpha - d}\, \big( 1-\tfrac1t\big) \,(|{x}| - |\bar{x}|) \big\}=0 \Big\},
%\end{aligned}$$
%where the first line of conditions on the right means that there is a site $z\in\Z^d$ such that
%$x=z/r_T\in A$ and $\xi(z)/a_T\in B$ with $y=\Phi_T(z)/a_T$ , and the second line means that~$\Phi_{tT}(z)$ is not 
%surpassed by~$\Phi_{tT}(\bar{z})$ for any other site~$\bar{z}\in\Z^d$ with $\bar{x}=\bar{z}/r_T$.
%We can then use the convergence of~$\Pi_T$ to~$\Pi$ inside the formula to 
%give a limit theorem for the one-dimensional distributions of
%$$\Big(\big(\tfrac{Z_{tT}}{r_T},	\tfrac{\xi(Z_{tT})}{a_T}\big)\, : \, t > 0\Big),$$
%and analogous reasoning leads to the convergence of the finite-dimensional distributions.

\section{Ageing: a weak limit theorem}\label{se:weak_ageing}

This section is devoted to the proof of Theorem~\ref{ageing_for_u}. In Section~\ref{ageing_for_variational_maximizer}
we show ageing for the two point function of the process $(Z_t \colon t\ge 0)$ of maximizers of the variational problem~$\Phi_t$, 
using the point process approach which was developed in~\cite{HMS08} and extended in~\cite{KLMS09}. In Section~\ref{ageing_for_solution} 
we use this and the localization of the profile in $Z_t$ to complete the proof.

\subsection{Ageing for the maximizer of $\Phi_t$}\label{ageing_for_variational_maximizer}

In this section, we prove ageing for the two point function of the process $(Z_t \colon t\ge 0)$, which
from now on is chosen to be left-continuous. The value $I(\theta)$ will be given by the formula 
in Proposition~\ref{I} below.

\begin{prop}\label{ageing_for_Z} Let $\theta >0$, then
$\displaystyle\lim_{t \ra \infty} \Prob \big\{ Z_t = Z_{t+\theta t} \big\} = I(\theta) \in (0,1).$
% where $I(\theta)\in(0,1)$ is given by the formula in Proposition~\ref{I} below.
\end{prop}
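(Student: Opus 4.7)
The plan is to translate the event $\{Z_t = Z_{t+\theta t}\}$ into an event on the rescaled point process $\Pi_t$ of~\eqref{pitdef} and to pass to the limit using the convergence $\Pi_t\Rightarrow\Pi$ established in~\cite{KLMS09}. The key enabler is the approximation~\eqref{appO}, which (uniformly on a suitable truncated region) lets me rewrite the defining condition of $Z_{t+\theta t}$ as the maximization of $\bar y + q|\bar x|$ over atoms of $\Pi_t$, where $q=\tfrac{d}{\alpha-d}\cdot\tfrac{\theta}{1+\theta}$.

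First I would carry out a truncation. The estimates from~\cite{HMS08,KLMS09} for the location and height of the maximizer show that for any $\delta>0$ there is a compact $K_\delta\subset H^0$ such that, with probability at least $1-\delta$ for all large $t$, both rescaled atoms $(Z_t/r_t,\Phi_t(Z_t)/a_t)$ and $(Z_{t+\theta t}/r_t,\Phi_t(Z_{t+\theta t})/a_t)$ lie in $K_\delta$, and the remainder in~\eqref{appO} is $o(1)$ uniformly on $K_\delta$ (which is essentially Lemma~\ref{Phi_t(1+c)_in_Phi_t}). Under this truncation, $\{Z_t=Z_{t+\theta t}\}$ is, up to an error of order~$\delta$, equivalent to the existence of an atom $(x,y)\in\Pi_t\cap K_\delta$ with $\Pi_t(A(x,y))=0$, where
\[ A(x,y)=\bigl\{(\bar x,\bar y)\in H^0 : \bar y>y \ \text{ or } \ \bar y+q|\bar x|>y+q|x|\bigr\}. \]

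Next I pass to the limit. Since $\nu(\partial A(x,y))=0$ and $\Pi$ almost surely avoids this boundary, the weak convergence $\Pi_t\Rightarrow\Pi$ on $H^0$ yields
\[ \lim_{t\to\infty}\Prob\{Z_t=Z_{t+\theta t}\}=\Prob\bigl\{\exists\,(x,y)\in\Pi:\Pi(A(x,y))=0\bigr\}. \]
Any two such atoms of $\Pi$ would necessarily share the same $y$-coordinate (by the first competitor condition), which occurs with probability zero; hence this set has cardinality at most one almost surely, and the right-hand side equals $\E[\#\{(x,y)\in\Pi:\Pi(A(x,y))=0\}]$. Mecke's formula applied to $\Pi$ then gives the integral representation
\[ I(\theta)=\iint_{H^0} e^{-\nu(A(x,y))}\,\nu(\d x\,\d y), \]
which matches Proposition~\ref{I}. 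Strict positivity of $I(\theta)$ follows because $\nu(A(x,y))$ is finite on a set of positive $\nu$-measure (for instance the region with $y$ in a compact subinterval of $(0,\infty)$ and $|x|$ bounded), while $I(\theta)<1$ follows by exhibiting with positive probability configurations of $\Pi$ in which the $\Phi_t$- and $\Phi_{t+\theta t}$-maximizers differ (e.g.\ a second atom with $\bar y$ slightly below $y$ but $|\bar x|$ large enough that $\bar y+q|\bar x|>y+q|x|$).

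The main obstacle is the truncation step: one must simultaneously quantify the uniform quality of~\eqref{appO} on $K_\delta$ and rule out that atoms of $\Pi_t$ outside $K_\delta$ (where the approximation is weaker) can contribute to the double maximization. Both items are variants of estimates carried out in~\cite{HMS08,KLMS09}; once established, the continuous-mapping step and the Mecke computation are routine.
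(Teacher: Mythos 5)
Your proposal follows essentially the same route as the paper: truncation to a compact window, the uniform approximation $\Phi_{t+\theta t}(z)/a_t \approx \Phi_t(z)/a_t + \tfrac{d}{\alpha-d}\tfrac{\theta}{1+\theta}|z|/r_t$ (Lemma~\ref{Phi_t(1+c)_in_Phi_t}), passing $\Pi_t\Rightarrow\Pi$ into the two-cone avoidance event, and evaluating the limit as $\iint e^{-\nu(D_\theta(|x|,y))}\,\nu(\d x\,\d y)$; your set $A(x,y)$ is exactly the paper's $D_\theta(|x|,y)$, and your Mecke-formula step is just the standard re-derivation of that integral. The only cosmetic difference is that you reuse the symbol $q$ for $\tfrac{d}{\alpha-d}\tfrac{\theta}{1+\theta}$ while the paper reserves $q$ for $\tfrac{d}{\alpha-d}$, and you leave to the reader the case $t\xi(z)<|z|$ (covered in the second half of Lemma~\ref{Phi_t(1+c)_in_Phi_t}), which is needed to ensure such sites cannot spoil the $\Phi_{t+\theta t}$ maximization.
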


Throughout the proofs we use the abbreviation 
$$q=\frac{d}{\alpha-d}\, . $$ For any $t >0$ consider the point process~$\Pi_t$ on $\R^d \times \R$ 
defined in \eqref{pitdef}.
%given by
%$$\Pi_t = \sum_{\heap{z \in \Z^d}{t \xi(z) \geq |z|}} \delta_{\big(\frac{z}{r_t}, \frac{\Phi_t(z)}{a_t} \big)} \, , $$
%We can restrict the sum to points $z \in \Z^d$ such that $t \xi(z) \geq |z|$ because, 
%as we will see later, these are the only relevant points for our calculation in the large $t$ limit.
Define a locally compact Borel set
$$\widehat{H} = \dot{\R}^{d+1} \setminus \big( \{ (x,y) \in \R^d \times \R \colon y < -q(1-\eps) |x| \} \cup \{ 0\} \big) \, , $$
where $0< \eps < \frac{1}{1+\theta}$ and $\dot{\R}^{d+1}$ is the one-point compactification of $\R^{d+1}$.
%\dot{\R}^{d+1} \setminus \big( (\R^d \times (-\infty,0) ) \cup \{ (0,0)\} \Big) \, . \]
As in Lemma~6.1\vspace{-1mm} 
of~\cite{KLMS09} one can show that the point process $\Pi_t$ restricted to the domain $\widehat H$ 
converges in law to a Poisson process $\Pi$ on $\widehat{H}$ with intensity measure
\be{definition_of_Pi} \nu(\diff x \, \diff y) =  \frac{\alpha\, \diff x \,\diff y}{(y + q |x|)^{\alpha + 1}}  
  \, . \ee
Here, $\Pi_t$ and $\Pi$ are random elements of the set of point measures on $\widehat H$, which is given the topology of vague convergence. For more background on point processes and similar arguments, see~\cite{HMS08}.

Our strategy is to express the condition $Z_t = Z_{t+\theta t}$ in terms of the point process $\Pi_t$. 
In order to be able to bound error functions that appear in our calculations, we have to restrict 
our attention to the point process $\Pi$ on a large box. To this end, define the two boxes 
\[ \bal B_N & = \{ (x,y) \in \R^d \times [0,\infty) \, : \, |x| \leq N , \, \tfrac{1}{N} \leq y \leq N \} \, , \\
 \widehat{B}_N & = \{ ( x, y) \in \widehat{H} \, : \, | x| \leq N , y \leq N \} \, . \eal \]
Now note that the condition $Z_t = Z_{t+\theta t}$ means that 
\be{c1} \Phi_{t+\theta t} (z) \leq \Phi_{t+\theta t}(Z_t) \, , \ee 
for all $z \in \Z^d$. We now show that it suffices to guarantee that this condition holds 
for all $z$ in a sufficiently large bounded box. % (growing in time).

\begin{lemma}\label{restriction_to_box} Define the event
\[  A(N,t) = \Big\{ \big(\tfrac{Z_t}{r_t},  \tfrac{\Phi_t(Z_t)}{a_t}\big) \in B_N\, ,  \Phi_{t+\theta t} (z)  \leq \Phi_{t+\theta t}(Z_t) \, \forall z  \in \Z^d \, \textrm{s.t.} \, \big(\tfrac{|z|}{r_t},\tfrac{\Phi_t(z)}{a_t}\big) \in \widehat{B}_N \Big\} \, .  \]
Then, provided the limit on the right-hand side exists, we find that
\[ \lim_{t \ra \infty} \Prob\{ Z_t = Z_{t+\theta t} \} = \lim_{N \ra \infty} \lim_{t \ra \infty} \Prob(A(N,t)) \, . \]
\end{lemma}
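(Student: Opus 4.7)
The plan is to show that the symmetric difference $\{Z_t = Z_{t+\theta t}\} \triangle A(N,t)$ has probability tending to $0$ as $N \to \infty$, uniformly in large $t$. The argument breaks into two inclusions. For the easy direction, if $Z_t = Z_{t+\theta t}$ then $Z_t$ is the global maximizer of $\Phi_{t+\theta t}$, so the inequality $\Phi_{t+\theta t}(z) \le \Phi_{t+\theta t}(Z_t)$ holds for \emph{all} $z$, in particular for those with rescaled parameters in $\widehat B_N$. Thus
\[
\{Z_t = Z_{t+\theta t}\} \setminus A(N,t) \subseteq \big\{ (Z_t/r_t, \Phi_t(Z_t)/a_t) \notin B_N \big\},
\]
and the probability of the right-hand side can be made small by using the point process convergence $\Pi_t \Rightarrow \Pi$ on $\widehat{H}$: the top point of $\Pi$ (which gives the rescaled maximizer of $\Phi_t$ in the limit) lies in $B_N$ with probability tending to $1$ as $N \to \infty$, because $\nu(\widehat H \setminus B_N) \to 0$.

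For the reverse inclusion, suppose $A(N,t)$ holds but $Z_t \ne Z_{t+\theta t}$. Setting $z^* = Z_{t+\theta t}$, the rescaled point $(|z^*|/r_t, \Phi_t(z^*)/a_t)$ lies outside $\widehat B_N$, which breaks into three cases. The case $\Phi_t(z^*)/a_t > N$ is impossible, since $Z_t$ maximizes $\Phi_t$ and $\Phi_t(Z_t)/a_t \le N$ on $A(N,t)$. The case $\Phi_t(z^*)/a_t < -q(1-\e)|z^*|/r_t$ (i.e.\ being outside $\widehat H$) is ruled out by the approximation $\Phi_{t+\theta t}(z)/a_t \approx \Phi_t(z)/a_t + \frac{\theta}{1+\theta}\, q\, |z|/r_t$ announced after~\eqref{appO} (and to be established in Lemma~\ref{Phi_t(1+c)_in_Phi_t}): it gives
\[
\Phi_{t+\theta t}(z^*)/a_t \;<\; q\, |z^*|/r_t \cdot \Big( \tfrac{\theta}{1+\theta} - (1-\e) \Big) \;<\; 0
\]
by the choice $\e < 1/(1+\theta)$, whereas on $A(N,t)$ we must have $\Phi_{t+\theta t}(z^*) > \Phi_{t+\theta t}(Z_t) \ge \Phi_t(Z_t) \ge a_t/N > 0$, a contradiction once $t$ is large enough that the approximation error is negligible.

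This leaves the essential case $|z^*|/r_t > N$. Its probability is bounded by the expected number of points of the rescaled process in the region
\[
R_N = \big\{(x,y) \in \widehat H \,:\, |x| > N,\ y + \tfrac{\theta}{1+\theta}q|x| > 1/N \big\},
\]
again using the approximation of $\Phi_{t+\theta t}$ by $\Phi_t$ plus the linear correction in $|z|$. A direct computation using the Pareto tail shows
\[
\E\big[\Pi_t(R_N)\big] \;=\; \sum_{|z|/r_t>N} \Prob\!\Big\{ \tfrac{\Phi_t(z)}{a_t} > \tfrac{1}{N} - \tfrac{\theta}{1+\theta}q\tfrac{|z|}{r_t}\Big\} \;\asymp\; \int_{|x|>N} \!\Big( \tfrac{1}{N} + \tfrac{q|x|}{1+\theta}\Big)^{-\alpha}\, \d x \;=\; O(N^{d-\alpha}),
\]
which tends to $0$ as $N \to \infty$ since $\alpha > d$; by Markov this controls the probability of the case uniformly in large $t$. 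The main obstacle is precisely the control of the approximation error $\Phi_{t+\theta t}/a_t - \Phi_t/a_t - \frac{\theta}{1+\theta}q|z|/r_t$ uniformly over all $z$ of interest (both those with $|z|/r_t$ of order $1$ and those with $|z|/r_t \gg 1$), including points whose rescaled $\Phi_t$-value is close to the lower boundary $-q|x|/r_t$ of the admissible region; this uniform error estimate is postponed to Lemma~\ref{Phi_t(1+c)_in_Phi_t}, whose output is used as a black box here.
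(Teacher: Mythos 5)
Your decomposition into the two inclusions is sound, and the easy direction (if $Z_t = Z_{t+\theta t}$ then the only way $A(N,t)$ can fail is for $(Z_t/r_t,\Phi_t(Z_t)/a_t)$ to leave $B_N$) is exactly what the paper does for its upper bound on $\Prob\{Z_t=Z_{t+\theta t}\}$. Your treatment of the cases ``$\Phi_t(z^*)/a_t>N$'' and ``$(|z^*|/r_t,\Phi_t(z^*)/a_t)\notin\widehat H$'' is also in the right spirit.

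For the essential case $|z^*|/r_t>N$, however, your plan diverges from the paper's in a way that leaves a genuine gap. The paper does \emph{not} relate $\Phi_{t+\theta t}$ to $\Phi_t$ for such far-out $z^*$: since $z^*=Z_{t+\theta t}$, it simply notes that the event is contained in $\{|Z_{t+\theta t}|/r_t>N\}$, and then uses the weak convergence $(Z_s/r_s,\Phi_s(Z_s)/a_s)\Rightarrow(Y^{\ssup 1},Y^{\ssup 2})$ from \cite[Lemma 6.2]{KLMS09} together with $r_{t+\theta t}\sim(1+\theta)^{q+1}r_t$ to conclude $\lim_{t\to\infty}\Prob\{|Z_{t+\theta t}|/r_t>N\}=\Prob\{|Y^{\ssup 1}|>N(1+\theta)^{-q-1}\}\to0$. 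No comparison of $\Phi_{t+\theta t}(z)$ with $\Phi_t(z)$ for $|z|/r_t>N$ is needed. You instead propose a first-moment bound $\E[\Pi_t(R_N)]=O(N^{d-\alpha})$, which is a reasonable alternative, but it requires the approximation
\[
\tfrac{\Phi_{t+\theta t}(z)}{a_t} \approx \tfrac{\Phi_t(z)}{a_t} + \tfrac{q\theta}{1+\theta}\tfrac{|z|}{r_t}
\]
to hold with controlled error for $z$ with $|z|/r_t$ \emph{arbitrarily large}. You flag this yourself and say it is ``postponed to Lemma~\ref{Phi_t(1+c)_in_Phi_t}, whose output is used as a black box.'' That does not work: Lemma~\ref{Phi_t(1+c)_in_Phi_t} gives the uniform error bound only for $z$ with $(z/r_t,\Phi_t(z)/a_t)\in\widehat B_N$, i.e.\ within the very box you are trying to escape. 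The estimate you actually need for $|z|/r_t>N$ is a different (and unbounded-region) uniform bound; it can be obtained by a direct computation, of the kind carried out in Lemma~\ref{exact_errors_for_Phi} in Section~\ref{sect_moderate_deviations}, but it is not available from Lemma~\ref{Phi_t(1+c)_in_Phi_t}. So as written, the essential case of your argument rests on a lemma that does not deliver what you ask of it, whereas the paper's route sidesteps the issue entirely by controlling only the law of the rescaled maximizer itself.

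One more small point: your chain $\Phi_{t+\theta t}(z^*)>\Phi_{t+\theta t}(Z_t)\geq\Phi_t(Z_t)\geq a_t/N$ uses monotonicity of $t\mapsto\Phi_t(Z_t)$ (or of $t\mapsto\Phi_t(z)$ for the fixed site $Z_t$), which is true for large $t$ but should be cited (it comes from \cite[Lemma 3.2]{KLMS09}); the paper's proof of this lemma avoids needing it.
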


\begin{proof} We have the lower bound,
\[ \bal \Prob \{ Z_t = Z_{t+\theta t} \}  &\geq \Prob \big\{ Z_t = Z_{t+\theta t} \, , 
\, \big(\tfrac{Z_t}{r_t},  \tfrac{\Phi_t(Z_t)}{a_t}\big) \in B_N \big\} \\
& \geq \Prob(A(N,t)) -  \Prob\big\{ \tfrac{|Z_{t+\theta t}|}{r_t} > N \big\}  
%+ \Prob\big\{ \tfrac{\Phi_{t}(Z_{t+\theta t})}{a_t} > N \big\}
%\Big)
\, . 
%\\ & \geq \Prob(A(N,t)) - \Big( \Prob\big\{ \tfrac{|Z_{t+\theta t}|}{r_t} > N \big\}  
%+ \Prob\big\{ \tfrac{\Phi_{t+\theta t}(Z_{t +\theta t})}{a_t} > N \big\} 
%\\ & \hspace{6.6cm} 
%+ \Prob\{ \xi(Z_{t+\theta t}) \leq \log d \} \Big) \, . 
\eal \]
%If $\frac{\Phi_{t}(Z_{t +\theta t})}{a_t} > N$, then by definition $t \xi(Z_{t+\theta t}) \geq |Z_{t+\theta t}|$. 
%For any $z$ such that $t \xi(z) \geq |z|$ the map $t \mapsto \Phi_t(z)$ is increasing if $\xi(z) > \log d$, 
%since $\eta(z) \leq |z| \log d$, and therefore
%$$\Prob\big\{ \tfrac{\Phi_{t}(Z_{t+\theta t})}{a_t} > N \big\} \le
%\Prob\big\{ \tfrac{\Phi_{t+\theta t}(Z_{t +\theta t})}{a_t} > N \big\} + 
%\Prob\{ \xi(Z_{t+\theta t}) \leq \log d \}.$$
Recall that, by~\cite[Lemma 6.2]{KLMS09}, we have that
\be{convergence_Z_Phi_of_Z} \big( \tfrac{Z_t}{r_t}, \tfrac{\Phi_t(Z_t)}{a_t} \big) \weakconv (Y^{\ssup 1}, Y^{\ssup 2}) \, , \ee
where $(Y^{\ssup 1}, Y^{\ssup 2})$ is a random variable on $\R^d \times [0,\infty)$  with an explicit density. In particular, 
we find that since $r_{t+\theta t} = (1+\theta)^{q+1} r_t (1+o(1))$ 
\[ \lim_{t \ra \infty} \Prob\big\{ \tfrac{|Z_{t+\theta t}|}{r_t} > N \big\} = 
\Prob \big\{ |Y^{\ssup 1}| > \tfrac{N}{(1+\theta)^{q+1}} \big\} \, ,  \]
which converges to zero as $N \ra \infty$. 
%Similarly, as $a_{t+\theta t} = (1+\theta)^{q} a_t (1+o(1))$, we get 
%\[ \lim_{t \ra \infty} \Prob\big\{ \tfrac{\Phi_{t+\theta t}(Z_{t+\theta t})}{a_t} > N \big\} 
%= \Prob \big\{ |Y^{\ssup 2}| > \tfrac{N}{(1+\theta)^{q}} \big\} \, \stackrel{N\to\infty}{\longrightarrow} 0.  \]
%Since, by Lemma 3.2(i) in ~\cite{KLMS09}, we have for any $\eps>0$, eventually for all $t$ that 
%$\xi(Z_t) > a_t (\log t)^{-\eps}$, the probability $\Prob\{ \xi(Z_{t+\theta t}) \leq \log d \}$ also tends to 
%zero as $t \ra \infty$.

Now, for an upper bound on $\Prob\{ Z_t = Z_{t(1+\theta)} \}$ we find that
\[  \Prob\{ Z_t = Z_{t(1+\theta)} \} 
%\\ & \leq \Prob \Big\{ Z_t = Z_{t(1+\theta)}, \Big(\frac{Z_t}{r_t},  \frac{\Phi_t(Z_t)}{a_t}\Big) \in B_N \Big\} + \Prob\Big\{ \frac{|Z_t|}{r_t} > N\} + \Prob\Big\{ \frac{1}{N} \leq \frac{\Phi_t(Z_t)}{a_t} \leq N\Big \}\\
 \leq \Prob (A(N,t)) + \Prob\big\{ \tfrac{|Z_t|}{r_t} \geq N\big\} + \Big(1-\Prob\big\{ \tfrac{1}{N} \leq \tfrac{\Phi_t(Z_t)}{a_t} \leq N\big\}
 \Big) \, . \]
As above, using the convergence~(\ref{convergence_Z_Phi_of_Z})  one can show that the limit of the last two summands is 
zero when taking first $t \ra \infty$ and then $N \ra \infty$, which completes the proof of the lemma.
%. This completes the proof of the claim that
%\[ \lim_{t \ra \infty} \Prob\{ Z_t = Z_{t(1+\theta)} \} = \lim_{N \ra \infty} \lim_{t \ra \infty} \Prob(A(N,t)) \, , \]
%provided the latter limit exists.
\end{proof}

We would like to translate the condition~(\ref{c1}) into a condition on the point process $\Pi_t$. Therefore, we have to express $\Phi_{t+\theta t}(z)$ in terms of $\Phi_t(z)$. 

\begin{lemma}\label{Phi_t(1+c)_in_Phi_t} For any $z\in\Z^d$ such that $(\frac{z}{r_t},\frac{\Phi_t(z)}{a_t}) \in \widehat{B}_N$ 
and $t \xi(z) \geq |z|$, 
\[ \frac{\Phi_{t+\theta t}(z)}{a_t} =  \frac{\Phi_t(z)}{a_t} + \frac{q\theta}{1+\theta}\frac{|z|}{r_t} + \delta_\theta
\big(t,\tfrac{|z|}{r_t}, \tfrac{\Phi_t(z)}{ a_t}\big) \, , \]
where the error $\delta_\theta$ converges to zero as $t \ra \infty$ uniformly. % for all such $z$. 
Moreover, almost surely, eventually for all large enough $t$, for all $z\in\Z^d$ such that  
$(\frac{z}{r_t},\frac{\Phi_t(z)}{a_t})\in \widehat B_N$ and $t \xi(z) < |z|$, we have that 
$\Phi_{t+\theta t}(z) \leq 0$, and such a $z\in\Z^d$ will automatically satisfy~(\ref{c1}).
\end{lemma}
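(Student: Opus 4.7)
The plan is to derive the identity in the first part by direct subtraction and then carefully bound the error term, before handling the second part by a case analysis combined with a dominance argument against $\Phi_{t+\theta t}(Z_t)$. Since the hypothesis $t\xi(z) \geq |z|$ already implies $(1+\theta)t\xi(z) \geq |z|$, both $\Phi_t(z)$ and $\Phi_{t+\theta t}(z)$ are given by the same analytic formula, and direct subtraction yields
\[ \Phi_{t+\theta t}(z) - \Phi_t(z) \;=\; \frac{\theta}{1+\theta}\cdot\frac{|z|\log\xi(z) - \eta(z)}{t}. \]
Dividing by $a_t$ and using the elementary identity $t a_t = r_t \log t$ (immediate from $r_t = (t/\log t)^{\alpha/(\alpha-d)}$ and $a_t = (t/\log t)^{d/(\alpha-d)}$) produces
\[ \frac{\Phi_{t+\theta t}(z)}{a_t} = \frac{\Phi_t(z)}{a_t} + \frac{\theta}{1+\theta}\,\frac{|z|}{r_t}\cdot\frac{\log\xi(z)}{\log t} - \frac{\theta}{1+\theta}\cdot\frac{\eta(z)}{t a_t}, \]
so the first part will follow once I show the combined error $\delta_\theta = \tfrac{\theta}{1+\theta}\bigl(\tfrac{|z|}{r_t}(\tfrac{\log\xi(z)}{\log t} - q) - \tfrac{\eta(z)}{t a_t}\bigr)$ vanishes uniformly in $z$ with $(z/r_t,\Phi_t(z)/a_t) \in \widehat B_N$.

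The combinatorial term is easy to control: $\eta(z) \leq |z|\log(2d)$ and $|z| \leq N r_t$ give $\eta(z)/(t a_t) \leq N \log(2d)/\log t \ra 0$ uniformly. For the main term, I will invert the defining equation $\Phi_t(z) = v a_t$ in the scaled variables $u = |z|/r_t$, $v = \Phi_t(z)/a_t$. Setting $\zeta = \xi(z)/a_t$ and using $|z|/(t a_t) = u/\log t$, the defining equation rearranges to
\[ v + q u = \zeta - \frac{u}{\log t}\log\zeta + O\Bigl(\frac{u \log\log t}{\log t}\Bigr). \]
A simple bootstrap (using that we are in the branch $\xi \geq |z|/t$ where the convex function $\xi \mapsto \xi - (|z|/t)\log\xi$ is monotone increasing) gives $\zeta = v + q u + O((u/\log t)\log(v+qu))$ uniformly on $\widehat B_N$; hence
\[ \frac{|z|}{r_t}\Bigl(\frac{\log\xi(z)}{\log t} - q\Bigr) = \frac{u\,\log(v+qu)}{\log t} + O\Bigl(\frac{\log\log t}{\log t}\Bigr). \]
The constraint $\widehat B_N \subset \widehat H$ gives $v + qu \in (q\eps u,\, N + qu]$, so $u|\log(v+qu)|$ is dominated by $\sup_{u\in[0,N]} u|\log(q\eps u)|$, which is finite thanks to $u\log(1/u) \to 0$ as $u \downarrow 0$. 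Division by $\log t$ therefore makes the error $o(1)$ uniformly, completing the first part.

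For the second statement, suppose $t\xi(z) < |z|$, so $\Phi_t(z) = 0$ by definition, and $(z/r_t, 0) \in \widehat B_N$ reduces to $|z| \leq N r_t$. If additionally $(1+\theta)t\xi(z) < |z|$ then $\Phi_{t+\theta t}(z) = 0$ and the claim is trivial. Otherwise $\xi(z) \in [|z|/((1+\theta)t),\, |z|/t)$, and using $\eta(z) \leq |z|\log(2d)$ the crude bound
\[ \Phi_{t+\theta t}(z) \leq \xi(z) + \frac{|z|\log(2d)}{(1+\theta)t} \leq \frac{(1+\log(2d))|z|}{t} \leq \frac{N(1+\log(2d))\, a_t}{\log t} = o(a_t) \]
shows that $\Phi_{t+\theta t}(z)$ is in any case at most $o(a_t)$. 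On the other hand, applying the first part of the lemma to the maximizer $Z_t$ (which lies in $B_N$ in the setup of Lemma~\ref{restriction_to_box}, so $\Phi_t(Z_t)/a_t \geq 1/N$) gives $\Phi_{t+\theta t}(Z_t) \geq a_t/(2N)$ for all sufficiently large $t$. Therefore $\Phi_{t+\theta t}(z) \leq \Phi_{t+\theta t}(Z_t)$, i.e.\ condition~\eqref{c1}, holds eventually. The hard part will be the uniform control of $\log\xi(z)/\log t$, where the interplay of the potentially unbounded factor $\log(v+qu)$ near the boundary of $\widehat H$ with the small prefactor $u$ is the subtle point, resolved through the classical bound $u\log(1/u) = O(1)$.
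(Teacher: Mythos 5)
Your proof of the first part is essentially the paper's own argument: both start from the exact identity
$\Phi_{t+\theta t}(z)-\Phi_t(z)=\tfrac{\theta}{1+\theta}\tfrac{|z|\log\xi(z)-\eta(z)}{t}$, reduce the problem to writing $\xi(z)/a_t$ as a function of $\Phi_t(z)/a_t$ and $|z|/r_t$, and control the resulting $u\log(v+qu)$ factor by the elementary bound $u\log(1/u)=O(1)$. The paper does the inversion explicitly via $\chi_\rho^{-1}$ where $\chi_\rho(x)=x-\rho\log x$, while you do it by a bootstrap — same content, same two-sided estimate (for the lower end $\rho\log\rho\ge -e^{-1}$, for the upper end $\chi_1\le\chi_\rho$). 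This part is fine.

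For the second part you deviate genuinely from the paper, and the deviation is worth spelling out carefully. The lemma literally claims $\Phi_{t+\theta t}(z)\le 0$, but you do not prove this; you prove the weaker bound $\Phi_{t+\theta t}(z)=o(a_t)$ and then conclude (\ref{c1}) by comparing with $\Phi_{t+\theta t}(Z_t)\gtrsim a_t$. In fact, the strong claim $\Phi_{t+\theta t}(z)\le 0$ does not actually hold: take $z$ with $|z|$ slightly larger than $t\xi(z)$ and with $\xi(z)\in[1,2)$ — such sites exist almost surely for every large $t$ and lie in $\widehat B_N$ since $|z|/r_t\to 0$; then $\Phi_{t+\theta t}(z)=\xi(z)-\tfrac{|z|\log\xi(z)-\eta(z)}{(1+\theta)t}\ge \xi(z)-\tfrac{\log 2}{1+\theta}>0$. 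The paper's proof of this step glosses over the fact that the negative term $(\tfrac{1}{\log t}-\tfrac{q}{1+\theta})\tfrac{|z|}{r_t}$ degenerates when $|z|/r_t\to 0$, precisely the regime that produces the counterexample; the uniform $o(1)$ claim for $\tilde\delta_\theta$ is correct, but it is not enough to force negativity there. Your route — bounding $\Phi_{t+\theta t}(z)$ by $O(a_t/\log t)$ and comparing with $\Phi_{t+\theta t}(Z_t)\ge\Phi_t(Z_t)\ge a_t/N$ — is the argument that actually gives (\ref{c1}), which is all that is ever used downstream. Two caveats, both minor: you should make explicit that your comparison uses the conditioning $(Z_t/r_t,\Phi_t(Z_t)/a_t)\in B_N$ from Lemma~\ref{restriction_to_box}, which is the ambient setting in which this lemma is applied but is not part of the lemma's hypotheses; and you should flag that you are establishing the used conclusion (\ref{c1}) rather than the intermediate assertion $\Phi_{t+\theta t}(z)\le 0$, so as not to silently claim something that, as written in the paper, appears to overstate what is true.
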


\begin{proof}
Consider any $z$ such that $(\frac{z}{r_t},\frac{\Phi_t(z)}{a_t}) \in \widehat{B}_N$ and $t \xi(z) \geq |z|$.
%, as we will see this latter condition is satisfied by all the relevant points $z \in \widehat{B}_N$. 
 %\1{\Big\{ (1+\theta)\frac{\xi(z)}{a_t} \geq (\log t)^{-1} \frac{|z|}{r_t} \Big\}}
Then, using that $r_t = \frac{t}{\log t} \, a_t$  we obtain
\be{Phi_t(1+c)} \begin{aligned} \frac{\Phi_{t+\theta t}(z)}{a_t} & =  \frac{\xi(z)}{a_t} - \frac{1}{a_t t+\theta t}\big(|z| \log \xi(z) - \eta(z)\big)   \\ 
& =  \frac{\Phi_t(z)}{a_t} + \frac{\theta}{1+\theta} \frac{|z|}{r_t \log t}\log a_t + \frac{\theta}{1+\theta}\Big(\frac{|z|}{r_t \log t} \log \frac{\xi(z)}{a_t} - \frac{\eta(z)}{t a_t}\Big)  \\
& = \frac{\Phi_t(z)}{a_t} + \frac{\theta q}{1+\theta}\frac{|z|}{r_t} + \delta_\theta'\big(t,\tfrac{z}{r_t}, \tfrac{\xi(z)}{ a_t}\big) \, ,
\end{aligned} \ee
where using that $\log a_t = (q+o(1)) \log t$ and $0\le \eta(z) \le |z| \log d$, we can write 
\be{error_term} \delta_\theta'\big(t,\tfrac{z}{r_t}, \tfrac{\xi(z)}{ a_t}\big) = \frac{\theta}{1+\theta}\Big( \frac{|z|}{r_t\log t} \log \frac{\xi(z)}{a_t} + o(1)\frac{|z|}{r_t} \Big) \, . \ee
First of all, we have to show that this expression is of the form $\delta_\theta(t,z/r_t,\Phi_t(z)/a_t)$ for some suitable error function. With this in mind, using that $a_t t = r_t \log t$, we obtain for $z$ such that $t \xi(z) \geq |z|$ 
\[ \bal \frac{\Phi_t(z)}{a_t} & = \frac{\xi(z)}{a_t} - \frac{|z|}{r_t \log t} \log \xi(z) + \frac{\eta(z)}{a_t t} \\
& = \frac{\xi(z)}{a_t} - (q + o(1)) \frac{|z|}{r_t} - \frac{|z|}{r_t \log t} \log \frac{\xi(z)}{a_t} + \frac{\eta(z)}{a_t t} \\
& = \chi_\rho\big(\tfrac{\xi(z)}{a_t}\big) - (q + o(1)) \frac{|z|}{r_t} ,\\   %+ \frac{\eta(z)}{a_t t} 
\eal \, , \]
where $\chi_\rho(x) = x - \rho \log x$ and $\rho = \frac{|z|}{r_t \log t}$. Note that $\chi_\rho$ is strictly increasing on $[\rho,\infty)$ and also that %since $r_t = \frac{t}{\log t} a_t$ we find that 
$\xi(z)/a_t > \rho$ is equivalent to $t \xi(z) > |z|$ which is satisfied by assumption. Therefore, we can write
\[ \frac{\xi(z)}{a_t} = \chi_\rho^{-1} \big(  \tfrac{\Phi_t(z)}{a_t} + (q + o(1)) \tfrac{|z|}{r_t}\big)\, , \] % - \tfrac{\eta(z)}{a_t t} 
and obtain that the error in~(\ref{error_term}) is of the required form
\be{error_in_required_form}  \bal \delta_\theta'\big(t,\tfrac{z}{r_t}, \tfrac{\xi(z)}{ a_t}\big) & = \frac{\theta}{1+\theta}\Big( \frac{|z|}{r_t\log t} \log \chi_\rho^{-1} \big(  \tfrac{\Phi_t(z)}{a_t} + (q + o(1)) \tfrac{|z|}{r_t} \big)  + o(1)\frac{|z|}{r_t} \Big) \\
&  =:  \delta_\theta\big(t,\tfrac{z}{r_t}, \tfrac{\Phi_t(z)}{ a_t}\big) \, . \eal \ee
We now show that this error tends to zero uniformly for all $z$ satisfying $t\xi(z) > |z|$ and 
$(\frac{z}{r_t},\frac{\Phi_t(z)}{a_t}) \in \widehat B_N$. For a lower bound we first use that
%On the one hand we can use that 
%$\frac{\eta(z)}{t a_t} \leq \frac{|z|}{r_t} \frac{\log d}{\log t}$, and also that 
$x\log x \geq -e^{-1}$ to obtain %since $t\xi(z) > |z|$
\begin{align*}
\frac{|z|}{r_t\log t} \log & 
\chi_\rho^{-1} \big(  \tfrac{\Phi_t(z)}{a_t} + (q + o(1)) \tfrac{|z|}{r_t}\big) \\
& \geq \frac{|z|}{r_t \log t}\log \frac{|z|}{r_t \log t} \geq -\frac{1}{\log t} \,e^{-1} - \frac{\log \log t}{\log t} \frac{|z|}{r_t} 
\geq -\frac{1}{\log t} \,e^{-1} - \frac{\log \log t}{\log t} N  \, . 
\end{align*}
To bound the expression in~(\ref{error_in_required_form}) from above note 
that $\rho = \frac{|z|}{r_t \log t} \leq \frac{N}{\log t}$ and we can thus assume that $\rho<1$, which implies that for $x > 1$ we find $\chi_1(x) \leq \chi_\rho(x)$. % so that $x \leq \chi_1^{-1} \chi_\rho(x)$. 
Hence, either
\[  \chi_\rho^{-1} \big(  \tfrac{\Phi_t(z)}{a_t} + (q + o(1)) \, \tfrac{|z|}{r_t} \big) \leq 1 \, , \]
or we can estimate
\[\chi_\rho^{-1} \big(  \tfrac{\Phi_t(z)}{a_t} + (q + o(1)) \,\tfrac{|z|}{r_t}\big) \leq 
\chi_1^{-1} \big(  \tfrac{\Phi_t(z)}{a_t} + (q + o(1)) \,\tfrac{|z|}{r_t}\big)
\leq \chi_1^{-1} \big(  (N(1 + 2q) \big) \, . \]
which completes the proof of the first part of the lemma.

For the second part, recall that for all $t>0$ we have $\Phi_t(Z_t) > 0$, since $\Phi_t(0) > 0$.
%see e.g.~\cite[Lemma 3.2]{KLMS09}. 
Suppose $t \xi(z) < |z|$, then $\Phi_t(z) = 0$ and hence $z \neq Z_t$.
We want to show that $\Phi_{t+\theta t}(z) \leq 0$ which ensures that $z$ satisfies~(\ref{c1}).
Indeed, if $(t+\theta t) \xi(z) < |z|$, then this is true as $\Phi_{t+\theta t}(z) = 0$, and otherwise
%but if $(t+\theta t) \xi(z) \geq |z|$, then 
we can estimate as above that 
\[ \frac{\Phi_{t+\theta t}(z)}{a_t} = \frac{\xi(z)}{a_t} - \frac{q\theta}{1+\theta} \frac{|z|}{r_t} + \tilde{\delta}_\theta\big(t,\tfrac{|z|}{a_t}, \tfrac{\Phi_t(z)}{a_t}\big) \, , \]
where $\tilde{\delta}_\theta(t,x,y)$ converges to zero 
%as $t \ra \infty$ 
uniformly in $(x,y) \in \widehat B_N$. In particular, %since $t \xi(z) < |z|$ 
it follows that
\[ \frac{\Phi_{t+\theta t}(z)}{a_t} \leq \Big( - \frac{q\theta}{1+\theta} + \frac1{\log t}\Big)\, 
\frac{|z|}{r_t} + \tilde{\delta}_\theta\big(t,\tfrac{|z|}{a_t}, \tfrac{\Phi_t(z)}{a_t}\big) \, , \]
which is negative for all $t$ large enough, uniformly for all $z$ such that  $(\frac{z}{r_t},\frac{\Phi_t(z)}{a_t}) \in B_N$.
\end{proof}

%According to Lemma~\ref{restriction_to_box}, we need to first 
We now calculate $\Prob(A(t,N))$ in the limit as $t \ra \infty$, i.e.
we are interested in
\[ \underset{{(x,y) \in B_N}}{\int\!\!\int}  \Prob \Big\{ \tfrac{Z_t}{r_t} \in \diff x, \tfrac{\Phi_t(Z_t)}{a_t} \in \diff y, 
\Phi_{t+\theta t}(z) \leq \Phi_{t+\theta t}(Z_t) \, \forall z\in\Z^d \mbox{ s.t. } 
\big(\tfrac{|z|}{r_t}, \tfrac{\Phi_t(z)}{a_t}\big) \in \widehat B_N \Big\}
 \, . \]
%Before, we continue we need to clarify what we mean by the above notation. We write
%\[ \int\int \Prob \{ X \in \diff x, Y \in \diff y, (x,y) \in A \} \, , \]
%instead of 
%\[ \int\int \Prob\{ (x,y) \in A | (X,Y) = (x,y) \} \Prob_{X,Y} (\diff x \, \diff y) \,, \]
%where $\Prob_{X,Y}$ is the distribution of $X,Y$ and $\Prob\{ (X,Y) \in A | (X,Y) = (x,y) \}$ is a regular conditional probability as defined in~\cite{Br68}. Since, $(X,Y)$ is $\R^{d+1}$ valued, the regular conditional probability always exists, see~\cite[Theorem 4.43]{Br68}.
 
First, we express the probability under the integral for fixed $(x,y) \in B_N$ in terms of 
the point process~$\Pi_t$. Given that $\Pi_t$ contains the point $(x,y)$ we require that  
%The condition $Z_t/r_t \in \diff x, \Phi_t(Z_t)/a_t \in \diff y$ means that $\Pi_t$ should have one point in the set $\diff x \times \diff y$ and 
there are no points in the set $\R^d \times (y,\infty)$, and
requiring~\eqref{c1} for all points $z$ with $({|z|}/{r_t}, {\Phi_t(z)}/{a_t}) \in \widehat B_N$ 
is, by Lemma~\ref{Phi_t(1+c)_in_Phi_t}, equivalent to 
%the requirement that in the limit all points $(\bar x,\bar y)$ of $\Pi$ 
%restricted to $\widehat B_N$ satisfy 
%~\cite[Lemma 3.7]{HMS08}
%\begin{align*}
%\bar y+\frac{q \theta}{1+\theta}|\bar x|\le y+\frac{q \theta}{1+\theta}|x|.
%\end{align*}
%In other words,
the requirement that $\Pi_t$ should have no points in the set 
$$\big\{(\bar x,\bar y)\in \widehat B_N  \colon \bar y+\tfrac{q\theta}{1+\theta}|\bar x|> y+\tfrac{q\theta}{1+\theta}|x|\big\}.$$
Hence, defining the set 
\[ D^N_\theta(r,y) = \big\{ (\ox, \oy) \in \R^d \times \R \, : \, \oy > y \big\} \cup 
\big\{ (\ox, \oy) \in \widehat B_N \, :\,  |\ox| > r, \oy > y - \tfrac{q\theta}{1+\theta} (|\ox| - r ) \big\} \, , \]
we see that, as $t\ra \infty$,
\[ \begin{aligned}
\lim_{t\ra\infty} \Prob(A(N,t)) &= \underset{{(x,y) \in B_N}}{\int\!\!\int}
\Prob\big\{\Pi(\d x\, \d y)=1, \Pi(D_\theta^N(|x|,y))=0\big\}\notag\\
&=\underset{{(x,y) \in B_N}}{\int\!\!\int}e^{-\nu(D^N_\theta(|x|,y))}\nu(\d x\,\d y) \, . 
%\label{c2}
\end{aligned} \]
Taking the limit in this way is justified as $D^N_\theta(|x|,y)$ is relatively compact in $\widehat H$ and $(x,y)$ ranges only over elements in $B_N$. 
Finally, if we similarly define (see also Figure~\ref{point_process})
\[ D_\theta(r,y) = \big\{ (\ox, \oy) \in \R^d \times \R \, :\, |\ox| \leq r, \oy  > y \mbox{ or } |\ox| > r, \oy > y 
- \tfrac{q\theta}{1+\theta} (|\ox| - r ) \big\} \, . \]
we can invoke Lemma~\ref{restriction_to_box} to see that
\[ \begin{aligned} \lim_{t \ra \infty} \Prob \{ Z_t = Z_{t+\theta t} \} & = 
\lim_{N \ra \infty} \lim_{t\ra\infty} \Prob(A(N,t)) \\
& = \lim_{N \ra \infty} \underset{{(x,y) \in B_N}}{\int\!\!\int}e^{-\nu(D^N_\theta(|x|,y))}\nu(\d x\,\d y) \\
& = \int_{y \geq 0} \int_{x \in \R^d} e^{- \nu(D_\theta(|x|,y))} \nu(\diff x\, \diff y) \, , \end{aligned}\]
where the last equality follows by dominated convergence, as the integrand %$\1\{(x,y) \in B_N\}e^{-\nu(D^N_c(|x|,y))}$ 
is dominated by $e^{- \nu(D_0(|x|,y))}$ which is integrable with respect to $\nu$ by the direct calculation in the 
next proposition. %For an illustration of the region $D_\theta(|x|,y)$ see Figure~\ref{point_process}.

\begin{figure}[htbp]
\centering 
\includegraphics[height=6cm]{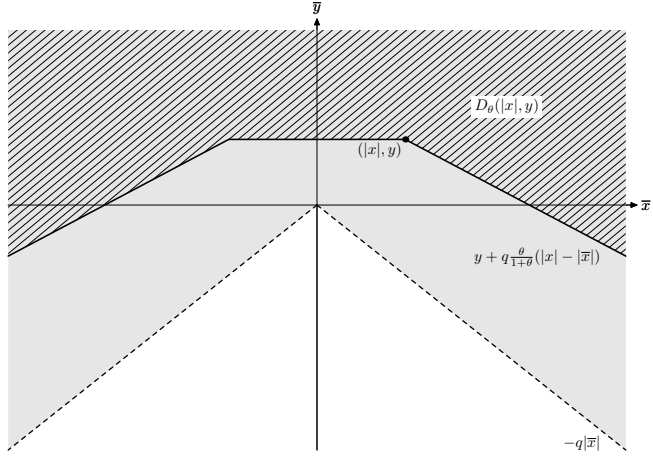} 
\caption[Illustration of the region $D_\theta(|x|,y)$.]{The point process $\Pi$ is defined on the set $\widehat H$ indicated in grey. 
%The shaded area shows the set $D_\theta(|x|,y)$ for a particular choice of $x$ and $y$. 
If we fix $Z_t/r_t=x, \Phi_t(Z_t)/a_t=y$, the condition that $Z_t = Z_{t+\theta t}$ 
corresponds to the requirement that the point process $\Pi$ has no points in the shaded region $D_\theta(|x|,y)$.
}\label{point_process}
\end{figure}

%\subsubsection{Simplification of the formula for $I(\theta)$}

We now simplify the expression that arises from the point process calculation. We
denote by $B(a,b)$ the Beta function with parameters $a,b$ and define the normalized 
incomplete Beta function 
%\be{incomplete_beta_function}
$$\tilde{B}(x,a,b) = \frac{1}{B(a,b)}\, \int_0^x v^{a-1} (1-v)^{b-1} \,\diff v \, .$$

\begin{prop}[Explicit form of $I(\theta)$]\label{I} For any $\theta\geq 0$, we have 
\[  \int_{y \geq 0} \int_{x \in \R^d} e^{- \nu(D_\theta(|x|,y))} \nu(\diff x\, \diff y) =  
I(\theta) := \frac{1}{B(\alpha - d+1,d)} \,\int_0^1  v^{\alpha - d} (1-v)^{d-1} \, \phi_\theta(v) \, \diff v \, ,\]
where the weight $\phi_\theta(v)$ is defined by
\begin{equation}\label{weight}
\tfrac1{\phi_\theta(v)} = 1- \tilde{B}(v,\alpha - d,d) + (1+\theta)^\alpha \, \big(\tfrac \theta v+1\big)^{d - \alpha}
 \tilde{B} \big( \tfrac{v+\theta}{1+\theta}, \alpha - d, d\big) \, .
\end{equation}
\end{prop}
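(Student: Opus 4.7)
The plan is a direct evaluation of the double integral. First I would compute $\nu(D_\theta(r,y))$ in closed form. Splitting $D_\theta(r,y)$ into the inner part ($|\bar x|\le r,\,\bar y>y$) and the outer cone ($|\bar x|>r,\,\bar y>y-\tfrac{q\theta}{1+\theta}(|\bar x|-r)$) and integrating the $\bar y$-coordinate out reduces the problem to two one-dimensional integrals of the form $\int s^{d-1}(y+qs)^{-\alpha}\,\d s$ and $\int s^{d-1}(Y+qs)^{-\alpha}\,\d s$, with $Y=(1+\theta)y+q\theta r$ and $s=|\bar x|$ the $\ell^1$-radial variable. In each, the substitution $w=qs/(y+qs)$ and $\tilde w=qs/(Y+qs)$, respectively, converts the integrand into $w^{d-1}(1-w)^{\alpha-d-1}$, and all four integration limits can be expressed via the single parameter $v=qr/(y+qr)\in[0,1]$ after noting that $Y+qr=(1+\theta)(y+qr)$. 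The outcome is
$$\nu(D_\theta(r,y))=\frac{c_d\, B(d,\alpha-d)}{q^d(y+qr)^{\alpha-d}}\,f(v,\theta),$$
with $f(v,\theta)=(1-v)^{d-\alpha}\tilde B(v,d,\alpha-d)+(1+\theta)^\alpha(1+\theta-v)^{d-\alpha}\bigl(1-\tilde B(\tfrac{v}{1+\theta},d,\alpha-d)\bigr)$ and $c_d$ the constant from $\ell^1$-polar coordinates.

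Next I would change variables in the outer double integral via $u=y+qr$ and $v=qr/(y+qr)$, where $r=|x|$. The Jacobian of $(y,r)\to(u,v)$, the $\ell^1$-radial factor $c_d r^{d-1}$, and the density $\alpha(y+q|x|)^{-(\alpha+1)}$ of $\nu$ combine so that the integrand separates as a product of a $v$-dependent factor and $u^{-(\alpha-d+1)}\exp(-\mathrm{const}\cdot f(v,\theta)/u^{\alpha-d})$. The $u$-integral is then elementary: the substitution $\lambda=c_d B(d,\alpha-d)/(q^d u^{\alpha-d})$ turns it into $\int_0^\infty e^{-\lambda f(v,\theta)}\,\d\lambda=1/f(v,\theta)$. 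Collapsing the remaining constants using $\alpha B(\alpha-d+1,d)=(\alpha-d)B(\alpha-d,d)$ yields
$$I(\theta)=\frac{1}{B(\alpha-d+1,d)}\int_0^1\frac{v^{d-1}\,\d v}{f(v,\theta)}.$$

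Finally, I would substitute $v\mapsto 1-v$ and match against the target form to read off $\phi_\theta(v)^{-1}=v^{\alpha-d}f(1-v,\theta)$. Expanding this and applying the reflection identity $\tilde B(x,a,b)=1-\tilde B(1-x,b,a)$ twice — once turning $\tilde B(1-v,d,\alpha-d)$ into $1-\tilde B(v,\alpha-d,d)$, and once turning $\tilde B((1-v)/(1+\theta),d,\alpha-d)$ into $1-\tilde B((v+\theta)/(1+\theta),\alpha-d,d)$ via the identity $1-(1-v)/(1+\theta)=(v+\theta)/(1+\theta)$ — reproduces exactly the formula \eqref{weight}, after using $v^{\alpha-d}(v+\theta)^{d-\alpha}=(\theta/v+1)^{d-\alpha}$. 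As a sanity check, at $\theta=0$ the two summands in $f(v,\theta)$ collapse to $(1-v)^{d-\alpha}$, so $\phi_0\equiv 1$ and $I(0)=1$, as required. The calculation is otherwise mechanical; the only delicate step is the bookkeeping with the incomplete Beta functions across the reflection, which is precisely what forces the prefactor $(1+\theta)^\alpha(\theta/v+1)^{d-\alpha}$ to appear in the final answer.
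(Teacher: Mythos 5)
Your proposal is correct and follows essentially the same route as the paper: compute $\nu(D_\theta(r,y))$ in $\ell^1$-polar coordinates, express it through normalized incomplete Beta functions, and evaluate the outer $(x,y)$-integral as a Gamma integral using the identity $\alpha B(\alpha-d+1,d)=(\alpha-d)B(\alpha-d,d)$. The only difference is cosmetic: you parametrize by $v=qr/(y+qr)$ where the paper uses $v=y/(y+qr)$, which is why you need the final substitution $v\mapsto 1-v$ together with the reflection identity $\tilde B(x,a,b)=1-\tilde B(1-x,b,a)$, whereas the paper's choice produces the stated form of $\phi_\theta$ directly.
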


\begin{proof}
First of all, we compute $\nu(D_\theta(r,y))$ for some $r>0$, 
\[ \begin{aligned} \nu(D_c(r,y)) & = \int_{|\ox| \leq r} \int_y^\infty \frac{\alpha \, \diff \ox \, \diff \oy}{(\oy + q |\ox|)^{\alpha+1}}
+ \int_{|\ox| > r} \int_{y - \frac{q\theta}{1+\theta}(|\ox| - r)}^\infty \frac{\alpha \,\diff \ox \, \diff \oy}{(\oy + q|\ox|)^{\alpha + 1}} \\
& = \int_{|\ox| \leq r} \frac{\diff \ox}{(y + q|\ox|)^{\alpha}} + \int_{|\ox| > r} \frac{ \diff \ox}{(y + \frac{q\theta}{1+\theta} r + \frac{q}{1+\theta}|\ox|)^{\alpha}} \, . \end{aligned} \]
Next, we can rewrite the two last summands. We exploit the invariance of the integrand under reflections at the axes, then
for $x_i \geq 0$ we use the substitution $u_1 = \ox_1 + \cdots +\ox_d$, $u_i = \ox_i$ for $i \geq 2$ and then 
the substitution $y + q u_1 = y/v$, so that
\be{turn_into_beta} \begin{aligned} \int_{|\ox| \leq r} \frac{\diff \ox}{(y + q|\ox|)^{\alpha}} & = 
2^d \int_0^{r} \frac{u_1^{d-1}}{(y+q u_1)^\alpha} \Big(\int_{\heap{u_2+\cdots + u_d \leq 1}{u_i \geq 0}} 
\diff u_2 \ldots \diff u_d \Big) \diff u_1 \\
& = \tfrac{2^d}{(d-1)!} \int_0^{r} \frac{u_1^{d-1}}{(y+ q u_1)^\alpha}\, \diff u_1 
 = \tfrac{2^d y^{d-\alpha}}{q^d (d-1)!} \int_{\frac{y}{y+qr}}^1 v^{\alpha - d- 1} (1-v)^{d-1} \diff v \\
& = \vartheta y^{d-\alpha} \Big( 1 - \tilde{B}\big( \tfrac{y}{y+qr}, \alpha - d, d\big) \Big) \, , \end{aligned} \ee
where $\vartheta = \frac{2^d B(\alpha - d,d)}{q^d (d-1)!}$. A similar calculation shows that
\[ \int_{|\ox| > r} \frac{ \diff \ox}{(y + \frac{q\theta}{1+\theta} r + \frac{q}{1+\theta}|\ox|)^{\alpha}} 
= \vartheta (1+\theta)^d \big( y + \tfrac{q\theta}{1+\theta}r\big)^{d - \alpha} \tilde{B} \big(\tfrac{y + \frac{q\theta}{1+\theta} r}{y+qr}, \alpha - d, d\big)  \, . \]
Combining the previous displays, and using the substitution $y + qr = y/v$ yields\vspace{-3mm}
\begin{align}\label{nu_D_c} %\begin{aligned} 
\nu\big( D_\theta(r,y)\big) & = \vartheta y^{d-\alpha} \Big( 1 - \tilde{B}\big( \tfrac{y}{y+qr}, \alpha - d, d\big) +
(1+\theta)^d \big( 1 + \tfrac{q\theta}{1+\theta}\tfrac{r}{y}\big)^{d - \alpha} \tilde{B} \Big(\tfrac{y + \tfrac{q\theta}{1+\theta} r}{y+qr}, \alpha - d, d\Big) \Big) \notag \\
& =  \vartheta y^{d-\alpha} \Big( 1 - \tilde{B}( v, \alpha - d, d) +
(1+\theta)^\alpha \big( 1 + \tfrac{\theta}{v}\big)^{d - \alpha} \tilde{B} \big(\tfrac{v+\theta}{1+\theta}, \alpha - d, d\big) \Big) 
\notag \\ & = \vartheta y^{d-\alpha} \phi_\theta(v)^{-1}\, . 
\end{align}%\end{aligned}\ee
To calculate the integral over $x \in \R^d$ we substitute $r = x_1 + \ldots + x_d$ and $u_i = x_i$ for $i \geq 2$,
\[ \int_{\R^d} e^{- \nu(D_\theta(|x|,y))} \frac{\alpha}{ (y + q|x|)^{\alpha + 1}} \,  \diff x
= \tfrac{2^d}{(d-1)!} \int_0^\infty e^{- \nu(D_\theta(r,y))} \frac{\alpha r^{d-1}}{(y+qr)^{\alpha + 1}} \,  \diff r . \]
Finally, we integrate over $y \geq 0$ and use the above formula for $\nu(D_\theta(r,y))$ together with the substitution $y + qr = y/v$
and $w = \vartheta y^{d-\alpha}$ to obtain
\[ \begin{aligned}\int_{y \geq 0} \int_{x \in \R^d} & e^{- \nu(D_\theta(|x|,y))} \, \nu(\diff x\, \diff y)
 = \tfrac{2^d}{(d-1)!} \int_0^\infty \int_0^\infty e^{- \nu(D_\theta(r,y))} \frac{\alpha r^{d-1} \diff r}{(y+qr)^{\alpha + 1}} \, \diff y\\
& = \tfrac{2^d}{q^d (d-1)!} \int_0^1 \alpha v^{\alpha - d} (1-v)^{d-1}
\int_0^\infty \exp \{ - \vartheta y^{d-\alpha} \phi_\theta(v)^{-1} \} \, y^{d-\alpha-1} \, \diff y \, \diff v \\
& = \tfrac{\alpha}{B(\alpha - d, d)(\alpha - d)} \int_0^1 v^{\alpha - d} (1-v)^{d-1} \int_0^\infty e^{-w \phi_\theta(v)^{-1}} 
\, \diff w \, \diff v \\
& = \tfrac{1}{B(\alpha -d +1,d)} \int_0^1 v^{\alpha -d} (1-v)^{d-1} \phi_\theta(v) \, \diff v
\, , \end{aligned} \]
where we used the identity $B(x+1,y)\,(x+y) = B(x,y)\,x$ for $x,y>0$ in the last~step.
\end{proof}

%\subsubsection{Asymptotics of $I$}

%Later on, we will need the following asymptotics of $I$ as $c \ra \infty$. 

\begin{prop}[Tails of $I$]\label{asymptotics_I_large_c}\ \\[-5mm]
\begin{itemize}
\item[(a)] $\displaystyle \lim_{\theta \ra \infty} \theta^d I(\theta) = \tfrac{1}{d \, B(\alpha - d + 1, d)}.$  
\item[(b)] $\displaystyle \lim_{\theta\da 0} \theta^{-1} (1-I(\theta)) = C_0,$
where the constant $C_0$ is given by \vspace{-3mm}
%\be{define_C_0} 
\[ C_0 = \tfrac{1}{B(\alpha-d+1,d)} \Big(\int_0^1 \alpha v^{\alpha-d}(1-v)^{d-1} \tB(v;\alpha-d,d) \diff v
+ B\big(2(\alpha-d), 2d-1\big)\Big) \, . \]
\end{itemize}
\end{prop}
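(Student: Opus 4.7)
The approach for both tails is the same: analyse the asymptotics of $\phi_\theta(v)$ in the appropriate limit, then push the limit inside the integral of Proposition~\ref{I} via dominated convergence.

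For part~(a), as $\theta\to\infty$ the last summand in~\eqref{weight} dominates: $(1+\theta)^\alpha\sim\theta^\alpha$, $(1+\theta/v)^{d-\alpha}\sim(\theta/v)^{d-\alpha}$, and $\tilde{B}((v+\theta)/(1+\theta),\alpha-d,d)\to 1$, so $1/\phi_\theta(v)\sim\theta^d v^{\alpha-d}$ pointwise on $(0,1)$. A matching uniform lower bound $1/\phi_\theta(v)\ge c\,\theta^d v^{\alpha-d}$, valid for all sufficiently large $\theta$, supplies the integrable dominating function $c^{-1}(1-v)^{d-1}$ for the integrand $v^{\alpha-d}(1-v)^{d-1}\theta^d\phi_\theta(v)$, and dominated convergence yields
$$
\theta^d I(\theta)\longrightarrow\frac{1}{B(\alpha-d+1,d)}\int_0^1(1-v)^{d-1}\,\diff v=\frac{1}{d\,B(\alpha-d+1,d)}.
$$

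For part~(b), note first that $\phi_0(v)\equiv 1$, so $I(0)=1$ and the task is to extract the first-order term. Writing $G(\theta,v):=1/\phi_\theta(v)$ and applying the product rule at $\theta=0$, using $\partial_\theta[(v+\theta)/(1+\theta)]\big|_{\theta=0}=1-v$, one obtains
$$
\partial_\theta G(0,v)=\alpha\,\tilde{B}(v,\alpha-d,d)+\frac{d-\alpha}{v}\,\tilde{B}(v,\alpha-d,d)+\frac{v^{\alpha-d-1}(1-v)^d}{B(\alpha-d,d)}.
$$
Since $(1-\phi_\theta)/\theta=(G-1)/(\theta G)\to\partial_\theta G(0,v)$, dominated convergence produces the limit as $(B(\alpha-d+1,d))^{-1}$ times the integral of $v^{\alpha-d}(1-v)^{d-1}\,\partial_\theta G(0,v)$. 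The three pieces evaluate in closed form: the first gives the $\alpha$-weighted incomplete Beta integral appearing in $C_0$; the middle term simplifies via the observation $\tilde{B}\,\tilde{B}'=\tfrac{1}{2}(\tilde{B}^2)'$, which collapses the integral to a multiple of $B(\alpha-d,d)$; and the third reduces directly to a Beta function by collecting powers of $v$ and $1-v$. Assembling the pieces yields the constant~$C_0$.

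The principal technical hurdle is the uniform domination required in~(b) near $v=0$, since $(1+\theta/v)^{d-\alpha}$ is singular as $v\downarrow 0$ for fixed $\theta>0$. The resolution is an exact cancellation: the substitution $v=\theta u$ together with the near-origin asymptotics $\tilde{B}(x,\alpha-d,d)\sim x^{\alpha-d}/((\alpha-d)B(\alpha-d,d))$ and the algebraic identity $(1+1/u)^{d-\alpha}(u+1)^{\alpha-d}=u^{\alpha-d}$ reveals that the two $\theta$-dependent contributions to $G(\theta,v)-1$ cancel at leading order on $\{v\le\theta\}$, so the contribution of this region to $(1-I(\theta))/\theta$ is $O(\theta^{\alpha-d})$ and hence negligible. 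On $[\theta,1]$ a Taylor remainder bound on each of the three factors provides an integrable dominating function depending only on $v$, which uses the integrability of $v^{2(\alpha-d)-1}(1-v)^{d-1}$ at the origin and completes the dominated-convergence argument.
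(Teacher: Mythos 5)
Your overall approach is the same as the paper's: for (a), establish a uniform lower bound on $1/\phi_\theta(v)$ of the form $c\,\theta^d v^{\alpha-d}$ to get a dominating function and apply dominated convergence; for (b), compute $\partial_\theta G(0,v)$ and pass the limit under the integral. Your derivative
\[\partial_\theta G(0,v)=\alpha\,\tilde{B}(v)+\tfrac{d-\alpha}{v}\,\tilde{B}(v)+\tfrac{v^{\alpha-d-1}(1-v)^d}{B(\alpha-d,d)}\]
is correct, your collapse of the middle integral via $\tilde{B}\tilde{B}'=\tfrac12(\tilde{B}^2)'$ to $-\tfrac{(\alpha-d)B(\alpha-d,d)}{2}$ is correct, and your reduction of the third piece to $B(2(\alpha-d),2d)/B(\alpha-d,d)$ is correct. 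Your discussion of the near-origin cancellation is in fact more careful than the paper's: the paper asserts a one-sided bound ("the first summand can be bounded by $(1+\theta)^\alpha-1$") and declares $\theta^{-1}(1-I(\theta))$ bounded, without explicitly producing the two-sided integrable majorant. Your observation that the $\tfrac{d-\alpha}{v}\tilde{B}(v)$ singularity cancels against $(1-v)\tilde{B}'(v)$ to leading order near $v=0$ is a nice sharpening, though it is not strictly needed for integrability since $v^{2(\alpha-d)-1}$ is already integrable at the origin.

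However, the final sentence "Assembling the pieces yields the constant $C_0$" is where you should have stopped and checked. What you have actually derived is
\[C_0=\tfrac{1}{B(\alpha-d+1,d)}\Big(\alpha\!\int_0^1 v^{\alpha-d}(1-v)^{d-1}\tilde{B}(v)\,\diff v-\tfrac{(\alpha-d)B(\alpha-d,d)}{2}+\tfrac{B(2(\alpha-d),2d)}{B(\alpha-d,d)}\Big),\]
and this does \emph{not} equal the expression in the statement, i.e.\ your $-\tfrac{(\alpha-d)B(\alpha-d,d)}{2}+\tfrac{B(2(\alpha-d),2d)}{B(\alpha-d,d)}\ne B(2(\alpha-d),2d-1)$ in general. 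Test it with $\alpha=2$, $d=1$: then $\phi_\theta(v)=(1+\theta v)^{-1}$, $I(\theta)=\tfrac{2(\theta-\log(1+\theta))}{\theta^2}$, and a direct Taylor expansion gives $(1-I(\theta))/\theta\to 2/3$. Your assembled formula gives $2\bigl(\tfrac23-\tfrac12+\tfrac16\bigr)=\tfrac23$, which is right, whereas the statement's formula gives $2\bigl(\tfrac23+\tfrac12\bigr)=\tfrac73$. So your intermediate computation is correct, the statement's $C_0$ contains a misprint, and the proposal should explicitly record the assembled constant rather than wave at agreement with a formula it does not in fact match.
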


\begin{proof} (a) As $\tilde{B}(v, \alpha-d,d) \leq 1$ and $v \mapsto \tilde{B}(v, \alpha-d,d)$ is nondecreasing we get,
for $0 \leq v \leq 1$,
\[\begin{aligned} \tfrac{1}{\phi_\theta(v)} &  = 1 - \tilde{B}(v, \alpha-d,d) + (1+\theta)^\alpha 
\big(\tfrac{\theta}{v} + 1\big)^{d-\alpha} \tilde{B}\big(\tfrac{v+\theta}{1+\theta}, \alpha-d,d\big) \\ 
%& \geq (1+\theta)^\alpha (\theta/v + 1)^{d - \alpha} \tilde{B}\Big(\frac{1+\theta}, \alpha-d,d\Big) \\
& \geq (1+\theta)^d \, v^{\alpha - d}\,  \big(\tfrac{1+\theta}{v+\theta}\big)^{\alpha - d} 
\tilde{B}\big(\tfrac{\theta}{1+\theta}, \alpha-d,d\big) \geq \tfrac{1}{2} (1+\theta)^d v^{\alpha - d} \,, \end{aligned} \]
where we chose $\theta$ large enough such that $\tilde{B}(\frac{\theta}{1+\theta}, \alpha-d,d) \geq \frac{1}{2}$.
Hence, %we deduce that 
for $\theta$ large enough,
\[ \begin{aligned} \theta^d I(\theta) & = \tfrac{1}{B(\alpha - d + 1, d)} \int_0^1 v^{\alpha - d}(1-v)^{d-1} \theta^d \phi_\theta(v) 
\, \diff v \\ & \leq \tfrac{2}{B(\alpha - d + 1, d)}  \int_0^1 (1-v)^{d-1}  \, \diff v 
= \tfrac{2}{d B(\alpha - d + 1, d)} \, . \end{aligned}\]
Therefore, since $\theta^d \phi_\theta(v) \ra v^{d-\alpha}$ pointwise for every $v \in (0,1)$ as $\theta \ra \infty$, we can invoke
the dominated convergence theorem to complete the proof of the lemma.

(b) We can write
\[ 1-I(\theta) = \tfrac{1}{B(\alpha-d+1,d)} \int_0^1v^{\alpha-d}(1-v)^{d-1} \phi_\theta(v)(\phi_\theta^{-1}(v) - 1) \, . \]
Set $\tB(v) = \tB(v; \alpha-d,d)$. Then, since $\phi_\theta(v) \ra 1$ for every $v$ as $\theta \da 0$,  we can concentrate on
\[ \begin{aligned} \phi_\theta(v)^{-1} - 1 & = (1+\theta)^d v^{\alpha-d}\, \big(\tfrac{1+\theta}{v+\theta}\big)^{\alpha-d} \tB\big(\tfrac{\theta+v}{1+v}\big) - \tB(v) \\
& = \tB\big(\tfrac{\theta+v}{1+\theta}\big) \, \big( \tfrac{(1+\theta)^\alpha v^{\alpha-d}}{(v+\theta)^{\alpha-d}} -1 \big) + \tB\big(\tfrac{\theta+v}{1+\theta}\big) - \tB(v). 
\end{aligned} \]
The first summand can be bounded by $(1+\theta)^{\alpha} - 1 \leq 2\alpha \theta$, 
eventually for all $\theta$. For the second term, we have that
\[ \tB\big(\tfrac{\theta+v}{1+\theta}\big) - \tB(v) = \int_v^\frac{v+\theta}{1+\theta} u^{\alpha-d-1}(1-u)^{d-1} \diff u
\leq \theta(1-v)^d \max\{ v^{\alpha-d-1},1\} \, . \]
Combining the two estimates we obtain that $\theta^{-1}(1-I(\theta))$ is bounded,
%\[ \theta^{-1}(1-I(\theta)) \leq  \tfrac{1}{B(\alpha-d+1,d)}\Big( 2\alpha \int_0^1v^{\alpha-d}(1-v)^{d-1}\diff v +
%\int_0^1 v^{\alpha-d}(1-v)^{2d-1} \max\{ v^{\alpha-d-1},1\}\diff v \Big)\, , \]
%which is finite, 
so that by the dominated convergence theorem, we may take the limit of $\theta^{-1}(\phi_\theta^{-1}(v) -1 )$ as $\theta\da0$
under the integral. 
\end{proof}

\subsection{Ageing for the solution profile}\label{ageing_for_solution}

In this section, we prove Theorem~\ref{ageing_for_u} by combining the results about ageing for the maximizer $Z_t$ from 
the previous section with the localization results in~\cite{KLMS09}. We start with a preliminary calculation that will 
be used several times in the remainder.

\begin{lemma}\label{difference_phi} If $\Phi_t(x) = \Phi_t(y)$ for some $t>0$ and $x,y\in\Z^d$ such that $t\xi(x) > |x|$ and $t \xi(y) > |y|$, then for all $s >0$ such that $s\xi(x) > |x|$ and $s \xi(y) > |y|$, we have that
 \[ \Phi_s(x) - \Phi_s(y) = (\xi(x) - \xi(y)) \big( 1- \tfrac{t}{s} \big) \, . \]
\end{lemma}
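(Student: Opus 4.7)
The plan is to proceed by direct computation from the definition of $\Phi_t$. Recall that under the assumption $t\xi(z)\geq |z|$ we have
$$\Phi_t(z) = \xi(z) - \tfrac{|z|}{t}\log\xi(z) + \tfrac{\eta(z)}{t}.$$
Since the hypotheses $s\xi(x)>|x|$, $s\xi(y)>|y|$, $t\xi(x)>|x|$, $t\xi(y)>|y|$ all hold, this explicit formula applies for both $x$ and $y$ at both times $s$ and $t$.

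First, I would write out the difference at time $s$:
$$\Phi_s(x)-\Phi_s(y) = \big(\xi(x)-\xi(y)\big) - \tfrac{1}{s}\big(|x|\log\xi(x)-|y|\log\xi(y)\big) + \tfrac{1}{s}\big(\eta(x)-\eta(y)\big).$$
The same identity with $t$ in place of $s$ gives, using the hypothesis $\Phi_t(x)=\Phi_t(y)=0$,
$$|x|\log\xi(x)-|y|\log\xi(y) - \big(\eta(x)-\eta(y)\big) \;=\; t\big(\xi(x)-\xi(y)\big).$$
Substituting this combination back into the expression for $\Phi_s(x)-\Phi_s(y)$ yields
$$\Phi_s(x)-\Phi_s(y) = \big(\xi(x)-\xi(y)\big)\Big(1-\tfrac{t}{s}\Big),$$
which is the claimed identity.

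There is really no obstacle here; the lemma is essentially a statement about the affine dependence of $\Phi_\cdot(z)$ on $1/t$. Geometrically, for each fixed $z$ in the admissible range, $r\mapsto \Phi_{1/r}(z)$ is an affine function of $r=1/t$ with slope $-(|z|\log\xi(z)-\eta(z))$ and intercept $\xi(z)$; so if two such affine functions (indexed by $x$ and $y$) agree at one value of $r$, their difference at any other $r$ is linear in $(r-1/t)$, and the specific form of the slopes combined with the hypothesis gives exactly the factor $(1-t/s)$ times $\xi(x)-\xi(y)$. The short computation above makes this precise and completes the proof.
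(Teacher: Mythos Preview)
Your proof is correct and follows essentially the same direct computation as the paper's own proof. One minor slip: you write ``using the hypothesis $\Phi_t(x)=\Phi_t(y)=0$'' where you mean $\Phi_t(x)-\Phi_t(y)=0$ (the hypothesis is equality, not that both vanish), but the ensuing algebra is unaffected.
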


\begin{proof}By the assumptions on $t,x,y$, we find that
\[ \Phi_t(x) - \Phi_t(y) = (\xi(x) - \xi(y)) - \tfrac{1}{t} 
\big( |x| \log \xi(x) - |y| \log \xi(y) - \eta(x) + \eta(y) \big) = 0 \, . \]
Rearranging, we can substitute into 
\[ \begin{aligned} \Phi_s(x) - \Phi_s(y) &= (\xi(x) - \xi(y)) - \tfrac{1}{s} 
\big( |x| \log \xi(x) - |y| \log \xi(y) - \eta(x) + \eta(y) \big) \\
& = (\xi(x) - \xi(y)) \, \big( 1- \tfrac{t}{s} \big) \, , \end{aligned} \]
which completes the proof.
\end{proof}

\begin{rem}\label{Z_of_jump_time} 
% Since $\Phi_t(x) > 0$ for all $|x| \leq 1$ and all $t\geq1$, we know that 
Let $Z_t^{\ssup 1}, Z_t^{\ssup 2},\ldots\in\Z^d$ be sites in $\Z^d$ producing the largest values of $\Phi_t$
in descending order (choosing the site with largest $\ell^1$-norm in case of a tie), and recall that 
$Z_t=Z_t^{\ssup 1}$. It is then easy to see that $t \xi(Z_t^{\ssup i}) > |Z_t^{\ssup i}|$ 
for $i = 1,2$ and all $t \geq 1$. 
Hence, if $\tau>1$ is a jump time of the process $(Z_t \colon t>0)$, then $\Phi_\tau(Z_\tau^{\ssup 1}) = 
\Phi_\tau(Z_\tau^{\ssup 2})$, so that we can apply Lemma~\ref{difference_phi} with $x = Z_\tau^{\ssup 1}$ and 
$y = Z_\tau^{\ssup 2}$ and the conclusion holds for all $s \geq \tau$.
\end{rem}

\begin{lemma}\label{xi_of_Z_t} Almost surely, the function $u \mapsto \xi(Z_u)$ is nondecreasing on $(1, \infty)$. \end{lemma}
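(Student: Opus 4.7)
The plan is to reduce to jump times of $(Z_u)$: since $Z$ is left-continuous and piecewise constant between its jumps, $u \mapsto \xi(Z_u)$ inherits the same piecewise constant structure, so it suffices to prove that at every jump time $\tau > 1$ one has $\xi(Z_{\tau+}) > \xi(Z_\tau)$. A useful preliminary fact, which I will invoke freely, is that the Pareto law is continuous, so almost surely the family $\{\xi(z) : z \in \Z^d\}$ is injective; this will allow me to upgrade non-strict comparisons to strict ones.

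Fix such a jump time $\tau$ and set $x = Z_\tau$, which by left-continuity coincides with the left limit of the maximizer. By the definition of a jump there exist $\delta > 0$ and $y \neq x$ with $Z_s = y$ for all $s \in (\tau,\tau+\delta)$. Letting $s \downarrow \tau$ and using continuity of $t \mapsto \Phi_t(z)$ together with maximality of $Z_s$, I obtain $\Phi_\tau(y) \geq \Phi_\tau(x)$, while $\Phi_\tau(x) \geq \Phi_\tau(y)$ by maximality at $\tau$, so $\Phi_\tau(x) = \Phi_\tau(y)$. In the notation of Remark~\ref{Z_of_jump_time} this identifies $x = Z_\tau^{\ssup 1}$ and $y = Z_\tau^{\ssup 2}$, and in particular places us in the hypotheses of Lemma~\ref{difference_phi}. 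That lemma then yields, for every $s > \tau$,
\[
\Phi_s(y) - \Phi_s(x) = (\xi(y) - \xi(x))\bigl(1 - \tfrac{\tau}{s}\bigr).
\]

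For $s \in (\tau,\tau+\delta)$ the factor $1-\tau/s$ is strictly positive, while the choice $Z_s = y$ gives $\Phi_s(y) \geq \Phi_s(x)$; injectivity of $\xi$ makes the right-hand side nonzero for $s \neq \tau$, so the inequality is strict. Hence $\xi(y) > \xi(x)$, i.e.\ $\xi(Z_{\tau+}) > \xi(Z_\tau)$. Combining this with the constancy of $\xi(Z_\cdot)$ between jump times proves the lemma. The only genuinely non-mechanical step is the identification $y = Z_\tau^{\ssup 2}$, which I would justify by noting that continuity of $t\mapsto \Phi_t(w)$ forces every competitor $w$ with $\Phi_\tau(w) < \Phi_\tau(x)$ to remain sub-maximal in a neighbourhood of $\tau$, so that near $\tau$ the maximizer can only be one of $x$ or $y$; this is the point I would expect to spell out most carefully.
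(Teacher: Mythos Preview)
Your proof is correct and follows essentially the same approach as the paper: both reduce to jump times, invoke Lemma~\ref{difference_phi} via Remark~\ref{Z_of_jump_time} to express $\Phi_s(y)-\Phi_s(x)$ as $(\xi(y)-\xi(x))(1-\tau/s)$, and conclude from nonnegativity of the left-hand side for $s>\tau$ that $\xi$ increases across the jump. Your use of almost-sure injectivity of $\xi$ to obtain strictness is exactly what underlies the paper's remark that $t\mapsto \Phi_t(Z_t^{\ssup 1})-\Phi_t(Z_t^{\ssup 2})$ is ``not constant'', so the two arguments are essentially the same.
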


\begin{proof} 
Let $\{ \tau_n\}$ be the successive jump times of the process $(Z_t \colon t \geq 1)$. 
By definition, $$\Phi_{\tau_{n+1}}(Z^{\ssup 1}_{\tau_{n+1}}) = \Phi_{\tau_{n+1}}(Z^{\ssup 2}_{\tau_{n+1}})$$ 
and by right-continuity of $t \mapsto Z_t^{\ssup 1}$, we have that $Z^{\ssup 2}_{\tau_{n+1}} = Z^{\ssup 1}_{\tau_n}$. 
Now, consider $\tau_{n+1}< t< \tau_{n+2}$ such that $Z_t^{\ssup i} = Z_{\tau_{n+1}}^{\ssup i}$ for $i =1,2$, 
%using that by~\cite[Lemma 3.2]{KLMS09} $t \xi(Z_t^{\ssup i}) > |Z_t^{\ssup i}|$ for $i=1,2$ we can deduce 
then by Lemma~\ref{difference_phi} and Remark~\ref{Z_of_jump_time} we know that
\begin{equation}\label{preeq}
 \bal \Phi_t(Z_t^{\ssup 1}) - \Phi_t(Z_t^{\ssup 2}) & = \Phi_t(Z_{\tau_{n+1}}^{\ssup 1}) - \Phi_t(Z_{\tau_{n+1}}^{\ssup 2})
= (\xi ( Z_{\tau_{n+1}}^{\ssup 1}) - \xi ( Z_{\tau_{n+1}}^{\ssup 2}) ) ( 1 - \tfrac{\tau_{n+1}}{t}) \\
& =(\xi ( Z_{\tau_{n+1}}^{\ssup 1}) - \xi ( Z_{\tau_{n}}^{\ssup 1}) ) ( 1 - \tfrac{\tau_{n+1}}{t})  \, . \eal  
\end{equation}
As $t < \tau_{n+2}$, and $t \mapsto \Phi_t(Z_t^{\ssup 1}) - \Phi_t(Z_t^{\ssup 2})$ is not constant, the left hand side of  
\eqref{preeq} is strictly positive, which implies that $\xi ( Z_{\tau_{n+1}}) - \xi ( Z_{\tau_{n}}) > 0$, thus completing the proof.
\end{proof}

As an immediate consequence of this lemma, we get that $(Z_t \colon t>1)$ never returns to the same point 
in $\Z^d$. We now prove the first part of Theorem~\ref{ageing_for_u}.

\begin{lemma}\label{weak_ageing} For any sufficiently small $\eps >0$, 
\[ \lim_{t \ra \infty} \Prob \big\{ \sup_{z \in \R^d} \big| {v(t,z)} - {v(t+\theta t,z)} \big| < \eps \big\}
= \lim_{t \ra \infty} \Prob\{ Z_t= Z_{t+\theta t} \} = I (\theta) \, . \]
\end{lemma}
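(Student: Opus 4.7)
The plan is to show that, for every sufficiently small $\eps>0$, the two events $\{Z_t=Z_{t+\theta t}\}$ and $\{\sup_{z\in\R^d}|v(t,z)-v(t+\theta t,z)|<\eps\}$ differ by a set whose probability tends to zero as $t\to\infty$, so that both probabilities share a limit, which by Proposition~\ref{ageing_for_Z} is $I(\theta)$. The two ingredients I would exploit are the one-point localization~\eqref{one-point} and the fact, recalled from~\cite{KLMS09} in the discussion after~\eqref{one-point}, that $X_t=Z_t$ with probability tending to one.

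Fix a small $\delta>0$ and introduce the high-probability event
$$E_t = \{X_t=Z_t\}\cap\{X_{t+\theta t}=Z_{t+\theta t}\}\cap\{v(t,X_t)>1-\delta\}\cap\{v(t+\theta t,X_{t+\theta t})>1-\delta\}.$$
Since the profile is a probability measure on $\Z^d$, the bound $v(s,X_s)>1-\delta$ forces $v(s,z)<\delta$ for every $z\ne X_s$. Thus, on the event $E_t\cap\{Z_t=Z_{t+\theta t}\}$ the peaks agree, $X_t=X_{t+\theta t}=:x^*$, and inspection at $z=x^*$ and at each $z\ne x^*$ gives $\sup_z|v(t,z)-v(t+\theta t,z)|<\delta$.

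For the reverse inclusion, I would argue that on $E_t$ the condition $\sup_z|v(t,z)-v(t+\theta t,z)|<\eps$ with $\eps+2\delta<1$ forces coincidence of peaks: evaluating at $z=X_t$ yields $v(t+\theta t,X_t)>1-\delta-\eps$, while also $v(t+\theta t,X_{t+\theta t})>1-\delta$, and the sum of these two masses exceeds $1$ unless $X_t=X_{t+\theta t}$. On $E_t$ this equality transfers to $Z_t=Z_{t+\theta t}$. Choosing for instance $\delta=\eps<\tfrac13$, both inclusions hold simultaneously, so the symmetric difference of the two events in question lies in $E_t^c$, which has vanishing probability, and Proposition~\ref{ageing_for_Z} closes the argument.

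The only delicate point is the use of $X_t=Z_t$ at \emph{both} endpoints $t$ and $t+\theta t$; this has to be applied at the two time scales separately, but since both probabilities tend to one, a union bound suffices. Beyond that, the proof is an essentially measure-theoretic observation: a probability measure with mass exceeding $1-\delta$ at one atom lies within supremum-distance $\delta$ of the Dirac measure there, so closeness of profiles and coincidence of peaks are interchangeable up to an error of order $\delta$.
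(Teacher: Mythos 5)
Your proof is correct and follows essentially the same approach as the paper: condition on a high-probability localization event and show the two events in question coincide on it. The only cosmetic difference is that you introduce $X_t$ explicitly and add $\{X_t=Z_t\}$, $\{X_{t+\theta t}=Z_{t+\theta t}\}$ to the conditioning event, whereas the paper works with $v(t,Z_t)$ directly and absorbs the identification $X_t=Z_t$ (needed to transfer the localization statement~\eqref{one-point} from $X$ to $Z$) into the assertion that $v(t,Z_t)\to 1$ in probability.
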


\begin{proof} Suppose $0< \eps < \frac{1}{2}$ and
let us throughout this proof argue on the event 
\[ A_t = \big\{  \, v(t,Z_t) > 1- \tfrac{\eps}{2}, \, v(t+\theta t,Z_{t+\theta t}) > 1- \tfrac{\eps}{2} \big\} \, . \] 
Now, if $z \neq Z_t$, then 
\[ u(t,z) \leq \sum_{x \neq Z_t} u(t,x) = U(t) - u(t, Z_t)< \tfrac{\eps}{2} \,U(t) \, , \]
and similarly if $z \neq Z_{t+\theta t}$, then $u(t+\theta t,z) \leq \frac{\eps}{2} \, U(t+\theta t)$.
%\[ u(t+\theta t,z) \leq \sum_{x \neq Z_{t+\theta t}} u(t+\theta t,x) 
%= U(t+\theta t) - u(t+\theta t, Z_{t+\theta t})< \tfrac{\eps}{2} \, U(t+\theta t) .\]
In particular, if $z \neq Z_t$ and $z \neq Z_{t+\theta t}$, then
$| v(t,z) - v(t+\theta t,z)| < \eps$.
%\be{bound}  \big| v(t,z) - v(t+\theta t,z) \big| < \eps \, . \ee
Now, if $Z_t = Z_{t+\theta t}$, then by assumption $A_t$
we have $| v(t,z) - v(t+\theta t,z)| < \eps$ for any $z \in \Z^d$. 
Conversely, suppose that $Z_t \neq Z_{t+\theta t}$. From above we then get 
$u(t+\theta t,Z_{t}) < \frac{\eps}{2} U(t+\theta t)$
and since we argue on the event $A_t$, we find that
$v(t,Z_t) - v(t+\theta t, Z_t) > 1 - \eps > \eps$,
so that 
\[ \sup_{z \in \Z^d} \big| v(t,z) - v(t+\theta t,z) \big| 
\geq \big|v(t,Z_t) - v(t+\theta t, Z_t)\big| > \eps \, . \]
To complete the proof, it remains to notice that
since $v(t, Z_t)$ converges weakly to one, we have that  $\Prob(A_t) \ra 1$ as $t \ra \infty$.
\end{proof}

Before we can prove the remaining part of Theorem~\ref{ageing_for_u}, we need to collect the following 
fact about the maximizers $Z^{\ssup 1}$ and $Z^{\ssup 2}$.

\begin{lemma}\label{separation_first_second} Let $\la_t = (\log t)^{-\beta}$ for some $\beta > 1 + \frac{1}{\alpha-d}$.
If $t_1\leq t_2$ are sufficiently large, satisfy $Z^{\ssup 1}_{t_1} = Z^{\ssup 1}_{t_2}$ and
\be{diff_first_second} \Phi_t(Z_t^{\ssup 1}) - \Phi_t(Z_t^{\ssup 2}) \geq  \tfrac{1}{2} \, a_t \la_t  \, , \\[2mm]\ee
holds for $t = t_1$ and $t= t_2$, then~(\ref{diff_first_second}) holds for all $t \in [t_1,t_2]$.
\end{lemma}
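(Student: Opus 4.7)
The plan is to combine concavity of $G(t) := t\, g(t)$ on $[t_1, t_2]$ with convexity of $F(t) := t\, a_t \la_t$ for $t$ large, and use chord inequalities to propagate the endpoint bound $G(t_i) \geq \tfrac12 F(t_i)$ into the interior. By Lemma~\ref{xi_of_Z_t} and the fact that $(Z_t \colon t>1)$ never revisits a site, the assumption $Z^{\ssup 1}_{t_1} = Z^{\ssup 1}_{t_2}$ forces $Z^{\ssup 1}_t = z_1 := Z^{\ssup 1}_{t_1}$ for all $t \in [t_1, t_2]$, so $g(t) = \Phi_t(z_1) - \Phi_t(Z_t^{\ssup 2})$ throughout.

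For each site $z$ active at time $t$, the quantity $t\Phi_t(z) = L_z(t) := t\xi(z) - |z|\log\xi(z) + \eta(z)$ is linear in $t$. Since $z_1$ is active on $[t_1, t_2]$, the first term in
\[ G(t) = L_{z_1}(t) - \max_{y \neq z_1} t\, \Phi_t(y) \]
is linear. For the second term, I would show that for large $t_1$ the second maximizer $Z_t^{\ssup 2}$ has $\xi(Z_t^{\ssup 2}) \geq K_0$ for some fixed $K_0$ throughout $[t_1,t_2]$; the reason is that any bounded $\xi$ contributes at most $Kt$ to $t\Phi_t(\cdot)$, whereas $t\Phi_t(z_1) = L_{z_1}(t) \sim \xi(z_1)\, t \gg Kt + \tfrac12 F(t)$ for $t_1$ sufficiently large. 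Restricting to $\xi(y) \geq K_0$, the inequality $\log\xi(y) > 1 + \eta(y)/|y|$ ensures $\hat{t}_y \geq t_y^\bias$, so $t\Phi_t(y) = \max(0, L_y(t))$ for all $t$; hence $\max_{y \neq z_1} t\Phi_t(y) = \max_{y\neq z_1,\, \xi(y)\geq K_0} \max(0, L_y(t))$ is a pointwise maximum of convex functions of $t$, thus convex. This makes $G$ linear minus convex, i.e.\ concave, on $[t_1, t_2]$. Separately, differentiating $F(t) = t^{\a/(\a-d)}(\log t)^{-d/(\a-d)-\b}$ twice gives $F''(t) \sim (\a/(\a-d))^2\, t^{\a/(\a-d)-2}(\log t)^{-d/(\a-d)-\b}$ as $t\to\infty$, which is positive, so $F$ is convex on $[t_1, \infty)$ for $t_1$ large.

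Combining the two ingredients, for $t \in [t_1, t_2]$,
\[ G(t) \,\geq\, \tfrac{t_2-t}{t_2-t_1} G(t_1) + \tfrac{t-t_1}{t_2-t_1} G(t_2) \,\geq\, \tfrac12 \Bigl[\tfrac{t_2-t}{t_2-t_1} F(t_1) + \tfrac{t-t_1}{t_2-t_1} F(t_2)\Bigr] \,\geq\, \tfrac12 F(t), \]
using concavity of $G$, the endpoint hypotheses $G(t_i) \geq \tfrac12 F(t_i)$, and convexity of $F$, respectively. Dividing by $t$ yields $g(t) \geq \tfrac12 a_t \la_t$, as required. The main obstacle is the convexity claim in the middle step: $t\Phi_t(y)$ has a jump discontinuity at the activation time $t_y^\bias = |y|/\xi(y)$, so $\max_{y\neq z_1} t\Phi_t(y)$ is not \emph{a priori} convex. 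One has to rule out sites with small $\xi(y)$ from achieving this max by exploiting that $a_t\la_t \to \infty$ while the contribution of any bounded-$\xi$ site to $t\Phi_t$ grows only linearly.
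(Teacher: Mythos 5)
Your approach is genuinely different from the paper's. The paper first treats the special case where $Z^{\ssup 2}_t$ is constant on $[t_1,t_2)$: there $\Phi_t(Z^{\ssup 1}_t) - \Phi_t(Z^{\ssup 2}_t) = A - B/t$ for constants $A, B$, and one analyses $f(t) = A - B/t - \tfrac12 a_t\la_t$ directly, showing $f'$ has at most one sign change on $[t_1,\infty)$ and so $f$ has no interior minimum below zero. The general case is then reduced to this one by cutting $[t_1,t_2]$ at the times where the second and third maximizers swap, invoking \cite[Proposition 3.4]{KLMS09} (separation of first and third maximizers) to re-establish~(\ref{diff_first_second}) at each such intermediate time. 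You instead multiply through by $t$ and argue globally: $G(t)=t\bigl(\Phi_t(Z^{\ssup 1}_t)-\Phi_t(Z^{\ssup 2}_t)\bigr)$ is concave (an affine function minus a pointwise max of convex functions) while $F(t)=t\,a_t\la_t$ is convex, so a single chord inequality with the endpoint hypotheses does the whole job. This is tidier: it avoids the partition of $[t_1,t_2]$ and replaces Proposition 3.4 with the weaker fact that $\xi(Z^{\ssup 2}_t)\ra\infty$.

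However, the heuristic you offer for the key claim that $\xi(Z^{\ssup 2}_t)\ge K_0$ throughout $[t_1,t_2]$ is not correct as a proof. Showing that the gap would be enormous if $Z^{\ssup 2}_t$ had small potential value does not show that $Z^{\ssup 2}_t$ has large potential value, and — as you yourself observe — you cannot afford to absorb the small-$\xi$ possibility into a pointwise estimate, because you need the identity $\max_{y\ne z_1}t\Phi_t(y) = \max_{y\ne z_1,\,\xi(y)\ge K_0}\max(0,L_y(t))$ on the \emph{whole} interval in order for $G$ to be concave there, not just at the two endpoints. Furthermore, your comparison $\xi(z_1)t\gg Kt + \tfrac12 F(t)$ cannot hold uniformly for large $t$: $F(t)\asymp t^{\alpha/(\alpha-d)}(\log t)^{-d/(\alpha-d)-\beta}$ grows superlinearly while $\xi(z_1)$ is a fixed number once $t_1$ is fixed. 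The repair is simply to cite~\cite[Lemma 3.2]{KLMS09}, which gives that almost surely, for all $t$ sufficiently large, $\xi(Z^{\ssup 2}_t)>a_t(\log t)^{-1}\ra\infty$ — the paper quotes this exact fact later, in the proof of Lemma~\ref{max_Z}. With that citation in place, and after replacing the minor slip $t\Phi_t(y)=\max(0,L_y(t))$ by the correct statement $\max(0,t\Phi_t(y))=\max(0,L_y(t))$ for sites with $\xi(y)\ge K_0$ (which is all you actually use, since $\Phi_t(Z^{\ssup 2}_t)>0$), your argument goes through and gives a clean alternative proof.
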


\begin{proof} First, we additionally assume that
$Z_t^{\ssup 2} = Z_{t_1}^{\ssup 2}$ for all $t \in [t_1,t_2)$.
By Lemma~\ref{xi_of_Z_t} we have\vspace{-1mm} that $Z^{\ssup 1}_{t} = Z^{\ssup 1}_{t_1}$ for all $t \in [t_1,t_2]$. 
Using also the continuity of $t \mapsto \Phi_t(Z_t^{\ssup i})$, $i = 1,2$, we get
\[\begin{aligned} \Phi_t(Z_t^{\ssup 1}) & - \Phi_t(Z_t^{\ssup 2}) 
 = \Phi_t(Z_{t_1}^{\ssup 1}) - \Phi_t (Z_{t_1}^{\ssup 2}) \\
& = \xi(Z_{t_1}^{\ssup 1}) - \xi(Z_{t_1}^{\ssup 2}) - \tfrac{1}{t} \big( |Z_{t_1}^{\ssup 1}|\log \xi(Z_{t_1}^{\ssup 1}) 
- |Z_{t_1}^{\ssup 2}|\log \xi(Z_{t_1}^{\ssup 2}) - \eta(Z_{t_1}^{\ssup 1})  + \eta(Z_{t_1}^{\ssup 2}) \big) \\
& = A - \tfrac{1}{t} B \quad \mbox{for all } t \in [t_1,t_2] \, , \end{aligned} \]
for some constants $A, B \in \R$ depending only on $t_1$. 
%Now, by assumption
%\[ \Phi_t(Z_t^{\ssup 1}) - \Phi_t(Z_t^{\ssup 2})  \geq \tfrac{1}{2} \, a_t \la_t %=: \gamma(t) 
%\, , \]
%for $t = t_1,t_2$. 
Now, defining
\[ f(t) = A - \tfrac{1}{t} B - \tfrac{1}{2} \, a_t \la_t %\gamma(t) 
\, , \]
we get that $f(t_1) \geq 0$ and $f(t_2) \geq 0$ by our assumption. 
%Now, calculating the derivative of $\gamma(t)$ gives	
%\[ \gamma'(t) = \tfrac{1}{2}\, \frac{t^{q-1}}{(\log t)^{q + \beta}} ( q - \frac{q + \beta}{\log t}) \, , \]
%so that $\gamma'$ is either strictly increasing or decreasing for all $t \geq t_1$ if $t_1$ is sufficiently large. It follows that for all $t \geq t_1$, $f'(t) = B \frac{1}{t^2} - \gamma'(t)$ has at most one zero $t'$. 
Moreover, 
\[ f'(t) %= B \frac{1}{t^2} - \gamma'(t) 
= \frac{1}{t^2}\Big( B - \tfrac{1}{2} \, \frac{t^{q+1}}{(\log t)^{q + \beta}} \big( q - \tfrac{q + \beta}{\log t}\big)\Big), \]
which is negative for $t$ larger than some threshold depending on~$t_1$. 
Also, if $t_1$ is large enough, the function $t \mapsto \frac{t^{q+1}}{(\log t)^{q + \beta}} \big( q - \frac{q + \beta}{\log t}\big)$ is strictly increasing 
for $t \geq t_1$, hence $f'$ has at most one zero for $t \geq t_1$.
Therefore, if $f'$ has a zero $t' \geq t_1$, then $f'$ is negative for all $t> t'$, implying that $f$ does not have 
a minimum at $t' \in (t_1,t_2)$. If $f'$ does not have a zero for $t \geq t_1$, then it follows that $f'(t) < 0$ for all $t \geq t_1$. 
In either case,  $f(t_1) \geq 0$ and $f(t_2) \geq 0$ imply that $f(t) \geq 0$ for all $t \in [t_1,t_2]$, in other words~(\ref{diff_first_second}) holds for all $t \in [t_1,t_2]$.

Now we drop the extra assumption on $Z^{\ssup 2}_t$ and define the jump times
\[ \tau^- = \sup\big\{ t < t_1 \, : \, Z_t^{\ssup 1} \neq Z_{t_1}^{\ssup 1} \big\} \quad \mbox{and}\quad
\tau^+ = \inf\big\{ t > t_2 \, : \, Z_t^{\ssup 1} \neq Z_{t_1}^{\ssup 1} \big\} \, . \]
%$n \in \N$ such that $\tau_n \leq t_1 < t_2 < \tau_{n+1}$.
Furthermore, define a sequence $s^{\ssup i}$ by setting $s^{\ssup 0} = \tau^-$ and for $i \geq 1$ setting
\[ s^{\ssup i} = \inf\{ s > s^{\ssup{i-1}} \, : \, \Phi_s(Z_s^{\ssup 2}) = \Phi_s(Z_s^{\ssup 3})  \}  \, . \]
Then, there exists $N \geq 1$ such that $s^{\ssup N} < \tau^+ < s^{\ssup {N+1}}$, where $N \geq 1$ since,
by Lemma~\ref{xi_of_Z_t}, 
$$Z_{\tau^-}^{\ssup 2} = \lim_{t \ua \tau^{-}} Z_{t}^{\ssup 1}\neq Z_{\tau^+}^{\ssup 1}.$$ 
Using that $\Phi_{s^{\ssup i}}(Z_{s^{\ssup i}}^{\ssup 2}) = 
\Phi_{s^{\ssup i}}(Z_{s^{\ssup i}}^{\ssup 3})$ and Proposition 3.4 in~\cite{KLMS09},
\[ \Phi_{s^{\ssup i}}(Z_{s^{\ssup i}}^{\ssup 1}) - \Phi_{s^{\ssup i}}(Z_{s^{\ssup i}}^{\ssup 2}) = \Phi_{s^{\ssup i}}(Z_{s^{\ssup i}}^{\ssup 1}) - \Phi_{s^{\ssup i}}(Z_{s^{\ssup i}}^{\ssup 3}) \geq \, a_{s^{\ssup i}} \la_{s^{\ssup i}} \, . \]
Therefore,~(\ref{diff_first_second}) holds for $t = s^{\ssup i}$, $i =1, \ldots, N$. Hence, the additional assumption that we made in the first part of the proof holds for each of the intervals $[t_1,s^{\ssup 1})$, $[s^{\ssup i}, s^{\ssup{ i+1}})$, for $i = 1, \ldots, N-1$ and 
$[s^{\ssup N},t_2)$. Thus, we can deduce that~(\ref{diff_first_second}) holds for all $t$ in the union of these intervals, 
%i.e.\[ \Phi_t(Z_t^{\ssup 1}) - \Phi_t(Z_t^{\ssup 2}) \geq  \tfrac{1}{2} \, a_t \la_t \quad \mbox{for all } t \in [t_1,s^{\ssup 1}] \cup %\bigcup_{i=1}^{N-1} [s^{\ssup i}, s^{\ssup{ i+1}}]  \cup [s^{(N)},t_2] \, , \]
which completes the proof.
\end{proof}

Finally, we can now show the stronger form of ageing for the profile $v$ and thereby 
complete the proof of Theorem~\ref{ageing_for_u}.

\begin{proof}[Proof of Theorem~\ref{ageing_for_u}] 
By Proposition~\ref{ageing_for_Z}, it suffices to show that
\[ \lim_{t \ra \infty} \Prob \Big\{ \sup_{\heap{z \in \R^d}{s \in [t, t+\theta t ]}} 
\big| {v(t,z)} - {v(s,z)} \big| < \eps \Big\}
= \lim_{t \ra \infty} \Prob\{ Z^{\ssup 1}_t= Z^{\ssup 1}_{t+\theta t} \} \,. \]
First of all, note that by Lemma~\ref{xi_of_Z_t} we know that $Z^{\ssup 1}_t = Z^{\ssup 1}_{t+\theta t}$ if and only
if $Z^{\ssup 1}_t = Z^{\ssup 1}_s$ for all $s \in [t,t+\theta t]$. We will work on the event
\[ A_t = \big\{  \Phi_{t}(Z_{t}^{\ssup 1}) - \Phi_{t}(Z_{t}^{\ssup 2}) \geq a_{t} \la_{t}/ 2 \big\} \cap
\big\{  \Phi_{t+\theta t}(Z_{t+\theta t}^{\ssup 1}) - \Phi_{t+\theta t}(Z_{t+\theta t}^{\ssup 2}) 
\geq a_{t+\theta t} \la_{t+\theta t}/ 2 \big\} \, . \]
Recall from Proposition 5.3 in~\cite{KLMS09} that if $\Phi_t(Z_t^{\ssup 1})$ and $\Phi_t(Z_t^{\ssup 2})$ are sufficiently far apart, then the profile is localized in $Z_t^{\ssup 1}$. More precisely, almost surely, 
\[ \lim_{t \ra \infty} \sum_{z \in \Z^d \setminus \{ Z_t^{\ssup 1}\}} v(t,z) \,
\1\{ \Phi_t(Z_t^{\ssup 1}) - \Phi_t(Z_t^{\ssup 2}) \geq a_t \la_t / 2 \}= 0\, .  \]
In particular, for given $\eps<\frac12$, we can assume that $t$ is sufficiently large, so that for all $s \geq t$, 
\be{total_mass_if_separated}  \sum_{z \in \Z^d \setminus \{ Z_s^{\ssup 1}\}} v(s,z) \,
\1\{ \Phi_s(Z_s^{\ssup 1}) - \Phi_s(Z_s^{\ssup 2}) \geq a_s \la_s / 2\} < \tfrac\eps2\, .\ee
%since this is an event whose probability tends to $1$ as $t \ra \infty$.
Now, if $Z_t^{\ssup 1} \neq Z_{t+\theta t}^{\ssup 1}$, then on $A_t$, we know by~(\ref{total_mass_if_separated}) that
$v(t+\theta t, Z_t^{\ssup 1}) \leq \frac\eps2$.
%\[ u(t+\theta t, Z_t^{\ssup 1}) \leq \sum_{z \in \Z^d\setminus\{Z_{t+\theta t}^{\ssup 1}\}} u(t+\theta t,z) < \tfrac{1}{2} \, \eps \, U(t+\theta t) %\, , \]
Combining this with the fact that $v(t, Z_t^{\ssup 1}) > 1 - \frac\eps2$, we have that
\[ \sup_{\heap{z \in \Z^d}{s \in [t,t+\theta t]}} \big| v(t,z) - v(s,z) \big| 
\geq \big| {v(t,Z_t^{\ssup 1})} - v(t+\theta t,Z_t^{\ssup 1}) \big|
> 1- \eps > \eps \, . \]
Conversely, assume that $Z_t^{\ssup 1} = Z_{t+\theta t}^{\ssup 1}$, then by Lemma~\ref{xi_of_Z_t}, $Z_t^{\ssup 1}= Z_s^{\ssup 1}$ for all \mbox{$s \in [t,t+\theta t]$}. Now, on the event $A_t$ we know by Lemma~\ref{separation_first_second}  that for all $s \in [t,t+\theta t]$,
\be{difference_first_second} \Phi_s(Z_s^{\ssup 1}) - \Phi_s(Z_s^{\ssup 2}) \geq  a_s \la_s / 2 \, . \ee
This implies by~(\ref{total_mass_if_separated}) that 
$$\sum_{z \in \Z^d \setminus \{ Z_s^{\ssup 1}\}} v(s,z) < \eps/2 \qquad \mbox{ for all $s \in [t,t+\theta t]$.}$$
As in the proof of Lemma~\ref{weak_ageing}, this yields that
\[ \sup_{\heap{z \in \R^d}{ s \in [t, t+\theta t]}} \big| {v(t,z)} - {v(s,z)} \big| < \eps \, . \]
Hence, to complete the proof, it remains to notice that by~\cite[Lemma 6.2]{KLMS09} the pair
$(\Phi_t(Z_t^{\ssup 1})/{a_t},{\Phi_t(Z_t^{\ssup 2})}/{a_t})$ converges weakly to a limit random variable 
with a density, from which we conclude that $\Prob(A_t) \ra 1$ as $t \ra \infty$. % might need better explanation
\end{proof}

\section{Ageing: an almost-sure limit theorem}\label{se:almost_sure_ageing}

In this section, we prove Theorem~\ref{asymptotics_R_X}. As in the previous section, we first concentrate on an analogous 
theorem for the maximizer of the variational problem $\Phi_t$. In particular, in Section~\ref{sect_moderate_deviations}, we 
extend Proposition~\ref{ageing_for_Z} to a moderate deviations principle. This estimate allows us to prove the equivalent 
of the almost sure ageing Theorem~\ref{asymptotics_R_X} in the setting of the variational problem in Section~\ref{asymptotics_tau}. 
Finally, in Section~\ref{sect_asymptotics_sigma}, we transfer this result to the maximizer of $v$.

\subsection{Moderate deviations}\label{sect_moderate_deviations}

Recall from Proposition~\ref{asymptotics_I_large_c} that 
\[ \lim_{t \ra \infty} \Prob\{Z_t = Z_{t+\theta t}\} = I(\theta) \sim \frac{1}{dB(\alpha-d+1,d)} \, \theta^{-d} \, , \]
where the latter asymptotic equivalence holds for $\theta$ tending to infinity. 
We now show that we obtain the same asymptotic for $\Prob\{ Z_t = Z_{t+\theta t} \}$ 
if we allow $\theta$ to grow slowly with $t$. 

\begin{prop}[Moderate deviations]\label{moderate_deviations} For any positive function $\theta_t$ such that \mbox{$\theta_t \ra \infty$} and 
$\theta_t \leq (\log t)^\delta$ for some $\delta >0$, we have that 
\[ \Prob\{ Z_t= Z_{t(1+\theta_t)} \} = \big( \tfrac{1}{dB(\alpha-d+1,d)}+o(1) \big)\,\theta_t^{-d}\,.\]
%for some explicit constant $C>0$.
\end{prop}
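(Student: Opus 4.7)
The strategy is to strengthen Proposition~\ref{ageing_for_Z} from a fixed-$\theta$ convergence to a statement holding uniformly for $\theta = \theta_t$ in the range $[1,(\log t)^\delta]$, then combine it with the tail asymptotic $\theta^d I(\theta) \to 1/(d\,B(\alpha-d+1,d))$ from Proposition~\ref{asymptotics_I_large_c}(a). Concretely, I would aim to prove
\[
\Prob\{Z_t = Z_{t(1+\theta_t)}\} = I(\theta_t)\,(1+o(1)),
\]
with $o(1)$ uniform in $\theta_t$; multiplying by $\theta_t^d$ and invoking Proposition~\ref{asymptotics_I_large_c}(a) then yields the proposition.

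To establish this, I would revisit the proof of Proposition~\ref{ageing_for_Z}, tracking the dependence on $\theta$ throughout. Two modifications are essential. First, the fixed box $B_N$ in Lemma~\ref{restriction_to_box} must be replaced by $B_{N_t}$ large enough to accommodate the typical location of $Z_{t(1+\theta_t)}$; since $r_{t(1+\theta_t)}/r_t \sim (1+\theta_t)^{q+1}$, this point lives at scale $\theta_t^{q+1} r_t$, so one would take, say, $N_t = \theta_t^{q+1}(\log\log t)^2$, and use quantitative tail bounds on the limit distributions of $Z_t/r_t$ and $Z_{t(1+\theta_t)}/r_t$ (read off Lemma~6.2 of \cite{KLMS09}) to show that the box-exit contribution is $o(\theta_t^{-d})$. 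Second, the error $\delta_\theta$ in Lemma~\ref{Phi_t(1+c)_in_Phi_t} must be uniformly small on $\widehat B_{N_t}$; a direct reading of its explicit form in \eqref{error_in_required_form} gives $\delta_\theta = O(N_t \log N_t/\log t)$ uniformly in $\theta$, which is $o(1)$ provided $\theta_t$ (and hence $N_t$) grows at most as a sufficiently small power of $\log t$.

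Equipped with these uniform estimates, the point-process calculation at the heart of Proposition~\ref{ageing_for_Z} yields
\[
\Prob\{Z_t = Z_{t(1+\theta_t)}\} = \iint_{B_{N_t}} e^{-\nu(D_{\theta_t}^{N_t}(|x|,y))}\, \nu(\diff x\,\diff y)\,(1+o(1)),
\]
and the truncated integral on the right differs from $I(\theta_t)$ only through its spatial cutoff. Using the explicit beta-function representation from Proposition~\ref{I} one verifies that the truncation error is of smaller order than $\theta_t^{-d}$ for $N_t$ chosen as above. Combining these steps and replacing $I(\theta_t)$ by its leading-order asymptotic from Proposition~\ref{asymptotics_I_large_c}(a) completes the proof.

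The key technical obstacle is to upgrade the vague convergence $\Pi_t \Rightarrow \Pi$ used in Proposition~\ref{ageing_for_Z} into a quantitative Poisson approximation on the growing box $\widehat B_{N_t}$ with absolute error $o(\theta_t^{-d})$. I would address this by coupling the i.i.d.\ Pareto potential on $\{z\in\Z^d: |z|/r_t\leq N_t\}$ with a Poisson process of intensity $\alpha u^{-\alpha-1}\,\diff u$, using the exact tail $\Prob\{\xi > x\} = x^{-\alpha}$ to obtain a polynomial-in-$t$ rate of convergence, which easily dominates the $o(\theta_t^{-d})$ requirement. A secondary obstacle is that the bound on $\delta_\theta$ restricts the admissible range of $\delta$ via the constraint $(q+1)\delta < 1$; extending to arbitrary polylogarithmic $\theta_t$ should then follow by a bootstrap, splitting the window $[t,t(1+\theta_t)]$ into geometrically spaced subwindows and iterating the estimate conditionally, the Markovian structure of the limiting point-process being the underlying justification.
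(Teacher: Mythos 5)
Your route --- quantify the Poisson approximation $\Pi_t\approx\Pi$ on boxes growing like $\theta_t^{q+1}$ and read off $I(\theta_t)$ --- is genuinely different from the paper's. The paper explicitly abandons the point-process framework for this statement (``the weak convergence only applies to compact sets, whereas here we deal with sets that increase slowly with $t$'') and instead carries out a direct computation from the Pareto tails, approximating sums over $\Z^d$ by integrals and products over $\Z^d$ by exponentials.

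There is a real gap in the proposal. Your uniform \emph{additive} bound $\delta_\theta = O(N_t\log N_t/\log t)$ on $\widehat B_{N_t}$ forces $N_t=o(\log t/\log\log t)$, i.e.\ $(q+1)\delta<1$, and the bootstrap you offer to remove this constraint does not work as stated: $(Z_t)$ is not Markov at finite $t$, the events $\{Z_{t_i}=Z_{t_{i+1}}\}$ over geometric subwindows are strongly positively correlated, and naively iterating the one-step estimate does not even reproduce the power $\theta_t^{-d}$ --- with $K$ geometric subwindows of ratio $(1+\theta_t)^{1/K}$ an independent product gives $C^K\theta_t^{-d}$, off by a factor $C^{K-1}$. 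The paper never meets this obstacle because its error is \emph{multiplicative}: in Lemma~\ref{exact_errors_for_Phi} the factor $(1\pm\delta_t)$ multiplies $q|z|/r_t$, and after applying the Pareto tail it becomes a uniform $(1\pm\delta_t)^{-\alpha}=1+O(\log\log t/\log t)$ that is independent of the box size; this is why $N_t,g_t$ may be taken to be arbitrary powers of $\log t$. A secondary issue: coupling the raw exceedances $(z,\xi(z))$ with a Poisson process may well have a polynomial rate, but what you need is $\Pi_t$ versus $\Pi$, and that discrepancy is dominated by the $\delta_t\asymp\log\log t/\log t$ error in passing from $\xi(z)$ to $\Phi_t(z)$, so the achievable rate is polylogarithmic rather than polynomial and reimposes an exponent constraint of the kind you are trying to escape. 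As it stands, the argument covers only $\delta$ below an explicit threshold, which is weaker than the proposition.
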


Unlike in the proof of Proposition~\ref{ageing_for_Z}, we cannot directly
use the point process techniques, as the weak convergence only applies to compact sets, whereas here we deal with sets that 
increase slowly with~$t$ to a set that has infinite mass under the intensity measure $\nu$. We start by expressing $\Phi_t(z)$ in terms of $\xi(z)$ and $|z|$, while carefully controlling the errors.

\begin{lemma}\label{exact_errors_for_Phi}
There exist $C_1,C_2>0$ and $t_0>0$ such that, for all $z\in\Z^d, t>t_0$ with $t \xi(z) > |z|$, %we have
\[ \frac{\xi(z)}{a_t} - q \frac{|z|}{r_t}\big(1+2\tfrac{\log (N_t + q g_t)}{\log t}\big)  
\leq \frac{\Phi_t(z)}{a_t} \leq \frac{\xi(z)}{a_t} - q \frac{|z|}{r_t}\big(1-C_1\tfrac{\log \log t}{\log t}\big) + C_2 \frac{1}{\log t} \, , \]
where the lower bound holds uniformly for all functions $N_t,g_t$ such that $\Phi_t(z) \leq a_t N_t, |z|\leq r_t g_t$ and 
$N_t,g_t \ra \infty$ as $t \ra \infty$. Similarly, for 
$\theta \geq 0$ and $z\in\Z^d$ such that $(1+\theta)t \xi(z) > |z|$, we have
\[ \frac{\xi(z)}{a_t} - \frac{q}{1+\theta}\frac{|z|}{r_t}
\big(1+2 \tfrac{\log(N_t + q g_t)}{\log t}\big) \leq \frac{\Phi_{t+\theta t}(z)}{a_t} \leq 
\frac{\xi(z)}{a_t} - \frac{q}{1+\theta}\frac{|z|}{r_t}\big(1-C_1 \tfrac{\log \log t}{\log t}\big) + C_2 \frac{1}{\log t} \, , \]
again with the restriction that for the lower bound we assume that
$\Phi_t(z) \leq a_t N_t$ and $|z|\leq r_t g_t$. %Moreover, for all $z$ such that $\Phi_t(z) \leq a_t N_t$ and $|z|\leq r_t g_t$
%\[ \frac{\Phi_t(z)}{a_t} + \frac{qc}{1+c}\frac{|z|}{r_t}\Big(1 - C_1 \frac{\log\log t}{\log t}\Big) - \frac{C_2}{\log t} \leq \frac{\Phi_{t+\theta t}(z)}{a_t} \leq \frac{\Phi_t(z)}{a_t} + \frac{q c}{1+c} \frac{|z|}{r_t}\Big(1 + \frac{\log (N_t + q g_t)}{\log t}\Big) \, . \]
\end{lemma}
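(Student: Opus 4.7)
The approach is to derive a single algebraic identity for $\Phi_t(z)/a_t$ that isolates the dominant term $\xi(z)/a_t - q|z|/r_t$ and then estimate the resulting error terms using the assumed side conditions. Starting from the definition $\Phi_t(z) = \xi(z) - (|z|/t)\log\xi(z) + \eta(z)/t$, dividing by $a_t$, writing $\log\xi(z) = \log(\xi(z)/a_t) + \log a_t$, and using $\log a_t = q(\log t - \log\log t)$ together with $a_t t = r_t \log t$, one can rewrite
\[
\frac{\Phi_t(z)}{a_t} = \frac{\xi(z)}{a_t} - q\frac{|z|}{r_t} + q\frac{|z|}{r_t}\frac{\log\log t}{\log t} - \rho \log\frac{\xi(z)}{a_t} + \frac{\eta(z)}{r_t\log t},
\]
with $\rho := |z|/(r_t\log t)$. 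Both error terms must be controlled; the $\eta$-contribution is trivially bounded by $(\log d)|z|/(r_t\log t)$ and will be absorbed into whichever neighbouring term dominates.

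For the upper bound, the only term requiring care is $-\rho\log(\xi(z)/a_t)$ when $\xi(z) < a_t$, where it is positive. Here I would invoke the hypothesis $t\xi(z) \geq |z|$, which is equivalent to $\xi(z)/a_t \geq \rho$, and the monotonicity of $u \mapsto -\rho\log u$, to deduce $-\rho\log(\xi(z)/a_t) \leq -\rho\log\rho$. Writing $y := |z|/r_t$ so that $\rho = y/\log t$, one has $-\rho\log\rho = (y\log\log t)/\log t - (y\log y)/\log t$; the first summand fits inside the allowance $qyC_1\log\log t/\log t$ for $C_1$ sufficiently large in terms of $q$, while the second contributes at most $(e\log t)^{-1}$ via the elementary inequality $-x\log x \leq 1/e$. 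The $\eta$-contribution is similarly absorbed.

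For the lower bound the difficulty is reversed: $-\rho\log(\xi(z)/a_t)$ is negative when $\xi(z) > a_t$, and one needs an a priori upper bound on $\xi(z)/a_t$. Rearranging the main identity, under the restrictions $\Phi_t(z) \leq a_t N_t$ and $|z| \leq r_t g_t$, gives $\chi_\rho(\xi(z)/a_t) \leq N_t + qg_t + o(g_t)$, where $\chi_\rho(u) := u - \rho\log u$ is strictly increasing on $[\rho,\infty)$. A short bootstrap of the relation $u = \chi_\rho(u) + \rho\log u$ then yields $\xi(z)/a_t \leq (N_t + qg_t)(1+o(1))$ and hence $\log(\xi(z)/a_t) \leq \log(N_t+qg_t) + o(1)$, so that $-\rho\log(\xi(z)/a_t) \geq -(y/\log t)\log(N_t+qg_t)(1+o(1))$, which fits inside the permitted slack $-2qy\log(N_t+qg_t)/\log t$ for large $t$ (the constant $2$ is generous). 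All remaining error terms in the identity are non-negative and only help.

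The bounds for $\Phi_{t+\theta t}(z)/a_t$ follow from the same argument applied with $t$ replaced by $t(1+\theta)$, noting that $a_t$, $r_t$, $\rho$ and $y$ are unchanged while the coefficient $q$ becomes $q/(1+\theta)$. The hypothesis $(1+\theta)t\xi(z) \geq |z|$ supplies $\xi(z)/a_t \geq \rho/(1+\theta)$, still enough to run the upper-bound estimate via $-\tilde\rho\log\tilde\rho$ with $\tilde\rho := \rho/(1+\theta)$, and the a priori bound $\xi(z)/a_t \leq N_t + qg_t + o(1)$ needed for the lower bound continues to come from the same hypothesis $\Phi_t(z) \leq a_t N_t$. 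The main technical subtlety throughout is the lower bound, which hinges on inverting $\chi_\rho$ to convert a bound on $\Phi_t$ into a bound on $\xi$; this is precisely what forces the factor $\log(N_t+qg_t)$ into the stated error.
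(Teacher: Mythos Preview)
Your approach is essentially the paper's: the same algebraic identity, the same upper-bound argument via $u:=\xi(z)/a_t \ge \rho$ and $-x\log x \le e^{-1}$, and for the lower bound the same reduction to an a priori upper bound on $u$. The only difference is in how that upper bound is obtained. The paper uses the inequality $\log x \le \sqrt{x}$: from $\chi_\rho(u) \le N_t + qg_t =: M$ one gets (in the nontrivial case $\sqrt u > 1 + g_t/\log t$) that $\sqrt{u} \le M$, hence $u \le M^2$ and $\log u \le 2\log M$---and this factor of $2$ is precisely the one in the statement.

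Your ``short bootstrap'' is under-specified and, as stated, does not go through in the claimed generality. To iterate $u \le M + \rho\log u$ you first need a crude bound $u \le CM$, and you do not say where this comes from; the paper's $\sqrt{x}$-trick is exactly the device that supplies it. Moreover, the refinement to $u \le M(1+o(1))$ would require $\rho\log M/M \to 0$, i.e.\ $\log(N_t+qg_t)=o(\log t)$, which the hypotheses do not guarantee (only $N_t,g_t\to\infty$ is assumed). For the lemma as stated one only needs $\log u \le 2\log M$, and the direct $\sqrt{x}$ argument delivers this without any iteration. Your treatment of $\Phi_{t+\theta t}$ is fine and matches the paper's observation that the same error term is simply scaled by $1/(1+\theta)$.
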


\begin{proof} Using that $r_t = \frac{t}{\log t} a_t$, we have, for $t \xi(z) > |z|$, that
$$\begin{aligned} \frac{\Phi_t(z)}{a_t} & = \frac{\xi(z)}{a_t} - \frac{1}{ta_t} ( |z| \log \xi(z) - \eta(z) ) 
 = \frac{\xi(z)}{a_t} - q \frac{|z|}{r_t} + {\rm error}(t,z),  \end{aligned}$$
where
$${\rm error}(t,z)=
q\frac{|z|}{r_t} \frac{\log \log t}{\log t} - \frac{|z|}{r_t \log t} \log \frac{\xi(z)}{a_t} + \frac{\eta(z)}{r_t \log t} \, .$$
It thus suffices to find suitable upper and lower bounds for the last two terms. For the upper bound, we use that $\eta(z) \leq |z|\log d$ and also that $x \log x\geq -e^{-1}$ for any $x>0$, to get
\[ -\frac{|z|}{r_t \log t} \log \frac{\xi(z)}{a_t} +\frac{\eta(z)}{a_t t} \leq -\frac{|z|}{r_t \log t} \log \frac{|z|}{r_t \log t} + \frac{|z|}{r_t}\frac{\log d}{\log t} \leq \frac{1}{\log t} e^{-1} + \frac{|z|}{r_t}\frac{\log \log t + \log d}{\log t} \, , \]
so that the upper bound holds for $C_1\ge\frac1q\,(1+\log d)+q$ and $C_2\ge e^{-1}$.
For the lower bound we\vspace{-1mm} note that either $\xi(z)/a_t < (1+\frac{g_t}{\log t})^2$, 
or we can use $\log x \leq x^{1/2}$, for all $x >0$, to estimate
\[\begin{aligned} \frac{\Phi_t(z)}{a_t} + q\frac{|z|}{r_t} & \geq \frac{\xi(z)}{a_t} - \frac{|z|}{r_t\log t} \log \frac{\xi(z)}{a_t}
 \geq \frac{\xi(z)}{a_t} - \frac{|z|}{r_t \log t} \Big(\frac{\xi(z)}{a_t}\Big)^{1/2}\\ & \geq \frac{\xi(z)}{a_t} - \frac{g_t}{ \log t} \Big(\frac{\xi(z)}{a_t}\Big)^{1/2} \geq\Big(\frac{\xi(z)}{a_t}\Big)^{1/2} \, . \end{aligned}\]
Therefore, we have
${\xi(z)}/{a_t} \leq \max\{ (1+\tfrac{g_t}{\log t})^2, (\tfrac{\Phi_t(z)}{a_t} + q\tfrac{|z|}{r_t})^2\}
\leq (N_t + q g_t)^2$.
Hence, we can conclude that
\[{\rm error}(t,z) \geq   - q \frac{|z|}{r_t}\, \frac{2 \log (N_t+q g_t)}{\log t}\, .\]
For the bound on $\Phi_{t+\theta t}(z)$ it suffices to note that
\[\bal \frac{\Phi_{t + \theta t}(z)}{a_t} % & = \frac{\xi(z)}{a_t} - \frac{1}{(1+\theta)t a_t}(|z|\log \xi(z) - \eta(z) ) \\
%& = \frac{\xi(z)}{a_t} - \frac{q}{1+\theta}\frac{|z|}{r_t} + \frac{1}{1+\theta}\Big( q \frac{|z|}{r_t}\frac{\log \log t}{\log t}
%- \frac{|z|}{r_t \log t} \log \frac{\xi(z)}{a_t} + \frac{\eta(z)}{r_t \log t} \Big) \\
& = \frac{\xi(z)}{a_t} - \frac{q}{1+\theta}\frac{|z|}{r_t} + \frac{1}{1+\theta} \, \mbox{error}(t,z) \, , \eal \]
where error$(t,z)$ is precisely the same error term as in the first part of the lemma.
%For the final estimate, note that
%\[ \frac{\Phi_{t+\theta t}(z)}{a_t} = \frac{\Phi_t(z)}{a_t} + \frac{qc}{1+c}  \Big( \frac{|z|}{r_t \log t} \log \frac{\xi(z)}{a_t} - \frac{\eta(z)}{r_t \log t} - \frac{|z|}{r_t}\frac{\log \log t}{\log t} \Big) \, . \]
%Thus the claim follows with the same error estimates as before, using in particular that $\frac{c}{1+c} \leq~1$.
\end{proof}

In analogy to the proof of Proposition~\ref{ageing_for_Z}, we will have to restrict $(Z_t/r_t, \Phi_t(Z_t)/a_t)$  to large boxes in $\R^d \times \R$. The first step is therefore to estimate the probability that $(Z_t/r_t, \Phi_t(Z_t)/a_t)$ lies outside a large box.

\begin{lemma}\label{restricting_maximizer}There exist constants $C, C'>0$ such that for all $t>0$ large enough, uniformly for all $N\geq 1$, 
\vspace{-2mm}
% and $0<\eta <1$,
\begin{itemize} 
\item[(a)] $\displaystyle\Prob\big\{ \tfrac{|Z_t|}{r_t} \geq N \big\} \leq C \, N^{d-\alpha}$,
\item[(b)] $\displaystyle\Prob\big\{ \tfrac{\Phi_t(Z_t)}{a_t}  \geq N \big\} \leq C \, N^{d-\alpha}$,
\item[(c)] for any positive function $\eta_t\le 1$ such that $\eta_ta_t \ra \infty$ we have
\[ \Prob\big\{ \tfrac{\Phi_t(Z_t)}{a_t} \leq \eta_t \big\} \leq C e^{-C'\eta_t^{d-\alpha}} \, . \]
\end{itemize}
\end{lemma}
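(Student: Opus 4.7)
All three estimates rest on the Pareto tail $\Prob\{\xi(z)>x\}=x^{-\alpha}$ on $[1,\infty)$, the scaling identity $r_t^d=a_t^\alpha$ (immediate from the definitions of $r_t,a_t$), and the lattice sum bound $\sum_{z\in\Z^d,\,|z|\geq M}|z|^{-\alpha}=O(M^{d-\alpha})$, which holds for $\alpha>d$ because $\#\{z\in\Z^d:|z|=k\}=\Theta(k^{d-1})$.

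For part~(a), I first observe that $\Phi_t(Z_t)\geq\Phi_t(0)=\xi(0)\geq 1$. Hence on the event $\{|Z_t|/r_t\geq N\}$ some site $z$ satisfies $|z|\geq Nr_t$ together with $\Phi_t(z)\geq 1$; by the upper bound in Lemma~\ref{exact_errors_for_Phi}, and choosing $t$ large enough to absorb the $\log\log t/\log t$ error, this forces $\xi(z)/a_t\geq \tfrac{q}{2}|z|/r_t$. A union bound together with the Pareto tail and the lattice sum above gives
\[
\Prob\big\{|Z_t|/r_t\geq N\big\}\leq \sum_{|z|\geq Nr_t}\Big(\tfrac{q a_t|z|}{2r_t}\Big)^{-\alpha}\leq C\,\tfrac{r_t^\alpha}{a_t^\alpha}\,(Nr_t)^{d-\alpha}=C\,N^{d-\alpha},
\]
where the last step uses $r_t^d=a_t^\alpha$. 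Part~(b) is analogous: $\Phi_t(Z_t)\geq Na_t$ combined with the same upper bound in Lemma~\ref{exact_errors_for_Phi} yields $\xi(z)\geq \tfrac{N}{2}a_t+\tfrac{q}{2}a_t|z|/r_t$ for some $z$; the union bound split at $|z|=Nr_t$ contributes $(Nr_t)^d(Na_t)^{-\alpha}=N^{d-\alpha}$ from the inner box (volume $\times$ uniform tail) and $(r_t/a_t)^\alpha(Nr_t)^{d-\alpha}=N^{d-\alpha}$ from the outer region (exactly as in~(a)).

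Part~(c) carries the bulk of the work. I would fix $M=\eta_t/(4q)$ and work on the box $B_t=\{z\in\Z^d:|z|\leq Mr_t\}$; since $\eta_t a_t\to\infty$ and $r_t/a_t=t/\log t\to\infty$, we have $Mr_t\to\infty$ and hence $|B_t|\geq c_d(Mr_t)^d$. The key observation is that for $z\in B_t$ with $\xi(z)\in[2\eta_t a_t,4\eta_t a_t]$, the hypothesis $\eta_t\leq 1$ yields $\log\xi(z)\leq \log 4+\log a_t\leq q\log t+O(1)$, whence
\[
\tfrac{|z|}{t}\log\xi(z)\leq Mq\,a_t(1+o(1))=\tfrac14\eta_t a_t(1+o(1)),
\]
so that $\Phi_t(z)\geq\xi(z)-\tfrac14\eta_t a_t(1+o(1))\geq\eta_t a_t$ for $t$ sufficiently large. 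Therefore the event $\{\Phi_t(Z_t)\leq \eta_t a_t\}$ is contained in the event that no $z\in B_t$ has $\xi(z)$ in the window $[2\eta_t a_t,4\eta_t a_t]$. By independence of the $\xi(z)$, and since the window has mass $(2^{-\alpha}-4^{-\alpha})(\eta_t a_t)^{-\alpha}$, this probability is at most
\[
\big(1-c\,(\eta_t a_t)^{-\alpha}\big)^{c_d(Mr_t)^d}\leq \exp\!\Big(-c'(Mr_t)^d(\eta_t a_t)^{-\alpha}\Big)=\exp(-C'\eta_t^{d-\alpha}),
\]
where the final equality uses $r_t^d=a_t^\alpha$ once more.

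The only delicate point is the calibration in~(c): $M$ must be small enough that the $\log\xi(z)$-penalty is dominated by $\xi(z)$ on the chosen window, yet large enough that $(Mr_t)^d(\eta_t a_t)^{-\alpha}$ produces the target exponent $\eta_t^{d-\alpha}$. Restricting $\xi(z)$ to the bounded window $[2\eta_t a_t,4\eta_t a_t]$ is what makes the log estimate elementary (this is where $\eta_t\leq 1$ enters), and the power of $\eta_t$ follows automatically from $r_t^d=a_t^\alpha$. Parts~(a) and~(b) use only the upper half of Lemma~\ref{exact_errors_for_Phi}, which is free of the $N_t,g_t\to\infty$ restriction required for the lower bound, so no complication arises there.
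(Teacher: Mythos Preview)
Your proof is correct. Parts~(a) and~(b) are essentially the paper's argument: a union bound combined with the upper half of Lemma~\ref{exact_errors_for_Phi}, the Pareto tail, the identity $r_t^d=a_t^\alpha$, and the lattice sum $\sum_{|z|\geq M}|z|^{-\alpha}=O(M^{d-\alpha})$. The only cosmetic differences are that the paper uses the threshold $\Phi_t(z)\geq 0$ rather than $\geq 1$ in~(a), and in~(b) it bounds the full sum $\sum_{z}a_t^{-\alpha}(N+q|z|/r_t)^{-\alpha}$ by an integral rather than splitting at $|z|=Nr_t$; these lead to the same estimate.

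Part~(c) is where you take a genuinely different route. The paper works on the large box $|z|\leq r_t\log t$, invokes the \emph{lower} bound of Lemma~\ref{exact_errors_for_Phi} (with $N_t=1$, $g_t=\log t$) to deduce that $\Phi_t(z)/a_t\leq\eta_t$ forces $\xi(z)/a_t\leq \eta_t+q|z|/r_t(1+o(1))$, takes the product over all $z$ in the box, and then bounds $\sum_{|z|\leq r_tg_t}a_t^{-\alpha}(\eta_t+q|z|/r_t)^{-\alpha}$ from below by an integral which evaluates to a constant times $\eta_t^{d-\alpha}$. Your argument is more elementary: by shrinking the box to $|z|\leq \eta_t r_t/(4q)$ and restricting $\xi(z)$ to the window $[2\eta_ta_t,4\eta_ta_t]$, you can verify $\Phi_t(z)>\eta_ta_t$ by a direct computation (using $r_t/t=a_t/\log t$ and $\log a_t\leq q\log t$), bypassing the lower half of Lemma~\ref{exact_errors_for_Phi} entirely. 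The trade-off is that the paper's approach extracts slightly more from each site (the full tail rather than a window), but your calibration already yields the exponent $\eta_t^{d-\alpha}$ with less machinery, which is all the lemma requires.
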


\begin{proof} (a) Using Lemma~\ref{exact_errors_for_Phi}, we can estimate 
\[\begin{aligned}  \Prob\{ |Z_t| \geq N r_t\} & \leq \Prob\{ \exists z\in\Z^d \mbox{ with }|z|\geq N r_t, \tfrac{\Phi_t(z)}{a_t} \geq 0 \} \\
& \leq \sum_{\heap{z\in\Z^d}{|z|\geq N r_t}} \Prob\Big\{ \tfrac{\xi(z)}{a_t} \geq q\tfrac{|z|}{r_t}\big(1-C_1 \tfrac{\log \log t}{\log t}\big) - C_2 \tfrac{1}{\log t} \Big\} \\
& = (1+o(1))\sum_{\heap{z\in\Z^d}{|z|\geq N r_t}} a_t^{-\alpha} \big(q\tfrac{|z|}{r_t}\big)^{-\alpha} = (1+o(1))q^{-\alpha}r_t^{\alpha-d}\sum_{\heap{z\in\Z^d}{|z|\geq N r_t}} |z|^{-\alpha} \, , \end{aligned}\]
where we used that $r_t^d=a_t^\alpha$ and $o(1)$ tends to $0$ as $t \ra \infty$ uniformly in $N\geq 1$. We obtain
the required bound by noting that the sum is bounded by a constant multiple of $(Nr_t)^{d-\alpha}$.
%Approximating the sum by an integral we get the upper bound
%\[ \begin{aligned}  & (1+o(1))q^{-\alpha}r_t^{\alpha-N} \int_{|z| \geq N r_t} (|z| -1)^{-\alpha}\diff z \\
%& = (1+o(1)) \, \tfrac{2^d}{(d-1)!q^{\alpha}} r_t^{\alpha-d} \int_{r \geq r_t N} r^{d-1}(r-1)^{-\alpha} \diff r \leq
%(1+o(1)) \,\tfrac{2^{d+\alpha}}{(d-1)!q^\alpha} N^{d-\alpha} 	,  \end{aligned} \]
%assuming that $r_t \geq 2$, so that the first claim follows.

(b) For the second estimate, we use again Lemma~\ref{exact_errors_for_Phi} to obtain
\[ \begin{aligned} \Prob\{ \Phi_t(Z_t) & \geq N a_t \} \leq \sum_{z\in\Z^d} \Prob\{ \Phi_t(z) \geq N a_t\} \\
& \leq \sum_{z \in \Z^d} \Prob\Big\{ \tfrac{\xi(z)}{a_t} \geq N + q \tfrac{|z|}{r_t} \Big( 1- C_1\tfrac{\log \log t}{\log t}\Big) - C_2\tfrac{1}{\log t} \Big\} \\
& \leq (1+o(1)) \sum_{z \in \Z^d} a_t^{-\alpha}\big(N + q\tfrac{|z|}{r_t}\big)^{-\alpha} \, . \end{aligned} \vspace{-2mm}\]
Similarly as before, observe that the sum is bounded by a constant multiple of
$\int_0^\infty r^{d-1} (N + qr)^{-\alpha}$, which itself is bounded by a constant multiple of
$N^{d-\alpha}$.

(c) For the last bound, note first that by Lemma~\ref{exact_errors_for_Phi}, that if $t\xi(z) > |z|$ and \mbox{$|z|/r_t < g_t:=\log t$}  and $\Phi_t(z)/a_t < 1$, then there exists $C>0$ such that
\[ \tfrac{\xi(z)}{a_t} - q\tfrac{|z|}{r_t}\big(1+C \tfrac{\log\log t}{\log t}\big) < \tfrac{\Phi_t(z)}{a_t} \, . \]
Hence, we can estimate
\[\begin{aligned} \Prob\big\{ \tfrac{\Phi_t(Z_t)}{a_t} \leq \eta_t  \big\} & \leq 
\Prob\big\{ \tfrac{\Phi_t(z)}{a_t} \leq  \eta_t \mbox{ for all } z \mbox{ with } t\xi(z) > |z| \mbox{ and } |z|< r_t(\log t) \big\} \\
& \leq \prod_{\heap{z\in\Z^d}{|z|\leq r_t g_t}} \Prob\Big\{ t\xi(z) \leq |z| \mbox{ or }  \, \tfrac{\xi(z)}{a_t} \leq \eta_t + q\tfrac{|z|}{r_t}\big(1+C\tfrac{\log \log t}{\log t}\big) \Big\}
 \, . \end{aligned} \]
Now, if $t\xi(z) \leq |z|$ and $t$ is large enough, the second inequality must hold as well. Hence we obtain
\[\begin{aligned} \Prob\big\{ \tfrac{\Phi_t(Z_t)}{a_t}\leq \eta_t \big\} & \leq \prod_{\heap{z\in\Z^d}{|z|\leq r_t g_t}} \Prob\Big\{ \tfrac{\xi(z)}{a_t} \leq \eta_t + q\tfrac{|z|}{r_t}\big(1+C\tfrac{\log \log t}{\log t}\big) \Big\} \\
& = \exp\Big\{ \sum_{\heap{z\in\Z^d}{|z|\leq r_t g_t}} \log \Big(1 -  a_t^{-\alpha} \big(\eta_t + q\tfrac{|z|}{r_t}\big(1+ C \tfrac{\log \log t}{\log t}\big)\big)^{-\alpha}\Big) \Big\} \\
& \leq \exp\Big\{ - (1+o(1))\sum_{\heap{z\in\Z^d}{|z|\leq r_t g_t}} a_t^{-\alpha} \big(\eta_t + q\tfrac{|z|}{r_t}\Big)^{-\alpha}\Big\} \, ,
\end{aligned}  \vspace{-1mm}\]
using that $a_t \eta_t \ra \infty$ and  $\log(1-x) \leq -x$ for $x<1$. The sum can be bounded from below by a constant multiple of
$$r_t^{-d}\int_0^{r_t g_t-\frac{1}{2}}  r^{d-1}\big(\eta_t + q\tfrac{r+\tfrac{1}{2}}{r_t}\big)^{-\alpha}\diff r
= (1+o(1))\, \int_0^{g_t-\frac{1}{2}r_t^{-1}}  r^{d-1}(\eta_t + qr)^{-\alpha} \diff r,$$
and the latter integral can be seen to be bounded from below by a constant multiple of~$\eta_t^{d-\alpha}$.
\end{proof}

\begin{proof}[Proof of Proposition~\ref{moderate_deviations}] 
The \emph{main idea} is again to restrict $(Z_t/r_t, \Phi_t(Z_t)/a_t)$ to large boxes to be able to control the error when approximating $\Phi_t$. To set up the notation, we introduce functions $\eta_t = (\log t)^{-\beta'}$, $N_t = (\log t)^{\beta}$,
$g_t = (\log t)^\gamma$ for some parameters $\beta, \beta', \gamma > 0$, which we will choose later on depending on the function $\theta_t$
%. In particular, the idea is to restrict the maximizer 
such that
\[ \Prob\big\{ Z_t = Z_{t(1+\theta_t)} \big\} = 
\Prob\big\{ Z_t = Z_{t(1+\theta_t)} ,\, |Z_t| \leq r_t g_t,\, \tfrac{\Phi_t(Z_t)}{a_t} \in [\eta_t,N_t] \big\} + o(\theta_t^{-d}) \, . \]
Once these growing boxes are defined, we can find by Lemma~\ref{exact_errors_for_Phi} a constant $C > 0$ such that the function $\delta_t = C \frac{\log \log t}{\log t}$ satisfies
\[ \frac{\xi(z)}{a_t} - q\frac{|z|}{r_t}(1+\delta_t) \leq \frac{\Phi_t(z)}{a_t} \leq \frac{\xi(z)}{a_t} - q\frac{|z|}{r_t}(1-\delta_t) + \delta_t \, , \]
where the upper bound holds for all $z\in\Z^d$ and the lower bound for all $z\in\Z^d$ such that $|z| \leq r_t g_t$ and $\Phi_t(z) \leq a_t N_t$.

\emph{Upper bound.}
%By Lemma~\ref{restricting_maximizer}, there exists constant $C_1,C_2 >0$ such for all $t>0$.
%\[\begin{aligned} |\Prob\{ Z_t = Z_{t+\theta_t} \} & - \Prob\{ Z_t = Z_{t+\theta_t}; \eta'_t a_t \leq \Phi_t(Z_t) \leq \eta_t a_t; |Z_t| \leq g_t r_t \} | \\
%& \leq \Prob\{ \Phi_t(Z_t) \leq \eta'_t a_t \} + \Prob\{\Phi_t(Z_t) \geq \eta_t a_t \} + \Prob\{ |Z_t| \geq g_t r_t \}
%\\ & \leq C_1 ( e^{C_2 (\eta'_t)^{d-\alpha}} + \eta_t^{d-\alpha} + g_t^{d-\alpha} )
%  \, , \end{aligned} \]
%where $\eta'=(\log t)^{\gamma'},\eta=(\log t)^{\gamma}$ and $g_t=(\log t)^\beta$ for some constants $\beta,\gamma,\gamma' > 0$ which by our assumption on the growth of $\theta_t$ we can choose large enough such that
%\[ \theta_t^d (e^{C_2 (\eta'_t)^{d-\alpha}} + \eta_t^{d-\alpha} + g_t^{d-\alpha} ) \ra 0 \, \quad \mbox{as } t \ra \infty \, . \]
%This implies that when we try to calculate $\theta_t^d \Prob\{ Z_t = Z_{t+\theta t} \}$ we can concentrate on the second probability where $(\Phi_t(Z_t)/a_t,Z_t/r_t)$ are restricted to a box growing slowly in time.
%Note first that by Lemma~\ref{exact_errors_for_Phi}, there exists a constant $C>0$ so that if we denote by $\delta_t = C\frac{\log \log t}{\log t}$, then for all $z$ such that $t \xi(z) > |z|$ we have that
%\[ \frac{\Phi_t(z)}{a_t} < \frac{\xi(z)}{a_t} - q\frac{|z|}{r_t}(1-\delta_t) + \delta_t \, . \]
%Thus, since $t\xi(Z_t) > |Z_t|$ we can estimate for $N_t = (\log t)^\gamma$ and $\eta_t = (\log t)^{-\gamma'}$
We use a slight variation on the general idea, and consider
\be{upper_bound_can_restrict}\begin{aligned} \Prob\big\{ Z_t =& Z_{t(1+\theta_t)} \big\}  \leq \Prob \big\{ Z_t = Z_{t(1+\theta_t)},\, 
\eta_t \leq \tfrac{\xi(Z_t)}{a_t} - q\tfrac{|Z_t|}{r_t}(1-\delta_t) + \delta_t < N_t \big\} \\ 
& + \Prob\big\{ \Phi_t(Z_t) < \eta_t a_t \big\} + \sum_{z\in\Z^d}\Prob\big\{\tfrac{\xi(z)}{a_t} - q\tfrac{|z|}{r_t}(1-\delta_t) + \delta_t 
\ge N_t \big\} \, . \end{aligned} \ee
By Lemma~\ref{restricting_maximizer}(c) and the proof of (b), we have that
\[ \Prob\{ \Phi_t(Z_t) < \eta_t a_t \} + \sum_{z\in\Z^d}\Prob\Big\{\tfrac{\xi(z)}{a_t} - q\tfrac{|z|}{r_t}(1-\delta_t) + \delta_t \ge N_t \Big\} \leq C_1\big(e^{-C_2 \eta_t^{d-\alpha}} + N_t^{d-\alpha}\big) \, , \]
so that this error term is of order $o(\theta_t^{-d})$ if $\beta>0$ is large enough.

Now, we can unravel the definition of $Z_t$ being the maximizer of $\Phi_t$ (in particular we know $t \xi(Z_t) > |Z_t|$ and $\Phi_{t}(Z_t)$ is positive) and write 
\be{expression_in_growing_box} \begin{aligned} 
\Prob& \big\{ Z_t = Z_{t+\theta_t},\, \eta'_t a_t \leq \Phi_t(Z_t) \leq \eta_t a_t,\, |Z_t| \leq g_t r_t \big\} \\
& = \int_{\eta_t}^{N_t} \sum_{z \in\Z^d} \Prob\left\{\ba{cc} \Phi_t(\oz) \leq \Phi_t(z) & \mbox{for } \oz \mbox{ with } t \xi(\oz) > |\oz|; \\ \Phi_{t(1+\theta_t)}(\oz) \leq \Phi_{t(1+\theta_t)}(z) & \mbox{for } \oz \mbox{ with } t(1+\theta_t) \xi(\oz) > |\oz|; \\
t \xi(z) > |z|  & \mbox{for }\frac{\xi(z)}{a_t} - q\frac{|z|}{r_t}(1-\delta_t) + \delta_t \in \diff y \ea\right\} 
\, . \end{aligned}\ee
%We continue by finding an upper bound for the latter probability. 
Let $z$ be such that $|z| < g_t r_t$, and $ \frac{\xi(z)}{a_t} - q\frac{|z|}{r_t}(1-\delta_t) + \delta_t = y < N_t$. For any $\oz$ 
with $|\oz| < g_t r_t$ and
\[ \ba{cl} \Phi_t(\oz) \leq \Phi_t(z) & \mbox{if } t \xi(\oz) > |\oz|, \\
\Phi_{t(1+\theta_t)}(\oz) \leq \Phi_{t(1+\theta_t)}(z) & \mbox{if } t(1+\theta_t) \xi(\oz)> |\oz|, \ea \]
we can deduce from Lemma~\ref{exact_errors_for_Phi} that
\[ \ba{cl} \frac{\xi(\oz)}{a_t} - q\frac{|\oz|}{r_t}(1+\delta_t)\leq y 
& \mbox{if } t \xi(\oz) > |\oz|, \\
\frac{\xi(\oz)}{a_t} - \frac{q}{1+\theta_t}\frac{|\oz|}{r_t}(1+\delta_t) \leq y + \frac{q\theta_t}{1+\theta_t} \frac{|z|}{r_t}(1- \delta_t) 
& \mbox{if } t(1+\theta_t) \xi(\oz)> |\oz|.  \ea \]
Recalling that $r_t \log t = t a_t$ it is easy to see that the inequalities on the left hold automatically for 
sufficiently large~$t$, if the conditions on the right are violated.
Therefore, using the independence of the $\xi(z)$, we get an upper bound on the expression in~(\ref{expression_in_growing_box}), 
\be{discrete_products} \begin{aligned} \int_{\eta_t}^{N_t}  \sum_{z\in\Z^d}& \Prob\big\{ \tfrac{\xi(z)}{a_t}-q\tfrac{|z|}{r_t}(1-\delta_t) + \delta_t  \in \diff y \big\} 
\prod_{\heap{\oz\in\Z^d}{|\oz|< |z|}} \Prob\big\{ \tfrac{\xi(\oz)}{a_t} \leq y + q\tfrac{|\oz|}{r_t}(1+\delta_t)\big\}  \\
&  \times \prod_{\heap{\oz\in\Z^d}{|z|<|\oz|< r_t g_t}} \Prob\big\{ \tfrac{\xi(\oz)}{a_t} - \tfrac{q}{1+\theta_t}\tfrac{|\oz|}{r_t}(1+\delta_t) \leq y + \tfrac{q\theta_t}{1+\theta_t} \tfrac{|z|}{r_t}(1- \delta_t) \big\} \, . \end{aligned}\ee
We now require that $\beta' < 1$, so that $\delta_t\eta_t^{-1} \ra 0$. 
In the following steps, we treat each of the products in the above expression separately. 
First of all, as $\xi(0)$ is Pareto-distributed,
\[\begin{aligned} \frac{1}{\diff y}\, \Prob\big\{ \tfrac{\xi(z)}{a_t}-q\tfrac{|z|}{r_t}(1-\delta_t) + \delta_t  \in \diff y \big\} 
& = \alpha \, a_t^{-\alpha} \big(y + q\tfrac{|z|}{r_t}(1-\delta_t) - \delta_t \big)^{-(\alpha+1)}  \\
& \leq (1-\delta_t \eta_t^{-1})^{-(\alpha+1)} \alpha\, a_t^{-\alpha} \big(y + q\tfrac{|z|}{r_t} \big)^{-(\alpha+1)} \, . \end{aligned}\]
For the second expression in~(\ref{discrete_products}), we find that for all $y > \eta_t$, we know that $a_t y > a_t \eta_t > 1$, 
assuming that $t$ is large enough.
In particular, we can use the approximation $\log (1-x) < -x$ for $x<1$ to obtain uniformly for all $y > \eta_t$ and all $z$,
\[\begin{aligned} \prod_{\heap{\oz\in\Z^d}{|\oz|< |z|}} \Prob\big\{ &\tfrac{\xi(\oz)}{a_t} \leq y + q\tfrac{|\oz|}{r_t}(1+\delta_t)\big\} 
 \leq \exp \Big\{ \sum_{\heap{\oz\in\Z^d}{|\oz| < |z|}} \log \big( 1- a_t^{-\alpha} \big(y + q\tfrac{|\oz|}{r_t}(1+\delta_t)\big)^{-\alpha}\big) \Big\}  \\	
& \leq \exp \Big\{ - \sum_{\heap{\oz\in\Z^d}{|\oz| < |z|}} a_t^{-\alpha} \big(y + q\tfrac{|\oz|}{r_t}(1+\delta_t)\big)^{-\alpha} \Big\} \\
& \leq \exp \Big\{ - 	(1+\delta_t)^{-\alpha} \int_{|\oz|<|z|-\frac{1}{2}} r_t^{-d} \big(y + q\tfrac{|\oz|+\frac{1}{2}}{r_t}\big)^{-\alpha} \diff \oz \Big\} \\
& \leq 
%\exp \Big\{ - (1+\delta_t)^{-\alpha} \Big((1+q\eta_t^{-1}r_t^{-1})^{-\alpha} \int_{|\ox|<\frac{|z|}{r_t}} (y +q|\ox|)^{-\alpha}  \diff \ox - %r_t^{-d}\eta_t^{-\alpha}\Big)\Big\} \\ & = 
(1+o(1)) \exp \Big\{ -(1+o(1)) \int_{|\ox|<\frac{|z|}{r_t}} (y +q|\ox|)^{-\alpha}  \diff \ox  \Big\}
\, ,\end{aligned} \]
where our assumptions on $\eta_t$ guarantee that all the error terms are of order $o(1)$. Finally, we consider the last product in~(\ref{discrete_products}), and a similar calculation to above shows that uniformly in $y \geq \eta_t$ and for all $z \in \Z^d$,
\[ \begin{aligned} \prod_{\heap{\oz\in\Z^d}{|z|<|\oz|< r_t g_t}} & \Prob\big\{ \tfrac{\xi(\oz)}{a_t} - \tfrac{q}{1+\theta_t}\tfrac{|\oz|}{r_t}(1+\delta_t) \leq y + \tfrac{q\theta_t}{1+\theta_t} \tfrac{|z|}{r_t}(1- \delta_t) \big\} \\
%& \leq \exp\Big\{ -(1+\delta_t)^{-\alpha}\sum_{|z| < |\oz| < r_t g_t} r_t^{-d} \big(   y + \tfrac{q\theta_t}{1+\theta_t} \tfrac{|z|}{r_t} + %\tfrac{q}{1+\theta_t}\tfrac{|\oz|}{r_t}\big)^{-\alpha} \Big\} \\
& \leq (1+o(1)) \exp \Big\{ -(1+o(1)) \int_{\tfrac{|z|}{r_t}\leq |\ox| \leq g_t} \big(y + \tfrac{q\theta_t}{1+\theta_t} \tfrac{|z|}{r_t} + \tfrac{q}{1+\theta_t} |\ox| \big)^{-\alpha} \diff \ox \Big\} 
\, . \end{aligned} \]
Combining these estimates to bound~(\ref{discrete_products}) and thus~(\ref{expression_in_growing_box}), we obtain
\[ \begin{aligned} \Prob\big\{ & Z_t = Z_{t+\theta_t},\, \eta'_t a_t \leq \tfrac{\xi(Z_t)}{a_t} - q\tfrac{|Z_t|}{r_t}(1-\delta_t) + \delta_t \leq \eta_t a_t,\, |Z_t| \leq g_t r_t \big\} \\
& \leq (\alpha+o(1)) \int_{\eta_t}^{N_t} \sum_{z \in \Z^d} r_t^{-d} 
\exp\Big\{-(1+o(1))\int_{|\ox|<\frac{|z|}{r_t}} (y +q|\ox|)^{-\alpha}  \diff \ox\Big\}\\
&  \qquad\times \exp \Big\{ -(1+o(1)) \int\limits_{\frac{|z|}{r_t}\leq |\ox| \leq g_t} \!\!\big(y + \tfrac{q\theta_t}{1+\theta_t} \tfrac{|z|}{r_t} + \tfrac{q}{1+\theta_t} |\ox| \big)^{-\alpha} \diff \ox \Big\} \big(y + q\tfrac{|z|}{r_t} \big)^{-(\alpha+1)} \diff y \\
& \leq (1+o(1)) \int_{\eta_t}^{N_t} \int_{x \in\R^d}
\exp\Big\{-(1+o(1))\int_{|\ox|<|x|} (y +q|\ox|)^{-\alpha}  \diff \ox\Big\}\\
&  \qquad \times \exp \Big\{ -(1+o(1)) \int_{|x|\leq |\ox| \leq g_t} \big(y + \tfrac{q\theta_t}{1+\theta_t} |x| + \tfrac{q}{1+\theta_t} |\ox| \big)^{-\alpha} \diff \ox \Big\} \frac{\alpha \, \diff x \, \diff y}{(y + q|x| )^{\alpha+1}} \, , 
\end{aligned} \]
where, as before, the approximation of the sum by an integral works because $\eta_t a_t \ra \infty$.	Note also that, uniformly in $x$ and $y$,
\[ \int_{|\ox|\geq g_t} \big(y + \tfrac{q\theta_t}{1+\theta_t}|x| + \tfrac{q}{1+\theta_t}|\ox|\big)^{-\alpha} \diff \ox
\leq (1+\theta_t)^{\alpha} q^{-\alpha}\int_{|\ox| \geq g_t} |\ox|^{-\alpha} \leq C' \theta_t^\alpha g_t^{d-\alpha} \, , \]
where $C'>0$ is some universal constant. Choosing $\gamma>0$ large enough ensures that this term tends to $0$. Hence, together with~(\ref{upper_bound_can_restrict}) we have shown that 
\[\begin{aligned} & \Prob\{ Z_t = Z_{t(1+\theta_t)} \}  \\
& \leq (1+o(1)) \int_{y >0} \int_{x \in\R^d} 
\exp\Big\{-(1+o(1)) \int_{|\ox|<|x|} (y +q|\ox|)^{-\alpha}  \diff \ox \Big\} \\
& \hspace{1cm} \exp \Big\{ -(1+o(1)) \int_{|x|\leq |\ox| } \big(y + \tfrac{q\theta_t}{1+\theta_t} |x| + \tfrac{q}{1+\theta_t} |\ox| \big)^{-\alpha} \diff \ox \Big\} \frac{\alpha \, \diff x \, \diff y}{(y + q|x| )^{\alpha+1}}
+ o(\theta_t^{-d}) \, .\end{aligned} \]

\emph{Lower bound.} Before we simplify the expression for the upper bound, we derive a similar expression for the lower bound. %let $g_t = (\log t)^{\gamma}, \eta_t = (\log t)^{-\beta'}$ and $N_t = (\log t)^{\beta}$  for $\gamma,\beta',\beta > 0$ to be determined later, and consider the lower bound
As in the upper bound, we follow the main idea and restrict our attention to large boxes and estimate
\be{lower_bound_1} \begin{aligned} \Prob\{ & Z_t = Z_{t(1+\theta_t)} \}  \geq \sum_{\heap{z\in\Z^d}{|z| \leq r_t g_t}} %\int_{\eta_t}^{N_t}
\Prob\big\{ Z_t = z = Z_{t+\theta t},\, \tfrac{\xi(z)}{a_t} - 2q\tfrac{|z|}{r_t} \leq N_t \big\} \\
& = \sum_{\heap{z\in\Z^d}{|z| \leq r_t g_t}}  \, 
\Prob\left\{\ba{cc} \Phi_t(\oz) \leq \Phi_t(z) & \mbox{for } \oz \mbox{ with } t \xi(\oz) > |\oz|; \\ \Phi_{t(1+\theta_t)}(\oz) \leq \Phi_{t(1+\theta_t)}(z) & \mbox{for } \oz \mbox{ with } t(1+\theta_t) \xi(\oz) > |\oz|; \\
t \xi(z) > |z| ; &\tfrac{\xi(z)}{a_t} - 2q\tfrac{|z|}{r_t} \leq N_t \ea\right\} . \end{aligned} \ee
%where $\delta_t = C\frac{\log \log t}{\log t}$, where $C$ is a constant which arises from Lemma~\ref{exact_errors_for_Phi} in the following way. If $z$ is such that $|z|\leq g_t r_t$ and $\frac{\xi(z)}{a_t} - q\frac{|z|}{r_t} < N_t$, then
The proof of Lemma~\ref{exact_errors_for_Phi} shows that if $z$ is such that $|z|\leq g_t r_t$ and $\frac{\xi(z)}{a_t} - 2q\frac{|z|}{r_t} \leq N_t$, then we\vspace{-1mm} can find $C>0$ such that with $\delta_t = C \frac{\log \log t}{\log t}$ we 
have that 
\[ \ba{c} \tfrac{\xi(z)}{a_t} - q\tfrac{|z|}{r_t}(1+\delta_t)    \leq  \tfrac{\Phi_t(z)}{a_t} \leq \tfrac{\xi(z)}{a_t} - q\tfrac{|z|}{r_t}(1-\delta_t) + \delta_t \mbox{ and } \\[1mm]
\tfrac{\xi(z)}{a_t} - \tfrac{q}{1+\theta}\tfrac{|z|}{r_t}(1+\delta_t)    \leq  \tfrac{\Phi_{t+\theta t}(z)}{a_t} \leq \tfrac{\xi(z)}{a_t} - \tfrac{q}{1+\theta}\tfrac{|z|}{r_t}(1-\delta_t) + \delta_t. \ea \]
Therefore, we can approximate~\eqref{lower_bound_1} further by
%we have that
%\[ \frac{\Phi_t(z)}{a_t} \geq \frac{\xi(z)}{a_t} - q \frac{|z|}{r_t}(1+\delta_t) \quad\mbox{ and }\quad
%\frac{\Phi_{t(1+\theta_t)}(z)}{a_t} \geq \frac{\xi(z)}{a_t} - \frac{q}{1+\theta_t} \frac{|z|}{r_t}(1+\delta_t) \, . \]
%Also, we require that $\delta_t$ is chosen such that for all $\oz$ such that $t\xi(\oz)>|\oz|$, 
%\[ \frac{\Phi_t(\oz)}{a_t} \leq \frac{\xi(\oz)}{a_t} - q \frac{|\oz|}{r_t}(1-\delta_t)+\delta_t \quad\mbox{ and }\quad
%\frac{\Phi_{t(1+\theta_t)}(\oz)}{a_t} \leq \frac{\xi(\oz)}{a_t} - \frac{q}{1+\theta_t} \frac{|\oz|}{r_t}(1-\delta_t) +\delta_t \, . \]
%In particular, we can obtain a lower bound on~(\ref{lower_bound_1}) 
\be{lower_bound_2}\begin{aligned} & \Prob\{Z_t=Z_{t(1+\theta_t)}\}  \\
& \geq 
\sum_{\heap{z\in\Z^d}{|z| \leq r_t g_t}} \int_{\eta_t}^{N_t} \, 
\Prob\left\{\ba{cc} \frac{\xi(\oz)}{a_t} - q \frac{|\oz|}{r_t}(1-\delta_t)+\delta_t \leq y %\frac{\xi(z)}{a_t} - q \frac{|z|}{r_t}(1+\delta_t) 
& \mbox{for } \oz\neq z; \\ \frac{\xi(\oz)}{a_t} - \frac{q}{1+\theta_t} \frac{|\oz|}{r_t}(1-\delta_t) +\delta_t \leq y + \frac{q\theta_t}{1+\theta_t}(1+\delta_t) %\frac{\xi(z)}{a_t} - \frac{q}{1+\theta_t} \frac{|z|}{r_t}(1+\delta_t) 
& \mbox{for } \oz \neq z; \\
\frac{\xi(z)}{a_t} - q\frac{|z|}{r_t}(1+\delta_t)  \in \diff y & \ea\right\}
\end{aligned} \ee
We now show that, depending on whether $|z| \leq |\oz|$ or $|z| > |\oz|$ one of the two conditions in the bracket
above is superfluous. Indeed, if $|\oz| \leq |z|$ and the first condition holds we can deduce that
\[ \frac{\xi(\oz)}{a_t} - \frac{q}{1+\theta_t} \frac{|\oz|}{r_t}(1-\delta_t) +\delta_t \leq 
y +\frac{q\theta_t}{1+\theta_t} \frac{|\oz|}{r_t}(1-\delta_t) \leq y + \frac{q\theta_t}{1+\theta_t}\frac{|z|}{r_t}(1+\delta_t) \, . \]
Conversely, if $|\oz| > |z|$ and we assume the second condition it follows that
\[ \frac{\xi(\oz)}{a_t} - q \frac{|\oz|}{r_t}(1-\delta_t)+\delta_t \leq y - \frac{q\theta_t}{1+\theta_t} \frac{|\oz|}{r_t} (1-\delta_t) + \frac{q\theta_t}{1+\theta_t}\frac{|z|}{r_t}(1-\delta_t) \leq y\, . \]
Hence, we have found a lower bound which can be expressed using the independence of the $\xi$ as 
%\be{lower_bound_3}
\begin{align} & \Prob\{Z_t=Z_{t(1+\theta_t)}\}  \notag\\
& \geq 
\sum_{\heap{z\in\Z^d}{|z| \leq r_t g_t}} \int_{\eta_t}^{N_t} \, \Prob\big\{ \tfrac{\xi(z)}{a_t} - q \tfrac{|z|}{r_t} (1+\delta)\in\diff y\big\}
\prod_{\heap{\oz\in\Z^d}{|\oz|< |z|}} \Prob\big\{ \tfrac{\xi(\oz)}{a_t} - q \tfrac{|\oz|}{r_t}(1-\delta_t)+\delta_t \leq y \big\} \notag\\
& \hspace{2cm} \times \prod_{\heap{\oz\in\Z^d}{|\oz|>|z|}} \Prob\Big\{ \tfrac{\xi(\oz)}{a_t} - \tfrac{q}{1+\theta_t} \tfrac{|\oz|}{r_t}(1-\delta_t) +\delta_t \leq y + \tfrac{q\theta_t}{1+\theta_t}\tfrac{|z|}{r_t}(1-\delta_t) \big\} \, .  \label{lower_bound_3}\end{align}
We use that $\log(1-x) \geq -x (1+x)$ for $0<x<1/2$ to see that
\[ \begin{aligned} \prod_{\heap{\oz\in\Z^d}{|\oz|< |z|}}  \Prob\big\{ & \tfrac{\xi(\oz)}{a_t} - q \tfrac{|\oz|}{r_t}(1-\delta_t)+\delta_t \leq y \big\} \\[-2mm]
& = \exp \Big\{ \sum_{\heap{\oz\in\Z^d}{|\oz|<|z|}} \log \big( 1 - a_t^{-\alpha} \big(y + q \tfrac{|\oz|}{r_t}(1-\delta_t)-\delta_t\big)^{-\alpha}\big) \Big\} \\
%& \geq \exp \Big\{ -\sum_{|\oz|<|z|} (1+\eta_t^{-\alpha}a_t^{-\alpha}) \Big(y + q \frac{|\oz|}{r_t}(1-\delta_t)-\delta_t\Big)^{-\alpha} \Big\} \\
%& \geq \exp \Big\{ -(1+\eta_t^{-\alpha}a_t^{-\alpha})(1-\delta_t\eta_t^{-1})^{-\alpha}\sum_{|\oz|<|z|}  \Big(y + q \frac{|\oz|}{r_t}\Big)^{-\alpha} \Big\} \\
& \geq (1+o(1))\exp \Big\{ -(1+o(1)) \int_{|\ox|<\frac{|z|}{r_t}} (y + q|\ox|)^{-\alpha} \diff \ox \Big\} \, , \end{aligned} \]
where $o(1)$ tends to $0$ uniformly in $y\geq \eta_t$ and all $x \in \Z^d$. Similarly as in the upper bound, we can deal with the other products in~(\ref{lower_bound_3}) and approximate the sums by integrals to obtain
\[ \begin{aligned}  \Prob&\{Z_t=Z_{t(1+\theta_t)}\}  \\
& \geq  (1+o(1))\int_{|x| \leq g_t} \int_{\eta_t}^{N_t} 
\exp\Big\{-(1+o(1)) \int_{|\ox|<|x|} (y +q|\ox|)^{-\alpha}  \diff \ox \Big\} \\
& \hspace{1cm} \times\exp \Big\{ -(1+o(1)) \int_{|x|\leq |\ox|} \big(y + \tfrac{q\theta_t}{1+\theta_t} |x| + \tfrac{q}{1+\theta_t} |\ox| \big)^{-\alpha} \diff \ox \Big\} \frac{\alpha\, \diff x \, \diff y}{(y + q|x| )^{\alpha+1}}
 \, ,\end{aligned} \]
which is almost the same expression as for the upper bound. In order to control the difference, we first estimate
\[\begin{aligned}  \int_{|x| \geq g_t} \int_{\eta_t}^{N_t} & \exp\Big\{-(1+o(1)) \int_{|\ox|<|x|} (y +q|\ox|)^{-\alpha}  \diff \ox \Big\} \\
& \hspace{1cm} \times\exp \Big\{ -(1+o(1)) \int_{|x|\leq |\ox|} \big(y + \tfrac{q\theta_t}{1+\theta_t} |x| + \tfrac{q}{1+\theta_t} |\ox| \big)^{-\alpha} \diff \ox \Big\} \frac{\alpha \, \diff x \, \diff y}{(y + q|x| )^{\alpha+1}} \\
& \leq \int_{|x| \geq g_t} \int_{\eta_t}^{N_t} \exp\Big\{-(1+o(1)) \int (y +q|\ox|)^{-\alpha}  \diff \ox \Big\}
\frac{\alpha \, \diff x \, \diff y}{(y + q|x| )^{\alpha+1}} \\
& = \tfrac{\alpha 2^d}{(d-1)!} \int_{\eta_t}^{N_t} \int_{r\geq g_t} e^{-(1+o(1)) \vartheta y^{d-\alpha}} \frac{r^{d-1}\diff r \, \diff y}{(y + qr)^{\alpha+1}} \, ,
\end{aligned} \]
where we used the same simplification as in Proposition~\ref{I} and $\vartheta = \frac{2^d B(\alpha-d,d)}{q^d (d-1)!}$. 
Similarly, %with $\vartheta=\frac{2^d B(\alpha-d,d)}{q^d (d-1)!}$ 
we get an upper bound of\vspace{-2mm}
\[ \begin{aligned} & \leq \tfrac{\alpha 2^d}{(d-1)!q^d} \int_0^\infty e^{-(1+o(1))\vartheta y^{d-\alpha}} y^{d-\alpha-1} \int_0^{N_t/(qg_t)} v^{\alpha-d}(1-v)^{d-1} \diff v \, \diff y \\
& \leq (1+o(1))\tfrac{\alpha}{B(\alpha-d,d)(\alpha-d)} \big(\tfrac{N_t}{q g_t}\big)^{\alpha-d+1} \, . \end{aligned} \]
Making $\beta>0$ larger depending on $\theta_t$, and then choosing $\gamma>0$ large depending on $\beta$ and $\theta_t$, 
we can ensure that this term is of order $o(\theta_t^{-d})$. Similar calculations yield
\[\begin{aligned}  \int_{x \in \R^d} 
\int_0^{\eta_t} & \exp\Big\{-(1+o(1)) \int_{|\ox|<|x|} (y +q|\ox|)^{-\alpha}  \diff \ox \Big\} \\
& \hspace{.5cm} \times\exp \Big\{ -(1+o(1)) \int_{|x|\leq |\ox|} \big(y + \tfrac{q\theta_t}{1+\theta_t} |x| + \tfrac{q}{1+\theta_t} |\ox| \big)^{-\alpha} \diff \ox \Big\} \frac{\alpha \, \diff x \, \diff y}{(y + q|x| )^{\alpha+1}} \\
%& \leq \int_{x\in\R^d} 
%\int_0^{\eta_t} e^{-(1+o(1))\int_{\ox \in \R^d} (y + q|\ox|)^{-\alpha}} \frac{\alpha \, \diff x \, \diff y}{(y + q|x| )^{\alpha+1}} \\
%& = \tfrac{\alpha 2^d}{(d-1)!q^d} \int_0^{\eta_t}  e^{-(1+o(1)) \vartheta y^{d-\alpha}} y^{d-\alpha-1} \int_0^1 v^{\alpha-d}(1-v)^{d-1}\\
& \leq \tfrac{\alpha 2^d B(\alpha-d+1,d)}{(d-1)!q^d} \, e^{-(1+o(1))\vartheta\eta_t^{d-\alpha}} \eta_t^{d-\alpha} \, ,    \end{aligned} \]
which is of order $o(\theta_t^{-d})$, and
\[\begin{aligned}  \int_{x \in \R^d} & \int_{N_t}^\infty \exp\Big\{-(1+o(1)) \int_{|\ox|<|x|} (y +q|\ox|)^{-\alpha}  \diff \ox \Big\} \\
&  \times\exp \Big\{ -(1+o(1)) \int_{|x|\leq |\ox|} \big(y + \tfrac{q\theta_t}{1+\theta_t} |x| + \tfrac{q}{1+\theta_t} |\ox| \big)^{-\alpha} \diff \ox \Big\} \frac{\alpha \, \diff x \, \diff y}{(y + q|x| )^{\alpha+1}} 
%& \leq \int_{x\in\R^d} \int_{N_t}^\infty %e^{-(1+o(1))\int_{\ox \in \R^d} (y + q|\ox|)^{-\alpha}} 
%\frac{\alpha \, \diff x \, \diff y}{(y + q|x| )^{\alpha+1}} \\
%& \leq \frac{\alpha 2^d}{(d-1)!q^d}  \int_{N_t}^\infty \int_0^1 v^{\alpha-d}(1-v)^{d-1} y^{\alpha-d-1} \diff v \, \diff y
 \leq C N_t^{\alpha-d} \, , \end{aligned} \]
for some constant $C>0$, which by choice of $\beta>0$ is also of order $o(\theta_t^{-d})$.
\pagebreak[3]

\emph{Final step.} Combining the upper and lower bound we have shown that
\[\begin{aligned} & \Prob\{ Z_t = Z_{t(1+\theta_t)} \}  \\
& = (1+o(1)) \int_{y >0} \int_{x \in\R^d} 
\exp\Big\{-(1+o(1)) \int_{|\ox|<|x|} (y +q|\ox|)^{-\alpha}  \diff \ox \Big\} \\
& \hspace{1cm} \exp \Big\{ -(1+o(1)) \int_{|x|\leq |\ox| } \big(y + \tfrac{q\theta_t}{1+\theta_t} |x| + \tfrac{q}{1+\theta_t} |\ox| \big)^{-\alpha} \diff \ox \Big\} \frac{\alpha \, \diff x \, \diff y}{(y + q|x| )^{\alpha+1}}
+ o(\theta_t^{-d}) \, .\end{aligned} \]
Simplifying the integrals as in Proposition~\ref{I}, we obtain that
$\Prob\{ Z_t = Z_{t(1+\theta_t)} \} = (1+$\vspace{-1mm} $o(1))I(\theta_t)+o(\theta_t^{-d})$,
and an appeal to Proposition~\ref{asymptotics_I_large_c} completes the proof.
\end{proof}
\smallskip

\begin{rem}\label{uniform_in_c} In fact, the proof of Proposition~\ref{moderate_deviations} even shows a slightly stronger statement. Namely, let $\gamma > 0$ and suppose $\ell_t$ is a function such that $\ell_t \ra \infty$ as $t\ra\infty$. Then for any $\eps > 0$, there exists $T>0$ such that for all $t\geq T$ and all $\ell_t \leq \theta \leq (\log t)^\gamma$, we have that
\[ (1-\eps) \tfrac{1}{d \, B(\alpha-d-1)} \, \theta^{-d} \leq \Prob\{ Z_t = Z_{t+\theta t} \} \leq (1+\eps) \tfrac{1}{d \, B(\alpha-d-1)} 
\, \theta^{-d} \, .\]
\end{rem}

As indicated in Section~\ref{se.guide} the previous proposition suffices to prove the upper bound in Theorem~\ref{asymptotics_R_X}. 
For the lower bound we also need to control the decay of correlations.

\begin{lemma}\label{asymptotic_independence}%\label{correlations_c_large} 
Let $\theta_t$ be a positive, nondecreasing function such that $\theta_t\ra\infty$ as $t\ra\infty$ and for some $\delta >0$, %$(\log t)^{\eps}\leq 
$\theta_t \leq (\log t)^{\delta}$ for all $t>0$. Then, for any $t >0$ and $s \geq (1+\theta_t)t$,
\[ \Prob\{ Z_t = Z_{t(1+\theta_t)} \neq Z_s = Z_{s(1+\theta_s)} \} \leq (1+o(1))\frac{1}{d^2 B(\alpha-d+1,d)^2}\theta_t^{-d}\theta_s^{-d}  \, , \]
where $o(1)$ is an error term that vanishes as $t\ra\infty$.
\end{lemma}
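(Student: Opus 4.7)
My plan is to extend the one-time moderate deviations argument of Proposition~\ref{moderate_deviations} to the two-time joint event, exploiting the separation of scales forced by the assumption $s \geq (1+\theta_t)t$ to obtain asymptotic independence of the two constraints.

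First I would decompose the event as a disjoint sum over candidate maximizers $z_1 \neq z_2$ of the probabilities
\[
\Prob\big\{ Z_t = z_1 = Z_{t(1+\theta_t)},\ Z_s = z_2 = Z_{s(1+\theta_s)} \big\}.
\]
As in the proof of Proposition~\ref{moderate_deviations}, Lemma~\ref{restricting_maximizer} lets me restrict to configurations with $|z_1| \leq r_t g_t$, $|z_2| \leq r_s g_s$ and the corresponding $\Phi$-values confined to polylogarithmic windows $[\eta,N]$, at an error that is $o(\theta_t^{-d}\theta_s^{-d})$ for suitably chosen parameters. Lemma~\ref{exact_errors_for_Phi} then converts all four families of conditions $\Phi_u(\bar z) \leq \Phi_u(z_i)$ (for $u \in \{t, t(1+\theta_t), s, s(1+\theta_s)\}$) into explicit linear upper bounds on $\xi(\bar z)$, and independence of $(\xi(z) \colon z \in \Z^d)$ allows me to factorize the joint probability as a product over $\bar z \neq z_1, z_2$ of the corresponding four-fold upper-tail events, together with two density factors for $\xi(z_1)$ and $\xi(z_2)$.

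The crucial step is the separation-of-scales argument. Because $s/t \geq 1+\theta_t \to \infty$, we have $r_s/r_t \to \infty$ and $a_s/a_t \to \infty$; moreover, since $\alpha > d$, the ratio $(s/t)^{\alpha/(\alpha-d)}$ appearing in the spatial scale strictly dominates $(s/t)^{d/(\alpha-d)}$ appearing in the value scale. Consequently, for $z_2$ at spatial scale $r_s$, the quantity $\Phi_t(z_2)$ is automatically dominated by $\Phi_t(z_1)$, because the penalty $q|z_2|/r_t$ swamps $\xi(z_2)/a_t$; symmetrically, $\Phi_s(z_1) \ll \Phi_s(z_2)$. Hence the "cross-time" conditions pinning $z_1$ at time $s$ or $z_2$ at time $t$ are automatic. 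For the remaining product over $\bar z$, I would split the spatial range into a $t$-regime (roughly $|\bar z| \lesssim r_t g_t$, where the constraints from times $t$ and $t(1+\theta_t)$ are binding) and an $s$-regime, and apply the same $\log(1-x) \leq -x$ expansion and sum-to-integral approximation as in Proposition~\ref{moderate_deviations}. This yields
\[
\Prob\big\{ Z_t = z_1 = Z_{t(1+\theta_t)},\ Z_s = z_2 = Z_{s(1+\theta_s)} \big\} \approx p_t(z_1)\, p_s(z_2),
\]
where each $p_u$ is the integrand from the single-time analysis. Summing over $z_1, z_2$ and invoking Proposition~\ref{asymptotics_I_large_c}(a) for each factor produces the claimed bound.

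The main obstacle will be rigorously decoupling the product in the \emph{intermediate} spatial region where $|\bar z|$ is comparable to neither $r_t$ nor $r_s$, so that both the $t$-scale and $s$-scale upper bounds on $\xi(\bar z)$ may be competitive. I expect this to require estimating the intersection of two linear half-spaces in $\xi(\bar z)$ and bounding the overlap integral $\int (y+q|x|)^{-\alpha}\, \d x$ over the cross-region, showing that the correction is at the $o(\theta_t^{-d}\theta_s^{-d})$ level uniformly in the parameters. Together with verifying that the polylogarithmic error terms $\delta_t, \delta_s$ in Lemma~\ref{exact_errors_for_Phi} do not accumulate across the two scales, this is the essential technical content beyond what already appears in the moderate deviations proof.
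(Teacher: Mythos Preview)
Your proposal is essentially correct and follows the same overall architecture as the paper's proof: decompose over $z_1\neq z_2$, restrict to polylogarithmic windows via Lemma~\ref{restricting_maximizer}, linearize via Lemma~\ref{exact_errors_for_Phi}, factorize using independence, and reduce each factor to the single-time computation of Proposition~\ref{moderate_deviations}.

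The one place where the paper is cleaner is precisely at the point you flag as your ``main obstacle.'' Since the statement is a pure \emph{upper} bound, there is no need to argue that the cross-time constraints are automatically satisfied, nor to analyse an intermediate spatial region where both scales compete. The paper simply \emph{drops} constraints: for $|\bar z|\leq r_t g_t$ it retains only the $\Phi_{t(1+\theta_t)}$-constraint, and for $r_t g_t < |\bar z| \leq r_s g_s$ only the $\Phi_{s(1+\theta_s)}$-constraint. With a single constraint per region the product factorizes immediately and each factor is already of the form handled in Proposition~\ref{moderate_deviations}. The only residual issue is that the $s$-factor integral then runs over $\{|\bar x| > g_t r_t/r_s\}$ rather than all of $\R^d$; the paper disposes of this by checking that $g_t r_t/(r_s(1+\theta_s)\eta_s)\to 0$ (using $s/t\geq 1+\theta_t$ and specific choices $g_t=\theta_t^{q+3/2}$, $\eta_t=\theta_t^{-\beta'}$ with $\beta'<\tfrac12$), so the omitted piece of the $s$-integral is negligible. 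Your route via automatic satisfaction of cross-constraints would also work, but buys you extra bookkeeping that the upper-bound structure lets you avoid.
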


\begin{proof} %In the first step, fix $g_t$ such that \comment{add condition}
%\[\begin{aligned} \Prob\{ Z_t = Z_{t(1+\theta_t)} & \neq Z_s = Z_{s(1+\theta_s)} \} \\
%& = \sum_{z_1\in\Z^d} \sum_{z_2 \in\Z^d\setminus\{z_1\}}  \Prob\{ Z_t = Z_{t(1+\theta_t)} = z_1 ; Z_s = Z_{s(1+\theta_s)}=z_2 \}  \\
%& = \sum_{k=1}^5 \sum_{(z_1,z_2) \in \calR_k} \Prob\{ Z_t = Z_{t(1+\theta_t)} = z_1 ; Z_s = Z_{s(1+\theta_s)}=z_2 \} 
%& \sum_{|z_1| \leq g_t r_t} \sum_{g_t r_t \leq |z_2| \leq g_s r_s} + \sum_{|z_1| > g_t r_t} \sum_{z_2 \neq z_1}
%+ \sum_{|z_1| \leq g_t r_t} \sum_{
%\, , \end{aligned} \]
%where we split up the sums over $\{(z_1,z_2) \, : \, z_1 \neq z_2 \}$ in the following way
%\[\ba{l} \calR_1 = \{ (z_1,z_2) \, : \, |z_1|<g_t r_t; g_t r_t < |z_2| < g_s r_s \} \\
%\calR_2 = \{ (z_1,z_2) \, : \, z_1 \neq z_2; |z_1| \geq g_t r_t ; |z_2| < g_s r_s \} \\
%\calR_3 = \{ (z_1,z_2) \, : \, z_1 \neq z_2; |z_1| \geq g_t r_t ; |z_2| \geq g_s r_s \} \\
%\calR_4 = \{ (z_1,z_2) \, : \, |z_1| < g_t r_t ; |z_2| \leq g_t r_t \} \\
%\calR_5 = \{ (z_1,z_2) \, : \, |z_1| < g_t r_t ; |z_2| \geq g_s r_s \} \, .  \ea \]
%We will show that the sum over $\calR_1$ gives the main contribution, while the sums over the other regions will give a contribution which is of order at most $o(\theta_t^{-d}\theta_s^{-d})$.
%
%\emph{Contributions from $\calR_2$.} By Lemma~\cite[Lemma 3.2]{KLMS09}, we know that $\Phi_t(Z_t)>0$. We can use Lemma~\ref{exact_errors_for_Phi}
We use a similar notation as in the proof of Proposition~\ref{moderate_deviations}. In particular, we will choose functions $g_t,\eta_t, N_t$ depending on $\theta_t$. Also, let $\delta_t = C\frac{\log \log t}{\log t}$, where $C$ is the constant implied in the error bounds in Lemma~\ref{exact_errors_for_Phi}. A lengthy routine calculation similar to Lemma~\ref{restricting_maximizer}
shows that
\be{correlations_first_error}\begin{aligned} \Prob\{ Z_t = Z_{t(1+\theta_t)} & \neq Z_s = Z_{s(1+\theta_s)} \} \\
& = \Prob\left\{\ba{c} Z_t = Z_{t(1+\theta_t)}  \neq Z_s = Z_{s(1+\theta_s)}; \\[1mm]
\frac{\xi(Z_t)}{a_t} - q\frac{|Z_t|}{r_t}(1-\delta_t) + \delta_t \in [\eta_t,N_t];\\[1mm] \frac{\xi(Z_s)}{a_s} - q\frac{|Z_s|}{r_s}(1-\delta_s) + \delta_s \in [\eta_s,N_s] \ea  \right\} + {\rm error}(s,t) \, , \end{aligned} \ee
where, for some constants $C_1,C_2>0$,
\[ \mbox{error}(t,s) \leq C_1 ( e^{-C_2 \eta_t^{d-\alpha}} + N_t^{d-\alpha})( e^{-C_2 \eta_s^{d-\alpha}} + \theta_s^{-d} + N_s^{d-\alpha}) + C_1 \theta_t^{-d}(e^{-C_2 \eta_s^{d-\alpha}} + N_s^{d-\alpha})\, . \]
Taking $N_t = \theta_t^{q+3/2}$ and $\eta_t = \theta_t^{-\beta'}$ for
$\beta'>0$ ensures that the error is of order $o(\theta_t^{-d}\theta_s^{-d})$. 
We can therefore focus the probability on the right hand side of~(\ref{correlations_first_error}). 
Using Lemma~\ref{exact_errors_for_Phi}, we find the following upper bound
\[ \begin{aligned} &\Prob  \left\{\ba{c} Z_t = Z_{t(1+\theta_t)}  \neq Z_s = Z_{s(1+\theta_s)}\, ; \\
\frac{\xi(Z_t)}{a_t} - q\frac{|Z_t|}{r_t}(1-\delta_t) + \delta_t \in [\eta_t,N_t]\, ;\\ \frac{\xi(Z_s)}{a_s} - q\frac{|Z_s|}{r_s}(1-\delta_s) + \delta_s \in [\eta_s,N_s] \ea  \right\} \\
& \leq \sum_{z_1\in \Z^d}\sum_{z_2\in\Z^d\setminus\{z_1\}} \Prob\left\{\ba{c} \Phi_{t(1+\theta_t)}(\oz) \leq \Phi_{t(1+\theta_t)}(z_1) \quad \forall |\oz| \leq r_t g_t \mbox{ with } \oz \neq z_1,z_2\, ;\\
\Phi_{s(1+\theta_s)}(\oz) \leq \Phi_{s(1+\theta_s)}(z_2) \quad \forall r_t g_t < |\oz| \leq r_s g_s\mbox{ with } \oz \neq z_1,z_2\, ; \\
\frac{\xi(z_1)}{a_t} - q\frac{|z_1|}{r_t}(1-\delta_t) + \delta_t \in [\eta_t,N_t]\, ;\\ \frac{\xi(z_2)}{a_s} - q\frac{|z_2|}{r_s}(1-\delta_s) + \delta_s \in [\eta_s,N_s] \ea  \right\} \, , \end{aligned} \]
which, taking $g_t = \theta_t^{q+3/2}$ and using the independence, we can estimate as
\[ \begin{aligned}
& \leq \sum_{z_1\in \Z^d}\sum_{z_2\in\Z^d\setminus\{z_1\}} \int_{\eta_t}^{N_t}  \int_{\eta_s}^{N_s} 
\prod_{\heap{|\oz| < g_t r_t}{\oz\neq z_1,z_2}} \Prob\Big\{ \tfrac{\xi(\oz)}{a_t} - q\tfrac{|\oz|}{r_t}(1+\delta_t) \leq y_1 + \tfrac{q\theta_t}{1+\theta_t}\tfrac{|z_1|}{r_t}(1-\delta_t) \Big \} \\
& \hspace{1cm} \times\prod_{\heap{g_t r_t < |\oz| < g_s r_s}{\oz \neq z_1,z_2}} \Prob\Big\{ \tfrac{\xi(\oz)}{a_s} - q\tfrac{|\oz|}{r_s}(1+\delta_s) \leq y_2 + \tfrac{q\theta_s}{1+\theta_s}\tfrac{|z_2|}{r_s}(1-\delta_s) \Big \} \\ 
&\hspace{1cm}\times \Prob\Big\{ \tfrac{\xi(z_1)}{a_t} - q\tfrac{|z_1|}{r_t}(1-\delta_t) + \delta_t \in \diff y_1 \Big\} 
\Prob\Big\{\tfrac{\xi(z_2)}{a_s} - q\tfrac{|z_2|}{r_s}(1-\delta_s) + \delta_s \in \diff y_2 \Big\}. 
\end{aligned} \]
As before, we can work out the probabilities, and approximate the sums by integrals to finally obtain $(1+o(1))$ times
\be{correlations_factorize} \begin{aligned} & \int \int_{\eta_t}^{N_t} 
\exp \Big\{ -(1+o(1)) \int_{|\ox|< g_t} \Big(y_1 + \tfrac{q}{1+\theta_t}|\ox| +\tfrac{q \theta_t}{1+\theta_t}|x_1|\big)^{-\alpha} \diff \ox \Big\} \frac{\alpha \, \diff x_1\, \diff y_1}{(y_1+q|x_1|)^{\alpha +1}} \\
&\times \int \int_{\eta_s}^{N_s} \exp\Big\{ -(1+o(1)) \hspace{-0.9cm}\int\limits_{g_tr_t/r_s <|\ox|< g_s} \hspace{-0.5cm}\big(y_2 + \tfrac{q}{1+\theta_s}|\ox| +\tfrac{q \theta_s}{1+\theta_s}|x_2|\big)^{-\alpha} \diff \ox \Big\} 
\frac{\alpha \, \diff x_2 \, \diff y_2}{(y_2+q|x_2|)^{\alpha +1}} \, . \end{aligned}\ee
In the remainder of the proof, we have to show that the first term is of order $\theta_t^{-d}$, whereas the second is of order $\theta_s^{-d}$. 
The integral in the first factor equals in polar coordinates
\[\begin{aligned} \tfrac{2^d}{(d-1)!} & \int\limits_{0<r< g_t} \Big(y_1 + \tfrac{q}{1+\theta_t}r +\tfrac{q \theta_t}{1+\theta_t}|x_1|\big)^{-\alpha} r^{d-1} \diff r \\
& \geq \tfrac{2^d}{(d-1)!}(1+\theta_t)^d \int\limits_{0<r< \frac{g_t}{1+\theta_t}} (y_1 + qr + q |x_1|\big)^{-\alpha} r^{d-1}\diff r \\
& = \tfrac{2^d}{(d-1)!}(1+\theta_t)^d \Big\{ q^{-\alpha}(y_1+q|x_1|)^{d-\alpha} B(\alpha-d,d) - \!\!\!\!\int\limits_{r> \frac{g_t}{(1+\theta_t)}}\hspace{-.2cm} (y_1 + qr + q |x_1|\big)^{-\alpha} r^{d-1}\diff r \Big\}  
\, . \end{aligned} \]
The subtracted integral is bounded from above by $q^{-\alpha} g_t^{d-\alpha}(1+\theta_t)^{\alpha-d}$ and therefore, by our assumptions, 
together with the $(1+\theta_t)^d$ factor tends to zero. 
Hence we can conclude that, with $\vartheta$ as before, the first factor in~(\ref{correlations_factorize}) is bounded from above by 
\[\begin{aligned}	 (1&+o(1)) \, \int \int_{\eta_t}^{N_t} e^{-(1+o(1)) \vartheta (1+\theta_t)^d(y_1+q|x_1|)^{d-\alpha}} \frac{\alpha\, \diff y_1 \, \diff x_1}{(y_1+q|x_1|)^{\alpha+1}} \\
&\le  (1+o(1)) \,  \tfrac{2^d}{(d-1)!} \int_0^\infty \int_0^\infty e^{-(1+o(1)) \vartheta (1+\theta_t)^d(y+qr)^{d-\alpha}} \frac{\alpha \, r^{d-1}\diff y_1 \, \diff r}{(y_1+qr)^{\alpha+1}} \\
& \leq  (1+o(1)) \, \tfrac{\alpha 2^d}{(d-1)!q^d} \int_{0}^\infty \int_0^1 e^{-(1+o(1))\vartheta(1+\theta_t)^d y^{d-\alpha}v^{\alpha-d}} y^{\alpha-d-1}v^{\alpha-d}(1-v)^{d-1} \diff v \, \diff y \\
& = (1+o(1)) \,(1+\theta_t)^{-d}\tfrac{\alpha}{(\alpha-d)B(\alpha-d,d)}\int_0^1 (1-v)^{d-1} \diff v = (1+o(1)) \,\theta_t^{-d} \tfrac{1}{d B(\alpha-d+1,d)} \, . \end{aligned}  \]
For the second factor in~(\ref{correlations_factorize}), we almost get the same expression, and it suffices to consider the following term and, 
using similar arguments as above, we can estimate uniformly in $y_2 \geq \eta_s$,
\[\begin{aligned}  \int_{|\ox|<\frac{g_tr_t}{r_s}} & (y_2 + \tfrac{q}{1+\theta_s} |\ox| + q|x_2|)^{-\alpha}\diff \ox \\
%& = (1+\theta_s)^d \tfrac{2^d}{(d-1)!} \int_{r< \frac{g_t r_t}{r_s(1+\theta_s)}} (y_2 + q r + q|x_2|)^{-\alpha}r^{d-1} \diff r \\
%& = (1+\theta_s)^d \tfrac{2^d}{q^d (d-1)!} (y_2 + q|x_2|)^{d-\alpha} \int_{\frac{y_2+q|x_2|}{y_2+q|x_2|+q\frac{g_tr_t}{r_s(1+\theta_s)}}}^1 %u^{\alpha-d-1}(1-u)^{d-1} \diff u \\
& \leq (1+\theta_s)^d \tfrac{2^d}{q^d (d-1)!} (y_2 + q|x_2|)^{d-\alpha} \int_{1-\frac{q g_t r_t}{r_s(1+\theta_s)\eta_s}}^1 u^{\alpha-d-1}(1-u)^{d-1} \diff u
\, . \end{aligned} \]
%Now, we claim that the latter integral converges to $0$. Indeed, 
Using that $s/t \geq (1+\theta_t)$ and recalling that $\eta_t = \theta_t^{-\beta'}$, where we can assume $0<\beta'<1$ and $g_t = \theta_t^{q+3/2}$, we obtain
\[ \frac{g_t r_t}{r_s(1+\theta_s)\eta_s} \leq \frac{g_t (\log t + \log (1+\theta_t))^{q+1} }{(\log t)^{q+1} \theta_t^{q+2-\beta'}} 
\leq (1+o(1))\theta_t^{\beta'-\frac{1}{2}} \, , \]
so that, by choosing $\beta'<\frac12$, this term tends to $0$. Now, we can simplify the second factor in~(\ref{correlations_factorize}) in the same way as the first one to show that it is of the required form.
%\[\begin{aligned}  \int_{|\ox|<\frac{g_tr_t}{r_s}} & (y_2 + \frac{q}{1+\theta_s} |\ox| + \frac{q\theta_s}{1+\theta_s}|x_2|)^{-\alpha}\diff \ox 
%\leq \eta_s^{-\alpha} \Big(\frac{g_t r_t}{r_s}\Big)^d \leq \Big(\frac{g_t r_t}{r_{t (1+\theta_t)^{(1+\eps)}} \eta_{t(1+\theta_t)^{(1+\eps)}}^{\alpha/d}}\Big)^d  \\
%& \leq \Big( \frac{g_t (\log t + (1+\eps)\log(1+\theta_t))^{\frac{\alpha}{d}\beta' + q + 1} }{(1+\theta_t)^{(1+\eps)(q+1)} (\log t)^{q+1}} \Big)^{d}\, ,\end{aligned} \]
%which by our assumptions on $g_t$ and $\eta_t$ tends to zero, thus completing the proof.
\end{proof}

\subsection{Almost sure asymptotics for the maximizer of~$\Phi_t$}\label{asymptotics_tau}

In analogy with the residual lifetime function $R$ for the process $X_t$, we can also define the residual lifetime function $R^V$ for the maximizer $Z_t$ of the variational problem, by setting
\[ R^V(t) = \sup \{ s \geq 0 \, : \, Z_t = Z_{t + s} \} \, . \]
Using the moderate deviation principle, Proposition~\ref{moderate_deviations}, developed in the previous section together with the Borel-Cantelli lemma, we aim to prove the following analogue of Theorem~\ref{asymptotics_R_X}.

\begin{prop}\label{limsup_asymptotics}
For any nondecreasing function $h:(0,\infty)\ra(0,\infty)$ %$\varphi\colon[1,\infty)\to(0,\infty)$ 
we have, almost surely, 
$$\limsup_{t \ra\infty} \frac{R^V(t)}{t h(t)} = 
\left\{ \begin{array}{ll} 0 & \mbox{ if } \displaystyle \int_1^\infty \frac{\d t}{t h(t)^d} <\infty,\\[3mm]
\infty & \mbox{ if } \displaystyle \int_1^\infty \frac{\d t}{t h(t)^d} =\infty.\\
\end{array}\right.$$\end{prop}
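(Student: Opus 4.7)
The plan is to carry out the Borel--Cantelli strategy sketched in Section~\ref{se.guide}, using the geometric discretization $t_n = e^n$. Because $h$ is nondecreasing, each integral $\int_{e^{n-1}}^{e^n} \d t/(th(t)^d)$ is sandwiched (up to constants) between $h(e^n)^{-d}$ and $h(e^{n-1})^{-d}$, so the sum $\sum_n h(e^n)^{-d}$ and the integral $\int_1^\infty \d t/(t h(t)^d)$ converge or diverge together.

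For the convergent direction, fix $\eps > 0$; the hypothesis forces $h(t)\to\infty$. I would bound $\Prob\{R^V(e^n)\geq \eps e^n h(e^n)\} = \Prob\{Z_{e^n}=Z_{e^n(1+\eps h(e^n))}\}$ using Remark~\ref{uniform_in_c}, which gives $\leq C(\eps h(e^n))^{-d}$ provided $\eps h(e^n)\leq n^\gamma$; in the regime where $\eps h(e^n)$ exceeds~$n^\gamma$ I would exploit the monotonicity of $\theta\mapsto \Prob\{Z_t = Z_{t(1+\theta)}\}$ to bound the probability by $Cn^{-\gamma d}$. Choosing $\gamma d>1$ renders both contributions summable, so the first Borel--Cantelli lemma delivers $R^V(e^n)/(e^n h(e^n))\to 0$ a.s. The same argument, applied to $R^V(e^{n+1})$ with threshold $\eps e^n h(e^n)$ (using $h(e^n)\leq h(e^{n+1})$), also yields $R^V(e^{n+1})/(e^n h(e^n))\to 0$ a.s. To pass from the grid to all $t>0$, I would observe that the constancy interval containing $t\in[e^n,e^{n+1}]$ either begins before $e^n$ (bounding $R^V(t)\leq R^V(e^n)$), is contained in $[e^n,e^{n+1}]$ (bounding $R^V(t)\leq (e-1)e^n$), or begins in $[e^n,e^{n+1}]$ and extends past $e^{n+1}$ (bounding $R^V(t)\leq R^V(e^{n+1})+(e-1)e^n$), so
\[ \sup_{t\in[e^n,e^{n+1}]} R^V(t) \leq R^V(e^n) + R^V(e^{n+1}) + (e-1)e^n\, . \]
Dividing by $th(t)\geq e^n h(e^n)$ and using $h(e^n)\to\infty$ completes the upper direction.

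For the divergent case, set $A_n=\{R^V(e^n)\geq M e^n h(e^n)\}$ for arbitrary $M>0$. Proposition~\ref{moderate_deviations} (or, if $h$ stays bounded, Proposition~\ref{ageing_for_Z} combined with $I(Mh_\infty)>0$) shows $\sum_n \Prob(A_n)=\infty$. For the pairwise correlation required by Kochen--Stone, I would take $m$ with $e^m\geq e^n(1+Mh(e^n))$ and decompose
\[ \Prob(A_n\cap A_m)\leq \Prob\{Z_{e^n}=Z_{e^n(1+Mh(e^n))}\neq Z_{e^m}=Z_{e^m(1+Mh(e^m))}\} + \Prob\{Z_{e^n}=Z_{e^m}\}\, , \]
where Lemma~\ref{asymptotic_independence} handles the first term as $(1+o(1))\Prob(A_n)\Prob(A_m)$ and the second is $O(e^{-(m-n)d})$ by moderate deviations, contributing negligibly once summed. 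Kochen--Stone then yields $\Prob(\limsup A_n)>0$, and a standard zero--one argument (the event is insensitive to modifications of $\xi$ on any finite set, since $|Z_t|\to\infty$) upgrades this to probability~$1$. Arbitrariness of $M$ gives $\limsup R^V(t)/(th(t))=\infty$ a.s. The main obstacle I expect is the decoupling step in the divergent case: one must combine Lemma~\ref{asymptotic_independence} with the collision bound, and separately absorb the $O(\sum \Prob(A_n))$ contribution from pairs $(n,m)$ with bounded $m-n$, in order to verify the Kochen--Stone hypothesis $\sum_{n,m\leq N}\Prob(A_n\cap A_m)\leq (1+o(1))(\sum_{n\leq N}\Prob(A_n))^2$.
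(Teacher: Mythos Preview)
Your approach is correct and follows the paper's strategy closely: Borel--Cantelli along the grid $e^n$ for the convergent case, and Kochen--Stone with Lemma~\ref{asymptotic_independence} for the divergent case. Two minor differences are worth noting. First, your grid-to-continuum interpolation via the trichotomy on constancy intervals is a clean alternative to the paper's device, which instead shows that a large value of $R^V(t_n)/t_n$ propagates to all $t\in[t_n,3t_n]$ and then pulls back to the grid point $e^{k(n)}$; the paper also absorbs the large-$h$ regime by the WLOG reduction $h\le(\log t)^\gamma$ rather than your monotonicity case split. Second, in the divergent case the paper does not invoke a zero--one law: it pushes the Kochen--Stone estimate to $\Prob(E_n\ \text{i.o.})\ge(1-\eps)/(1+\eps)$ for arbitrary $\eps$, obtaining probability one directly. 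Your tail-$\sigma$-field argument is a valid shortcut (using $|Z_t|\to\infty$, which follows from Lemma~\ref{xi_of_Z_t}), and it spares you from having to make the correlation bound sharp to first order. Note, however, that Lemma~\ref{asymptotic_independence} requires $\theta_t\to\infty$, so for bounded $h$ you should first reduce to the unbounded case (as the paper does) rather than run Kochen--Stone directly.
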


%\begin{lemma}\label{upper_bound_tau_n} For any nondecreasing function $h: [0,\infty)\ra(0,\infty)$, 
%\[ \limsup_{t \ra\infty} \frac{R^V(t)}{th(t)} = 0 \quad \mbox{ if }
%\int_1^\infty \frac{\diff t}{t h(t)^d} < \infty \, . \] 
%\end{lemma}

%We will need the following lemma adapted from~\cite[Lemma 1]{Be03}.

%\begin{lemma}\label{berlinkov} Fix $\rho > 0$, $d \geq 1$. Suppose $\varphi\colon[1,\infty)\to(0,\infty)$ is nondecreasing and
%let $h(t)=\phi(t)/t$. Then,
%\[ \int_1^{\infty} \frac{1}{th(t)^d} \,\diff t < \infty \mbox{ if and only if } \sum_{k=1}^\infty h(\rho e^k)^{-d} < \infty \, . \]
%\end{lemma}

%\begin{proof} Note that $\int_1^{\infty} \frac{1}{th(t)^d} \, \diff t < \infty$ if and only if 
%$\int_1^{\infty} \frac{1}{h(\rho e^t)^d} \,\diff t < \infty$.  Let $t \geq 0$, then 
%\[ h(\rho e^{s+t}) = \frac{1}{\rho e^{s+t}} \varphi(\rho e^{s+t}) \geq \frac{1}{\rho e^{s+t}} \varphi(\rho e^{s})
%= e^{-t} h(\rho e^s) \, . \]
%Now, suppose that $\int_1^{\infty} \frac{1}{h(\rho e^t)^d} \,\diff t = \infty$, then 
%\[ \sum_{k=1}^\infty h(\rho e^k)^{-d} \geq e^{-d} \sum_{k=1}^\infty \sup\{ h(\rho e^{x})^{-d} \, : \, k \leq x \leq k+1\} = \infty \, %. \]
%Conversely, if $\sum_{k=1}^\infty h(\rho e^k)^{-d} = \infty$, then
%\[ \int_1^\infty \frac{1}{h(\rho e^t)^d} \, \diff t \geq \sum_{k=1}^\infty \inf\{ h(\rho e^x)^{-d} \, : \, k \leq x \leq k+1 \} 
%\geq e^{-d} \sum_{k=1}^\infty h(\rho e^{k+1})^{-d} = \infty \, , \]
%which completes the proof of the lemma.
%\end{proof}

\begin{proof}[Proof of the first part of Proposition~\ref{limsup_asymptotics}] %[Proof of Theorem~\ref{asymptotics_R_X}, upper bound on limit superior.]
Consider $h \colon(0,\infty)\ra(0,\infty)$ such that
$\int_1^\infty \frac{\diff t}{th(t)^d}<$\vspace{-1mm}
$\infty$ %and define $h(t) = \varphi(t)/t$. This implies $\int_1^\infty \frac{1}{th(t)^d} < \infty$,
which  %by Lemma~\ref{berlinkov} 
is equivalent to $\int_{t>1} h(\frac{1}{3}e^t)^{-d} \diff t < \infty$, so that
\vspace{-2mm} 
\be{summable_criterion} \sum_{n=1}^\infty h(\tfrac{1}{3} e^n)^{-d} < \infty . \ee
It is not hard to see that $h(t)\to\infty$ and that we can assume, without loss of generality, that
$h(t) \le (\log t)^{\gamma}$ for some $\gamma>1$, replacing~$h(t)$
by $\tilde h(t) =h(t) \wedge (\log t)^{\gamma}$ if necessary. 
%Note that this implies $\lim_{t\to\infty}h(t)=\infty$. 
\pagebreak[3]

%Indeed, we can define $\tilde{h}(t) = \min \{ (\log t)^{\frac{1}{2}(1+\gamma)}, h(t) \}$. 
%Then, $t \mapsto t \tilde{h(t)}$ is increasing, since for$s<t$,
%\[ s \tilde{h}(s) \leq s \min \{ (\log s)^{\frac{1}{2}(1+\gamma)}, h(s) \} \leq  \min \{ t(\log t)^{\frac{1}{2}(1+\gamma)}, t h(t) 
%\} = t \tilde{h(t)} \, . \]
%Also, 
%\[ \bal \int_{t > 1} \frac{\diff t}{t\tilde{h}(t)^d} & = \int_{t > 1} \max \Big\{ \frac{1}{t h(t)^d} , \frac{1}{t (\log t)^{\frac{1}{2}(1+\gamma)}} %\Big\} \diff t\\
%& \leq \int_{t > 1}\frac{1}{t h(t)^d} \diff t + \int_{t > 1} \frac{1}{t (\log t)^{\frac{1}{2}(1+\gamma)}} < \infty 
%\, . \eal \]
%Therefore, the assumptions of the theorem apply to $\tilde{h}$ and if we can prove the theorem in this case, we can deduce that since $\tilde{h} \leq %h$, 
%\[ \limsup_{t \ra \infty} \frac{R(t)}{t h(t)} \leq \limsup_{t \ra \infty} \frac{R(t)}{t \tilde{h}(t)} = 0 \, , \]
%which justifies our assumption $\lim_{t\ra\infty} h(t) (\log t)^{-\gamma} = 0$.
Fix $\eps > 0$ and an increasing sequence $t_n \ra \infty$.  It suffices to show that almost surely, 
\[ \limsup_{n \ra \infty} \frac{R^V(t_n)}{t_n h(t_n)} \leq \eps \, . \]

To this end, we now show that for all but finitely many~$n$, 
\begin{equation}\label{claim}
\tfrac{R^V(t_n)}{t_n} > \eps h(t_n) \quad\mbox{ implies }\quad
\tfrac{R^V(t)}{t} > \tfrac{1}{4}\eps h(t_n) \mbox{ for all $t \in [t_n,3t_n]$.}
\end{equation}
%Since, by Lemma~\ref{xi_of_Z_t}, $Z_t$ never returns to a point that it has visited once, we know that if 
By definition, ${R^V(t_n)} > \eps\,{t_n} h(t_n)$ implies that $Z_t$ does not jump during the interval 
$[t_n,t_n(1+\eps h(t_n))]$. As $R^V$ is affine with slope $-1$ on this interval
\[ \frac{R^V(t)}{t} = \frac{R^V(t_n) + t_n - t }{t} >  \frac{ (1+\eps h(t_n)) t_n - t}{t} 
\geq \frac{\eps h(t_n)}{4} \quad \mbox{for $t \in [t_n,t_n\frac{(1+\eps 
h(t_n))}{(1+\frac{1}{4}\eps h(t_n))}]$}\]
Recall that $h(t)\to\infty$, and hence we have,  for all but finitely many~$n$, that
$(1+\eps h(t_n))\geq 3 {(1+\frac{1}{4}\eps h(t_n))}$, completing the proof of~\eqref{claim}.

Now, define $k(n) = \inf \{ k \, : \, e^k \geq t_n\}$, so that in particular $t_n \leq e^{k(n)} < 3 t_n$. 
Then, by~\eqref{claim} and monotonicity of $\phi$, we can deduce that for $n$ large enough
\[ \tfrac{R^V(t_n)}{t_n} \geq \eps h(t_n) \quad\mbox{ implies }\quad \tfrac{R^V(e^{k(n)})}{e^{k(n)}} \geq \tfrac{\eps}{4} h(t_n) \geq \tfrac{\eps}{12} h(\tfrac{1}{3}e^{k(n)}) \, .  \]
This shows in particular that 
\[ \Prob \big\{ \tfrac{R^V(t_n)}{t_n} \geq \eps h(t_n)  \mbox{ infinitely often} \big\} \leq 
\Prob \big\{ \tfrac{R^V(e^n)}{e^n} \geq \tfrac{\eps}{12} h(\tfrac{1}{3}e^n) \mbox{ infinitely often} \big\} \, . \]
By Proposition~\ref{moderate_deviations} we can deduce that exists a constant
$\tilde{C}$ such that for all $n$ large enough 
\[ \Prob\big\{ \tfrac{R^V(e^n)}{e^n} \geq \tfrac{\eps}{12} h(\tfrac{1}{3}e^n) \big\}
\leq  \tilde{C} h(\tfrac{1}{3} e^n)^{-d} \, . \] 
By~(\ref{summable_criterion}) these probabilities are summable, so that Borel-Cantelli 
completes the proof.
\end{proof}

For the second part of Proposition~\ref{limsup_asymptotics}, we need to prove a lower bound on the limit superior, so our strategy is to use the fine control over the decay of correlations that we developed in the previous section and combine it with the Kochen-Stone lemma.

\begin{proof}[Proof of second part of Proposition~\ref{limsup_asymptotics}] 
Let $h \colon (0,\infty)\ra(0,\infty)$ be %by $h(t) = \varphi(t)/t$ 
such that $\int_1^\infty \frac{\diff t}{th(t)^d} = \infty$.
%We can also assume that $h(t) \leq (\log t)^2 $ for all $t\geq 0$. Namely, assume that $h(t_n) > (\log t_n)^2$ for some sequence $t_n\ra\infty$, and %take $\tilde{h}(t) = \min \{ (\log t)^2, h(t) \}$. 
%Then, 
%\[ \int_{t > 1} \frac{\diff t}{\tilde{h}(t)^d} \geq \int_{t>1} \frac{\diff t}{h(t)^d} = \infty, . \]
%Clearly $\tilde{h}(t) \leq (\log)^{2}$ and so if we can prove the theorem under this extra assumption, 
%we can deduce that there exists a sequence $t_n \ra \infty$ such that
%\[ \lim_{n\ra\infty}\frac{R(t_n)}{t_n\tilde{h}(t_n)} = \infty \, . \]
%Now, if there exists a subsequence $t_{n_k}$ such that $h(t_{n_k}) > (\log t_{n_k})^2$, then $\tilde{h(t_{n_k})} = \log (t_{n_k})^2$ and it follows %that\[ \lim_{k\ra\infty} \frac{R(t_{n_k})}{t_{n_k} (\log (t_{n_k}))^2} = \infty \,  ,\]
%which contradicts the first part of the theorem, as $\sum_{k\geq 1} \frac{1}{n^2}$ is summable.
%Hence, we know that eventually for all $n$ large enough, $\tilde{h(t_n)} = h(t_n)$ and we can deduce that
%\[ \lim_{n\ra\infty} \frac{R(t_n)}{t_n h(t_n)} =  \lim_{n\ra\infty} \frac{R(t_n)}{t_n \tilde{h}(t_n)} = \infty \, , \]
%so that the theorem also holds for $h$.
Then, we can deduce that %By Lemma~\ref{berlinkov} we have
\vspace{-2mm} 
\begin{equation}\label{summ}
\sum_{n=1}^\infty h(e^n)^{-d} = \infty.
\end{equation}
Without loss of generality, we can assume that $h(t) \ra\infty$ and also additionally 
that $h(t) \leq (\log t)^{2/d}$ for all $t$. Indeed, if necessary, 
we may replace $h(t)$\vspace{-2pt} by $\tilde{h}(t)= h(t) \wedge (\log t)^{2/d}$. 
For fixed $\kappa>0$, define the event $E_n = \{ \frac{R(e^n)}{e^n} \geq \kappa h(e^n)\}$.
By Proposition~\ref{moderate_deviations}, 
\[ \Prob (E_n) = \tfrac{1}{d \, B(\alpha-d+1,d)} (1+o(1)) \kappa^{-d} h(e^n)^{-d} \, ,  \]
so that by~\eqref{summ}  we have $\sum_{n=1}^\infty \Prob(E_n) = \infty$. 
By the Kochen-Stone lemma, see for instance~\cite{FG97}, we then have that
\be{kochen-stone} \Prob \{ E_n \mbox{ infinitely often } \} \geq \limsup_{k\ra\infty} \frac{\big(\sum_{n=1}^k \Prob(E_n) \big)^2}{\sum_{n=1}^k \sum_{m=1}^k \Prob(E_m \cap E_n)} \, . \ee
Fix $\eps > 0$.  By Proposition~\ref{moderate_deviations} and Remark~\ref{uniform_in_c} we can deduce that we can choose $N$ large enough such that for all $t \geq N$ and all $(\log t)^{\frac{1}{2d}}\wedge h(t) \leq \theta \leq (\log t)^{6}$, we have that
\be{mod_deviations} (1-\eps)\tfrac{1}{d \, B(\alpha-d+1,d)} \,\theta^{-d} \leq  \Prob\{ Z_t = Z_{t+\theta t} \} \leq (1+\eps)\tfrac{1}{d \, B(\alpha-d+1,d)} \, \theta^{-d} \, . \ee
Also, by Lemma~\ref{asymptotic_independence}, we know that we can assume $N$ is large enough such that such that for all $n\geq N$
and $m \geq n + \log(1+\kappa h(e^n))$, we have that
\be{asymptotic_independence_estimate} \bal \Prob\{ Z_{e^n} = Z_{e^n(1+\kappa h(e^n))} & \neq Z_{e^m} = Z_{e^m(1+\kappa h(e^m))} \} \\ & \leq (1+\eps)\big(\tfrac{1}{dB(\alpha-d+1,1)}\big)^2 \kappa^{-2d}h(e^n)^{-d}h(e^m)^{-d} \\
& \leq \tfrac{1+\eps}{1-\eps}\, \Prob(E_n)\Prob(E_m) \, .\eal \ee
Note that by Lemma~\ref{xi_of_Z_t}, we know that $Z_t$ never returns to the same point, therefore we have
\[\bal & \Prob( E_n \cap E_m) \\&= \Prob\big\{ Z_{e^n} = Z_{e^m(1+\kappa h(e^n))}\big\} + 
\Prob\big\{Z_{e^n} = Z_{e^n(1+\kappa h(e^n))} \neq Z_{e^m} = Z_{e^m(1+\kappa h(e^m))} \big\}. \eal\]
In particular, notice that the second probability is zero if $n \leq m \leq n +\log(1+\kappa h(e^n))$. 
Hence,  we can estimate for $n > N$ and for $k$ large enough,
using~(\ref{mod_deviations}) and~(\ref{asymptotic_independence_estimate}),
\[\bal \sum_{m=n}^k & \Prob(E_n \cap E_m) \\
& \leq \sum_{m=n}^{n + 2 \log n} \Prob\{ Z_{e^n} = Z_{e^m(1+\kappa h(e^n))} \} + \sum_{m=n+2 \log n}^k \hspace{-0.5cm} \Prob\{ Z_{e^m n^{-2}} = Z_{e^m(1+\kappa h(e^m))} \} \\ 
& \quad + \tfrac{1+\eps}{1-\eps} \sum_{m = n + \log(1+\kappa h(e^n))}^k \Prob(E_n) \Prob(E_m) \\
& \leq \tilde{C}\,\Prob(E_n)  \sum_{m = n}^k e^{d(n-m)} + \tilde{C}\, n^{-2d} \sum_{m=n}^k \Prob(E_m) + \tfrac{1+\eps}{1-\eps} \sum_{m = n}^k \Prob(E_n) \Prob(E_m) \, ,
\eal\]
where $\tilde{C}$ is some suitable constant. 
Finally, in order to bound the right hand side of~(\ref{kochen-stone}), we can estimate for $k > N$,
\[\bal \sum_{n=1}^k  \sum_{m=1}^k \Prob(E_n \cap E_m)  %\\ 
 &\leq 2N \sum_{n=1}^k \Prob(E_n) + \sum_{n=N}^k\sum_{m=N}^k \Prob(E_n \cap E_m) \\
& \leq 2\sum_{n=1}^k \Big(N + \sum_{m=1}^k m^{-2d} + \sum_{m =n}^k \tilde{C} e^{d(n-m)}\Big)  \Prob(E_n) 
\\ & \hspace{2cm} 
+ 2 \tfrac{1+\eps}{1-\eps} \sum_{n=N}^k \sum_{m=n}^k \Prob(E_n)\Prob(E_m) \\
& \leq C' \sum_{n=1}^k \Prob(E_n) + \tfrac{1+\eps}{1-\eps} \sum_{n=1}^k\sum_{k=1}^k \Prob(E_n) \Prob( E_m) 
\, , \eal \]
where $C'> 0$.
Therefore, we can conclude from~(\ref{kochen-stone}) that 
$\Prob\{ E_n \mbox{ infinitely often } \} \geq \tfrac{1-\eps}{1+\eps}$, and
since $\eps > 0$ and $\kappa>0$ were arbitrary, 
the second statement of Proposition~\ref{limsup_asymptotics} follows.
\end{proof}

\subsection{Almost sure asymptotics for the maximizer of the solution profile}\label{sect_asymptotics_sigma}

In this section, we prove Theorem~\ref{asymptotics_R_X}. Thus, we have to transfer the almost sure ageing result of Proposition~\ref{limsup_asymptotics}, which was formulated on the level of the variational problem, to the residual lifetime function of the maximizer $X_t$ of the profile $v$. The underlying idea is that most of the time $X_t$ and the maximizer of the variational problem $Z_t$ agree and we only have to control the length of the intervals when they can disagree. The latter scenario corresponds to those times during which the processes relocate to another point. Therefore, our strategy is to look at the jump times and show that both processes jump at almost the same times.

%Around each jump of the maximizer $Z^{\ssup 1}$ there is a certain interval during which $u$ is localized in more than one point. Thus our aim is to show that the length of this interval divided by the time of the respective jump of $Z^{\ssup 1}$ tends to zero. 

%Recall that $X_t = \argmax\{ u(t,z) \, : \, z \in \Z^d\}$ and define {\color{red} (Define starting conditions)}
%\[\ba{ccl} b_n^- & = & \inf\{ t > b_{n-1}^+ \, : \, \Phi_t(Z_t^{\ssup 1}) - \Phi_t(Z_t^{\ssup 1}) \geq \frac{1}{2} a_t \la_t \}\, , \\
% b_n^+ & = & \inf\{ t > b_n^- \, : \, \Phi_t(Z_t^{\ssup 1}) - \Phi_t(Z_t^{\ssup 1}) \leq \frac{1}{2} a_t \la_t \}\, . \ea \]
%Furthermore, define $I_n = [b_n^-,b_n^+]$. 

The period when the maximizers relocates correspond exactly to those times when $Z_t^{\ssup 1}$ and $Z_t^{\ssup 2}$ produce a comparable value of $\Phi$. With this in mind, define for $\la_t = (\log t)^{-\beta}$ with $\beta > 1  + \frac{1}{\alpha - d}$, the set of exceptional transition times
\be{definition_of_calE} \calE = \calE(\beta) = \big\{ t > t_0 \, : \, \Phi_t(Z_t^{\ssup 1}) - \Phi_t (Z_t^{\ssup 2}) \leq \tfrac{1}{2} a_t \la_t 
\big\} \, , \ee
where $t_0$ is chosen sufficiently large and, to avoid trivialities, such that $t_0 \neq \inf \calE$. 
By~\cite[Lemma 3.4]{KLMS09} we can choose $t_0$ large enough such that for all $t > t_0$, 
\be{separation_first_third} \Phi_t(Z_t^{\ssup 1}) - \Phi_t(Z_t^{\ssup 3}) > a_t \la_t \, . \ee

\begin{lemma}\label{properties_jumps_of_Z} The process $(Z_t^{\ssup 1} \colon t \geq t_0)$ jumps only at times contained in the set~$\calE$. Moreover, each connected component of $\calE$ contains exactly one such jump time.
\end{lemma}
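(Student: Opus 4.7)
The plan is to introduce the continuous function
\[
 h(t) \;=\; \Phi_t(Z_t^{\ssup 1}) - \Phi_t(Z_t^{\ssup 2}) - \tfrac12 a_t \lambda_t,
\]
so that $\calE = \{t > t_0 \colon h(t) \le 0\}$, and derive both claims from three basic observations. First, $h$ is continuous on $(t_0,\infty)$: at any instant when the ordering of the $\Phi$-values reshuffles, the two exchanging sites have coincident $\Phi$-values, so each map $t\mapsto\Phi_t(Z_t^{\ssup k})$ is continuous. Second, at a jump time $\tau$ of $Z^{\ssup 1}$ we have $\Phi_\tau(Z_\tau^{\ssup 1}) = \Phi_\tau(Z_\tau^{\ssup 2})$, whence $h(\tau) = -\tfrac12 a_\tau\lambda_\tau < 0$; in particular every such $\tau$ lies in $\calE$. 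Third, if $s$ is a time at which $Z_t^{\ssup 2}$ changes because the third-ranked site overtakes the second (call this a \emph{$Z^{\ssup 2}$-crossing}), then $\Phi_s(Z_s^{\ssup 2}) = \Phi_s(Z_s^{\ssup 3})$, and combining this with~\eqref{separation_first_third} gives $h(s) > \tfrac12 a_s\lambda_s > 0$; hence no $Z^{\ssup 2}$-crossing can lie in $\calE$.

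For the first assertion, I would suppose towards contradiction that a connected component $J = [s,u]$ of $\calE$ contains no jump of $Z^{\ssup 1}$. Then $Z_t^{\ssup 1} \equiv z_1$ is constant on $J$, and since $Z_t^{\ssup 2}$ can only change at $Z^{\ssup 1}$-jumps or at $Z^{\ssup 2}$-crossings, the third observation forces $Z_t^{\ssup 2} \equiv z_0$ to be constant on $J$ as well. On $J$ one therefore has $h(t) = A - B/t - \tfrac12 a_t\lambda_t$ for constants $A,B$ depending only on $z_0,z_1$, with $A - B/t \ge 0$ since $z_1$ remains in first place. The derivative computation performed in the proof of Lemma~\ref{separation_first_second} shows that $h' = B/t^2 - \tfrac12(a_t\lambda_t)'$ has at most one zero on $J$ and is negative for all large $t$, so $h$ has at most one critical point on $J$. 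Since the functions $1,\ 1/t,\ a_t\lambda_t$ are linearly independent on any interval, $h$ cannot be identically zero, so the two boundary conditions $h(s) = h(u) = 0$ force $h > 0$ somewhere in $(s,u)$, contradicting $J \subset \calE$.

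For the second assertion, I would suppose for contradiction that $\tau_1 < \tau_2$ are two consecutive $Z^{\ssup 1}$-jumps both lying in the same connected component $J$ of $\calE$, so that $(\tau_1,\tau_2) \subset J$ and $Z_t^{\ssup 1}$ is some constant $z_1$ on $(\tau_1,\tau_2)$. Write $z_0$ for the top site just before $\tau_1$ and $z_1'$ for the top site just after $\tau_2$. By Lemma~\ref{xi_of_Z_t} we have $\xi(z_0) < \xi(z_1) < \xi(z_1')$, so in particular $z_0 \neq z_1'$. On the other hand, immediately after $\tau_1$ we have $Z_t^{\ssup 2} = z_0$, while immediately before $\tau_2$ we have $Z_t^{\ssup 2} = z_1'$. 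Hence $Z_t^{\ssup 2}$ must change somewhere in $(\tau_1,\tau_2)$, and since $Z^{\ssup 1}$ is constant there, every such change is a $Z^{\ssup 2}$-crossing. By the third observation this change lies outside $\calE$, contradicting $(\tau_1,\tau_2) \subset J \subset \calE$.

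The main technical obstacle I anticipate is the verification of a few generic/non-degeneracy conditions underpinning the three observations: that distinct $\Phi$-values never cross simultaneously (a null event under the continuous Pareto potential), that $h$ is not identically zero on any subinterval (handled by the linear independence noted above), and that every component of $\calE$ is bounded, which is needed to apply the two-endpoint argument in the existence step. The last point should follow, for $t_0$ chosen large enough, from the almost-sure fact that $Z^{\ssup 1}$ keeps jumping as $t\to\infty$ because the heavy Pareto tails produce ever-larger values of $\xi$ further and further out.
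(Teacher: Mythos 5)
Your proof is correct and uses essentially the same ingredients as the paper's: jump times lie in $\calE$ trivially; existence of a jump in each component follows from the one-critical-point shape of $f(t)=A-B/t-\tfrac12 a_t\la_t$ from the proof of Lemma~\ref{separation_first_second} together with the boundary equalities $h(b^-)=h(b^+)=0$; and uniqueness follows because a second jump would force $Z^{\ssup 2}$ to change inside the component, which is excluded by~\eqref{separation_first_third} combined with Lemma~\ref{xi_of_Z_t}. The only minor difference is that you isolate the exclusion of $Z^{\ssup 2}$-crossings from $\calE$ as an explicit preliminary observation, which lets you run the existence step self-contained (with $Z^{\ssup 2}$ already known to be constant on the component) rather than invoking Lemma~\ref{separation_first_second} as a black box in the general case where $Z^{\ssup 2}$ may move.
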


\begin{proof}
The first part of the statement is trivial, since at each jump time $\tau \geq t_0$ of $Z_t^{\ssup 1}$ we have that $\Phi_\tau(Z_\tau^{\ssup 1}) = \Phi_\tau(Z_\tau^{\ssup 2})$ so that $\tau \in \calE$.
For the second statement, let $[b^-,b^+]$ be a connected component of $\calE$, then 
\[ \Phi_t(Z_t^{\ssup 1}) - \Phi_t(Z_t^{\ssup 2}) = \tfrac{1}{2} a_t \la_t \, , \]
for $t = b^-,b^+$ (here we use that $b^- \geq \inf \calE \neq t_0$). Now, since $t \mapsto \Phi_t(Z_t^{\ssup 1}) - \Phi_t(Z_t^{\ssup 2})$ is never constant, if $Z_{b^-}^{\ssup 1} = Z_{b^+}^{\ssup 1}$ then by Lemma~\ref{separation_first_second} there is $t \in (b^-,b^+)$ such that $t \notin \calE$ contradicting the connectedness of $[b^-,b^+]$. Thus, we can conclude that $Z_t^{\ssup 1}$ jumps at least once in $[b^-,b^+]$. Finally, the fact that, by Lemma~\ref{xi_of_Z_t}, $Z_t^{\ssup 1}$ never returns to the same point combined with~(\ref{separation_first_third}) guarantees that $Z_t^{\ssup 1}$ only jumps once in $[b^-,b^+]$ (namely from $Z_{b^-}^{\ssup 1}$ to $Z_{b^-}^{\ssup 2}$).
\end{proof}

Denote by $(\tau_n)$ the jump times of the maximizer process $(Z^{\ssup 1}_t \colon t \geq t_0)$ in increasing order. In the
next lemma we have collected some of their basic properties.

\begin{lemma}\label{relative_lower_bound_tau_n} 
\begin{itemize} 
\item[(i)] Fix $\beta > 1 + \frac{1}{\alpha -d}$, then, almost surely, for all but finitely many $n$, 
\[ (\xi(Z_{\tau_n}^{\ssup 1}) - \xi(Z_{\tau_n}^{\ssup 2})) 
\, \big(\tfrac{\tau_{n+1} - \tau_n}{\tau_n}\big) \geq a_{\tau_n} (\log \tau_n)^{-\beta} \, . \]
\item[(ii)] Fix $\gamma > 1 + \frac{2}{\alpha - d}$, then, almost surely, for all but finitely many $n$,
\[ \frac{\tau_{n+1} - \tau_n}{\tau_n} \geq (\log \tau_n)^{-\gamma} \, . \]
\item[(iii)]\label{lower_bound_jump_xi} Fix $\delta > 1 + \frac{1}{\alpha-d} + \frac{1}{d}$, then, almost surely, for all but finitely many $n$, 
%(1 + \frac{1}{\alpha-d} + \frac{1}{d}$
\[ \xi(Z_{\tau_n}^{\ssup 1}) - \xi(Z_{\tau_n}^{\ssup 2}) \geq a_{\tau_n} (\log \tau_n)^{-\delta} \, . \]
\end{itemize}
\end{lemma}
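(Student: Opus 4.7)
The three parts share a common starting point: by Remark~\ref{Z_of_jump_time}, every jump time $\tau_n>1$ satisfies $\Phi_{\tau_n}(Z_{\tau_n}^{\ssup 1})=\Phi_{\tau_n}(Z_{\tau_n}^{\ssup 2})$, so Lemma~\ref{difference_phi} yields, for every $s\ge\tau_n$ at which $(Z_s^{\ssup 1},Z_s^{\ssup 2})=(Z_{\tau_n}^{\ssup 1},Z_{\tau_n}^{\ssup 2})$,
\[ \Phi_s(Z_{\tau_n}^{\ssup 1})-\Phi_s(Z_{\tau_n}^{\ssup 2})=\big(\xi(Z_{\tau_n}^{\ssup 1})-\xi(Z_{\tau_n}^{\ssup 2})\big)\Big(1-\tfrac{\tau_n}{s}\Big). \]
This deterministic identity underlies all three parts; (ii) and (iii) add probabilistic estimates on top.

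For part~(i), $\tau_n$ and $\tau_{n+1}$ lie in distinct connected components of~$\calE$ by Lemma~\ref{properties_jumps_of_Z}; let $s^\star\in(\tau_n,\tau_{n+1}]$ be the minimum of (a)~the first time at which $Z^{\ssup 2}$ switches, and (b)~the left endpoint $a$ of the $\calE$-component containing~$\tau_{n+1}$. In case~(a) Proposition~3.4 of~\cite{KLMS09} guarantees $\Phi_{s^\star}(Z_{s^\star}^{\ssup 1})-\Phi_{s^\star}(Z_{s^\star}^{\ssup 2})\ge a_{s^\star}\lambda_{s^\star}$, while in case~(b) continuity gives $\Phi_{s^\star}(Z_{s^\star}^{\ssup 1})-\Phi_{s^\star}(Z_{s^\star}^{\ssup 2})=\tfrac12 a_{s^\star}\lambda_{s^\star}$. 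In either case the pair is constant on $[\tau_n,s^\star]$, so the identity above at $s=s^\star$ gives
\[ \big(\xi(Z_{\tau_n}^{\ssup 1})-\xi(Z_{\tau_n}^{\ssup 2})\big)\frac{s^\star-\tau_n}{s^\star}\ge\tfrac12\, a_{s^\star}\lambda_{s^\star}, \]
and statement~(i) follows from $s^\star\le\tau_{n+1}$, the eventual monotonicity of $t\mapsto a_t\lambda_t$, and a slight shrinking of the exponent defining $\lambda_t$ below the given~$\beta$ to absorb the constant~$\tfrac12$.

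Part~(iii) is the main obstacle and is where the technical work concentrates. My plan is a Borel--Cantelli argument along $t_k=e^k$: show that
\[ p_k:=\Prob\Big\{\exists n\colon\tau_n\in[t_k,t_{k+1}],\ \xi(Z_{\tau_n}^{\ssup 1})-\xi(Z_{\tau_n}^{\ssup 2})\le a_{t_k}(\log t_k)^{-\delta}\Big\} \]
is summable in~$k$. On the high-probability event that $|Z_{\tau_n}^{\ssup i}|\le r_{t_k}(\log t_k)^{1/(\alpha-d)+o(1)}$ (using Lemma~\ref{restricting_maximizer}(a)) and that both $\xi$-values are of order $a_{t_k}$ (using Lemma~\ref{restricting_maximizer}(b,c)), a union bound over candidate pairs combined with the Pareto density at level $\sim a_{t_k}$ produces, after balancing the volume of the spatial cut-off (contributing the $1/d$ term) against the tail of~$\xi/a_t$ (contributing the $1/(\alpha-d)$ term), a bound that is summable precisely for $\delta>1+\tfrac1{\alpha-d}+\tfrac1d$. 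The subtlety I expect to require the most care is that the pair $(Z_{\tau_n}^{\ssup 1},Z_{\tau_n}^{\ssup 2})$ is random and must be handled uniformly over all jump times in~$[t_k,t_{k+1}]$, forcing the union bound to range over \emph{all} pairs in the cut-off box rather than a single typical one.

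For part~(ii), given $\gamma>1+\tfrac2{\alpha-d}$, pick $\eps>0$ so small that $\beta:=\gamma-\tfrac1{\alpha-d}-\eps>1+\tfrac1{\alpha-d}$. Rearranging~(i) with this~$\beta$ gives
\[ \frac{\tau_{n+1}-\tau_n}{\tau_n}\ge\frac{a_{\tau_n}(\log\tau_n)^{-\beta}}{\xi(Z_{\tau_n}^{\ssup 1})-\xi(Z_{\tau_n}^{\ssup 2})}, \]
so it suffices to show the a.s.\ bound $\xi(Z_{\tau_n}^{\ssup 1})\le a_{\tau_n}(\log\tau_n)^{1/(\alpha-d)+\eps}$ eventually. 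By monotonicity of $\xi\circ Z^{\ssup 1}$ (Lemma~\ref{xi_of_Z_t}) this reduces to a tail estimate on $\xi(Z_{t_k}^{\ssup 1})/a_{t_k}$ along $t_k=e^k$, which follows from a routine Borel--Cantelli argument using a bound of the form $\Prob\{\xi(Z_t^{\ssup 1})/a_t\ge M\}=O(M^{-(\alpha-d)})$, easily derived from Lemmas~\ref{exact_errors_for_Phi} and~\ref{restricting_maximizer}(b).
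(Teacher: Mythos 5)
Your parts (i) and (ii) are essentially sound. For (i), your route via the intermediate time $s^\star$ (first change of $Z^{\ssup 2}$, or the left boundary of the $\calE$-component of $\tau_{n+1}$) is valid but more elaborate than the paper's, which evaluates the identity directly at $s=\tau_{n+1}$ and observes that $Z_{\tau_n}^{\ssup 2}=Z_{\tau_{n-1}}^{\ssup 1}$ is distinct from both $Z_{\tau_{n+1}}^{\ssup 1}$ and $Z_{\tau_{n+1}}^{\ssup 2}=Z_{\tau_n}^{\ssup 1}$, hence at most in third place; then Proposition 3.4 of~\cite{KLMS09} immediately gives $\Phi_{\tau_{n+1}}(Z_{\tau_n}^{\ssup 1})-\Phi_{\tau_{n+1}}(Z_{\tau_n}^{\ssup 2})\ge a_{\tau_n}(\log\tau_n)^{-\beta}$ with no constant to absorb. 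For (ii) your plan matches the paper, with a minor caveat: to convert the tail bound on $\Phi_t(Z_t)/a_t$ from Lemma~\ref{restricting_maximizer}(b) into one on $\xi(Z_t)/a_t$ via Lemma~\ref{exact_errors_for_Phi}, you also need a bound on $|Z_t|/r_t$, so you actually need both parts (a) and (b) of Lemma~\ref{restricting_maximizer}; the paper achieves the same $\xi$-bound more directly from almost-sure statements in~\cite{HMS08,KLMS09}.

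Part (iii) is where there is a genuine gap. A union bound over all pairs $(z_1,z_2)$ in a ball of radius $r_{t_k}(\log t_k)^{\gamma}$ with $\xi$-values of order $a_{t_k}$ and $|\xi(z_1)-\xi(z_2)|\le a_{t_k}(\log t_k)^{-\delta}$ yields, using $r_t^d=a_t^\alpha$, a bound of order $(\log t_k)^{2d\gamma-\delta+o(1)}$; since we need $\gamma>\tfrac{1}{\alpha-d}$ to keep the spatial cut-off almost surely valid, the union bound is summable only for $\delta>1+\tfrac{2d}{\alpha-d}$. This beats the stated threshold $1+\tfrac{1}{\alpha-d}+\tfrac1d$ only when $\alpha>2d^2$, and in particular fails for $\alpha$ close to $d$, which is the regime of primary interest. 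The reason the brute-force bound is too lossy is that it ignores that the pair must in fact be the top two maximizers of $\Phi_t$ at a jump time in $[t_k,t_{k+1}]$, an extremely restrictive event compared with simply having two sites with nearby large $\xi$-values. The paper avoids this entirely: it \emph{reuses} Proposition~\ref{limsup_asymptotics}, already established, to obtain the almost-sure upper bound $\tfrac{\tau_{n+1}-\tau_n}{\tau_n}\le(\log\tau_n)^{\delta'}$ for any $\delta'>\tfrac1d$, and then divides this into the inequality of part (i) (with $\beta>1+\tfrac{1}{\alpha-d}$, choosing $\beta+\delta'\le\delta$) to conclude. The exponents $\tfrac1d$ and $\tfrac{1}{\alpha-d}$ thus arise from the envelope theorem for $R^V$ and the $\Phi$-gap estimate respectively, not from a direct union bound over site pairs. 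You should look for a way to leverage part (i) plus the residual-lifetime envelope rather than trying to control the pair geometry from scratch.
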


%\begin{corollary}\label{lower_bound_jump_xi} For any $\delta > (1 + \frac{1}{\alpha-d})(1+\frac{2}{d})$ \comment{adjust parameters. Where is this needed?}
%\[ \xi(Z_{\tau_n}^{\ssup 1}) - \xi(Z_{\tau_n}^{\ssup 2}) \geq a_{\tau_n} (\log \tau_n)^{-\delta} \, . \]
%\end{corollary}
%\begin{proof}This follows from Lemma~\ref{relative_lower_bound_tau_n}(i) combined with Lemma~\ref{upper_bound_tau_n}.
%\end{proof}

\begin{proof}
(i) By Lemma~\ref{difference_phi} and Remark~\ref{Z_of_jump_time} we find that
\be{diff_Phi_tau_n} \begin{aligned} \Phi_{\tau_{n+1}} (Z_{\tau_n}^{\ssup 1}) - \Phi_{\tau_{n+1}} (Z_{\tau_n}^{\ssup 2}) & =  (\xi(Z_{\tau_n}^{\ssup 1}) - \xi(Z_{\tau_n}^{\ssup 2})) \big(\tfrac{\tau_{n+1} - \tau_n}{\tau_{n+1}}\big) \\
& \leq (\xi(Z_{\tau_n}^{\ssup 1}) - \xi(Z_{\tau_n}^{\ssup 2})) \big(\tfrac{\tau_{n+1} - \tau_n}{\tau_{n}}\big) \, . \end{aligned} \ee
Now, we can estimate the difference on the left-hand side from below by using that $Z_{\tau_n}^{\ssup 2}$ cannot produce more than the third largest value of $\Phi$ at time $\tau_{n+1}$. 
Indeed, Lemma~\ref{xi_of_Z_t} ensures that $Z^{\ssup 1}$ never visits the same point again, so that
$Z_{\tau_n}^{\ssup 2} = Z_{\tau_{n-1}}^{\ssup 1} \neq Z_{\tau_{n+1}}^{\ssup i}$ for $i = 1,2$ since $Z_{\tau_{n+1}}^{\ssup 2} = Z_{\tau_n}^{\ssup 1}$. Hence, using~\cite[Proposition 3.4]{KLMS09} for the second inequality, 
\[ \bal \Phi_{\tau_{n+1}} (Z_{\tau_n}^{\ssup 2}) & \leq \Phi_{\tau_{n+1}}(Z_{\tau_{n+1}}^{\ssup 3})
\leq \Phi_{\tau_{n+1}}(Z_{\tau_{n+1}}^{\ssup 1}) - a_{\tau_{n+1}} (\log \tau_{n+1})^{-\beta} 
\\ & \leq  \Phi_{\tau_{n+1}}(Z_{\tau_n}^{\ssup 1})- a_{\tau_n} (\log \tau_n)^{-\beta} \, , \eal \]
where in the last step we again used that $Z_{\tau_n}^{\ssup 1} = Z_{\tau_{n+1}}^{\ssup 2}$ and that
$t \mapsto a_t (\log t)^{-\beta}$ is increasing for all sufficiently large $t$.
Substituting this inequality into~(\ref{diff_Phi_tau_n}) completes the proof of part~(i).

(ii) By the first part, we need to get an upper bound on $\xi(Z_{\tau_n}^{\ssup 1})$. Therefore, our first claim is that for any $\delta > \frac{1}{\alpha - d}$, and all $t$ sufficiently large
\be{upper_bound_on_xi_of_Z} \xi (Z_t^{\ssup 1}) \leq a_t (\log t)^{\delta} \, . \ee
Indeed, by~\cite[Lemma 3.5]{HMS08}, for $\eps = \frac{1}{3}( \delta - \frac{1}{\alpha - d})$ we have for all sufficiently large~$r$, 
\[ \max_{|z| \leq r} \xi(z) \leq r^{\frac{d}{\alpha}} (\log r)^{\frac{1}{\alpha} + \eps} \, . \]
Moreover, by~\cite[Lemma 3.2]{KLMS09}, for $\eps ' = \frac{\alpha}{d}(  \frac{1}{3}( \delta - \frac{1}{\alpha - d}))$, we find that
$|Z_t^{\ssup 1}| < r_t (\log t)^{\frac{1}{\alpha - d} + \eps'}$ and therefore 
%using that $r_t^{\frac{d}{\alpha}} = a_t$ and that $\log r_t = (q+1)\log t(1+ o(1))$, 
\[ \begin{aligned} \xi(Z_t^{\ssup 1}) & \leq a_t (\log t)^{(\frac{1}{\alpha - d} + \eps')\frac{d}{\alpha}}  ( \log( r_t (\log t)^{\frac{1}{\alpha - d} + \eps'} ))^{\frac{1}{\alpha}+\eps} \\
& = a_t (q+1)^{\frac{1}{\alpha} + \eps} (\log t)^{ \frac{1}{\alpha - d} + \eps' \frac{d}{\alpha} + \eps } (1+o(1))
< a_t (\log t)^{\delta} \, , \end{aligned}\]
eventually for all $t$ sufficiently large.
\pagebreak[3]

Now, if we combine part (i) for $\beta = \frac{1}{2}(\gamma + 1) > 1 + \frac{1}{\alpha - d}$ 
with~(\ref{upper_bound_on_xi_of_Z}) for $\delta=\frac{1}{2}(\gamma - 1)$, we get
\[ \frac{\tau_{n+1} - \tau_n}{\tau_n} \geq \frac{a_{\tau_n}(\log \tau_n)^{-\beta}}{ \xi(Z_{\tau_n}^{\ssup 1})  }
\geq \frac{a_{\tau_n}(\log \tau_n)^{-\beta}}{a_{\tau_n} (\log \tau_n)^{\delta}} = (\log \tau_n)^{- (\delta + \beta)}
= (\log \tau_n)^{- \gamma} \, , \]
which completes the proof of the lemma.

(iii) Note that for any $\delta' > \frac{1}{d}$, Proposition~\ref{limsup_asymptotics}, shows that for all but finitely many $n$, 
\[ \frac{\tau_{n+1}-\tau_n}{\tau_n} = \frac{R^V(\tau_n)}{\tau_n}
\leq (\log \tau_n)^{\delta'} \, . \]
This observation together with part (i), immediately implies the statement of part (iii).
\end{proof}

A similar statement to Lemma~\ref{properties_jumps_of_Z} also holds for the process $X_t = \argmax\{ u(t,z) \, : \, z \in \Z^d\}$.  Fix $0<\eps <\frac{1}{3}$, then by~\cite[Proposition 5.3]{KLMS09} we can assume additionally that $t_0$ in the definition~(\ref{definition_of_calE}) of $\calE$ is chosen large enough such that for all $t > t_0$
\be{one_point} \Big[ U(t)^{-1} \sum_{\heap{z \in \Z^d}{z\not=Z_t^{(1)}}} u(t,z) \Big] \1\{ \Phi_t(Z_t^{\ssup 1}) - \Phi_t(Z_t^{\ssup 2}) \geq \tfrac{1}{2} a_t \la_t \} < \eps \, .\vspace{-3mm} \ee
Furthermore, by the `two cities theorem'~\cite[Theorem 1.1]{KLMS09}, we may assume that for all $t \geq t_0$, 
\be{two_point} \frac{u(t,Z_t^{\ssup 1}) + u(t,Z_t^{\ssup 2})}{U(t)} > 1 - \eps \, . \ee

\begin{lemma}\label{properties_jumps_of_X} The process $(X_t \colon t \geq t_0)$ only jumps at times contained in $\calE$ and each connected component of $\calE$ contains exactly one such jump time. Furthermore, it never returns to the same point in $\Z^d$.
\end{lemma}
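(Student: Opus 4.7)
The strategy parallels Lemma~\ref{properties_jumps_of_Z} for $Z^{\ssup 1}$: outside $\calE$ the profile is essentially a point mass at $Z_t^{\ssup 1}$, while inside each component of $\calE$ the dynamics reduce to a two-site competition. First I would use~\eqref{one_point} to deduce that on the (open) complement of $\calE$ in $(t_0,\infty)$ we have $v(t,Z_t^{\ssup 1})>1-\eps>\tfrac{2}{3}$, so $Z_t^{\ssup 1}$ is the unique site with mass exceeding $\tfrac{1}{2}$ and hence $X_t=Z_t^{\ssup 1}$. Since $Z^{\ssup 1}$ is constant on each connected component of the complement (Lemma~\ref{properties_jumps_of_Z}), so is $X$, and therefore $X$ can only jump inside $\calE$. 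Noting that $\calE$ is closed as a sublevel set of a continuous function of $t$, each connected component $[b^-,b^+]$ contains the unique jump time $\tau$ of $Z^{\ssup 1}$ supplied by Lemma~\ref{properties_jumps_of_Z}, with $Z^{\ssup 1}$ moving from $A:=Z_{b^-}^{\ssup 1}$ to $B:=Z_{b^-}^{\ssup 2}$. Because $X=A$ just before $b^-$ and $X=B$ just after $b^+$, $X$ performs at least one jump inside the component.

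The harder task is to show that $X$ jumps \emph{at most} once per component. Throughout $[b^-,b^+]$ we have $\{Z_t^{\ssup 1},Z_t^{\ssup 2}\}=\{A,B\}$, so by~\eqref{two_point} the mass satisfies $u(t,A)+u(t,B)>(1-\eps)U(t)$ while every site in $\Z^d\setminus\{A,B\}$ carries mass at most $\eps U(t)$; for $\eps<\tfrac{1}{3}$ this forces $X_t\in\{A,B\}$ and jumps of $X$ correspond to sign changes of $w(t):=u(t,A)-u(t,B)$. I would differentiate the heat equation to obtain, at any zero of $w$ with $u(t,A)=u(t,B)=c$,
\[ w'(t)=(\xi(A)-\xi(B))\,c+\sum_{y\sim A}u(t,y)-\sum_{y\sim B}u(t,y). \]
The $c$-contributions cancel in the difference of the two sums if $A\sim B$, and all other neighbours lie in $\Z^d\setminus\{A,B\}$, so the Laplacian difference is bounded in absolute value by $2d\,\eps\,U(t)$. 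Combined with $c\ge\tfrac{1-\eps}{2}U(t)$,
\[ w'(t)\le\Bigl[-(\xi(B)-\xi(A))\tfrac{1-\eps}{2}+2d\eps\Bigr]U(t). \]
Lemma~\ref{relative_lower_bound_tau_n}(iii) gives $\xi(B)-\xi(A)\ge a_\tau(\log\tau)^{-\delta}\to\infty$, so enlarging $t_0$ makes the bracket strictly negative. Hence $w$ crosses zero strictly from positive to negative exactly once on $[b^-,b^+]$, giving the required unique jump of $X$ from $A$ to $B$.

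For the non-return claim, each jump of $X$ moves from a site with value $\xi(A)$ to one with $\xi(B)>\xi(A)$ (Lemma~\ref{xi_of_Z_t}), so $\xi\circ X_t$ is strictly increasing along the sequence of values that $X$ takes; this rules out revisits. The main obstacle is the derivative estimate for $w$: one must show that the potential-driven drift $-(\xi(B)-\xi(A))\tfrac{1-\eps}{2}U(t)$ beats the $O(\eps U(t))$ Laplacian correction at every zero of $w$, and this is precisely where the quantitative separation $\xi(B)-\xi(A)\to\infty$ from Lemma~\ref{relative_lower_bound_tau_n}(iii) is needed, going beyond what membership in $\calE$ alone would supply.
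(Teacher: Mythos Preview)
Your proof is correct and follows essentially the same approach as the paper: localize $X_t$ to $\{A,B\}$ via~\eqref{one_point} and~\eqref{two_point}, then use the heat equation together with the quantitative potential gap from Lemma~\ref{relative_lower_bound_tau_n}(iii) to show a unique crossing. The only cosmetic difference is that the paper works with the ratio $g(t)=u(t,B)/u(t,A)$ and shows $g'>0$ at $g=1$, whereas you use the difference $w(t)=u(t,A)-u(t,B)$ and show $w'<0$ at $w=0$; the paper also cites \cite[Lemmas~2.2,~3.2]{KLMS09} to exclude $A\sim B$ outright, making your case analysis for adjacent sites unnecessary but harmless.
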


\begin{proof} 
By~(\ref{one_point}), for any $t \in [t_0,\infty)\setminus \calE$, we have 
$X_t = Z_t$ so that, in particular, $X_t$ jumps only at times in $\calE$.
Now, let $[b^-,b^+]$ be a connected component of $\calE$. Note that the proof of Lemma~\ref{properties_jumps_of_Z} shows that for all $t \in [b^-,b^+]$, the set $\{ Z_t^{\ssup 1}, Z_t^{\ssup 2} \}$ consists of exactly two points, $z^{\ssup 1}:= Z_{b^+}^{\ssup 1}$ and $z^{\ssup 2}:=Z_{b^+}^{\ssup 2}= Z_{b^-}^{\ssup 1}$. Hence, by~(\ref{one_point}) we find that $X_{b^-} = z^{\ssup 2}$ and $X_{b^+} = z^{\ssup 1}$. Also, the two-point localization~(\ref{two_point}) implies that 
$\{ X_t \, : \, t \in [b^-,b^+]\} = \{ z^{\ssup 1},z^{\ssup 2} \}$. Hence, it remains to show that $(X_t \colon t>0)$ 
jumps only once (from $z^{\ssup 2}$ to $z^{\ssup 1}$) in the interval $[b^-,b^+]$.

Define the function
\[ g(t) = \frac{u(t,z^{\ssup 1})}{u(t,z^{\ssup 2})} \, . \]
Then, note that since $u$ solves the heat equation, for $z\in\{z^{\ssup 1}, z^{\ssup 2}\}$,
\[ \tfrac{\partial}{\partial t} u (t,z) = \Delta u(t,z) + \xi(z) \,u(t,z) = \sum_{y \sim z} (u(t,y) - u(t,z)) 
+ \xi(z) \,u(t,z) \, . \]
Furthermore, by~\cite[Lemmas 2.2, 3.2]{KLMS09}, we have $z^{\ssup 1} \not\sim z^{\ssup 2}$ so that using~(\ref{two_point}) we get
\[\begin{aligned} (-2d + \xi(z))\,u(t,z)  < \tfrac{\partial}{\partial t} u (t,z) &< 2d \eps U(t) - 2d u(t,z) + \xi(z)\, u(t,z) 
\\ & < 2d \tfrac{\eps}{1-\eps} (u(t,z^{\ssup 1}) + u(t,z^{\ssup 2})) + (\xi(z) - 2d)\, u(t,z) \, . \end{aligned} \]
Therefore, %if we calculate the derivative of $g$ we obtain
\[ \begin{aligned} g'(t) & = \frac{\tfrac{\partial}{\partial t} u(t,z^{\ssup 1}) u(t,z^{\ssup 2}) - u(t,z^{\ssup 1})\tfrac{\partial}{\partial t} u(t,z^{\ssup 2}) }{u(t,z^{\ssup 2})^2}
\\
& > \frac{1}{u(t,z^{\ssup 2})^2} \Big[ \big(\xi(z^{\ssup 1}) - \xi(z^{\ssup 2}) - 2d \tfrac{\eps}{1-\eps}\big) u(t,z^{\ssup 1}) u(t,z^{\ssup 2})  - 2d \tfrac{\eps}{1-\eps} u(t,z^{\ssup 1})^2 \Big] \\
& = g(t)  \Big(\xi(z^{\ssup 1}) - \xi(z^{\ssup 2}) - 2d \tfrac{\eps}{1-\eps}( 1+  g(t)) \Big). \end{aligned} \]
Now, since $z^{\ssup 1} = Z_{b^+}^{\ssup 1}$ and $z^{\ssup 2} = Z_{b^-}^{\ssup 1}$, Lemma~\ref{lower_bound_jump_xi} shows (again assuming that $t_0$ is large enough) that,
for any $\delta > 1+\frac{1}{\alpha-d}+\frac{1}{d}$, if $\tau$ is the jump time of $Z^{\ssup 1}$ in the interval $[b^-,b^+]$, then
\[ \xi(z^{\ssup 1}) - \xi(z^{\ssup 2}) \geq a_{\tau} (\log \tau)^{-\delta} \, . \]
Hence, we can deduce that if there exists $t'$ such that $g(t') = 1$, then $g'(t') > 0$. Using the continuity of $u$ 
we see that first there can be at most one such $t'$ and $g(t) < 1$ if $t < t'$ and $g(t) > 1$ if $t > t'$, and second
that there exists $t' \in [b^-,b^+]$ such that $g(t') = 1$. Therefore it has to be unique and $u(t,z^{\ssup 1}) < u(t,z^{\ssup 2})$ if $t < t'$ and $u(t,z^{\ssup 1}) > u(t,z^{\ssup 2})$ if $t > t'$. Thus, we can see that $X_t$ jumps exactly once in the interval $[b^-,b^+]$.
%To complete the proof, we just have to notice that by Lemma~\ref{xi_of_Z_t}, we know that $t \mapsto \xi(Z_t^{\ssup 1})$ is increasing on %$(t_0,\infty)$. Therefore since $Z^{\ssup 1}$ and $X^{\ssup 1}$ agree on $[t_0,\infty) \setminus \calE$ and since in any connected component %$[b^-,b^+]$ of $\calE$, 
%$X_t$ jumps once from $Z_{b^-}^{\ssup 1}$ to $Z_{b^+}^{\ssup 1}$, we conclude that $t \mapsto \xi(X_t)$ is increasing, so that %once $X_t$ has jumped, it never returns to a point that it has visited previously.
\end{proof}

In order to be able to deduce the asymptotics of the jump times of $(X_t \colon t>0)$ from those of $(Z_t\colon t>0)$, 
we find bounds for the length of a connected component of $\calE$.

%To formalize this idea, recall that we know that by~\cite[Proposition 5.3]{KLMS09} one-point localization occurs when $\phi(Z^{\ssup 1}_t) - \phi(Z^{\ssup 2}_t) \geq a_t \la_t / 2$. Thus, define 
%\be{defn_of_b_n} b_n = \inf\{ t > 0 \, : \, \Phi_{\tau_n + t} (Z_{\tau_n + t}^{\ssup 1}) - \Phi_{\tau_n + t} (Z_{\tau_n + t}^{\ssup 2}) = \tfrac{1}{2} \, a_{\tau_n + t} \, \la_{\tau_n + t}  \} \, , \ee
%where $\la_t = (\log t)^{-\beta}$ for some $\beta >(\frac{2}{d}(q+1) +1)(1+\frac{1}{\alpha -d })$.

\begin{lemma}\label{length_intermediate_interval} 
Suppose in the definition~(\ref{definition_of_calE}) we choose $\beta > 1+ \frac{q+2}{d} + \frac{1}{\alpha-d}$. %(\frac{2}{d}(q+2) +1)(1+\frac{1}{\alpha -d })$.
Then, for any $0 < \eps < \frac{1}{2} (\beta - (1+ \frac{q+2}{d} + \frac{1}{\alpha-d}))$, almost surely 
for any connected component $[b^-,b^+]$ of $\calE$ with $b^-$ large enough, we find that
\[ \frac{b^+ - b^-}{\tau} \leq (\log \tau)^{-\eps}\, , \]
where $\tau$ is the jump time of the process $(Z_t \colon t>0)$ in the interval $[b^-,b^+]$.
\end{lemma}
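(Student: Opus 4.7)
The plan is to combine the explicit time-dependence of $\Phi_s(z^{(1)})-\Phi_s(z^{(2)})$ on the component $[b^-,b^+]$ with the almost-sure lower bound on the jump amplitude $\xi(z^{(1)})-\xi(z^{(2)})$ from Lemma~\ref{relative_lower_bound_tau_n}(iii) and the almost-sure upper bound on the jump spacing from Proposition~\ref{limsup_asymptotics}. Throughout, $q=\frac{d}{\alpha-d}$.

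I would first record that, exactly as in the proof of Lemma~\ref{properties_jumps_of_Z}, on the component $[b^-,b^+]$ the pair of top two sites of $\Phi_t$ is constant and equals $\{z^{(1)},z^{(2)}\}$, where $z^{(1)}=Z^{(1)}_\tau$ and $z^{(2)}=Z^{(2)}_\tau=Z^{(1)}_{b^-}$. Since $\Phi_\tau(z^{(1)})=\Phi_\tau(z^{(2)})$, Lemma~\ref{difference_phi} gives
\[
\Phi_s(z^{(1)})-\Phi_s(z^{(2)})=\bigl(\xi(z^{(1)})-\xi(z^{(2)})\bigr)\bigl(1-\tfrac{\tau}{s}\bigr),\qquad s\in[b^-,b^+].
\]
By continuity and maximality of $[b^-,b^+]$, the modulus of the left-hand side equals exactly $\tfrac12 a_s\lambda_s$ at each of $s=b^-$ and $s=b^+$. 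Solving the two resulting equations yields
\[
\tau-b^-=\frac{b^-\,a_{b^-}(\log b^-)^{-\beta}}{2\bigl(\xi(z^{(1)})-\xi(z^{(2)})\bigr)},\qquad b^+-\tau=\frac{b^+\,a_{b^+}(\log b^+)^{-\beta}}{2\bigl(\xi(z^{(1)})-\xi(z^{(2)})\bigr)}.
\]

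Next I would calibrate the scales of $b^\pm$ against $\tau$. Applying Proposition~\ref{limsup_asymptotics} to $h(t)=(\log t)^{\delta'}$ with any fixed $\delta'>1/d$, eventually almost surely $\tau_{n+1}-\tau_n\le\tau_n(\log\tau_n)^{\delta'}$, and since $[b^-,b^+]\subset[\tau_{n-1},\tau_{n+1}]$ when $\tau=\tau_n$, this forces $\log b^\pm=(1+o(1))\log\tau$ and consequently $a_{b^\pm}/a_\tau\le(b^\pm/\tau)^q(1+o(1))$. Lemma~\ref{relative_lower_bound_tau_n}(iii), applied with any fixed $\delta>1+\frac{1}{\alpha-d}+\frac{1}{d}$, gives $\xi(z^{(1)})-\xi(z^{(2)})\ge a_\tau(\log\tau)^{-\delta}$ eventually. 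Substituting these bounds in gives
\[
\frac{b^+-b^-}{\tau}\le\bigl(1+(\log\tau)^{\delta'}\bigr)^{q+1}(\log\tau)^{\delta-\beta}(1+o(1))\le (\log\tau)^{(q+1)\delta'+\delta-\beta}(1+o(1)).
\]

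Finally, choosing $\delta'$ arbitrarily close to $1/d$ and $\delta$ arbitrarily close to $1+\frac{1}{\alpha-d}+\frac{1}{d}$ makes the exponent $(q+1)\delta'+\delta-\beta$ arbitrarily close to $1+\frac{q+2}{d}+\frac{1}{\alpha-d}-\beta$, which by the hypothesis on $\beta$ is strictly less than $-2\eps$. This delivers $(b^+-b^-)/\tau\le(\log\tau)^{-\eps}$ for all sufficiently large $\tau$, almost surely.

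The delicate point—and the reason for the particular threshold on $\beta$—is the simultaneous calibration of three exponents: the jump-spacing exponent $\delta'>1/d$, the $\xi$-gap exponent $\delta>1+\frac{1}{\alpha-d}+\frac{1}{d}$, and the separation exponent $\beta$ defining $\calE$. The factor $a_{b^+}/a_\tau$ contributes an extra $(b^+/\tau)^q$, which is what produces the multiplier $(q+1)$ in front of $\delta'$ and hence the $(q+2)/d$ term in the hypothesis; any cruder treatment of this scaling would force a strictly stronger assumption on $\beta$.
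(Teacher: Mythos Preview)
Your proof is correct and follows essentially the same route as the paper. The only bookkeeping difference is that you cite part~(iii) of Lemma~\ref{relative_lower_bound_tau_n} for the lower bound on $\xi(z^{(1)})-\xi(z^{(2)})$, whereas the paper cites part~(i) together with the bound $\tau^{+}/\tau\le(\log\tau)^{\delta}$ from Proposition~\ref{limsup_asymptotics}; since part~(iii) is itself derived from part~(i) and that same proposition, the two arguments use identical ingredients and arrive at the same limiting exponent $1+\tfrac{q+2}{d}+\tfrac{1}{\alpha-d}-\beta$.
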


\begin{proof} We start by expressing the distances $b^+ - \tau$ and $\tau - b^-$ in terms of the potential values 
at the sites $Z_\tau^{\ssup 1}$ and $Z_\tau^{\ssup 2}$. 
As we have seen in the proof of Lemma~\ref{properties_jumps_of_Z}, 
$Z_{\tau}^{\ssup i} = Z_{b^+}^{\ssup i}$ for $i =1,2$. Hence, we obtain that
\[ \Phi_{b^+} (Z_{\tau}^{\ssup 1}) - \Phi_{b^+} (Z_{\tau}^{\ssup 2}) =
\Phi_{b^+} (Z_{b^+}^{\ssup 1}) - \Phi_{b^+} (Z_{b^+}^{\ssup 2})=  \tfrac{1}{2} a_{b^+} \la_{b^+} \, . \]
Moreover, by Lemma~\ref{difference_phi} we get that
\[ \Phi_{b^+} (Z_{\tau}^{\ssup 1}) - \Phi_{b^+} (Z_{\tau}^{\ssup 2})
= (\xi(Z_{\tau}^{\ssup 1}) - \xi(Z_{\tau}^{\ssup 2})) ( 1 - \tfrac{\tau}{b^+}) \, . \]
Combining the previous two displayed equations and rearranging yields
\be{difference_in_xi_b^+}   b^+ - \tau =  \frac{\frac{1}{2} b^+ a_{b^+} \la_{b^+}}{\xi (Z_{\tau}^{\ssup 1}) - \xi (Z_{\tau}^{\ssup 2})}   \, . \ee
Similarly, we know that $Z_{b^-}^{\ssup 1} = Z_\tau^{\ssup 2}$ and $Z_{b^-}^{\ssup 2} = Z_\tau^{\ssup 1}$ 
and deduce in the same way that
\be{difference_in_xi_b^-}  \tau - b^-   =  \frac{\frac{1}{2} b^- a_{b^-} \la_{b^-}}{\xi (Z_{\tau}^{\ssup 1}) - \xi (Z_{\tau}^{\ssup 2})}   \, . \ee
Define $\tau^+$ as the next jump of $Z_t^{\ssup 1}$ after $\tau$, then $b^+ \leq \tau^+$. We use~(\ref{difference_in_xi_b^+}) and~(\ref{difference_in_xi_b^-}) to get
\be{length_interval} \begin{aligned}
\frac{b^+ - b^-}{\tau} & = \frac{b^+ - \tau}{\tau} + \frac{\tau - b^-}{\tau}
= \frac{1}{2} \frac{1}{\xi (Z_{\tau}^{\ssup 1}) - \xi (Z_{\tau}^{\ssup 2})} \Big(   a_{b^+} \la_{b^+} \frac{b^+}{\tau}
+  a_{b^-} \la_{b^-} \frac{b^-}{\tau} \Big) \\
& \leq \frac{1}{2} \frac{1}{\xi (Z_{\tau}^{\ssup 1}) - \xi (Z_{\tau}^{\ssup 2})} \Big(   a_{b^+} \la_{b^+} \frac{\tau^+}{\tau}
+  a_{\tau} \la_{\tau}  \Big) \, , \end{aligned}
\ee
where we used in the last step that $\beta^- \leq \tau$ and that $t \mapsto a_t (\log t)^{-\beta} = \frac{t^q}{(\log t)^{q + \beta}}$ is increasing for all $t$ large enough.
Next, by the definition of $a_t$ and $\la_t$, we obtain that
\be{a_tau_n_b_n}  a_{b^+} \la_{b^+}  = \tfrac{ (b^+ )^q}{(\log b^+)^{q + \beta}}
\leq \tfrac{\tau^q}{(\log \tau)^{q+\beta}} \, \big( \tfrac{\ \tau^+}{\tau}\big)^q 
= a_{\tau} \la_\tau \big( \tfrac{\ \tau^+}{\tau}\big)^q   \, , \ee
where we used that $b^+ \leq \tau^+$ for the inequality. Using  Lemma~\ref{relative_lower_bound_tau_n}(i),
if $\tau$ is large enough, for $\beta'= 1 + \frac{1}{\alpha - d} + \frac{\eps}{2}$, we get
\[ \xi(Z_{\tau}^{\ssup 1}) - \xi(Z_{\tau}^{\ssup 2}) \geq  \frac{\tau}{ \tau^+ - \tau} \, a_{\tau} (\log \tau)^{-\beta'} \, . \]
Hence, substituting this estimate into~(\ref{length_interval}) together with the previous estimate~(\ref{a_tau_n_b_n}) yields
\[ \begin{aligned} \frac{b^+ - b^-}{\tau} & \leq 
%\frac{1}{2} \frac{1}{\xi (Z_{\tau}^{\ssup 1}) - \xi (Z_{\tau}^{\ssup 2})} \Big(   a_{b^+} \la_{b^+} \frac{\tau^+}{\tau}
%+  a_{\tau} \la_{\tau}  \Big) \\& \leq  
\frac{\tau^+ - \tau}{\tau} \,(\log \tau)^{\beta' - \beta} \Big( \big( \tfrac{\ \tau^+}{\tau}\big)^{q+1} + 1\Big)
\leq 2 \big( \tfrac{\ \tau^+}{\tau}\big)^{q+2} (\log \tau)^{\beta' - \beta}\, . \end{aligned} \]
It remains to bound the term $\tau^+/ \tau$. 
By Proposition~\ref{limsup_asymptotics}, for $\delta = \frac{1}{d} + \frac{\eps}{2(q+2)}$, we get
\[ \frac{\tau_+}{\tau} = 1 + \frac{\tau^+- \tau}{\tau} \leq (\log \tau)^{\delta} \, . \]
Finally, we have shown that if $b^-$ is large enough
$\frac{b^+ - b^-}{\tau} %\leq 2 \Big( \frac{\ \tau^+}{\tau}\Big)^{q+2} (\log \tau)^{\beta' - \beta}
\leq 2 (\log \tau)^{\beta' - \beta + (q+2) \delta} < (\log \tau)^{-\eps}$,
%where we used that by our choice of $\eps$
%\[ \beta - \beta' - \delta(q+2) = \beta - \Big(\frac{2}{d}(q+2) + 1\Big)\Big(1 + \frac{1}{\alpha - d}\Big) - \eps > \eps \,, \]
which completes the proof.
\end{proof}

We are now in the position to translate the results from Section~\ref{asymptotics_tau} from the setting of the variational problem to 
the setting of the residual lifetime function of the maximizer of the solution.

\begin{proof}[Proof of Theorem~\ref{asymptotics_R_X}]
Suppose $t \mapsto h(t)$ is a nondecreasing function such that 
$$\int_1^\infty \frac{\diff t}{t h(t)^d} < \infty.$$
Without loss of generality, we can assume that there exists $\gamma' > 0$ such that $h(t) \leq (\log t)^{\gamma'}$ for all $t>0$.
Also, let $\gamma > 1 + \frac{2}{\alpha - d}$. %Fix $\gamma > 1 + \frac{2}{\alpha - d}$ and $\gamma' > \frac{2(1+\alpha -d)}{d(\alpha - d)}$. 
Fix $\eps > 0$ and choose $\beta > 1 + \frac{q+2}{d} + \frac{1}{\alpha-d}$ %(\frac{2}{d}(q+2) +1)(1+\frac{1}{\alpha -d })$ 
large enough such that 
\[ \delta := \tfrac{1}{4} \big(\beta - \big(1 + \tfrac{q+2}{d} + \tfrac{1}{\alpha-d}\big)\big) >  \gamma' +\gamma\,. \]
Define $\calE = \calE(\beta)$ as in~(\ref{definition_of_calE}) and denote by $[b_n^-,b_n^+], n \geq 1$, the connected components of $\calE$.
By Lemmas~\ref{properties_jumps_of_Z} and~\ref{properties_jumps_of_X} each of the processes $(X_t \colon t \geq t_0)$ and $(Z_t \colon t \geq t_0)$ jumps only at times in $\calE$ and each interval $[b_n^-,b_n^+]$ contains exactly one jump time, which we denote by $\sigma_n$ for $X_t$ and $\tau_n$ for $Z_t$. 
%Since we are only interested in the asymptotics of the sequences of $(\sigma_n)_{n \geq 1}$ and $(\tau_n)_{n \geq 1}$, there is no loss of generality %by coupling the indices in this way.
By Lemma~\ref{relative_lower_bound_tau_n} and Proposition~\ref{limsup_asymptotics}, for all $n$ sufficiently large,
\be{bounds_tau_n} 2(\log \tau_n)^{-\gamma} \leq \frac{\tau_{n+1} - \tau_n}{\tau_n} \leq 
\tfrac{\eps}{3} h(\tfrac{1}{2} \tau_n) \leq \tfrac{1}{2}(\log \tau_n)^{\gamma'} \, . \ee
We now want to translate the upper bound to the jump times $(\sigma_n)$. For this purpose, we can invoke Lemma~\ref{length_intermediate_interval} to find that by our choice of $\beta$ and $\delta$ we have that for all $n$ sufficiently large
\be{difference_b_n} \frac{b_n^+ - b_n^-}{\tau_n} \leq (\log \tau_n)^{-\delta} \, .\ee
Now, we first use that $|\sigma_n - \tau_n| \leq b_n^+ - b_n^-$ and then the estimates~(\ref{bounds_tau_n}) and~(\ref{difference_b_n}) to obtain
\[ \begin{aligned} \frac{R(\sigma_n)}{\sigma_n h(\sigma_n)} = \frac{\sigma_{n+1} - \sigma_n}{\sigma_n h(\sigma_n)}
%\leq \frac{ \tau_{n+1}- \tau_n + b_{n+1}^+ - b_{n+1}^- + b_n^+ - b_n^- }{\tau_n(1-(\log \tau_n)^{-\delta}) h(\tau_n (1-(\log \tau_n)^{-\delta}))} \\& 
& \leq \Big( \frac{\tau_{n+1} - \tau_n}{\tau_n} + \frac{b_{n+1}^+ - b_{n+1}^-}{\tau_{n+1}} \frac{\tau_{n+1}}{\tau_n} + \frac{b_n^+ - b_n^-}{\tau_n}\Big) \\ & \hspace{1cm}\times\big((1-(\log \tau_n)^{-\delta}) h(\tau_n (1-(\log \tau_n)^{-\delta}))\big)^{-1}% \Big(1 - \frac{b_n^+ - b_n^-}{\tau_n} \Big)^{-1} 
\\
%& \leq  \Big( \frac{\tau_{n+1} - \tau_n}{\tau_n} + (\log \tau_{n+1})^{-\delta} \Big(1 + \frac{\tau_{n+1}- \tau_n}{\tau_n}\Big) + (\log \tau_n)^{-\delta} \Big) \Big(1 - (\log \tau_n)^{-\delta} \Big)^{-1} \\
& \leq  \big( \tfrac{\tau_{n+1} - \tau_n}{\tau_n} +  (\log \tau_{n+1})^{-\delta + \gamma'}  + (\log \tau_n)^{-\delta} \big) \,
\big( \tfrac{1}{2}h(\tfrac{1}{2}\tau_n )\big)^{-1} 
%\Big(1 - (\log \tau_n)^{-\delta} \Big)^{-1} 
\\
& \leq 3 \,\tfrac{\tau_{n+1} - \tau_n}{ h(\frac{1}{2} \tau_n) \tau_n} \leq \eps %h(\tfrac{1}{2} \tau_n) \leq \eps h(\frac{1}{2}\sigma_n (1-(\log \tau_n)^{-\delta})^{-1}) \leq \eps h(\sigma_n) 
\, , \end{aligned}\]
for all but finitely many $n$. %where we used in the last step that $\delta > \gamma'$ so that $(\log \tau_{n+1})^{-\delta + \gamma'} \ra 0$ as $n \ra \infty$. 
In particular, this shows that, almost surely,
\[ \limsup_{n \ra \infty} \frac{R(\sigma_n)}{\sigma_n h(\sigma_n)} = 0 \, . \]
However, since $R$ jumps only at the points $\sigma_n$ and decreases on $[\sigma_n,\sigma_{n+1})$, this immediately implies the first part of Theorem~\ref{asymptotics_R_X}, see also Figure~\ref{remaining_lifetime}.

For the \emph{second part} of the proof, suppose $t \mapsto h(t)$ is a nondecreasing function such that
\[ \int_1^\infty \frac{\diff t}{t h(t)^d} = \infty \, . \]
Fix $\kappa > 0$, then by Proposition~\ref{limsup_asymptotics}, we know that there exists a sequence $(t_n)_{n\ge 1}$ such that 
$R^V(t_n)\geq 3\kappa t_n h(2 t_n) $.
Define a subsequence of the jump times $(\tau_n)$ by choosing $n_k$ such that for some index $j$ we have that $t_j \in [\tau_{n_k},\tau_{n_k+1})$.
%those $\tau_n$ w
%setting 
%\[  n_k = \sup \{ \ell \, : \, t_k \geq \tau_\ell \} \, . \]
%By passing to a subsequence if necessary, we can assume that $(\sigma_{n_k})$ are a strictly increasing subsequence of jump times. 
In particular, since %$t_j \in [ \tau_{n_k}, \tau_{n_k +1})$, and 
$R^V$ is decreasing on the interval $[\tau_{n_k},\tau_{n_k+1})$, we can deduce that for $k$ large enough
\[\frac{\tau_{n_k +1}-\tau_{n_k}}{\tau_{n_k} h(2\tau_{n_k})} =  \frac{R^V(\tau_{n_k})}{\tau_{n_k} h(2\tau_{n_k})} \geq \frac{R^V(t_j)}{t_j h(2t_j)}  \geq 3\kappa \, , \]
Similarly as for the upper bound, we can estimate
\[ \begin{aligned} \frac{R(\sigma_{n_k})}{\sigma_{n_k}h(\sigma_{n_k})} 
& = \frac{\sigma_{{n_k}+1} - \sigma_{n_k}}{\sigma_{n_k}h(\sigma_{n_k})}
 \geq \frac{ \tau_{{n_k}+1}- \tau_{n_k} -(b_{{n_k}+1}^+ - b_{{n_k}+1}^-) - (b_{n_k}^+ - b_{n_k}^-) }{(\tau_{n_k} + ( b_{n_k}^+ - b_{n_k}^-))h(\tau_{n_k} + b_{n_k}^+ - b_{n_k}^-)} \\
& \geq  \Big(1 - \frac{b_{{n_k}+1}^+ - b_{{n_k}+1}^-}{\tau_{{n_k}+1}}\frac{\tau_{{n_k}+1}}{\tau_{n_k}}\frac{\tau_{n_k}}{\tau_{{n_k}+1}-\tau_{n_k}} - \frac{b_{n_k}^+ - b_{n_k}^-}{\tau_{n_k}}\frac{\tau_{n_k}}{\tau_{{n_k}+1}-\tau_{n_k}}\Big) \\
& \hspace{0.5cm} \times \frac{\tau_{{n_k}+1} - \tau_{n_k}}{\tau_{n_k}}\Big((1 + (\log\tau_{n_k})^{-\delta}) h(\tau_{n_k}(1 + (\log\tau_{n_k})^{-\delta}))\Big)^{-1} \\
%& \geq \frac{\tau_{{n_k}+1} - \tau_{n_k}}{\tau_{n_k}} \Big( 1 - (\log \tau_{{n_k}+1})^{- \delta} \Big( 1 + \frac{\tau_{{n_k}+1} - \tau_{n_k}}{\tau_{n_k}}\Big)
%(\log \tau_{n_k})^{\gamma} - (\log \tau_{n_k})^{- \delta + \gamma} \Big) ( 1 + (\log \tau_{n_k})^{-\delta})^{-1} \\
& \geq \frac{\tau_{{n_k}+1} - \tau_{n_k}}{\tau_{n_k}} ( 1 - (\log \tau_{{n_k}+1})^{\gamma + \gamma' - \delta} - (\log \tau_{n_k})^{\gamma - \delta}) (2h(2\tau_{n_k}))^{-1}\\
%( 1 +  (\log \tau_{n_k})^{-\delta})^{-1} 
& \geq \tfrac{1}{3}\frac{\tau_{{n_k}+1} - \tau_{n_k}}{\tau_{n_k}h(2\tau_{n_k})} \geq \kappa %h(2 \tau_{n_k}) \geq \kappa h( 2 \sigma_{n_k} ( 1 + (\log \tau_{n_k})^{-\delta})^{-1}) \geq \kappa h(\sigma_{n_k}) 
\, , \end{aligned}\]
eventually for all $k$ large enough. This implies that
$\displaystyle\limsup_{t \ra \infty} \tfrac{ R(t) }{t h(t)} = \limsup_{t \ra \infty} \tfrac{ R(t) }{\varphi(t)} \geq \kappa$,
thus\vspace{-1mm} completing the proof of Theorem~\ref{asymptotics_R_X}.
\end{proof}

\section{A functional scaling limit theorem}\label{sect_spatial_limit_theorem}

The aim of this section is to prove Theorem~\ref{spatial_limit_u}. As in previous sections, we start by dealing with 
the maximizer of the variational problem formulating a limit theorem for the process
\be{proc}
 \Big( \big( \tfrac{Z_{tT}}{r_T} , \tfrac{\Phi_{tT}(Z_{tT})}{a_T} \big) \, : \, t > 0 \Big). 
\ee 
Convergence will take place in the Polish space $D(0,\infty) := D((0,\infty),\R^{d+1})$ of all c\`adl\`ag processes defined on $(0,\infty)$ taking values in $\R^{d+1}$ equipped with the Skorokhod topology on compact subintervals. This means that $f_n\to f$ if, for every $0<a<b<\infty$
we can find a continuous and strictly increasing time-changes $\la_n\colon [a,b]\ra[a,b]$ such that
$$\sup_{t \in [a,b]} |\la_n(t) - t| \to 0 \quad \mbox{ and } \sup_{t \in [a,b]} |f(t) - f_n(\la_n(t))| \to 0 , $$
for more details see~\cite{Bil99}. The main part of this section is devoted to the proof of the following proposition 
stated in terms of the maximizer of the variational problem.

\begin{prop}\label{spatial_limit} %$\phantom{q}$
As $T\to\infty$
\[ \Big( \big(\tfrac{Z_{tT}}{r_T}, \tfrac{\Phi_{tT}(Z_{tT})}{a_T}\big)\, : \, t>0\Big) \weakconv  \Big(\big(Y^{\ssup 1}_t,Y^{\ssup 2}_t+q\big(1-\tfrac{1}{t}\big)|Y^{\ssup 1}_t|\big)\, : \, t>0 \Big) \, , \]
in the sense of weak convergence on $D(0,\infty)$.
\end{prop}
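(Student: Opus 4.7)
The proof splits naturally into convergence of finite-dimensional distributions and tightness in the Skorokhod topology on compact subintervals $[a,b]\subset(0,\infty)$. Throughout I write $q=d/(\alpha-d)$.

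For convergence of finite-dimensional distributions at times $t_1<\cdots<t_k$, the starting point is the approximation
\[
\frac{\Phi_{tT}(z)}{a_T} = \frac{\Phi_T(z)}{a_T} + q\Big(1-\frac{1}{t}\Big)\frac{|z|}{r_T} + o(1),
\]
which follows from Lemma~\ref{Phi_t(1+c)_in_Phi_t} (taking $T$ as the base time and $\theta=t-1$ for $t\geq 1$), with Lemma~\ref{exact_errors_for_Phi} providing the analogous bound when $t<1$, uniformly for $(z/r_T,\Phi_T(z)/a_T)$ in a fixed compact subset of $\widehat H$. Consequently, up to asymptotically negligible error, $Z_{tT}$ is the site $z$ maximizing $\Phi_T(z)/a_T+q(1-\tfrac{1}{t})|z|/r_T$, which on the level of the rescaled point process $\Pi_T$ corresponds to picking the point $(x,y)$ in the support of $\Pi_T$ that maximizes $y+q(1-\tfrac{1}{t})|x|$. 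The joint event $\{(Z_{t_iT}/r_T,\Phi_{t_iT}(Z_{t_iT})/a_T)\in A_i,\ i=1,\dots,k\}$ can then be written as a continuity event on the restriction of $\Pi_T$ to a large box $\widehat B_N$, exactly as in Lemma~\ref{restriction_to_box}. Passing to the limit $T\to\infty$ using $\Pi_T\weakconv\Pi$, the limiting event is precisely the defining condition of the cone construction of $(Y_{t_i}^{\ssup 1},Y_{t_i}^{\ssup 2})$, yielding the joint limit. Tail control via Lemma~\ref{restricting_maximizer} together with dominated convergence then lets us send $N\to\infty$.

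For tightness in $D([a,b],\R^{d+1})$, pointwise tightness is immediate from Lemma~\ref{restricting_maximizer}, so the task is controlling the Skorokhod modulus of continuity. Between successive jumps of $t\mapsto Z_{tT}$ both components of the rescaled process move continuously, since $s\mapsto\Phi_s(z)$ is continuous in $s$ for fixed $z$ and the spatial component is locally constant, so the modulus reduces to controlling jump locations and magnitudes. The key estimate is the weak ageing bound: by Proposition~\ref{ageing_for_Z} and Proposition~\ref{asymptotics_I_large_c}(b), for any $t\in[a,b]$ and small $\delta>0$,
\[
\limsup_{T\to\infty}\Prob\{Z_{tT}\neq Z_{(t+\delta)T}\} = 1-I(\delta/t) \sim C_0\,\delta/t
\]
as $\delta\downarrow 0$, uniformly in $t$, giving Aldous's condition on each coordinate. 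To upgrade to stopping times, one combines this with a uniform bound on the number of jumps of $Z_{tT}$ in $[a,b]$, which is tight as $T\to\infty$ since successive jumps in the unscaled time scale are separated by gaps comparable to the current time.

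The hardest step will be ruling out jump cascades in which $Z_{tT}$ visits several sites with nearly equal $\Phi$-values within a small window. Here the crucial a priori input is the monotonicity $u\mapsto\xi(Z_u)$ from Lemma~\ref{xi_of_Z_t} together with Lemma~\ref{separation_first_second}, showing that once the top two values of $\Phi_t$ are well separated this separation persists, so that the relevant jumps are spaced out on the scale~$t$. With finite-dimensional convergence and tightness in hand, identification of the limit as $(Y_t^{\ssup 1},\,Y_t^{\ssup 2}+q(1-\tfrac{1}{t})|Y_t^{\ssup 1}|)$ is immediate from the cone description.
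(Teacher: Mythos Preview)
Your treatment of finite-dimensional distributions is essentially the paper's proof: the same approximation from Lemma~\ref{Phi_t(1+c)_in_Phi_t}, the same reduction to a continuity set for $\Pi_T$ restricted to a large box, the same passage to the Poisson limit, and the same tail control via Lemma~\ref{restricting_maximizer}. (The paper does not literally invoke Lemma~\ref{restriction_to_box}, which is tailored to the event $\{Z_t=Z_{t+\theta t}\}$, but carries out the analogous box-restriction in a self-contained Step~1/Step~2; this is a cosmetic difference.)

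For tightness you propose Aldous's criterion, whereas the paper verifies the $w'$-modulus condition directly. Either route is viable, but your sketch misidentifies the key ingredient. You invoke Lemma~\ref{xi_of_Z_t} and Lemma~\ref{separation_first_second} to ``rule out jump cascades''; however, Lemma~\ref{separation_first_second} is an \emph{almost-sure persistence} statement (if the gap $\Phi_t(Z_t^{\ssup 1})-\Phi_t(Z_t^{\ssup 2})$ is large at the endpoints and $Z^{\ssup 1}$ does not change, it is large throughout), not a probabilistic bound on multiple jumps in a short window. It gives no control on $\Prob\{Z_t$ jumps twice in $[T,(1+\delta)T]\}$, which is what you actually need both to bound the number of jumps in $[a,b]$ and to upgrade your fixed-time estimate $1-I(\delta/t)\sim C_0\delta/t$ to stopping times. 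The paper supplies this via a direct point-process computation (Lemma~\ref{small_jumps}): the probability of at least two jumps in $[T,(1+\delta)T]$ is, in the limit, bounded by the probability that $\Pi$ places two or more points in $D_\delta(|x|,y)\setminus D_0(|x|,y)$, which is shown to be $o(\delta)$ by an explicit integral estimate. Once jumps are separated by at least $\delta T$ with high probability, the paper also needs Lemma~\ref{max_Z} (a uniform-in-$t\in[a,b]$ bound on $|Z_{tT}|/r_T$, obtained by iterating the inequality $|Z_\tau^{\ssup 2}|<(1+o(1))|Z_\tau^{\ssup 1}|$ across the now-finite number of jumps) to control both the $\|\cdot\|$-bound and the continuous drift of $\Phi_{tT}(Z_{tT})/a_T$ between jumps. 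Your sketch does not account for this spatial uniform bound, which is needed even in the Aldous approach to show that the second component varies by $O(\delta)$ over each inter-jump interval.
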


We will prove this result by first showing convergence of the finite-dimensional distributions in Section~\ref{sect_finite_diml} 
and then tightness in Section~\ref{sect_tightness}.  In Section~\ref{transfer_spatial_limit}, we transfer the results to the maximizer 
of the profile and the potential value at that site, hence showing Theorem~\ref{spatial_limit_u} and, by a slight variation, also
Proposition~\ref{classical}.

\subsection{Finite-dimensional distributions}\label{sect_finite_diml}

The next lemma shows that the finite-dimensional distributions of the process~\eqref{proc}
converge weakly to those of the limiting process defined in terms of $Y = (Y^{\ssup 1},Y^{\ssup 2})$. 
%Note since for any fixed $t$ the set $\{ Y(t) \neq Y(t-) \}$ has $\Prob$-measure $0$,
%the set $\calT_Y$, whose complement contains the fixed discontinuities of $Y$, 
%is equal to the interval $(0,\infty)$. Thus, we need to show convergence of the finite-dimensional distribution for any $0< t_1 < t_2 < \ldots < t_k < \infty$.
%Here, we use that for fixed $t$ the set $\{ Y(t) \neq Y(t-) \}$ has $\Prob$-measure $0$, so that the projection $\pi_t : D([\eps,M],\widehat H) \ra \widehat H$ is continuous except on a set of $\Prob$-measure $0$.

\begin{lemma}\label{finite_dimensional_distributions} Fix $0< t_1<\ldots<t_k <\infty$. Then as $T \ra \infty$,
\[ \bal \Big(\big(\tfrac{Z_{t_1T}}{r_T}, & \tfrac{\Phi_{t_1T}(Z_{t_1T})}{a_T}\big), \ldots,\big(\tfrac{Z_{t_kT}}{r_T}, \tfrac{\Phi_{t_kT}(Z_{t_kT})}{a_T}\big)\Big) \\ & \weakconv 
\big((Y^{\ssup 1}_{t_1},Y^{\ssup 2}_{t_1}+q(1-\tfrac{1}{t_1})|Y^{\ssup 1}_{t_1}|),\ldots, (Y^{\ssup 1}_{t_k},Y^{\ssup 2}_{t_k}+q(1-\tfrac{1}{t_k})|Y^{\ssup 1}_{t_k}|)\big) \, .\eal \]\end{lemma}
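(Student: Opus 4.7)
The plan is to extend the point-process technique developed for Proposition~2.2 to the joint law at finitely many times. The starting observation is a multi-time analog of Lemma~2.3: using the identity $s\Phi_s(z)-t\Phi_t(z)=(s-t)\xi(z)$ (valid whenever $\min(s,t)\xi(z)\ge |z|$) together with the expansion $\xi(z)/a_T=\Phi_T(z)/a_T+q|z|/r_T+o(1)$ from Lemma~3.1, one obtains, for each $i=1,\dots,k$,
$$\frac{\Phi_{t_iT}(z)}{a_T} = \frac{\Phi_T(z)}{a_T} + q\Big(1-\frac{1}{t_i}\Big)\frac{|z|}{r_T} + \delta^{(i)}_T\Big(\tfrac{z}{r_T},\tfrac{\Phi_T(z)}{a_T}\Big),$$
with $\delta^{(i)}_T\to 0$ uniformly on $\widehat B_N$; the sign of $1-1/t_i$ correctly handles both regimes $t_i<1$ and $t_i>1$.

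Next I would localize. By \cite[Lemma~6.2]{KLMS09} applied at each $t_iT$ together with the scaling identities $r_{tT}/r_T\to t^{q+1}$ and $a_{tT}/a_T\to t^q$, for any $\eps>0$ I can fix $N$ so large that, uniformly for large $T$, all $k$ pairs $(Z_{t_iT}/r_T,\Phi_T(Z_{t_iT})/a_T)$ lie in a compact subset of $\widehat B_N$ with probability at least $1-\eps$. On this event the maximality of $Z_{t_iT}$ for $\Phi_{t_iT}$ translates, up to the vanishing error $\delta^{(i)}_T$, into the statement that the corresponding point $(x_i,y_i)\in \Pi_T$ maximizes the linear functional $(\bar x,\bar y)\mapsto \bar y + q(1-1/t_i)|\bar x|$ over $\Pi_T\cap \widehat B_N$.

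I would then express the joint probability as an integral over ordered $k$-tuples of points of $\Pi_T$ (with coincidences permitted when two different times happen to share a maximizer), with each maximality constraint encoded as a void condition on the cone
$$D_{t_i}(x_i,y_i)=\big\{(\bar x,\bar y)\in \widehat B_N : \bar y + q(1-\tfrac{1}{t_i})|\bar x| > y_i + q(1-\tfrac{1}{t_i})|x_i|\big\},$$
in direct analogy to the one-time computation leading to Proposition~\ref{I}. Since each $D_{t_i}(x_i,y_i)$ is relatively compact in $\widehat H$ and its boundary carries no $\nu$-mass, and since $\Pi_T\Rightarrow\Pi$ on $\widehat B_N$ by \cite[Lemma~6.1]{KLMS09}, dominated convergence (using $\nu(\widehat B_N)<\infty$) lets me pass to the limit. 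The resulting expression is exactly the joint law of the vector $((Y^{\ssup 1}_{t_i},Y^{\ssup 2}_{t_i}+q(1-1/t_i)|Y^{\ssup 1}_{t_i}|))_{i=1}^k$ read off from the cone construction after Remark~\ref{comments_spatial_limit_u}. Finally letting $N\to\infty$ removes the localization.

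The main obstacle is ensuring that the map sending a configuration on $\widehat H$ to its $k$-tuple of $t_i$-maximizers is continuous at $\Pi$ almost surely, so that the vague-to-weak passage is valid. This reduces to the fact that, with probability one, for each $i$ the functional $\bar y + q(1-1/t_i)|\bar x|$ has a unique maximizer on $\Pi\cap \widehat B_N$ and attains its maximum strictly above the runner-up value, which follows from the absolute continuity of the intensity measure $\nu$ and a union bound over the finitely many indices~$i$.
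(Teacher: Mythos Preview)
Your proposal is correct and follows essentially the same route as the paper: expand $\Phi_{t_iT}(z)/a_T$ about $\Phi_T(z)/a_T$ via Lemma~\ref{Phi_t(1+c)_in_Phi_t}, localize the maximizers to a large box, encode each maximality constraint as a void condition on a cone for the point process~$\Pi_T$, and pass to the limit using $\Pi_T\Rightarrow\Pi$ on relatively compact subsets of~$\widehat H$. The only presentational difference is that the paper introduces an explicit restricted maximizer $Z_{t_iT}^{K,T}$ (maximizing only over points whose $\Pi_T$-coordinates lie in $B_K$) and shows it coincides with $Z_{t_iT}$ with high probability, whereas you package the same step as almost-sure continuity of the $k$-tuple-of-maximizers map at~$\Pi$; both arguments rely on the same estimates and are equivalent here.
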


\begin{proof} First notice, 
by the continuous mapping theorem, see e.g.~\cite[Theorem~2.7]{Bil99}, we can equivalently show that for $Y_t = (Y_t^{\ssup 1},Y_t^{\ssup 2})$
\[ \bal \Big(\big(\tfrac{Z_{t_1T}}{r_T}, & \tfrac{\Phi_{t_1T}(Z_{t_2T})}{a_T} - q\big(1-\tfrac{1}{t_1}\big)\tfrac{|Z_{t_1T}|}{r_T}\big), \ldots,\big(\tfrac{Z_{t_kT}}{r_T}, \tfrac{\Phi_{t_kT}(Z_{t_kT})}{a_T} - q\big(1-\tfrac{1}{t_k}\big)\tfrac{|Z_{t_kT}|}{r_T}\big)\Big) \\ & \weakconv \big(Y_{t_1},\ldots, Y_{t_k}\big) \, .\eal\]
%((Y^{\ssup 1}_{t_1},Y^{\ssup 2}_{t_1}),\ldots, (Y^{\ssup 1}_{t_k},Y^{\ssup 2}_{t_k}+q(1-\frac{1}{t_k})|Y^{\ssup 1}_{t_k}|)) \, .\eal \]
Define
\[ \restrH = \big\{ (x,y) \in \R^d\times \R \, : \, y  > -q\big(1 - \tfrac{1}{t_k}\big) |x| \big\}\, . \]
and recall that, for large $T$, all components in the vectors above are in~$H^*$. Hence it suffices to show that, 
for any $A \subset (\restrH)^k$ with $\mathrm{Leb}_{k(d+1)}(\partial A) = 0$, 
we have, as $T \ra \infty$,
\be{fd} \Prob\Big\{ \big(\tfrac{Z_{t_iT}}{r_T},\tfrac{\Phi_{t_iT}(Z_{t_iT})}{a_T} - q\big(1-\tfrac{1}{t_i}\big)\tfrac{|Z_{t_iT}|}{r_T}\big)_{i=1}^k \in  A  \Big\} \ra \Prob\big\{ (Y_{t_i})_{i=1}^k \in A \big\} \, . \ee
The remainder of the proof is organised as follows: First, we show that in fact it suffices to show~(\ref{fd}) for $A$ intersected with large boxes. Second, we also show that it is enough to consider the maximizer of the variational problem on a large region. These steps let us express the probability in question in terms of the point process
$\Pi_T = \{ (z/r_t,\Phi_T(z)/a_T) \, : \, z \in \Z^d\}$ restricted to a relatively compact set, so that we can invoke the weak convergence of $\Pi_T \weakconv \Pi$ and recognize the resulting event in terms of the process $(Y_t \colon t>0)$. 

\emph{Step 1.} Define a large region
\[ B_N = \{ (x,y) \in \restrH \, : \, |x|\leq N,\tfrac{1}{N} - q|x| \leq y \leq N \}\, . \]
We claim that we only have to show that
\be{fd_restricted_to_box} \Prob\Big\{ \big(\tfrac{Z_{t_iT}}{r_T},\tfrac{\Phi_{t_iT}(Z_{t_iT})}{a_T} - q(1-\tfrac{1}{t_i})\tfrac{|Z_{t_iT}|}{r_T}\big)_{i=1}^k \in B_N^k \cap A \Big\}
\ra  \Prob\big\{ (Y_{t_i})_{i=1}^k \in B_N^k \cap A  \big\} \, ,  \ee
for all $N$ in order to deduce~(\ref{fd}). Indeed, 
%we can argue similarly as in Lemma~\ref{restriction_to_box},
%\[\begin{aligned} \Prob\Big\{ & \Big(\tfrac{Z_{t_iT}}{r_T},\tfrac{\Phi_{t_iT}(Z_{t_iT})}{a_T} - %q(1-\tfrac{1}{t_i})\tfrac{|Z_{t_iT}|}{r_T}\Big)_{i=1}^k \in  A \Big\}
%\\ & = \Prob\Big\{ \Big(\tfrac{Z_{t_iT}}{r_T},\tfrac{\Phi_{t_iT}(Z_{t_iT})}{a_T} - q(1-\tfrac{1}{t_i})\tfrac{|Z_{t_iT}|}{r_T}\Big)_{i=1}^k \in B_N^k %\cap A \Big\} \\
%& \quad + \Prob\Big\{ %\mbox{for some } i \, : \,
%\Big(\tfrac{Z_{t_iT}}{r_T},\tfrac{\Phi_{t_iT}(Z_{t_iT})}{a_T} - q(1-\tfrac{1}{t_i})\tfrac{|Z_{t_iT}|}{r_T}\Big)_{i=1}^k \in A \setminus B_N^k \Big\} %\, . \end{aligned} \]
%But, 
using that, by~\cite[Lemma 3.2]{KLMS09}, %and Lemma~\ref{xi_of_Z_t} respectively that 
$\Phi_t(Z_t)$ is an increasing function of $t$, for all $t$ large enough, % so that we can assume that these properties hold for all $t \geq \eps T$. 
we get
\begin{align} &\Prob \Big\{ 
\big(\tfrac{Z_{t_iT}}{r_T},\tfrac{\Phi_{t_iT}(Z_{t_iT})}{a_T} - q\big(1-\tfrac{1}{t_i}\big)\tfrac{|Z_{t_iT}|}{r_T}\big) \notin  B_N 
 \mbox{ for some } i \Big\} \notag \\
& \leq \sum_{i=1}^k \Prob\big\{ \tfrac{|Z_{t_iT}|}{r_T} > N \big\} + \Prob\big\{ \tfrac{\Phi_{t_iT}(Z_{t_iT})}{a_T} < \tfrac{1}{N} \big\} + \Prob\big\{ \tfrac{\Phi_{t_iT}(Z_{t_iT})}{a_T} + \tfrac{q}{t}\tfrac{|Z_{t_iT}|}{r_T}> N \big\} \notag\\
& \leq k \Big[\max_{i=1,\ldots,k}\Prob\big\{ \tfrac{|Z_{t_iT}|}{r_{t_iT}} > N \tfrac{r_T}{r_{t_kT}} \big\} + \Prob\big\{ \tfrac{\Phi_{t_1 T}(Z_{t_1 T})}{a_{t_1 T}} < \tfrac{1}{N}\tfrac{a_T}{a_{t_1 T}} \big\} \label{bound_outside_box}\\
& \hspace{1cm} + \Prob\big\{ \tfrac{\Phi_{t_kT}(Z_{t_kT})}{a_{t_kT}} > \tfrac{1}{2}N\tfrac{a_T}{a_{t_kT}} \big\}+ \max_{i=1,\ldots,k} 	\Prob\big\{ \tfrac{|Z_{t_iT}|}{a_{t_i T}} > \tfrac{N t_1}{2 q}\tfrac{r_T}{r_{t_kT}}\big\}\ \Big] \notag \\
& \leq C_1 \, k \, \Big[ (\tfrac{N}{t_k^{q+1}})^{d-\alpha} + e^{-C_2 (N t_1^q)^{\alpha-d}} + (\tfrac{N}{2t_k^q})^{d-\alpha} + 	(\tfrac{N t_1}{2qt_k^{q+1}})^{d-\alpha} \Big] \notag
 \,  , 
\end{align}
where $C_1,C_2>0$ are some constants and in the last step we used Lemma~\ref{restricting_maximizer} and 
the fact that ${a_T}/{a_{t_1 T}} \ra t_1^{-q}$ and ${r_T}/{r_{t_kT}} \ra t_k^{-(q+1)}$. 
%and also  $(Z_t/r_t,\Phi_t(Z_t)/a_t)$ converges to a random variable with density on $\R^d \times \R^+$ (see e.g.~\cite[Lemma 6.2]{KLMS09}), t
Hence, the terms in the last display tend to zero %uniformly in $T$, 
as $N \ra \infty$. Similarly, we can bound 
\begin{align} & \Prob\big\{ (Y_{t_i})_{i=1}^k \in A \setminus B_N^k \big\}  
\leq \sum_{i=1}^k  \Prob \big\{ Y_{t_i} \not\in  B_N \big\}\notag\\[-2mm] 
& \leq \sum_{i=1}^k \big[ \Prob \big\{ |Y_{t_i}^{\ssup 1}| > N \big\} + 
\Prob \big\{ - q(1-\tfrac{1}{t_k}) |Y_{t_i}^{\ssup 1}| \leq  Y_{t_i}^{\ssup 2} \leq \tfrac{1}{N} - q|Y_{t_i}^{\ssup 1}| \big\}  + \Prob\big\{Y_{t_i}^{\ssup 2} > N \big\} \big]\notag\\
& \leq k \big[ \Prob\big\{ |Y_{t_k}^{\ssup 1}| > N \big\} + \Prob \big\{ |Y_{t_1}^{\ssup 1}| \leq \tfrac{t_k}{qN} \big\} 
+ \Prob \big\{ Y_1^{\ssup 2} > N \big\} \big] \notag\, , \end{align}
where we used that $|Y_t^{\ssup 1}|$ is an increasing function in $t$ and $Y_1^{\ssup 2}\geq Y_t^{\ssup 2}$ for all $t$ by construction.
%, since $Y_1$ corresponds to the point of the Poisson point process $\Pi$ with the largest second component. 
By definition of $(Y_t \colon t>0)$ all the probabilities tend to zero, as $N \ra \infty$, and hence
if we can show~(\ref{fd_restricted_to_box}) we can also deduce~(\ref{fd}).

\emph{Step 2.} Denote, for $K > N$ by $Z_{tT}^{_{K,T}}$ the point
%with $(Z_{tT}^{_{K,T}}/r_T,Z_{tT}^{_{K,T}}/a_T)\in B_K$
satisfying
\[ \Phi_{tT}(Z_{tT}^{_{K,T}}) = \max\big\{ \Phi_{tT}(z) \, : \,  t\xi(z) \geq z \mbox{ and }\big(\tfrac{z}{r_T}, \tfrac{\Phi_T(z)}{a_T}\big) 
\in B_K \big\} \, , \]
where in case of a tie we take the one with the larger $\ell^1$ norm.
We claim that if $K$ is large, $Z_{tT}^{_{K,T}}$ agrees with high probability with the global maximizer $Z_{tT}$. Indeed, we find that
\begin{align} 
%& \Big| \Prob \Big\{  \big(\tfrac{Z_{t_iT}}{r_T},\tfrac{\Phi_{t_iT}(Z_{t_iT})}{a_T} - q(1-\tfrac{1}{t_i})\tfrac{|Z_{t_iT}|}{r_T}\big)_{i=1}^k \in %B_N^k \cap A \Big\} \notag\\
%& \hspace{2cm} - \Prob\Big\{ \Big(\tfrac{Z^{K,T}_{t_iT}}{r_T},\tfrac{\Phi_{t_iT}(Z_{t_iT}^{K,T})}{a_T} - %q(1-\tfrac{1}{t_i})\tfrac{|Z_{t_iT}^{K,T}|}{r_T}\Big)_{i=1}^k \in B_N^k \cap A \Big\} \Big| \notag\\ 
%& \leq 
\Prob\big\{ &\mbox{there exists } i \mbox{ with } Z_{t_iT}^{_{K,T}} \neq Z_{t_iT} \big\} 
\leq \sum_{i=1}^k  \Prob \big\{ \big( \tfrac{Z_{t_i T}}{r_T}, \tfrac{\Phi_{T}(Z_{t_iT})}{a_T} \big) \notin B_K \big\} \label{fd_restricted_maximizer}\\
& \leq k \max_{i=1,\ldots,k} \Big[ \Prob\big\{ \tfrac{|Z_{t_i T}|}{r_T} \geq K \big\} +  \Prob\big\{ \tfrac{\Phi_T(Z_T)}{r_T}>K\big\}
+  \Prob\big\{ \tfrac{\Phi_{t_iT}(Z_{t_iT})}{a_T} < \tfrac{1}{K} \big\} \Big] \notag
\, , \end{align}
where for the last term, we use that by Lemma~\ref{Phi_t(1+c)_in_Phi_t}, we can express
\[ \tfrac{\Phi_{t_iT}(Z_{t_i T})}{a_T} = \tfrac{\Phi_T(Z_{t_iT})}{a_T} + q(1-\tfrac{1}{t_i})\tfrac{|Z_{t_iT}|}{r_T} + \mbox{error}(T) \, , \]
where the error term tends to $0$. Hence, as in~(\ref{bound_outside_box}), we can use Lemma~\ref{restricting_maximizer} to
show that the expression~(\ref{fd_restricted_maximizer}) tends to zero if we first let $T$ and then $K \ra \infty$.

\emph{Step 3.} Using the point process we want to express the probability
\be{fd_double_restricted} \begin{aligned} \Prob\Big\{ & \big(\tfrac{Z^{{K,T}}_{t_iT}}{r_T},\tfrac{\Phi_{t_iT}(Z_{t_iT}^{K,T})}{a_T} - q(1-\tfrac{1}{t_i})\tfrac{|Z_{t_iT}^{K,T}|}{r_T}\big)_{i=1}^k \in B_N^k \cap A \Big\} \\
& = \int_{A\cap B_N^k} \Prob\Big\{ \tfrac{Z^{K,T}_{t_iT}}{r_T} \in \diff x_i,\tfrac{\Phi_{t_iT}(Z_{t_iT}^{K,T})}{a_T} - q(1-\tfrac{1}{t_i})\tfrac{|Z_{t_iT}^{K,T}|}{r_T} \in \diff y_i \mbox{ for all } i\Big\} \, , \end{aligned}
\ee
in the limit as $T\ra \infty$. 
First note that by Lemma~\ref{Phi_t(1+c)_in_Phi_t} we have that, for any $t \in [t_1,t_k]$,
\[ \tfrac{\Phi_{tT}(z)}{a_T} = \tfrac{\Phi_T(z)}{a_T} + q(1-\tfrac{1}{t}) \tfrac{|z|}{r_T} + \delta_{1-t}\big(T,\tfrac{z}{r_T},\tfrac{\Phi_T(z)}{a_T}\big) \, ,  \]
where the error $\delta_{1-t}$ goes to $0$ uniformly for all $z$ such that $(\frac{z}{r_T},\frac{\Phi_T(z)}{a_T}) \in B_K$ and also uniformly for all $t \in [t_1,t_k]$. 
Recall also that $\Pi_T$ converges weakly to $\Pi$ on $H^*$. 
Now, as the restriction to large boxes ensures that we are only dealing with the point process on relatively compact sets, we can in the limit as $T\ra\infty$ express the condition $$\tfrac{Z^{K,T}_{t_iT}}{r_T} = %\diff 
x_i,\tfrac{\Phi_{t_iT}(Z_{t_iT}^{K,T})}{a_T} - q(1-\tfrac{1}{t_i})\tfrac{|Z_{t_iT}^{K,T}|}{r_T} = y_i$$ 
by requiring that $\Pi$ has an atom in $(x_i,y_i)$ and all other points $(x,y)$ of $\Pi$ restricted to $B_K$ satisfy
$y + q(1-\tfrac{1}{t_i})|x| \leq y_i + q(1 - \tfrac{1}{t_i})|x_i|$.
Therefore, if we denote by $\calC_{t_i}(x_i,y_i)$ the open cone of all points $(x,y) \in \restrH$ satisfying
$y + q(1-\tfrac{1}{t_i})|x| > y_i + q(1-\tfrac{1}{t_i})|x_i|$,
we can express the probability in~(\ref{fd_double_restricted}) in the limit as
\[ \begin{aligned} &\lim_{T \ra \infty}  \Prob\Big\{  \big(\tfrac{Z^{K,T}_{t_iT}}{r_T},\tfrac{\Phi_{t_iT}(Z_{t_iT}^{K,T})}{a_T} - q(1-\tfrac{1}{t_i})\tfrac{|Z_{t_iT}^{K,T}|}{r_T}\big)_{i=1}^k \in B_N^k \cap A \Big\} \\
& = \int_{A\cap B_N^k} \Prob\Big\{ \left.\Pi\right|_{B_K} (\diff x_i \, \diff y_i) = 1 \mbox{ for } i=1,\ldots,k , \left.\Pi\right|_{B_K}\Big(\bigcup_{i=1}^k \calC_{t_i}(x_i,y_i)\Big) = 0 \Big\} \, . \end{aligned} \]

Now, we can remove the restriction of the point process to $B_K$, by letting $K \ra \infty$ and noting that the probability that for some $(x_i,y_i) \in A \cap B_N^k$ and some $i = 1,\ldots, k$ the point process $\Pi$ has a point in the set $\calC_{t_i}(x_i,y_i) \cap B_K^c$ can be bounded from above by the probability that $\Pi$ has a point in the set
\[ \big\{ (x,y) \in \R^{d+1}\, : \, y > \tfrac{1}{N} - q(1-\tfrac{1}{t_k})|x| \mbox{ and } ( y > K \mbox{ or } |x| > K ) \big\} \, . \]
But the intensity measure $\nu$ of $\Pi$ gives finite mass to this region, so that we can conclude that the probability of the latter event tends to zero as $K \ra \infty$. Hence, we can combine this observation with the estimate in~(\ref{fd_restricted_maximizer}) and letting first $T\ra \infty$ and then $K\ra\infty$, to deduce that 
\[ \begin{aligned} \lim_{T\ra \infty} \Prob\Big\{ & \big(\tfrac{Z_{t_iT}}{r_T},\tfrac{\Phi_{t_iT}(Z_{t_iT})}{a_T} - q(1-\tfrac{1}{t_i})\tfrac{|Z_{t_iT}|}{r_T}\big)_{i=1}^k \in B_N^k \cap A \Big\} \\
& = \int_{A\cap B_N^k} \Prob\Big\{ \Pi (\diff x_i \, \diff y_i) = 1 \mbox{ for } i=1,\ldots,k , \Pi \Big(\bigcup_{i=1}^k \calC_{t_i}(x_i,y_i)\Big) = 0 \Big\} \\
& = \Prob\{ (Y_{t_i})_{i=1}^k \in B_N^k \cap A  \} \, , \end{aligned}\]
where in the last step we used the definition of $Y$. For an illustration of the event under the integral, see also Figure~\ref{fig_finite_diml}. Thus we have completed the proof of the lemma.
\end{proof}

\begin{figure}[thbp]
\centering 
\includegraphics[width=6cm]{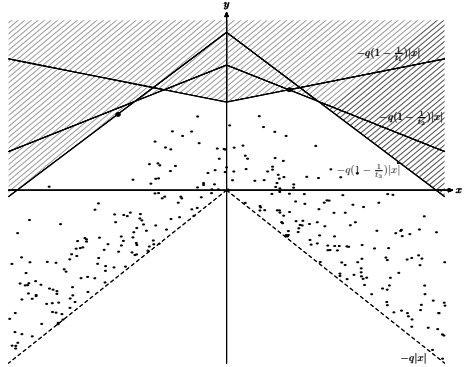} 
\caption[Calculation of finite-dimensional distributions.]{Calculation of finite-dimensional distributions at times $t_1 < 1 < t_2 < t_3$. The event that $Y_{t_i} = (x_i,y_i)$ translates to the condition that the point process $\Pi$ has an atom in each of the points $(x_i,y_i)$, but does not contain any points in the union of open cones with `slope' $-q(1-\frac{1}{t_i})$ whose boundaries touch the points $(x_i,y_i)$ (as indicated by the shaded region).
}\label{fig_finite_diml}
\end{figure}

\subsection{Tightness}\label{sect_tightness}

To prepare the tightness argument we prove two auxiliary lemmas. For fixed $0<a<b$ the first lemma gives us control on the probability that the maximizer  makes small jumps during the time interval $[aT,bT]$. The second shows that, with arbitrarily high probability, during this
time the maximizer stays within a box with  sidelength a multiple of~$r_T$. 

\begin{lemma}\label{small_jumps} Let $\tau_i$ denote the jump times of the process $(Z_t \colon t \geq a T)$ in increasing order. Then
\[ \liminf_{T\ra \infty} \Prob \big\{  \tau_{i+1}-\tau_i \geq \delta T \mbox{ for all jump times } \tau_i \in [a T, b T]  \big\} \geq p(\delta) \, , \]
where $p(\delta) \ra 1$ as $\delta \da 0$.
\end{lemma}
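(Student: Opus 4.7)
The strategy is to cover the time window by overlapping short intervals and, on each one, to bound the probability that $(Z_t)$ has at least two jumps by extending the point-process computation of Proposition~\ref{moderate_deviations} to an event involving three sites.

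First, I would cover $[aT,(b+\delta)T]$ by intervals $I_k=[aT+k\delta T/2,\,aT+k\delta T/2+\delta T]$ for $k=0,1,\ldots,M-1$ with $M=O(\delta^{-1})$. If consecutive jump times $\tau_i,\tau_{i+1}$ of $(Z_t)$ satisfy $\tau_i\in[aT,bT]$ and $\tau_{i+1}-\tau_i<\delta T$, then both lie in some common $I_k$, so the bad event is contained in $\bigcup_k\{J(I_k)\ge 2\}$, where $J(I)$ denotes the number of jumps of $(Z_t)$ in $I$. By a union bound, it then suffices to show
\[ \limsup_{T\to\infty}\sum_{k=0}^{M-1}\Prob\{J(I_k)\ge 2\}\le q(\delta), \]
with $q(\delta)\to 0$ as $\delta\downarrow 0$; this gives the lemma with $p(\delta)=1-q(\delta)$.

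Second, the core quantitative estimate I would establish is $\Prob\{J([s,s+\delta T])\ge 2\}\le C\delta^2$, uniformly for $s$ with $s/T\in[a,b+\delta]$ and all sufficiently large $T$. The heuristic is that two jumps in such a short window force the point process $\Pi_T$ from~\eqref{pitdef} to supply two successive competitors overtaking the argmax, a doubly rare event. Rigorously, I would rewrite $\{J([s,s+\delta T])\ge 2\}$ as the existence of three distinct sites $z_1,z_2,z_3\in\Z^d$ with a first meeting $\Phi_{u_1}(z_1)=\Phi_{u_1}(z_2)$ at some $u_1\in(s,s+\delta T)$ and a second meeting $\Phi_{u_2}(z_2)=\Phi_{u_2}(z_3)$ at some $u_2\in(u_1,s+\delta T)$, with no further site interfering. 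Using Lemma~\ref{exact_errors_for_Phi} to relate $\Phi_t(z)$ to $\xi(z)$ and $|z|$ and restricting as in Lemma~\ref{restricting_maximizer} to large compact boxes to control the errors, this probability is represented as a three-fold integral against the intensity of $\Pi_T$; each of the two meeting constraints contributes a factor of order $\delta$, yielding the bound $O(\delta^2)$ after simplifications analogous to those in Proposition~\ref{I}.

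Combining the two steps, $\sum_k\Prob\{J(I_k)\ge 2\}\le M\cdot C\delta^2=O(\delta)$, so one may take $p(\delta)=1-C'\delta$ and obtain $p(\delta)\to 1$ as $\delta\downarrow 0$. The main obstacle is the three-site estimate: unlike the single survival integral in Proposition~\ref{moderate_deviations}, the geometry now couples two consecutive cone-touching events, and the truncation to compact regions plus the uniformity over $s/T\in[a,b+\delta]$ must both be handled carefully; once this is done, the covering and union-bound steps are routine.
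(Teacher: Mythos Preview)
Your outline is correct and follows the same overall architecture as the paper: cover $[aT,bT]$ by $O(\delta^{-1})$ short intervals, union-bound, and show that the probability of at least two jumps in a window of length $\delta T$ is $o(\delta)$. The difference lies in how the core two-jump estimate is obtained. The paper does not set up a three-site integral at the level of $\Pi_T$; instead it first passes to the limit $\Pi_T\Rightarrow\Pi$ and observes that two jumps in $[T,(1+\delta)T]$ force $\Pi$ to have at least two points in the thin annulus $D_\delta(|x|,y)\setminus D_0(|x|,y)$, whose $\nu$-mass $f_\delta(|x|,y)$ is already computed in Proposition~\ref{I}. The Poisson formula then gives the integrand $e^{-\nu(D_0)}\bigl(1-e^{-f_\delta}-f_\delta e^{-f_\delta}\bigr)$, and a dominated-convergence argument shows the whole integral is $o(\delta)$. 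Your route via a direct three-site computation in the style of Proposition~\ref{moderate_deviations} would also work and even yields the sharper $O(\delta^2)$, but it duplicates effort: the annulus picture plus the Poisson $\{\ge 2\}$ formula encodes exactly your ``two meeting constraints each contribute a factor $\delta$'' without having to redo the truncation-to-boxes and sum-to-integral steps for a third site.
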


\begin{proof} 
Cover the interval $[a T, bT]$ by small subintervals of length $\delta T$ by setting $x_i = a T + i \delta T$ for
$i = 0,\ldots, N+1$, for $N = \left\lceil (b-a)/\delta \right\rceil$. We estimate 
\[ \bal \Prob & \big\{  \tau_{i+1}-\tau_i < \delta T \mbox{ for some jump times } \tau_i \in [a T, b T]  \big\}\\
& \leq \sum_{j=0}^{N-1} \Prob \big\{  \tau_{i+1}-\tau_i < \delta T \mbox{ for some jump times } \tau_i \in [x_j, x_{j+1}]  \big\} \\ 
& \leq \sum_{j=0}^{N-1} \Prob  \big\{ Z_t \mbox{ jumps more than once in the interval } [x_j,x_{j+2}] \big\} \, .
\eal \]
Hence, taking the limit $T\ra\infty$, we have that 
\[ \limsup_{T \ra \infty} \Prob  
\big\{ \tau_{i+1} - \tau_i < \delta T \mbox{ for some jump time } \tau_i \in [a T, b T]  \big\} \leq N  \,\tilde{p}(2\delta) \, , \]
where $$\tilde{p}(\delta) := \limsup_{T \ra \infty} \Prob \big\{ Z_t \mbox{ jumps more than once in the interval } [T,(1+\delta)T] \big\}.$$
Thus it remains to show that $\tilde{p}(\delta)/\delta \to 0$ as $\delta\ra 0$.
We use notation and ideas from Section~\ref{se:weak_ageing}, which tell us in particular that,
%we recall that $\Pi$ is a point process on $\widehat H$ with intensity 
%\[ \nu(\diff x\, \diff y) =  \frac{\alpha \,\diff x \,\diff y}{(y + q|x|)^{\alpha+1}} \, . \]
%Then, 
as $T\ra \infty$, if we fix $({Z_T}/{r_T},{\Phi_T(Z_T)}/{a_T}) =(x,y)$ then the probability that $(Z_t \, : \, t \geq T)$ jumps more than once in the interval $[T,(1+\delta)T]$ is bounded from above by the probability that the point process $\Pi$ has no points in the set $D_0(|x|,y)$ and at least two points in the set $D_\delta(|x|,y)\setminus D_0(|x|,y)$. To make this bound rigorous, one has restrict the process $(Z_t/r_t,\Phi_t(Z_t)/a_t)$ to large boxes, let $T \ra \infty$ and then the size of the boxes go to infinity and finally justify interchanging the limit. Details are very similar to Lemma~\ref{restriction_to_box} and Lemma~\ref{finite_dimensional_distributions} and are therefore omitted. Using this observation, we obtain the bound 
\begin{align}  \limsup_{T \ra \infty} &  \, \Prob \big\{ Z_t  \mbox{ jumps more than once in the interval } [T,(1+\delta)T] \big\} \notag\\
& \leq \int_{y \geq 0}\int_{x\in \R^d} \Prob \big\{ \Pi(\diff x \, \diff y) = 1, \, \Pi(D_0(|x|,y)) = 0, \,
\Pi(D_\delta(|x|,y)\setminus D_0(|x|,y)) \geq 2 \big\} \notag\\
%& = \int_{(x,y)} e^{-\nu(D_0(|x|,y))} \sum_{k\geq 2} \frac{f_\delta(x,y)^k}{k!} e^{-f_\delta(x,y)} \nu(\diff x \, \diff y)\, , 
& =  \int_{y \geq 0}\int_{x\in \R^d} e^{-\nu(D_0(|x|,y))} 
\big(1 - e^{-f_\delta(|x|,y)} - f_\delta(|x|,y)e^{-f_\delta(|x|,y)}\big) \, \nu(\diff x \, \diff y)\notag\\
& = \tfrac{2^d}{(d-1)!}\int_{0}^\infty\int_{0}^\infty  e^{-\nu(D_0(r,y))}  
\big(1 - e^{-f_\delta(r,y)} - f_\delta(r,y)e^{-f_\delta(r,y)}\big) \, \frac{\alpha r^{d-1} }{(y+qr)^{\alpha+1}}\, \diff r \diff y \, ,  
\label{integral}\end{align}
where $f_\delta(r,y) = \nu(D_\delta(r,y)) - \nu(D_0(r,y))$. 
It remains to be shown that the right hand side divided by $\delta$ converges to zero. 
As we would like to invoke the dominated convergence theorem, we show that this term is bounded by an integrable function.  
%Using that $1-e^{-x}\leq x$, for $x \geq 0$, 
We have
$$
\begin{aligned}  e^{-\nu(D_0(r,y))} & \tfrac{1}{\delta}\,\big(1 - e^{-f_\delta(r,y)} - f_\delta(r,y)e^{-f_\delta(r,y)}\big) 
\, \tfrac{\alpha r^{d-1} }{(y+qr)^{\alpha+1}} \notag\\
& \leq e^{-\nu(D_0(r,y))}  \tfrac{1}{\delta}\,\big(\nu(D_\delta(r,y)) - \nu(D_0(r,y))\big) \tfrac{\alpha r^{d-1}}{(y+qr)^{\alpha+1}}
 \, . \end{aligned}$$
Recall from~(\ref{nu_D_c}) that $\nu(D_\delta(r,y)) = \vartheta y^{d-\alpha} \phi_\delta(v)^{-1}$
with $y+qr = \frac{y}{v}$ and $\phi_\delta$ given by~(\ref{weight}), and $\vartheta= {2^d B(\alpha-d,d)}/{q^d(d-1)!}$.
Next, we estimate the part of the integrand that depends on~$\delta$, so for $\tilde{B}(x) := \tilde{B}(x,\alpha-d,d)$ we consider
\[\bal \tfrac{1}{\delta}(\phi_\delta(v)^{-1} - \phi_0(v)^{-1})  & = \tfrac{1}{\delta}\big( \tfrac{(1+\delta)^\alpha}{(\delta+v)^{\alpha-d}} v^{\alpha-d} \tB\big(\tfrac{v+\delta}{1+\delta}\big) - \tB(v) \big) \\
&  \leq \tfrac{1}{\delta}\big( (1+\delta)^\alpha - 1 \big) + \tfrac{1}{\delta} \big(\tB \big(\tfrac{v+\delta}{1+\delta}\big) - \tB(v) \big) \, . \eal \]
As the first term is $\leq 2 \alpha$ for all $\delta \leq \delta_0$ for some small $\delta_0$ (independent of $r$, $y$), we can concentrate on the second term. Now, we can use the definition of $\tB$ to write
\[ \bal \tfrac{1}{\delta} \big(\tB \big(\tfrac{v+\delta}{1+\delta}\big) - \tB(v) \big)
& = \tfrac{1}{\delta} \int_v^{\frac{v+\delta}{1+\delta}} u^{\alpha-d-1} (1-u)^{d-1} \diff u \leq \tfrac{1}{\delta} \int_v^{\frac{v+\delta}{1+\delta}} u^{\alpha-d-1}  \diff u  \\
& \leq \tfrac{1}{\delta}\big(\tfrac{v+\delta}{1+\delta}-v\big) \max\{ v^{\alpha-d-1}, 1\}  \leq \max\{v^{\alpha-d-1},1\} \, .\eal\] 
Combining the last three displays we obtain a majorant for the integrand in~\eqref{integral} divided by~$\delta$, which does not depend on~$0<\delta<\delta_0$. To show that this majorant is integrable we calculate
$$\begin{aligned}
\int_{0}^\infty\int_{0}^\infty & e^{-\nu(D_0(r,y))}  y^{d-\alpha}
\,\big( 2\alpha + \max\big\{(\tfrac{y}{y+qr})^{\alpha-d-1},1\big\}\big) \, \tfrac{\alpha \vartheta r^{d-1} }{(y+qr)^{\alpha+1}}\, \diff r \diff y \\
& \leq \tfrac{\alpha \vartheta}{q^d} \int_{0}^\infty \int_{0}^{1}  v^{\alpha-d}(1-v)^{d-1} y^{2(d-\alpha)-1} 3\alpha \max\{ v^{\alpha -d -1},1\}  e^{-\vartheta y^{d-\alpha}} \,  \diff v \,\diff y \\
& = \tfrac{3 \,\alpha^2 \vartheta}{q^d}  \int_0^1  v^{\alpha-d}(1-v)^{d-1}  \max\{ v^{\alpha -d -1},1\} \diff v \int_0^\infty  y^{2(d-\alpha)-1}  e^{-\vartheta y^{d-\alpha}} \diff y \\
%& \leq \frac{3\, 2^d \alpha^2}{q^d(d-1)!(\alpha-d)} \max\{B(\alpha-d+1,d),B(2(\alpha-d),d)\} \int_0^\infty w e^{-w}\diff w \\
& \leq \tfrac{3 \, \alpha^2}{q^d(\alpha-d)} \max\{B(\alpha-d+1,d),B(2(\alpha-d),d)\},
\end{aligned}$$
so that the proof is completed by applying the dominated convergence theorem.
\end{proof}

%The previous lemma gives us some control about the maximum number of jumps that $Z_t$ can make during an interval $[\eps T, M T]$.
%We will use this to show that the probability that the $\ell^1$-norm of the rescaled version $Z_t$ can never be too large.

\begin{lemma}\label{max_Z} For fixed $0<a<b$, we have that 
\[ \lim_{\kappa \ra \infty} \limsup_{T\ra\infty} \Prob\Big\{ \sup_{t \in [a T, b T]} \frac{|Z_t|}{r_T} \geq \kappa\Big\} = 0 \, . \]
\end{lemma}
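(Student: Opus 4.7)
The plan is to show that if $Z_t = z$ for some $t \in [aT, bT]$ with $|z| \geq \kappa r_T$, then the potential $\xi(z)$ must be at least a constant multiple of $(|z|/r_T) a_T$; a dyadic shell decomposition over $|z|$ then produces a probability bound of order $\kappa^{d-\alpha}$, which vanishes as $\kappa\to\infty$ because $\alpha > d$. The key input is the deterministic upper bound on $\Phi_t(z)$ provided by Lemma~\ref{exact_errors_for_Phi}.

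Concretely, I would first write
\[ \Big\{ \sup_{t \in [aT, bT]} \tfrac{|Z_t|}{r_T} \geq \kappa \Big\} \subseteq \bigcup_{\substack{z \in \Z^d \\ |z| \geq \kappa r_T}} \big\{ Z_t = z \mbox{ for some } t \in [aT, bT] \big\}. \]
On the event $Z_t = z$ one has $\Phi_t(z) \geq \Phi_t(0) = \xi(0) \geq 1 > 0$, which in particular forces $t\xi(z) > |z|$, so the upper bound in Lemma~\ref{exact_errors_for_Phi} applies and gives
\[ 0 \leq \Phi_t(z) \leq \xi(z) - q|z| \tfrac{a_t}{r_t}(1-\delta_t) + \tfrac{C_2\, a_t}{\log t}, \]
with $\delta_t = C_1 \log\log t/\log t \to 0$. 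Since $a_t/r_t = \log t/t$ is eventually decreasing in $t$ while $a_t/\log t = t^q/(\log t)^{q+1}$ is eventually increasing, the weakest constraint on $\xi(z)$ over $t \in [aT, bT]$ is at $t = bT$. Using $\log(bT)/(bT) \sim (1/b)\, a_T/r_T$, one concludes that for all $T$ sufficiently large and all $z$ with $|z|\geq \kappa r_T$,
\[ \xi(z) \geq \tfrac{q}{2b}\, \tfrac{|z|}{r_T}\, a_T. \]

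Then I would estimate the right-hand union bound via dyadic shells $S_j = \{z \in \Z^d \colon 2^j \kappa r_T \leq |z| < 2^{j+1} \kappa r_T\}$, $j\geq 0$. Since $|S_j| \leq C_d (2^j \kappa r_T)^d$ and $\Prob\{\xi(z) \geq x\} = x^{-\alpha}$ for $x \geq 1$, combining the two displayed inequalities yields, for $T$ large,
\[ \Prob\Big\{ \sup_{t \in [aT, bT]} \tfrac{|Z_t|}{r_T} \geq \kappa \Big\} \leq \sum_{j \geq 0} C_d (2^j \kappa r_T)^d \Big(\tfrac{q\, 2^j \kappa}{2b}\, a_T\Big)^{-\alpha} = C'\, \kappa^{d-\alpha}\, \tfrac{r_T^d}{a_T^\alpha} \sum_{j \geq 0} 2^{j(d-\alpha)}. \]
Since $r_T^d = a_T^\alpha$ by definition and $\alpha > d$, the geometric series converges and the right-hand side equals $C''\kappa^{d-\alpha}$, uniformly in all sufficiently large $T$. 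Taking first $\limsup_{T\to\infty}$ and then $\kappa\to\infty$ gives the claim.

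The main point, and the only nontrivial feature of the argument, is that the lower bound on $\xi(z)$ inherited from $\Phi_t(z) \geq 0$ scales \emph{linearly} in $|z|$: without this linear penalty (were the bound merely $\xi(z) \geq c\kappa\, a_T$ uniformly) the shell sum would diverge. Lemma~\ref{exact_errors_for_Phi} is precisely what provides the linear-in-$|z|$ penalty; the remaining work is the routine verification that its error terms can be absorbed uniformly for $t \in [aT, bT]$.
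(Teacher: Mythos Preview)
Your proof is correct and takes a more direct route than the paper's.

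The paper argues via the jump structure of $(Z_t)$: using Lemma~\ref{xi_of_Z_t} it shows that $|Z_\tau^{\ssup 2}| \leq |Z_\tau^{\ssup 1}|(1+o(1))$ at each jump time~$\tau$, hence $\sup_{t\in[aT,bT]}|Z_t|\le (1+o(1))^{N_T}\,|Z_{bT}|$ where $N_T$ is the number of jumps in $[aT,bT]$; it then bounds $N_T$ in probability via Lemma~\ref{small_jumps} (a nontrivial point-process estimate) and finally invokes the weak convergence $Z_{bT}/r_{bT}\Rightarrow Y_1^{\ssup 1}$. You instead observe that the single-time tail argument behind Lemma~\ref{restricting_maximizer}(a) extends to the whole interval $[aT,bT]$ with only a change of constants, because the constraint $\Phi_t(z)\ge 0$ forces $\xi(z)\ge c\,(|z|/r_T)\,a_T$ uniformly in $t\in[aT,bT]$, after which the union bound and the identity $r_T^d=a_T^\alpha$ finish the job. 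This is shorter and self-contained --- it uses neither Lemma~\ref{small_jumps} nor any weak-convergence input. The paper's route, by contrast, yields the pathwise information that $\sup_t|Z_t|$ is controlled by the endpoint $|Z_{bT}|$ up to a factor governed by the number of jumps, which meshes with the other jump-based arguments in Section~\ref{sect_tightness} but is more than the bare statement of Lemma~\ref{max_Z} requires.
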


\begin{proof} Fix a jump time $\tau$ of $Z_t$. By Lemma~\ref{xi_of_Z_t} we have $\xi(Z_\tau^{\ssup 1}) > \xi(Z_\tau^{\ssup 2})$. 
In particular, we have, using that $\chi(z) = x - \rho \log x$ is increasing on $x>\rho$, 
\[ \Phi_\tau(Z_\tau^{\ssup 1}) \geq \xi(Z_\tau^{\ssup 1}) - \tfrac{1}{\tau} |Z_\tau^{\ssup 1}|\log \xi(Z_\tau^{\ssup 1}) >
\xi(Z_\tau^{\ssup 2}) - \tfrac{1}{\tau} |Z_\tau^{\ssup 1}| \log \xi(Z_\tau^{\ssup 2}) \, . \]
Since $\Phi_\tau(Z_\tau^{\ssup 1}) = \Phi_\tau(Z_\tau^{\ssup 2})$, we thus obtain that
\[ \xi(Z_\tau^{\ssup 2}) - \tfrac{1}{\tau}|Z_\tau^{\ssup 2}| \log \xi(Z_\tau^{\ssup 2}) + \tfrac{1}{\tau}\eta(Z_\tau^{\ssup 2}) > \xi(Z_\tau^{\ssup 2}) - \tfrac{1}{\tau} |Z_\tau^{\ssup 1}| \log \xi(Z_\tau^{\ssup 2}) \, . \]
Hence using that $\eta(z) \leq |z|\log d$, we find that
\[ |Z_\tau^{\ssup 2}| < |Z_\tau^{\ssup 1}| \big( 1 - \tfrac{\log d}{\log \xi(Z_\tau^{\ssup 2})}\big)^{-1} < |Z_\tau^{\ssup 1}| \big(1 - \tfrac{\log d}{q \log \tau (1+o(1))}\big)^{-1} \, , \]
where we invoked~\cite[Lemma 3.2]{KLMS09} to deduce that eventually $\xi(Z_t^{\ssup 2}) > a_t (\log t)^{-1}$.
Hence, denoting by $N_T$ the number of jumps of $Z_t$ in the interval $[a T, b T]$, we have that for $T$ large enough
\be{max_in_random_jumps} \sup_{t \in [a T, b T]} |Z_t| \leq (1-\tfrac{2\log d}{q \log a T})^{-N_T} |Z_{bT}| \, . \ee
Fix $\eps > 0$. By Lemma~\ref{small_jumps}, we can choose $\delta > 0$ such that, 
\[ \liminf_{T \ra\infty} \Prob \big\{ \tau_{i+1} - \tau_i \geq \delta T
\mbox{ for all jump times } \tau_i \in [a T, bT] \big\} \geq 1 -\tfrac{\eps}{4} \, . \]
%where $\tau_i$ denote the jump times of $Z_t$ in $[a T, bT]$ in increasing order.
If all jump times $\tau_i$ in $[a T, b T]$ satisfy $\tau_{i+1} - \tau_i \geq \delta T$, 
then $N_T \leq \frac{b-a}{\delta}+1$ and hence
\[ \sup_{t \in [a T, b T]} |Z_t| \leq (1-\tfrac{2\log d}{q \log a T})^{-\frac{b-a}{\delta}-1} |Z_{bT}| \, . \]
Therefore, for any $\kappa > 1$, we can estimate that
\[\begin{aligned} \Prob\Big\{ & \sup_{t \in [a T, bT]} \tfrac{ |Z_t|}{r_T} \geq \kappa \Big\} \\
& \leq \Prob\Big\{ (1-\tfrac{2\log d}{q \log a T})^{-\frac{b-a}{\delta}-1} \tfrac{|Z_{bT}|}{r_T} \geq \kappa \Big\}
+ \Prob\big \{ \tau_{i+1}-\tau_i < \delta T \mbox{ for some } 1\le i\le N_T \big\} \\
& \leq \Prob\Big\{ \tfrac{|Z_{bT}|}{r_{bT}} \geq \kappa b^{-(q+1)}(1+o(1)) \Big\} + \tfrac{\eps}{2} \\
& \leq \big(1+\tfrac{\eps}{2}\big) \, \Prob \big\{ |Y^{\ssup 1}_1| \geq \kappa b^{-(q+1)} \big\} + \tfrac{ \eps}{2} 
\, , \end{aligned} \]
for all all $t$ sufficiently large, where we use that $Z_t/r_t \weakconv Y^{\ssup 1}_1$. Hence, by choosing $\kappa$ large enough,
the latter expression can be made smaller than $\eps$, which completes the proof.
\end{proof}

To prove tightness we use the following characterization (see,e.g.,~\cite[Thm. 13.2]{Bil99}). A family
$(P_T \colon T\ge 1)$ of probability measures on $D([a,b])$ is tight if and only if the following two conditions are satisfied:
\be{tightness} \ba{ll} \mbox{(i)} & \lim\limits_{\kappa \ra \infty} \limsup\limits_{T\ra \infty} P_T\{ x \,:\, \|x\| \geq \kappa \} = 0, %\mbox{ and } 
\\ \mbox{(ii)} & \mbox{for any }\epsilon > 0 \mbox{ we have } \lim\limits_{\delta \ra 0} \limsup\limits_{T \ra \infty} P_T\{ x \, : \, w'_x(\delta) \geq \epsilon \} = 0 \, . \ea \ee
Here, $\|x\|$ is the uniform norm, \ie $\|x\| = \sup_{t \in [a,b]} |x(t)|$,
and the modulus $w'_x(\delta)$ is defined as
\[ w'_x(\delta) = \inf_{\{t_i\}} \max_{1 \leq i \leq v} w_x[t_{i-1},t_i) \, , \]
where the infimum runs over all partitions $a = t_0 < t_1 < \cdots < t_v = b$ of $[a,b]$ satisfying $\min_{1\leq i\leq v}(t_i-t_{i-1}) > \delta$ and $w_x$ is the modulus of continuity defined for an interval $I\subset[a,b]$ as
\[ w_x(I) = \sup_{s,t \in I} |x(s) - x(t)| \, . \]

\begin{lemma}\label{Prob_T_tight} 
For any $0<a<b$, the family  $\{ \Prob_T\colon T\ge 1\}$ is a tight family of probability measures
on $D([a,b])$, where $\Prob_T$ is the law of 
\[ V_T = \Big(\big(\tfrac{Z_{tT}}{r_T},\tfrac{\Phi_{tT}(Z_{tT})}{a_T}\big) \, : \, t \in [a,b]\Big) \, , \]
under $\Prob$. \end{lemma}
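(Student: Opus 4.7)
The plan is to verify the two standard tightness conditions in \eqref{tightness}. Throughout, I will treat $V_T(t) = (Z_{tT}/r_T, \Phi_{tT}(Z_{tT})/a_T)$ and freely use that, by \cite[Lemma~3.2]{KLMS09}, the map $t \mapsto \Phi_t(Z_t)$ is nondecreasing, and that $V_T$ is continuous on every interval that contains no jump of $(Z_{tT} \colon t \in [a,b])$.

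\emph{Condition (i).} The supremum of the first coordinate is controlled directly by Lemma~\ref{max_Z}. For the second coordinate, monotonicity gives
$$\sup_{t \in [a,b]} \frac{\Phi_{tT}(Z_{tT})}{a_T} = \frac{\Phi_{bT}(Z_{bT})}{a_T} = \big(b^q+o(1)\big)\frac{\Phi_{bT}(Z_{bT})}{a_{bT}},$$
and Lemma~\ref{restricting_maximizer}(b) applied at time $bT$ shows this is tight in $T$.

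\emph{Condition (ii).} Fix $\eps>0$. I would first pick constants on a single \emph{good event} of arbitrarily high probability, and then construct a partition tailored to that event:
\begin{enumerate}
\item By Lemma~\ref{small_jumps}, choose $\delta_0>0$ such that, with probability $\ge 1-\eps/3$, every pair of consecutive jump times of $(Z_s)$ in $[aT,bT]$ is at distance at least $\delta_0 T$.
\item By Lemma~\ref{max_Z}, choose $\kappa$ such that, with probability $\ge 1-\eps/3$, $\sup_{t\in[a,b]}|Z_{tT}|/r_T \le \kappa$.
\item By the bound $\xi(Z_t) \le a_t(\log t)^{\delta'}$ established in the proof of Lemma~\ref{relative_lower_bound_tau_n}(ii), fix $\delta'$ so that, with probability $\ge 1-\eps/3$, $\xi(Z_{tT})\le a_T (\log T)^{\delta'}$ uniformly on $[a,b]$ for all large $T$.
\end{enumerate}
On the intersection $G_T$ of these three events, and for $t$ in the open interval between two consecutive jump times with $Z_{tT}=z$, a direct computation gives
$$\frac{\partial}{\partial t}\frac{\Phi_{tT}(z)}{a_T} = \frac{|z|\log \xi(z)-\eta(z)}{t^2\, T\, a_T}.$$
Using $\eta(z)\le |z|\log d$, $|z|\le \kappa r_T = \kappa T a_T/\log T$, and $\log \xi(z) \le q\log T\,(1+o(1))$, this derivative is bounded uniformly by some constant $M=M(\kappa,q,d,a)$ that does not depend on $T$. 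Consequently, on any subinterval of $[a,b]$ not containing a jump of $(Z_{\cdot T})$, the oscillation of $V_T$ is at most $M$ times the length of that subinterval.

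\emph{Choice of partition.} Let $\delta>0$ satisfy $\delta < \min(\delta_0/2,\eps/(2M))$. Let $a<\sigma_1<\dots<\sigma_m<b$ be the rescaled jump times of $(Z_{\cdot T})$ inside $(a,b)$, and set $\sigma_0=a$, $\sigma_{m+1}=b$. On $G_T$ we have $\sigma_{i+1}-\sigma_i\ge \delta_0$ for all interior indices; refine each interval $[\sigma_i,\sigma_{i+1})$ by inserting extra partition points with spacing in $(\delta,\eps/(2M)]$, so that the resulting partition $a=t_0<t_1<\dots<t_v=b$ satisfies $\min_j(t_j-t_{j-1})>\delta$ and each interval $[t_{j-1},t_j)$ contains no jump of $(Z_{\cdot T})$ except possibly at its left endpoint. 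On every such interval, $V_T$ is continuous and
$$w_{V_T}[t_{j-1},t_j)\le M(t_j-t_{j-1})\le M\cdot \tfrac{\eps}{2M}=\tfrac\eps2,$$
so $w'_{V_T}(\delta)\le \eps$. Hence $\Prob_T\{w'_{V_T}(\delta)\ge\eps\}\le \Prob(G_T^c)\le\eps$ for all sufficiently large $T$, which gives (ii).

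\emph{Main obstacle.} There is no deep mathematical difficulty; the technical point is a careful bookkeeping exercise. The key is to secure, on one event of high probability, three simultaneous bounds (separation of jumps, boundedness of $|Z_{tT}|/r_T$, subpolynomial control on $\xi(Z_{tT})$) that together yield a \emph{uniform-in-$T$} derivative estimate for $t\mapsto \Phi_{tT}(Z_{tT})/a_T$ between jumps, and then to construct a partition compatible both with the $\delta$-separation requirement of the modulus $w'$ and with the locations of the jumps.
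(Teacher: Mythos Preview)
Your proof is correct and follows essentially the same route as the paper: for condition~(i) you use Lemma~\ref{max_Z} together with monotonicity of $t\mapsto\Phi_t(Z_t)$, and for condition~(ii) you combine Lemma~\ref{small_jumps}, Lemma~\ref{max_Z}, and the bound $\xi(Z_t)\le a_t(\log t)^{\delta'}$ to control the oscillation of $V_T$ between jumps and then build a partition through the jump times. The only cosmetic differences are that you phrase the oscillation bound via a derivative estimate and fix $\kappa$ on a high-probability event, whereas the paper uses a direct increment estimate and keeps $\sup_t|Z_{tT}|/r_T$ random until the end; one small point to watch (present in both arguments) is that the first or last jump could fall within $\delta$ of $a$ or $b$, but this is handled by noting that $\Prob\{Z_{aT}\neq Z_{(a+\delta)T}\}\to 0$ as $\delta\downarrow 0$.
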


\begin{proof} We have to check the two conditions in~(\ref{tightness}).

(i) First recall from~\cite[Lemma 3.2]{KLMS09} that eventually for all $t$,  the function $t \mapsto \Phi_t(Z_t)$ is increasing, so that we can assume throughout the proof that this property holds for all $t \geq a T$. 
Note that 
\[ \| V_T \| = \sup_{t \in[a,b]} \big\{ \big|\tfrac{Z_{tT}}{r_T}\big| + \big|\tfrac{\Phi_{tT}(Z_{tT})}{a_T}\big| \big\}
= \sup_{t \in [a, b]} \big\{ \tfrac{|Z_{tT}|}{r_T} \big\} + \tfrac{\Phi_{bT}(Z_{bT})}{a_T} \, .\]
Therefore, we find that for any $\kappa > 0$
\be{first_tightness} \bal \Prob\big\{ \| V_T\| \geq \kappa \big\} & \leq \Prob \Big\{\sup_{t\in[a,b]} \tfrac{|Z_{tT}|}{r_T} \geq \tfrac{\kappa}{2}\Big\} + \Prob\{ \tfrac{\Phi_{bT}(Z_{bT})}{a_T} \geq \tfrac{\kappa}{2} \big\}
%\\
%& \ra \Prob \{ (b)^{q+1}|Y^{\ssup 1}| +  (b)^q Y^{\ssup 2}\geq \kappa \} \quad \mbox{as } T\ra \infty
\, . \eal\ee
Now, by Lemma~\ref{max_Z} and the weak convergence of $\Phi_t(Z_t)/a_t \weakconv Y^{\ssup 2}_1$, we can deduce that
the above expressions tend to zero, if we first let $T\ra\infty$ and then $\kappa \ra\infty$.
%where we used that $\lim_{T\ra\infty} a_{Tb}/a_T =  b^q a_T$ and $\lim_{T\ra\infty} r_{Tb} /r_T = b^{q+1}$ and $$(Z_T/r_T,\Phi_T(Z_T)/a_T) \weakconv (Y_1^{\ssup 1},Y_1^{\ssup 2})$$ by~\cite[Lemma 6.1]{KLMS09}, where the latter is a random variable with density on $\R^d\times\R^+$. Thus, taking the limit as $\kappa \ra \infty$ shows that the probability on the right-hand side in~(\ref{first_tightness}) tends to $0$, so that condition (i) is satisfied.

(ii) Fix $\delta > 0$ and a partition $(t_i)_{i=0}^v$ of $[a,b]$ such that $\delta < t_{i+1}-t_i < 2 \delta$ and such that all the jump times of $(Z_{tT} \, : \, t \in[a,b])$ are some of the $t_i$. This is possible if all the jump times $\tau_i$ of $Z_t$ in $[a T, b T]$ satisfy $\tau_{i+1} - \tau_i \geq \delta T$, an event which by Lemma~\ref{small_jumps} has probability tending to $1$ if we first let $T \ra \infty$ and then $\delta \ra 0$. Thus, we can work on this event from now on.

First, using that $Z_{tT}$ does not jump in $[t_{i-1},t_i)$ and the fact that $\Phi_t(Z_t)$ is increasing and $t\mapsto \xi(Z_t)$ nondecreasing by Lemma~\ref{xi_of_Z_t}, we can estimate 
\[\bal w_{V_T}[t_{i-1},t_i) & = \sup_{s,t\in[t_{i-1},t_i)} \big| \tfrac{Z_{tT}}{r_T} - \tfrac{Z_{sT}}{r_T}\big|
+ \sup_{s,t \in [t_{i-1},t_i)} \big| \tfrac{\Phi_{tT}(Z_{tT})}{a_T} - \tfrac{\Phi_{sT}(Z_{sT})}{a_T} \big| \\
& = \tfrac{1}{a_T} \big(\Phi_{t_{i}T}(Z_{t_{i-1}T}) - \Phi_{t_{i-1}T}(Z_{t_{i-1}T})\big) \\
& = \tfrac{1}{a_T} \big( \tfrac{1}{t_{i-1}T} - \tfrac{1}{t_iT} \big)\, \big( |Z_{t_{i-1}}| \log \xi(Z_{t_{i-1}}) - \eta(Z_{t_{i-1}}) \big) \\
& \leq \tfrac{2 \delta}{ a^2} \sup_{s\in[a,b]}\{\tfrac{|Z_{sT}|}{r_T}\} \tfrac{\log \xi(Z_{bT})}{\log T} \,.  \eal \]
Now, recall that, by~(\ref{upper_bound_on_xi_of_Z}), % for any $\delta>0$, 
we can bound $\xi(Z_t) \leq a_t \log t$ eventually for all $t$ so that together with $\log a_T = (q + o(1))\log T$
we obtain
\[ w'_{V_T}(\delta) \leq \tfrac{2 \delta}{ a^2} \sup_{s\in[a,b]}\{\tfrac{|Z_{sT}|}{r_T}\} \tfrac{\log \xi(Z_{bT})}{\log T}
\leq \tfrac{2 q \delta}{ a^2} \sup_{s\in[a,b]}\{\tfrac{|Z_{sT}|}{r_T}\} (1+o(1))
%\frac{1}{\log T}\frac{2\delta}{a^2}\frac{|Z_{bT}|}{r_T}\log (\log bT)^\delta \leq \frac{|Z_{bT}|}{r_T} \frac{2\delta}{a^2} q(1+o(1)) 
\, . \]
Finally, we can use Lemma~\ref{max_Z} to deduce that
\[ \lim_{\delta\da 0}\limsup_{T \ra \infty} \Prob \big\{ w'_{V_T}(\delta) \geq \epsilon \big\} 
%\leq \Prob \Big\{ b^{q+1}\frac{2\delta q}{a^2} |Y^{\ssup 1}| \geq \eps \Big\} 
\leq \lim_{\delta\da0}\limsup_{T\ra\infty}\Prob\Big\{ \tfrac{2 q \delta}{ a^2} \sup_{s\in[a,b]}\{\tfrac{|Z_{sT}|}{r_T}\} (1+o(1)) \geq \epsilon \Big\} = 0
 \, , \] 
%where the latter probability tends to $0$ as $\delta \da 0$, 
so that also the second part of the criterion~(\ref{tightness}) is satisfied.
\end{proof}

\subsection{Functional limit theorem for the maximizer of the solution profile}\label{transfer_spatial_limit}

In this section, we prove Theorem~\ref{spatial_limit_u} by translating the functional limit theorem from the maximizer of the variational problem to the maximizer of the solution profile. We prove both parts (a) and (b) simultaneously. 
%Therefore, 
%we will extend our topology to the space of c\`adl\`ag functions $f:(0,\infty) \ra \R^d\times\R\times\R$.
The main argument is contained in the following lemma.
%Again, it suffices to consider the process on an interval $[\eps,M]$. Therefore, define
%\[ \tilde{X}_T = \big( \big(\tfrac{X_{tT}}{r_T}, \tfrac{1}{t T a_T} \log U(tT)\big)  \,:\, t \in[\eps,M] \big) \, .\]
%Denote by $\dist$ the Skorokhod metric on the space of c\`adl\`ag functions on $\tilde{H}$. 

\begin{lemma}\label{superappro}
As $T\ra\infty$, the difference process
$$\Big( \big(\tfrac{Z_{tT}}{r_T},\tfrac{\Phi_t(Z_{tT})}{a_T},\tfrac{\Phi_{tT}(Z_{tT})}{a_T} + \tfrac{q}{t}\tfrac{|Z_{tT}|}{r_T}\big)
- \big(\tfrac{X_{tT}}{r_T}, \tfrac{1}{ a_T} \tfrac{\log U(tT)}{tT}, \tfrac{\xi(X_{tT})}{a_T}\big) \colon t>0 \Big)$$
tends to zero in probability.
\end{lemma}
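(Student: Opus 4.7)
The plan is to fix a compact interval $[a,b]\subset(0,\infty)$ and show that each of the three coordinates of the difference process tends to zero in probability in the Skorokhod topology on $D([a,b],\R^{d+1})$; the second coordinate will in fact converge in the uniform sense, because both processes involved are continuous. The three key ingredients are: (i) the fact that $X_t=Z_t$ outside the exceptional set $\mathcal{E}$ of~\eqref{definition_of_calE}; (ii) the sharp control on the length of connected components of $\mathcal{E}$ furnished by Lemma~\ref{length_intermediate_interval}; and (iii) the quantitative approximation $\Phi_t(z)/a_t\approx\xi(z)/a_t-q|z|/r_t$ from Lemma~\ref{exact_errors_for_Phi}, combined with~\eqref{key}.

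First I would handle the first coordinate and reduce the third to a comparison of $\xi(X_{tT})$ with $\xi(Z_{tT})$. By Lemma~\ref{properties_jumps_of_X} together with~\eqref{one_point}, we have $X_t=Z_t$ on the complement of $\mathcal{E}$, and both processes have precisely one jump inside every connected component of $\mathcal{E}$. Choosing $\beta$ in the definition of $\mathcal{E}$ sufficiently large, Lemma~\ref{length_intermediate_interval} guarantees that every component of $\mathcal{E}$ meeting $[aT,bT]$ has length at most $T(\log T)^{-\epsilon}$, whereas Lemma~\ref{relative_lower_bound_tau_n}(ii) gives a lower bound $T(\log T)^{-\gamma}$ on the separation of consecutive jump times of $Z_{sT}$, with $\gamma<\epsilon$. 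On this high-probability event there exists a strictly increasing continuous time change $\lambda_T\colon[a,b]\to[a,b]$ pairing jump times of $Z_{tT}$ with those of $X_{tT}$ within each component of $\mathcal{E}$, with $\sup_{t\in[a,b]}|\lambda_T(t)-t|\le(\log T)^{-\epsilon}$, and such that $Z_{tT}=X_{\lambda_T(t)T}$ and therefore $\xi(Z_{tT})=\xi(X_{\lambda_T(t)T})$ for every $t\in[a,b]$. This establishes Skorokhod convergence to zero of the first coordinate, and reduces the third coordinate to
\begin{equation*}
\frac{\Phi_{tT}(Z_{tT})}{a_T}+\frac{q}{t}\frac{|Z_{tT}|}{r_T}-\frac{\xi(Z_{tT})}{a_T}.
\end{equation*}

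For this reduced quantity I would apply Lemma~\ref{exact_errors_for_Phi} with $tT$ in place of $t$, which yields, uniformly for $z$ in a large but slowly growing box,
\begin{equation*}
\frac{\Phi_{tT}(z)}{a_{tT}}=\frac{\xi(z)}{a_{tT}}-q\frac{|z|}{r_{tT}}+o(1).
\end{equation*}
Multiplying by $a_{tT}/a_T$ and using the elementary identities $a_{tT}/a_T\to t^q$ and
\begin{equation*}
\frac{a_{tT}\,r_T}{a_T\,r_{tT}}=\frac{\log(tT)}{t\log T}\longrightarrow\frac{1}{t},
\end{equation*}
both valid uniformly in $t\in[a,b]$, one obtains $\Phi_{tT}(z)/a_T+(q/t)|z|/r_T=\xi(z)/a_T+o(1)$. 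Tightness of the trajectory $(Z_{tT}/r_T,\Phi_{tT}(Z_{tT})/a_T)_{t\in[a,b]}$, which follows from Lemma~\ref{max_Z} and the arguments used in Lemma~\ref{restricting_maximizer}, allows us to restrict to a fixed large box with arbitrarily high probability, so the reduced third coordinate vanishes uniformly in probability. For the second coordinate, the key equation~\eqref{key} of~\cite{KLMS09} gives $|t^{-1}\log U(t)-\Phi_t(Z_t)|=o(a_t)$ almost surely; dividing by $a_T$ and using boundedness of $a_{tT}/a_T$ on $[a,b]$ produces uniform convergence of the second coordinate to zero. Both processes involved are continuous ($U$ is $C^1$, and at every jump time $\tau$ of $Z_t$ one has $\Phi_\tau(Z_\tau^{\ssup 1})=\Phi_\tau(Z_\tau^{\ssup 2})$, so $t\mapsto\Phi_{tT}(Z_{tT})$ is continuous), which makes uniform convergence consistent with Skorokhod convergence.

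The hardest part will be ensuring that all the $o(1)$ errors from Lemma~\ref{exact_errors_for_Phi} are genuinely uniform in $t\in[a,b]$ along the random trajectory $Z_{tT}$. This requires balancing two competing requirements: the box in which the lemma gives a uniform error bound must grow with $T$ in a controlled way, yet the trajectory must be confined to this box with probability tending to one. Combining the tightness result of Lemma~\ref{Prob_T_tight} with Lemma~\ref{max_Z} and a careful choice of the growth rates for the truncation boxes addresses this; the remaining steps are then routine bookkeeping.
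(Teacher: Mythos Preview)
Your proposal is correct and follows essentially the same route as the paper: construct a time-change $\lambda_T$ pairing the jump times of $X$ and $Z$ inside the connected components of $\mathcal{E}$ (using Lemmas~\ref{properties_jumps_of_Z}, \ref{properties_jumps_of_X} and \ref{length_intermediate_interval}), handle the first coordinate by the identity $Z_{tT}=X_{\lambda_T(t)T}$, the third via the approximation $\Phi_{tT}(z)/a_T\approx \xi(z)/a_T-\tfrac{q}{t}|z|/r_T$, and the second via the almost sure comparison of $\tfrac{1}{t}\log U(t)$ with $\Phi_t(Z_t)$.

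Two small remarks. First, for the second coordinate the paper invokes the explicit two-sided bounds $\Phi_t(Z_t)-2d+o(1)\le \tfrac{1}{t}\log U(t)\le \Phi_t(Z_t)+Ct^{q-\delta'}$ from \cite[Props.~4.2, 4.4]{KLMS09}; quoting~\eqref{key} alone gives only asymptotic equivalence, so to turn that into an $o(a_T)$ estimate uniformly on $[aT,bT]$ you implicitly need tightness of $\Phi_{tT}(Z_{tT})/a_T$ --- which you have, but it is cleaner to cite the sharper bounds directly. Second, both you and the paper phrase the result as ``the difference process tends to zero'', but what is actually proved (and what is needed for Theorem~\ref{spatial_limit_u} via Slutsky) is that the Skorokhod \emph{distance} between the two processes tends to zero in probability; literal Skorokhod convergence of $D_T$ to the zero function would force uniform convergence, which fails on the short intervals where $X_{tT}\neq Z_{tT}$. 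Your time-change argument establishes exactly the right thing.
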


\begin{proof} Denoting the difference process above by $(D_T(t) \colon t>0$) it suffices to show
that, for  any $0<a<b$, there exist time-changes $\la_T\colon [a,b]\ra[a,b]$ such that as $T\uparrow\infty$, in probability, 
$$\sup_{t \in [a,b]} |\la_T(t) - t|  \Rightarrow 0 \quad \mbox{ and } 
\sup_{t \in [a,b]} \|D_T(\la_T(t))\| \Rightarrow 0.$$
Fix $0<a<b$. %Let $0<\gamma <\frac{b-a}{4}$ and n
Note that, by Proposition~\ref{ageing_for_Z}, 
\be{bdry}
\lim_{\gamma \da 0} \lim_{T\ra\infty}\Prob\big\{ Z_{a T} = Z_{a T(1+\gamma)} \mbox{ and } Z_{b T(1-\gamma)} = Z_{b T} \big\} =1.
\ee
so that we can henceforth assume that $0<\gamma <\frac{b-a}{4}$ is given such that the event above holds.
Let $(\sigma_i, i = 0,1,\ldots)$ be the jump times of $(X_t \colon t\ge aT)$ and $(\tau_i, i = 0,1,\ldots)$ 
be the jump times of $(Z_t \colon t\ge aT)$, both in increasing order. Recall from the discussion in 
Section~\ref{sect_asymptotics_sigma} that if $T$ is large enough then the jump times always occur in pairs 
which are close together, i.e.~for $\beta > 1 + \frac{1}{\alpha-d}$ each connected component of the set $\calE(\beta)$, defined in~(\ref{definition_of_calE}), contains exactly one jump time of each of the two processes.
In particular, by Lemma~\ref{length_intermediate_interval}, there exists $\delta > 0$ such that
\be{close_jumps} 
 \frac{|\sigma_i - \tau_i|}{\tau_i} \leq (\log \tau_i)^{-\delta} \leq (\log a T)^{-\delta} < \gamma \, ,\ee
so that under the event in~\eqref{bdry} there exists
$$N=\max\big\{i \colon \sigma_i\in[aT,bT]\big\}= \max\big\{i \colon \tau_i\in[aT,bT]\big\}.$$
%if we assume $Z_{a T} = Z_{a T(1+\gamma)}$ and $Z_{b T(1-\gamma)} = Z_{b T}$, we know that all pairs $\sigma_i, \tau_i \in (a T, b T)$.
%(if necessary set $\sigma_0$ or $\tau_0$ equal to $a T$, similarly set $\sigma_N$ or $\tau_N$ equal to $bT$). 
%Then, for all $i = 0,\ldots,N$, $\sigma = \sigma_i$ and $\tau = \tau_i$ satisfy the bound~(\ref{close_jumps}). 
%Recall, also from Section~\ref{sect_asymptotics_sigma} that 
%Now, we can set up a time change that relates $X_t$ and $Z_t$ as follows. 
Denote $s_i = \sigma_i /T$ and $t_i = \tau_i / T$ and define $\lambda=\lambda_T \colon [a,b] \ra \R$ 
such that $\la(a) = a, \la(b) = b$ and $\la(s_i) = t_i$ for all $i = 0,\ldots,N$, and 
linear between these points. Then
\be{time_change} \bal \sup_{t \in [a,b]} | \la(t) - t| 
& = \sup_{i = 0,\ldots,N} |\la(s_i) - s_i| = \sup_{i = 0,\ldots,N}\tfrac{1}{T}|\tau_i - \sigma_i|
\leq b \sup_{i = 0,\ldots,N}\tfrac{|\tau_i-\sigma_i|}{\tau_i} \\
& \leq b \sup_{i=0,\ldots,N} (\log \tau_i)^{-\delta}
\leq b (\log a T)^{-\delta} \, , \eal
\ee
which converges to $0$ when $T \ra \infty$, as required.

We now look at the individual components of the process~$D_T$.
For the \emph{first component}, we simply observe that the time-change is set up in such way that 
$X_{tT} = Z_{\la(t) T}$ for all $t \in [a,b]$. For the \emph{second component}, we split
\begin{align} \tfrac{1}{a_T} & \big| \tfrac{\log U(tT)}{tT} - \Phi_{\la(t)T}(Z_{\la(t)T}) \big| \notag
\\ & \leq \tfrac{1}{a_T}  \big| \tfrac{\log U(tT)}{tT} - \Phi_{tT}(Z_{tT}) \big|
+ \tfrac{1}{a_T} \big| \Phi_{tT}(Z_{tT}) - \Phi_{\la(t)T}(Z_{tT})\big| \label{estimate_second_component}\\
& \hspace{1cm} + \tfrac{1}{a_T} \big| \Phi_{\la(t)T}(Z_{tT}) - \Phi_{\la(t)T}(Z_{\la(t)T})\big| \notag \, , \end{align}
and look at the three terms separately.
%show that each of the expressions tends to $0$ as $T\ra \infty$ (uniformly for all $t \in [a,b]$).
For the first term, we use Propositions 4.2 and 4.4 from~\cite{KLMS09} to conclude that there exists $\delta' > 0$ and $C>0$ such that almost surely, for all sufficiently large~$t$,
\[ \Phi_t(Z_t)- 2d + o(1) \leq \frac{1}{t} \log U(t) \leq \Phi_t(Z_t) + Ct^{q-\delta'} \, , \]
so that the first term in~(\ref{estimate_second_component}) tends to $0$ uniformly for all $t \in[a,b]$. 
For the second term, we use the bound $\eta(z) \leq |z| \log d$, the bound~(\ref{time_change}) 
for the time-change, and that, by~(\ref{upper_bound_on_xi_of_Z}) combined with~\cite[Lemma 3.2]{KLMS09}, there exists a $\delta'>0$ such that 
\be{xiest}a_t(\log t)^{-\delta'} \leq \xi(Z_t) \leq a_t (\log t)^{\delta'}.\ee 
This gives, for $T$~large enough and all $t\in[a,b]$,
\[ \begin{aligned} \tfrac{1}{a_T}\big| \Phi_{tT}(Z_{tT}) & - \Phi_{\la(t)T}(Z_{tT}) |  = 
\tfrac{1}{a_T}\big|\tfrac{1}{tT}- \tfrac{1}{\la(t)T}\big| \, \big| |Z_{tT}| \log \xi(Z_{tT}) - \eta(Z_{tT})\big| \\	
& \leq \tfrac{1}{a^2} |\la(t) - t| \sup_{t \in [a,b]}\tfrac{|Z_{tT}|}{r_T \log T}  \max\{ |\log \xi(Z_{bT})|, 2d\} \\
%& \leq \tfrac{1}{a^2b}  (\log a T)^{-\delta} \frac{|Z_{bT}|}{r_{bT}} \frac{r_{bT}}{r_T \log T} (\log a_T + \log (\log b T)^{\delta'}) \\ 
& \leq (1+ o(1))\tfrac{qb}{a^2} (\log a T)^{-\delta}  \sup_{t \in [a,b]}\tfrac{|Z_{tT}|}{r_T}\, ,
\end{aligned} \]
and the right hand side tends to zero in probability by Lemma~\ref{max_Z}.
%as $\frac{Z_{bT}}{r_{bT}}$ tends by~\cite[Lemma 6.2]{KLMS09} to a random variable with density.
In order to deal with the last term in~(\ref{estimate_second_component}), note that if $t \in (s_i\vee t_i, s_{i+1} \wedge t_{i+1})$ for some $i=0,\ldots,N-1$, then $Z_{tT} = Z_{\la(t)T}$ so that the term vanishes.  Otherwise, if $t \in [s_i \wedge t_i,s_i \vee t_i]$, then $tT$ is in the set of transition times $\calE$ as discussed in Section~\ref{sect_asymptotics_sigma} and we find that $\{ Z_{tT}, Z_{\la(t)T} \} \subset \{Z_{\la(t)T}^{\ssup 1}, Z_{\la(t)T}^{\ssup 2}\}$ and also that there exists $\beta > 1 + \frac{1}{\alpha - d}$ such that 
\[ \begin{aligned} \tfrac{1}{a_T} \big| \Phi_{\la(t)T}(Z_{tT})  - \Phi_{\la(t)T}(Z_{\la(t)T}) \big|
& \leq \tfrac{1}{a_T} \big( \Phi_{\la(t)T}(Z_{\la(t)T}^{\ssup 1}) - \Phi_{\la(t)T}(Z_{\la(t)T}^{\ssup 2}) \big)
\\ & \leq \tfrac{a_{\la(t)T}}{a_T} (\log \la(t) T)^{-\beta} \\
& \leq b^q(1+o(1))  (\log a T)^{-\beta}\, , \end{aligned}\]
which tends to zero uniformly in $t\in[a,b]$ completing the discussion of the second component. 

Finally, we consider the \emph{third component}. Using~\eqref{xiest} and that $Z_{tT}= X_{\la^{-1}(t) T}$, we estimate
\[ \bal \big| \tfrac{\Phi_{tT}(Z_{tT})}{a_T} + \tfrac{q}{t}\tfrac{|Z_{tT}|}{r_T} - \tfrac{\xi(X_{\la^{-1}(t)T})}{a_T} \big| 	
& = \big| \tfrac{q}{t}\tfrac{|Z_{tT}|}{r_T} - \tfrac{|Z_{tT}|}{t r_T}\tfrac{\log \xi(Z_{tT})}{\log T} + \tfrac{\eta(Z_{tT})}{t r_T \log T} \big|  \\
& \leq C'\, \tfrac1a\,\sup_{t \in [a, b]}\big\{ \tfrac{|Z_{tT}|}{r_T}\big\}\,\tfrac{\log \log b T}{\log T}  \, , \eal \]
where $C'$ is some constant depending on $a,b$. By Lemma~\ref{max_Z}, the right hand side converges 
in probability to zero,  which completes the proof of the lemma.
\end{proof}

\begin{proof}[Proof of Theorem~\ref{spatial_limit_u}] By a classic result on weak convergence, see e.g.~\cite[Thm. 3.1]{Bil99}, 
the previous lemma ensures that the processes
\[ \Big( \big(\tfrac{X_{tT}}{r_T}, \tfrac{1}{a_T} \tfrac{\log U(tT)}{tT}, \tfrac{\xi(X_{tT})}{a_T}\big) \, : \, t>0 \Big) 
\mbox{ and }\Big( \big(\tfrac{Z_{tT}}{r_T},\tfrac{\Phi_t(Z_{tT})}{a_T},\tfrac{\Phi_{tT}(Z_{tT})}{a_T} + \tfrac{q}{t}\tfrac{|Z_{tT}|}{r_T}\big) \, : \, t>0 \Big) \, , \]
have the same limit, which was identified in Proposition~\ref{spatial_limit}, as 
\[ \Big(\big(Y^{\ssup 1}_t,Y^{\ssup 2}_t+\tfrac{d}{\alpha-d}\big(1-\tfrac{1}{t}\big)|Y^{\ssup 1}_t|,Y^{\ssup 2}_t+\tfrac{d}{\alpha-d} |Y^{\ssup 1}_t|\big)\, : \, t>0 \Big) \, . \]
%\big( Y^{\ssup 2}_t+\tfrac{d}{\alpha-d}\big(1-\tfrac{1}{t}\big)|Y^{\ssup 1}_t|\, : \, t  > 0 \big)
Hence, projecting onto the first and third component proves~(a), and projecting onto 
the second component and noting that all involved processes are continuous proves~(b).
\end{proof}

\begin{proof}[Proof of Proposition~\ref{classical}] We focus on the one-dimensional distributions, as the higher dimensional case works analogously. Fix $t>0$ and let $f$ be a continuous, bounded nonnegative function on $\R^d$. 
%in particular there exists $\kappa > 0$ such that $f(x) \leq \kappa$ for all $x \in \R^d$. In order to show that
Denote
$$\xi_{tT}(f) := \big(\tfrac{T}{\log T}\big)^{\frac{\alpha d}{\alpha-d}} \, \int v\big(tT, \big(\tfrac{T}{\log T}\big)^{\frac{\alpha}{d-\alpha}}x\big) \, f(x) \, \diff x
=  \sum_{y\in \Z^d} v(tT,y) \, f\big(\tfrac{y}{r_T}\big).$$
It suffices to show that the Laplace functionals converge, \ie 
$$\lim_{T\uparrow\infty} \E \big[ e^{-\xi_{tT}(f)} \big] = \E \big[ e^{-f(Y_{t})} \big] \, . \vspace{-4mm}$$
Let $\kappa>0$ be an upper bound for~$f$. For small $\eps > 0$ and $\delta = \frac{\log (1+\frac{\eps}{2})}{\kappa}$, 
consider the event $A_{\delta} = \{ v(tT,Z_{tT}) > 1-\delta \}$.
Since $v(tT, Z_{tT}) \weakconv 1$, we can choose $T_0$ large enough such that $\Prob(A_{\delta}) > 1 -\frac{\eps}{2}$
for all $T\geq T_0$.  We estimate
$$\E[ e^{-f(Y_t)}] - \E\big[e^{-\xi_{tT}(f)}\big] \leq \E[e^{-f(Y_t)}] - \E\big[ e^{-f(\frac{Z_{tT}}{r_T} )} \1_{A_\delta}\big] e^{-\delta\kappa} 
 \leq \E[e^{-f(Y_t)}] - \E\big[ e^{-f(\frac{Z_{tT}}{r_T} )}\big] + \eps \, ,$$
and
$$\E[e^{-f(Y_t)}] - \E[ e^{-\xi_{tT}(f)} ] 
% \geq \E[e^{-f(Y_t)}] - \E\big[ e^{-\sum_{x\in\Z^d} v(tT,x)f(\frac{x}{r_T})} \1_{A_\delta} \big] - \tfrac{\eps}{4} 
\geq \E[ e^{-f(Y_t)}] - \E\big[ e^{-(1-\delta)f(\frac{Z_{tT}}{r_T})} \big] -\tfrac{\eps}{2} 
\geq \E[ e^{-f(Y_t)}] - \E\big[ e^{-f(\frac{Z_{tT}}{r_T})} \big] -\eps.$$
As $\frac{Z_{tT}}{r_T} \Rightarrow Y_t$ by Proposition~\ref{spatial_limit}, the statement follows.
\end{proof}
%\medskip

{\bf Acknowledgments:} We would like to thank Hubert Lacoin for a stimulating discussion and
for suggesting the proof of Lemma~\ref{properties_jumps_of_X}. 
The first author is supported by an Advanced Research Fellowship from
the {\it Engineering and Physical Sciences Research Council} (EPSRC). This article is based on material
from the second author's PhD thesis.
%\medskip

%%%%%%%%%%%%%%%%%%%%%%%%%%%%%%%%%%%%%%%%%%%%%%%%%%%%%%%%%%%%%%%%%%%%%%%%%%%%%%
%															Bibliography 																	 %
%%%%%%%%%%%%%%%%%%%%%%%%%%%%%%%%%%%%%%%%%%%%%%%%%%%%%%%%%%%%%%%%%%%%%%%%%%%%%%

\bibliographystyle{alpha}
\bibliography{ageing}

\end{document}